\newcommand{\wB}{\widehat{B}}
\newcommand{\wQ}{\widehat{Q}}
\newcommand{\whC}{\widehat{C}}
\newcommand{\Graph}{\mathrm{Graph}}
\newcommand{\Sol}{\mathrm{Sol}}
\newcommand{\SQ}{\mathrm{SQ}}
\newcommand{\Char}{\mathrm{Char}}
\newcommand{\musupp}{{\mu\supp}}
\newcommand{\potimes}{\overset{+}{\otimes}}
\newcommand{\Int}{\mathrm{Int}}
\newcommand{\hol}{\mathrm{hol}}
\newcommand{\forlim}{{\text{``${\displaystyle{\lim}}$''}}}
\newcommand{\wC}{\widetilde{\bC}}
\newcommand{\Open}{\mathrm{Open}}
\newcommand{\DR}{\mathrm{DR}}
\renewcommand{\Sh}{\mathrm{Sh}}
\renewcommand{\Mod}{\mathrm{Mod}}
\renewcommand{\SS}{\mathrm{SS}}
\newcommand{\al}{\mathrm{al}}
\newcommand{\cOL}{\cO^{\hbar, \Lambda_0}}
\newcommand{\cOae}{\cO^{\ae}}
\newcommand{\cOc}{\cO^{\mathrm{ce}}}
\newcommand{\cOse}{\cO^{\mathrm{se}}}
\newcommand{\cOexp}{\cO^{\exp}}
\newcommand{\cOaeL}{\cO^{\ae, \Lambda_0}}
\newcommand{\cOseL}{\cO^{\mathrm{se},\Lambda_0}}
\newcommand{\cOdeL}{\cO^{\mathrm{de},\Lambda_0}}
\newcommand{\cOexpL}{\cO^{\Lambda}}
\newcommand{\cOceL}{\cO^{\mathrm{ce}, \Lambda_0}}
\newcommand{\se}{\mathrm{se}}
\newcommand{\ce}{\mathrm{ce}}
\newcommand{\cDae}{\cD^{\ae}}
\newcommand{\cDaeL}{\cD^{\ae, \Lambda_0}}
\newcommand{\cDseL}{\cD^{\mathrm{se},\Lambda_0}}
\newcommand{\cDceL}{\cD^{\mathrm{ce}, \Lambda_0}}
\newcommand{\cDdeL}{\cD^{\mathrm{de}, \Lambda_0}}
\newcommand{\noneq}{\mathrm{ne}}
\title{$\hbar$-Riemann--Hilbert correspondence}
\author{Tatsuki Kuwagaki}
\date{}
\begin{document}

\maketitle
\begin{abstract}
We formulate and prove a Riemann--Hilbert correspondence between $\hbar$-differential equations and sheaf quantizations, which can be considered as a correspondence between two kinds of quantizations (deformation and sheaf quantization) of holomorphic cotangent bundles. The latter category is expected to be equivalent to a version of Fukaya category, which is a ``quantization" of Lagrangian intersection theory. The ideas of the constructions are based on asymptotic/WKB analysis, which is related to geometric quantization.
\end{abstract}

\setcounter{tocdepth}{1}
\tableofcontents
\section{Introduction}
\subsection{Sato's lost dimension}
Solving complex linear differential equations has been a central topic in algebraic analysis. Mikio Sato initiated $\cD$-module theory and also emphasized the importance of seeing microlocal directions. 

One of the achievements of such philosophy was realized by the Riemann--Hilbert correspondence formulated and proved by Kashiwara~\cite{KashiwaraRH} (another proof by Mebkhout~\cite{Mebkhout}). Microlocal theories exist in this story on both sides: $\cE$-module theory~\cite{SKK} and microlocal sheaf theory~\cite{KS}. 

Later years, Sato's Kyoto school started to study exact WKB analysis~\cite{KawaiTakei}. Exact WKB analysis is a special way to solve differential equations by the Borel resummation method applied to WKB
solutions~\cite{Voros}. Sato imagined this theory should recover the lost dimension in microlocal theories~\cite{SatoMath}. Namely, in the microlocal theories, objects are always conic with respect to the fiber scaling of cotangent bundles i.e., there is no information in this direction outside the zero section.

The aim of this paper is to give a first step to realize Sato's imagination, in the context of Riemann--Hilbert correspondence (by developing the author's previous work~\cite{kuwagaki2020sheaf}). Namely, we would like to provide a theory which is a mixture of $\cD$-module theory, microlocal sheaf theory, and (exact) WKB analysis.

\subsection{Quantization and Fukaya category}
One can also motivate our theory developed here from a symplectic geometric point of view. 

There are several formalisms of quantization of symplectic geometry. Deformation quantization is one of the ways. A deformation quantization of a symplectic manifold is a deformation of the structure sheaf algebra of the manifold over $\hbar$ whose 1-st order correction is given by the Poisson bracket. One of the simplest instance of deformation quantization is the ring of differential operators on a manifold, which can be considered as a deformation quantization of the cotangent bundle with $\hbar=1$. 

On the other hand, we have the notion of Fukaya category. Roughly speaking, this is a category associated to a symplectic manifold whose objects are Lagrangian submanifolds (+ brane data) and the space of morphisms is given by intersection Floer theory. We can consider that Fukaya category is also a kind of quantization in the sense that it deforms the classical intersection theory by instanton corrections.

These two seemingly unrelated quantizations are expected to be related~\cite{BresslerSoibelman} , and actually related in the case of cotangent bundles of complex manifolds:
\begin{equation}
    D^b_{rh}(\cD_M)\hookrightarrow \Sh^c(M)\cong \Fuk(T^*M).
\end{equation}
Here $D^b_{rh}(\cD_M)$ is (derived) the category of regular holonomic modules over a complex manifold $M$, $\Sh^c(M)$ is the category of real constructible sheaves (complexes), and $\Fuk(T^*M)$ is Fukaya category of $T^*M$ (with an appropriate wrapping setup). The first arrow is the regular Riemann--Hilbert correspondence and the second arrow is the Nadler--Zaslow equivalence~\cite{NZ} (or Ganatra--Pardon--Shende equivalence with an appropriate interpretation on the data of stops~\cite{GPS}).

In the latter equivalence, the correspondence is obtained (roughly speaking) by sending a sheaf to its microsupport. The microsupport of constructible sheaves are always conic Lagrangians. Hence the Lagrangians appearing in the above equivalence are potentially conic (i.e., conic at the contact boundary). This is the description of Sato's lost dimension in symplectic geometric setup. In other words, when studying symplectic geometry using sheaves naively, we can only study conic Lagrangians.

Tamarkin, in his seminal paper~\cite{Tam} (first appeared in 2008 on arXiv), proposed a way to remedy the situation. For an exact Lagrangian submanifold $L\subset T^*M$, using a primitive of the Liouville form restricted to $L$, he defined a lift $\widehat{L}\subset T^*M\times T^*\bR$. Then Tamarkin proved non-displaceability results using sheaves microsupported on $\widehat{L}$. Such a sheaf is called a sheaf quantization of $L$. From the microlocal-sheaf-theoretic point of view, the notion of sheaf quantization is a non-conic version of the notion of constructible sheaf. The idea of sheaf quantization is later developed by many people. Some of the main references are \cite{GKS, Guillermou, JT, AI, kuwagaki2020sheaf, nadlershende}. 

Tamarkin's idea is probably based on deformation quantization of Lagrangian submanifolds. Namely, the added real line $\bR$ can be considered as a real counterpart of (the Fourier--Laplace dual) of $\hbar$. Actually, such a variable has been effectively used to construct deformation quantization modules supported on (not necessarily conic) Lagrangian submanifolds~\cite{PS} (see also an explanation in \cite{kuwagaki2020sheaf}).  

The ring of differential operator ($\subset$ the conic world) is obtained by setting $\hbar=1$ for the deformation quantization ($\subset$ the non-conic world) of the cotangent bundle. Hence the usual Riemann--Hilbert correspondence can be considered as a comparison in the conic world. Then it is natural to expect a non-conic version of the Riemann--Hilbert correspondence, between deformation quantization and sheaf quantization.

For the 1-dimensional case, it is actually done by using exact WKB analysis in our previous paper on the object level~\cite{kuwagaki2020sheaf}. Namely, given an $\hbar$-differential equation with certain conditions on a Riemann surface, we construct a solution sheaf quantization using exact WKB analysis.

Our aim in this paper is to functorize the construction of \cite{kuwagaki2020sheaf} and state it in the form of an equivalence of categories. To achieve this goal, one can naively expect a realization of the Borel resummation procedure as a functor. However, it is not the case. Actually, exact WKB analysis breaks down over turning points, but sheaves should also exist over such points. In \cite{kuwagaki2020sheaf}, it is remedied by hand, but it is not functorial. Also, the availability of exact WKB analysis in higher orders or higher dimensions are still conjectural.

To avoid such difficulties, we use a weaker notion for solutions, which do not need to have asymptotic expansions. So, exact WKB analysis is not explicit in our main constructions, but it is related to our previous solution sheaf quantizations at the object level. This point will be explained in Section 16.

Before describing our results, we would like to mention Kontsevich--Soibelman's DQ-Riemann--Hilbert conjecture (see, for example, \cite{KonTalk}). The conjecture roughly says a relation between deformation quantization modules and Fukaya category for a certain class of holomorphic symplectic manifolds. We expect that there exists a Nadler--Zaslow (or GPS)-type equivalence for our sheaf quantizations over the Novikov ring, and it relates our theorem and their conjecture for the case of cotangent bundles. Completing this picture, we can obtain the non-conic version of the diagram (1.1).

\begin{remark}[Geoemtric quantization]
We can also motivate from another form of quantization: geometric quantization. The additional variable $\bR$ can be considered as a kind of the trivial prequantum bundle. Using the standard polarization, one can consider a section of a prequantum bundle as a state (see e.g. \cite{GuilleminSternberg, BatesWeinstein}). In the cotangent bundle setting, a Lagrangian submanifold ($+$ some data) gives a state, so-called WKB-state. In this terminology, our sheaf quantization can be considered as a kind of geometric (or topological?) avatar of a WKB-state associated to a Lagrangian submanifold obtained as a solution of the given differential equation, which is a deformation quantization of the Lagrangian submanifold.

Geometric, deformation, and Fukaya quantization interaction is recently taken up in the work of Gaiotto--Witten~\cite{gaiotto2021probing}, developing the ideas in \cite{kapustin2007electricmagnetic, Gukov}. It should be interesting to compare these works with our present work. Some observations will be treated in \cite{KuwaCC}.
\end{remark}

\begin{remark}
An approach to Fukaya category from a WKB-like consideration also appeared in \cite{Tsygan}.
\end{remark}

\begin{remark}
A version of the Riemann--Hilbert correspondence with a presence of $\hbar$ is also treated in \cite{DGS}. In one sentence, the difference is: Their treatment is formal, while our treatment is analytic. It might be interesting to relate our result with their result through the asymptotic expansion map.
\end{remark}

\begin{remark}[Algebraic Lagrangians]
In this paper, we only treat the class of algebraic Lagrangians (see Definition in Appendix). In particular, if $L$ is an algebraic Lagrangian in $T^*M$, the intersection $L\cap T^*_xM$ has only finite connected components. It should also be important to extend our results to non-algebraic Lagrangians for the following reason. If one wants to treat Kontsevich--Soibelmans's DQ-RH correspondence for the difference (or q-difference/ elliptic difference) equations sheaf-theoretically, a standard strategy is to consider them as infinite-order differential equations. Then the corresponding Lagrangians are  infinite-order coverings and not algebraic.
\end{remark}

\subsection{Main players}
As we have already mentioned, there exist several developments (i.e., generalizations) of the notion of sheaf quantization. In this paper, we use a slight generalization of the version in \cite{kuwagaki2020sheaf}. One advantage of the definition of \cite{kuwagaki2020sheaf} was that the category of sheaf quantization is enriched over the Novikov ring. The relationship between the Novikov ring and the extra $\bR$ was already explained in \cite{Tam}, but we have realized it more naturally using equivariant sheaves. Since we would like to consider $\hbar$ as a complex variable, we will use $\bC$ instead of $\bR$.

Since the topological side (i.e., sheaf quantization), is enriched over the Novikov ring, the differential equation side should also be enriched over the Novikov ring. We obtain such a category by completing the rings related to asymptotic analysis.

Let $\bC_\hbar$ be the complex line with the standard coordinate $\hbar$. Take the oriented real blow-up of the origin and take a neighborhood of an acute portion of the exceptional divisor. Such a subset (+ an extra condition) will be defined in the body of the paper and called a sectoroid.

Let $S$ be a sectoroid and $M$ be a complex manifold. We denote the sheaf of asymptotically expandable holomorphic functions by $\cA_{M\times S}$. We also denote the polar dual of $S$ by $S^\vee$. Then, for $c\in S^\vee$, $e^{-c/\hbar}\in \cA_{M\times S}$. The linear combinations of such elements form a ring isomorphic to the polynomial ring of $S^\vee$. With respect to the conic topology of $S^\vee$, we can complete the group ring, and we obtain the Novikov ring $\Lambda_0^S$ associated to $S$. We can also complete $\cA_{M\times S}$ so that it becomes a module over $\Lambda_0^S$. Then we take the quotient by the functions which decay rapider than every $e^{-c/\hbar}$ with $c\in S^\vee$. We denote the resulting ring by $\cOaeL_{M\times S}$.

We will also use some variants of $\cOaeL$; the hierarchy is as follows:
\begin{equation}
    \cOaeL\subset \cOceL\subset \cOseL\subset \cOdeL.
\end{equation}
The role of each ring is as follows:
\begin{enumerate}
    \item $\cOaeL$ will be used to work on actual asymptotic analysis.
    \item $\cOceL$ will be used to formulate the summability and all the modules we treat essentially come from this level.
    \item $\cOseL$ naturally appears when we consider a canonical filtration on $\cOaeL$.
    \item $\cOdeL$ is used as the base ring of our categories, which is a slight ramification of $\cOseL$.
\end{enumerate}

Now the category of the differential equation side is defined to be the bounded derived category of summable $\cDceL_{M\times S}$-modules. It roughly consists of the following conditions:
\begin{enumerate}
    \item $\cDceL_{M\times S}:=\cOceL_{M\times S}\otimes \cD^{\hbar}_{M\times S}$-module structure,
    \item Coherency,
    \item Holonomicity, which says the characteristic variety is Lagrangian, 
    \item WKB-regularity, which is a consistency condition between the formal type and the characteristic variety,
    \item Summability, which says existence of solutions with certain asymptotic behaviors.
\end{enumerate}
We denote the category by $D^b_{\mathrm{sum}}(\cDceL_{M\times S})$. Then we also obtain the locally base-changed version $D^b_{\mathrm{sum}}(\cDdeL_{M\times S})$. 

The monodromy side is sheaf quantization. Our sheaf quantization can be roughly described by the following structures and conditions:
\begin{enumerate}
    \item A sheaf quantization is a $\bC$-equivariant sheaf on $M\times \bC_t$ with an action of the base ring $\cOdeL_S$,
    \item with microsupport in $S^\vee$,
    \item whose $\Lambda^S_0$-module structure and $\cOdeL_S$-module structure are compatible,
    \item and its non-conic microsupport is algebraic Lagrangian,
    \item perfectness with respect to $\cOdeL_S$-module structure.
\end{enumerate}
We denote the resulting category by $\SQ_S(T^*M)$.

\subsection{Main results}
Now we can state the main theorem of this paper. 
\begin{theorem}
Let $S$ be a sectoroid. There exists an equivalence
\begin{equation}
    \Sol^\hbar_S\colon D^b_{\mathrm{sum}}(\cDdeL_{M\times S})\rightarrow \SQ_S(T^*M).
\end{equation}
\end{theorem}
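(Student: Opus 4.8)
The plan is to prove the theorem by constructing the functor $\Sol^\hbar_S$ explicitly as a microlocalization-type solution functor, and then establish the equivalence by building a quasi-inverse (a Riemann--Hilbert-type reconstruction functor) together with adjunction/unit-counit arguments. At the highest level, this is a deformation (over $\Lambda_0^S$, in the presence of $\hbar$) of the classical regular Riemann--Hilbert correspondence $D^b_{rh}(\cD_M)\hookrightarrow\Sh^c(M)$, so the architecture should parallel Kashiwara's and Mebkhout's proofs: define $\Sol^\hbar_S(\cM) := R\cHom_{\cDceL}(\cM, \cO^{?})$ for an appropriate solution complex valued in one of the asymptotic rings (the ring $\cOaeL$ being where genuine asymptotic analysis lives, per the role list in the excerpt), then $\bC$-equivariantize with respect to the $\hbar\leftrightarrow t$ Fourier--Laplace duality to land in sheaves on $M\times\bC_t$, and check that the output satisfies all five defining conditions of $\SQ_S(T^*M)$.

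**First I would** verify that $\Sol^\hbar_S$ actually lands in $\SQ_S(T^*M)$: that the solution complex is $\bC$-equivariant (from the grading by exponents $e^{-c/\hbar}$), that its microsupport lies in $S^\vee$ (from the decay/summability conditions built into $\cDceL$-modules and the WKB-regularity hypothesis), that the $\Lambda_0^S$- and $\cOdeL_S$-module structures on the output are compatible (this should be essentially tautological from how both sides are completed), that the non-conic microsupport recovers the characteristic variety of $\cM$ which is algebraic Lagrangian by holonomicity plus the algebraicity hypothesis, and that perfectness over $\cOdeL_S$ follows from coherence of $\cM$. The key input making this work is that WKB-regularity is precisely the condition matching ``formal type'' (the Stokes/exponential data) to the geometry of the characteristic variety, so that the Borel-summable solutions exist in the right quantity --- this is the role the summability axiom plays, and it is the analytic analogue of regularity in the classical story.

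**The main obstacle will be** full faithfulness, and within that, essential surjectivity via the quasi-inverse. The honest difficulty is exactly the one flagged in the introduction: exact WKB analysis breaks down over turning points, yet sheaf quantizations are defined there, so one cannot simply invert a Borel-resummation functor. My plan is to circumvent this the way the introduction hints --- work with the weaker notion of solution (no required asymptotic expansion) so that the solution sheaf is defined over turning points by continuity/gluing, and prove the reconstruction by a local-to-global argument: away from turning points and caustics, $\cM$ is (after $\hbar$-microlocalization) a direct sum of WKB-normal-form pieces $\cOexpL \cdot e^{-\phi/\hbar}$ whose solution sheaves are rank-one local systems twisted by the primitive, so both functors are inverse there by the one-dimensional/local computation of \cite{kuwagaki2020sheaf}; then one shows the reconstruction functor (sheaf $\rightsquigarrow$ its ``Hom into the sheaf'' $\cDceL$-module, using that microlocal sheaf theory provides a $\cDLpm$-module structure from microsupport in $S^\vee$) glues across turning points because the target category $D^b_{\mathrm{sum}}$ is itself local and the gluing data matches on overlaps. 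Descent/Mayer--Vietoris along an open cover adapted to the Lagrangian stratification, plus the fact that both categories are sheaves of categories over $M\times S$, reduces the global equivalence to the local one.

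**Finally**, one assembles the unit $\mathrm{id}\Rightarrow \mathrm{RH}\circ\Sol^\hbar_S$ and counit $\Sol^\hbar_S\circ\mathrm{RH}\Rightarrow\mathrm{id}$ and checks they are isomorphisms; by the preceding locality this is checked stalk-wise (for the sheaf side) and at the level of characteristic varieties plus formal types (for the $\cD$-module side), where it reduces to the already-established WKB normal-form computation. I expect the most delicate estimates to be the comparison of the various asymptotic rings in the hierarchy $\cOaeL\subset\cOceL\subset\cOseL\subset\cOdeL$ --- in particular showing that passing to $\cOdeL$ (the ``slight ramification'' base ring) does not lose or add solutions --- and the verification that summability is preserved under the reconstruction functor, i.e. that a sheaf quantization with algebraic Lagrangian microsupport always arises from a genuinely Borel-summable, not merely formal, $\cD$-module; this is the analytic heart of the theorem and the place where the hypotheses on $S$ being a sectoroid (acuteness, the extra condition) are genuinely used.
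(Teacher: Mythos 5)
Your high-level architecture matches the paper: define $\Sol^\hbar_S$ as a $\cHom$ into a solution object living on $M\times\bC_t$, reduce to exponential normal forms $\cE^{\alpha/\hbar}$ away from turning points, construct a quasi-inverse $\Psi(\cE)=\cHom^{\Lambda_0}(\cE,\,\cdot\,)$, and handle turning points by dévissage along the Lagrangian stratification. However, your route diverges from the paper's in two places, and one of these is a genuine soft spot.

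First, the solution object: the paper does not take $R\cHom_{\cD}(\cM,\cO^{?})$ and then ``$\bC$-equivariantize.'' It uses a specific kernel $\cR_{M\times S\times \bC_t}$, the subsheaf of $\cOaeL_{M\times\bC_t\times S}$ annihilated by $\hbar\partial_t-1$, and sets $\Sol^\hbar_S(\cM)=p_{S*}\cHom_{\cDdeL}(\cM,\cR_{M\times S\times\bC_t})$. The $\bC_t$-equivariance and the $\cOdeL_S$-action are already built into $\cR$, and the crucial point (as computed for $1_S$ in Section 7, culminating in Proposition~\ref{prop: endocalculation}) is that $\End(1_S\star\bC_{S^\vee})\cong \cOdeL_S$ is precisely what forces the base ring to be $\cOdeL_S$ rather than $\cOseL_S$ or $\cOceL_S$. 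Your proposal treats $\cOdeL$ as a mere ``slight ramification'' of $\cOseL$ whose role is to be verified a posteriori; in the paper it is forced by the endomorphism computation of the unit object, and this is what makes the module-structure-compatibility axiom of $\SQ_S$ hold automatically. This is a detail you would need to reconstruct from scratch.

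Second, and more seriously: for the reduction to flat connections you invoke ``Descent/Mayer--Vietoris along an open cover adapted to the Lagrangian stratification, plus the fact that both categories are sheaves of categories over $M\times S$.'' The paper does not prove, or even state, that $D^b_{\mathrm{sum}}(\cDdeL_{M\times S})$ or $\SQ_S(T^*M)$ form stacks, and it is not clear that this holds in a usable form. Instead, the paper runs a localization dévissage: pick $D$ so $\cM(*D)$ is a summable flat connection, use $\Hom(\cM,\cN(*D))\cong\Hom(\cM(*D),\cN(*D))$ (Lemma~\ref{Lem: MD}) together with $\Sol(\cM(*D))\cong\Sol(\cM)\otimes\bC_{M\setminus D}$ (Corollary~\ref{Lem: Tensor-restriction}), form the exact triangle $\cM\to\cM(*D)\to\cC\to[1]$, resolve the divisor, and induct on the dimension of $\supp\cC_i$. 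For essential surjectivity the paper similarly avoids descent: it uses Proposition~\ref{prop: stratification} to break any sheaf quantization into \emph{elementary} pieces (supported on a locally closed $N$, with unramified-covering microsupport modulo $T^*_NM$), then explicitly builds a $\cDseL$-lattice inside $\Psi(\cE)$ on a desingularization of $\overline N$. Your descent claim, if it can be made precise, would give a cleaner packaging, but as stated it imports a nontrivial foundational result; you should either prove the stack property or replace that step by the paper's dévissage-by-localization, which is elementary and avoids the issue. Finally, you describe full faithfulness as ``both functors are inverse there by the one-dimensional/local computation,'' but the paper's actual verification is a direct comparison of two de~Rham complexes with moderate-growth coefficients $\cO^{\mathrm{de},\Lambda_0,\mathrm{mod}\,D}$ on the real oriented blow-up; this calculation is the analytic content and cannot be black-boxed.
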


The construction of the solution functor is much inspired from the construction of D'Agnolo--Kashiwara's Riemann--Hilbert correspondence \cite{DK}. Already in their work, the definition of the topological side (``enhanced ind-sheaves") involves the extra $\bR$. The difference between their work and our work can be said as: In \cite{DK}, despite the appearance of $\bR$, it is somehow squashed by taking ind-limit toward $+\infty$, while we keep the extra direction as it is. Also, in some sense, their work belongs to the exact symplectic world (over $\bC$), and our work belongs to the non-exact symplectic world (over the Novikov ring). When an $\hbar$-differential equation is without completion, we can compare the solution sheaf quantization and the enhanced ind-sheaf. It is described in Section 17. 

The Novikov ring also used in the Riemann--Hilbert correspondence in the author's previous paper~\cite{KuwIrreg} as a reformulation of D'Agonolo--Kashiwara's theory. The hom-spaces defined over the (finite) Novikov ring in \cite{KuwIrreg} are close relatives of our hom-spaces of sheaf quantizations. Again, since it belongs to the exact symplectic world, we could specialize the Novikov indeterminate and we could work over $\bC$. 

In the above theorem, we require the summability as an axiom. Hence it is not clear whether a given differential equation is summable or not. As a second main theorem, we give a sufficient condition for the summability.
\begin{theorem}
Any weakly semi-globally semisimple WKB-regular flat $\hbar$-connection valued in $\cOaeL_{M\times S}$ is summable at  any small $\theta\in S$.
\end{theorem}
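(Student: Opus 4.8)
The plan is to run the classical summability machinery for linear meromorphic connections — with $\hbar\to 0$ playing the role of the irregular singular point — but carried out with holomorphic dependence on the $M$-variables and compatibly with the Novikov completion. \emph{Step 1 (formal normal form).} Applying the $\hbar$-asymptotic-expansion map to the connection matrix produces a formal $\hbar$-connection over the formal (Novikov-completed) power-series ring. WKB-regularity forces its leading exponential data to be read off from the characteristic Lagrangian, and the weak semi-global semisimplicity hypothesis guarantees that over a suitable open $U\subset M$ this Lagrangian is generically a disjoint union of graphs of exact $1$-forms $d\phi_i$, with the $\phi_i$ holomorphic in the $U$-coordinates and, away from a controlled locus, pairwise non-colliding in the direction $\theta$. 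A Hukuhara--Turrittin--Levelt argument then yields a formal gauge transformation $\hat G\in\mathrm{GL}(\cO_U[[\hbar]])$ conjugating the formal connection into a direct sum of rank-one $\hbar$-connections with exponents $e^{-\phi_i/\hbar}$; equivalently, a formal fundamental matrix $\hat Y=\hat F(\hbar)\,e^{-\Phi/\hbar}$ with $\hat F\in\mathrm{GL}(\cO_U[[\hbar]])$ and $\Phi=\mathrm{diag}(\phi_i)$.

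\emph{Step 2 ($1$-summability of $\hat F$).} Using flatness, restrict along a path (or a polydisc) in $U$ to reduce to an $\hbar$-ODE with parameters; then $\hat F$ solves a linear ODE with an irregular singularity at $\hbar=0$ and is Gevrey-$1$. Sibuya's main asymptotic existence theorem provides, on every sufficiently narrow subsector $S'\subset S$ around $\theta$, a holomorphic invertible $F$ with asymptotic expansion $\hat F$, holomorphic in the $U$-parameters; since $\theta$ is small we may shrink the opening of $S'$ below the spacing of the Stokes rays, and uniqueness of such an $F$ then makes $\hat F$ Borel ($1$-)summable on $S'$ with sum $F$. Hence $Y=Fe^{-\Phi/\hbar}$ is an honest solution matrix with the prescribed WKB asymptotics over $U\times S'$, and transporting back through $\hat G$ exhibits a full system of solutions asymptotic to the formal WKB solutions — which is precisely summability at $\theta$.

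\emph{Step 3 (the Novikov completion).} The coefficients lie in $\cOaeL_{M\times S}$, and summability is formulated at the level $\cOceL$, through which — as noted in the excerpt — the relevant modules factor; I would therefore carry out Steps 1--2 at each stage of the Novikov filtration, where the connection is an ordinary holomorphic $\hbar$-connection, and then pass to the limit, checking that the Gevrey constants in Sibuya's estimates stay uniform in the Novikov degree so that the sums assemble into honest $\cOaeL_{M\times S}$-valued solutions with the required asymptotic behavior.

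The hard part will be the uniformity: making the normal form of Step 1 and, above all, the summability estimates of Step 2 hold uniformly as the $M$-parameters range over $U$ and as the Novikov degree grows, especially near the loci where the $\phi_i$ approach each other (the would-be turning points that weak semi-global semisimplicity is designed to keep away from the direction $\theta$). This requires the parametrized forms of the asymptotic-existence and summability theorems (after Sibuya, and Balser--Mozo-Fern\'andez), together with careful bookkeeping of how the Stokes geometry — hence the admissible opening of $S'$ — varies over $U$; the genericity built into the hypothesis is exactly what lets one choose $S'$, and the smallness of $\theta$, uniformly.
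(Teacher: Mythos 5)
Your strategy (formal normal form, then asymptotic lift, then handle the Novikov completion) is the same in outline as the paper's, but the machinery and the order of operations differ in ways that matter, and the place you identify as the hard part is exactly where the paper takes a different turn.

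The paper does not invoke Borel/Gevrey-$1$ summability, the Hukuhara--Turrittin--Levelt normal form, or a ``stage-by-stage'' pass through the Novikov filtration with uniform Gevrey constants. Instead it follows Sibuya's block-diagonalization scheme: one formal reduction step produces a nonlinear (Riccati-type) equation for the block-off-diagonal part of the gauge transformation, and that equation is solved directly over the Novikov-completed ring by a contraction-mapping argument (Theorem~\ref{Theorem: Non-linear}, the Novikov adaptation of Wasow's Theorem~26.1). The crucial point is that the fixed-point operator respects the filtration by sets $D\in\frakD$, so the convergence is established on each graded piece $\Gr_D$ and then passes automatically to the completion --- there is no a posteriori uniformity-in-Novikov-degree to check. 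This is precisely the issue you flag at the end as ``the hard part,'' and the paper avoids it by building the Novikov structure into the contraction estimate from the outset rather than trying to uniformize Gevrey constants afterwards. Relatedly, the paper's notion of summability only asks for existence of solutions with the prescribed asymptotic behavior (so that the gauge matrix lies in $\cOseL$), not for Borel summability of the WKB series; your Step~2 proves more than is asked for and imports an analytic framework (``irregular singularity at $\hbar=0$'') that does not quite fit the singular-perturbation setting here, where $\hbar$ enters the connection $\hbar d + \Omega$ as a parameter, not as the ODE variable. Finally, the role of weak semi-global semisimplicity is narrower than your Step~1 suggests: the formal block decomposition along distinct sheets of $\Char(\cM)$ already follows from WKB-regularity plus Sibuya's construction, and integrability forces the off-block entries of the other $\Omega^{(j)}$ to decay exponentially; the weak semisimplicity hypothesis is invoked only at the very end, to split each block (which a priori carries a sheet with multiplicity $>1$) into rank-one pieces up to $O(e^{-c_*/\hbar})$, after which the Proposition in \S15.1 upgrades weak diagonalizability to honest diagonalizability. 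So the hypothesis does not give you the disjoint-union-of-graphs picture directly; that comes from the geometry of algebraic Lagrangians, and the hypothesis is a finer, within-block statement.
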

The definition of the weak semi-global semisimplicity is given in the body of the paper. The proof is based on Wasow's proof in $\dim=1$~\cite{Wasow} and Sibuya's block-diagonalization method. Also, there are some inspirations from asymptotic liftings of solutions of irregular differential equations in higher dimensions due to Mochizuki \cite{WildTwistor} (see also \cite{SabbahIntrotoStokes, Hien}.

We would like to summarize what is written in each section. In section 2, we will collect some languages concerning the real blow-up of $\bC_\hbar$. In section 3, we collect some constructions of the Novikov ring associated to subsets of the real-blowup. Section 4 describes sheaves of functions appearing in this paper. Section 5 introduces the differential equation side of our correspondence. Section 6 explains some constructions in the theory of equivariant sheaves. Section 7 explains how the Novikov rings appear in microlocal sheaf theory. Based on this, Section 8 explains the sheaf quantization side. 

Section 9 constructs our solution functor relating the two sides of the category. In section 10, we discuss the localization of $\cD^\hbar$-modules. Section 11 explains the notion of WKB-regularity, which is a condition to make geometric data compatible with Stokes data. Section 12 explains basic facts to solve differential equations in the completed setting. Then, Section 13, explains the notion of the summability, which is in some sense an axiomatic form of a weak version of the WKB summability. 

On these preparations, we prove the equivalence statement  in Section 14. In Section 15, we give a sufficient condition for the summability. The method is a Novikov-adaptation of classical asymptotic analytic results obtained by Sibuya, Wasow and others. 

Section 16 compares the present result and the previous result in \cite{kuwagaki2020sheaf}. 
Section 17 introduces the notion of convergent lattice to speak about objects defined over $\bC$. Using this notion, we compare our construction and D'Agnolo--Kashiwara's construction~\cite{DK}. In Appendix, we state a result concerning a property of algebraic Lagrangian in the cotangent bundle, which is used in the body of the paper.

\subsection*{Acknowledgement}
I would like to thank Kohei Iwaki and Akishi Ikeda for their interests and teaching me the ideas of exact WKB and resurgent analysis. I also would like to thank Takahiro Saito for various discussions on subjects related to Rees modules, Yuichi Ike for some discussions related to $\gamma$-topology and sheaf quantizations. Special thanks to Kyoji Saito for his emphasis on Sato's philosophy prevailing on the whole story, which brings me an idea to begin the above introduction with Sato's idea.

The work is supported by MEXT KAKENHI Grant Number 18K13405, 18H01116, 20H01794.

\section{Topology of the real blow-up of $\bC_\hbar$}
We would like to prepare some terminology around the oriented real blow-up of $\bC$ which plays a fundamental role in asymptotic analysis.
\subsection{Real blow-up and sector}
Let $\bC_\hbar$ be the complex line with the standard coordinate $\hbar$. Let $\varpi\colon {\wC_{\hbar}}\rightarrow \bC_\hbar$ be the real oriented blow-up at $0$. Concretely, 
\begin{equation}
\begin{split}
      {\wC_\hbar}&:=\lc (z, \theta)\in \bC_\hbar\times \bR/\bZ\relmid  \Im(ze^{-2\pi\sqrt{-1}\theta})= 0, \Re(ze^{-2\pi\sqrt{-1}\theta})\geq 0\rc\\
      &=\lc (z, \theta)\in \bC_\hbar\times \bR/\bZ\relmid  \arg z=2\pi\theta \text{ if }z\neq 0 \rc\\
\end{split}
\end{equation}
and the map $\wC_\hbar\rightarrow \bC_\hbar$ is given by the projection $(z, \theta)\mapsto z$. The set $\varpi^{-1}(0)$ is called the exceptional divisor.

For a subset $D\subset \bC_\hbar$, we use the following notation:
\begin{equation}
    \widetilde{D}:=\mathrm{Int}\lb\overline{\varpi^{-1}(D)}\cap \varpi^{-1}(0)\rb\cup \varpi^{-1}(D)
\end{equation}
where $\mathrm{Int}$ is the interior as a subset of $\varpi^{-1}(0)$.

\begin{definition}
An open sector $S$ is a subset of ${\wC_\hbar}$ of the form that there exist $\theta_1 <\theta_2\in\bR$ and $\epsilon \in \bR_{>0}$
\begin{equation}
 {S}=\lc(z,\theta)\in \wC_\hbar\relmid 0\leq |z|<\epsilon, \theta_1<\theta<\theta_2\rc.
\end{equation}
The definition of a closed sector is obtained by replacing $\theta_1<\theta<\theta_2$ with $\theta_1\leq \theta\leq \theta_2$.

We set 
\begin{equation}
    S^\circ:=S\bs \varpi^{-1}(0).
\end{equation}
In this notation, one can interpret the following equality appropriately:
\begin{equation}
\widetilde{S^\circ}=S.
\end{equation}

\end{definition}
Sometimes it is convenient to work with a more flexible framework.
\begin{definition}
We consider the topology of $\wC_\hbar$ generated by the open sectors. We denote the topological space by $\wC^s_\hbar$. An open subset of this space is called a sectoroid.
The forgetful map $\wC_\hbar\rightarrow \wC_\hbar^s$ is denoted by $\frakf$.
\end{definition}

\begin{lemma}
Let $S$ be a nonempty sectoroid. The intersection $\varpi^{-1}(0)\cap S$ is a nonempty open subset of $\varpi^{-1}(0)$.
\end{lemma}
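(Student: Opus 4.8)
The statement is essentially a topological bookkeeping exercise, so the plan is to unwind the definitions carefully. First I would recall that, by definition, $\wC^s_\hbar$ carries the topology \emph{generated} by the open sectors, so a sectoroid $S$ is an arbitrary union of finite intersections of open sectors. Hence, since $S$ is nonempty, it contains a nonempty set of the form $U=S_1\cap\cdots\cap S_n$ with each $S_i$ an open sector; writing $S_i=\lc 0\le|z|<\epsilon_i,\ \theta\in I_i\rc$ with $I_i$ an open arc in $\bR/\bZ$, we get $U=\lc 0\le|z|<\min_i\epsilon_i,\ \theta\in\bigcap_i I_i\rc$, and $\bigcap_i I_i$ is a nonempty open subset of $\bR/\bZ$, so it contains a nonempty open arc $(\theta_1,\theta_2)$. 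Thus $U$ — and a fortiori $S$ — contains the nonempty open sector $S_0=\lc 0\le|z|<\epsilon,\ \theta_1<\theta<\theta_2\rc$ with $\epsilon=\min_i\epsilon_i>0$.

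Next I would compute the intersection with the exceptional divisor directly from the description $\varpi^{-1}(0)=\lc(0,\theta)\relmid\theta\in\bR/\bZ\rc$ in the definition of $\wC_\hbar$: since $\epsilon>0$, one has $\varpi^{-1}(0)\cap S_0=\lc(0,\theta)\relmid\theta_1<\theta<\theta_2\rc$, which is the image of the open interval $(\theta_1,\theta_2)$ and hence a nonempty open subset of $\varpi^{-1}(0)\cong\bR/\bZ$. Because $S_0\subset S$, this already shows $\varpi^{-1}(0)\cap S$ is nonempty.

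For openness, I would argue that the composite $\varpi^{-1}(0)\hookrightarrow\wC_\hbar\xrightarrow{\ \frakf\ }\wC^s_\hbar$ is continuous when $\varpi^{-1}(0)$ is given its standard topology: it suffices to check preimages of the generating open sets, and by the computation just made the preimage of an open sector $S_0$ is the open arc $\lc(0,\theta)\relmid\theta_1<\theta<\theta_2\rc$. Consequently the preimage of any open set of $\wC^s_\hbar$ — in particular of the sectoroid $S$ — is open in $\varpi^{-1}(0)$, and that preimage is exactly $\varpi^{-1}(0)\cap S$. Combining with the previous paragraph gives the claim.

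I do not expect any genuine obstacle here; the only mild subtlety worth flagging is that open sectors are not closed under intersection (two arcs on the circle $\bR/\bZ$ can meet in two arcs), so one should phrase the argument in terms of the generated topology and finite intersections rather than treating the open sectors as a basis — but this costs nothing beyond the elementary remark used above.
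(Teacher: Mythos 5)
Your proof is correct, but it takes a genuinely different route from the paper's. The paper disposes of openness as trivial and proves nonemptiness via a scaling-invariance argument: it observes that $rS\subset S$ for every $0\le r<1$, first for open sectors (obviously), and then for any sectoroid, because the class of subsets stable under this scaling is closed under arbitrary unions and finite intersections, hence contains the whole generated topology. Taking $r=0$ then sends any point $(z,\theta)\in S$ to $(0,\theta)\in S$, so $\varpi^{-1}(0)\cap S\ne\emptyset$. Your argument instead unwinds the generated topology explicitly: you extract a nonempty finite intersection of open sectors inside $S$, reduce it to a single open sector, and read off the open arc it cuts on the exceptional divisor; you then prove openness by checking that $\varpi^{-1}(0)\hookrightarrow\wC_\hbar\xrightarrow{\frakf}\wC^s_\hbar$ is continuous on generators. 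Both are sound. The paper's approach is slicker and scales better (the scaling invariance is a structural fact about $\wC_\hbar$ worth noticing on its own), whereas yours is more elementary and self-contained, and has the side benefit of making the openness claim — which the paper dismisses in one clause — fully explicit; you also correctly flag the small subtlety that open sectors need not form a base (two arcs can meet in two arcs), which the paper does not discuss. In short: same lemma, two different mechanisms, both valid.
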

\begin{proof}
Only the nontrivial point is the nonemptiness. We will prove the following assertion: For any $0\leq r<1$ and any sectoroid $S$, we have $rS\subset S$. Since it is true for every sector, the assertion remains true after forming the topology.
\end{proof}

We have the projection
\begin{equation}
\frakp\colon {\wC_\hbar}\rightarrow \bR/\bZ; (z, \theta)\mapsto \theta.
\end{equation}
We call this map the polar projection. Note that the polar projection map factors through the forgetful map $\frakf\colon {\wC_\hbar}\rightarrow {\wC_\hbar^s}$. We denote the resulting map $\wC^s_\hbar\rightarrow \bR/\bZ$ again by $\frakp$ if no confusions occur.
The following is obvious.
\begin{corollary}
For a sectoroid $S$, we have
\begin{equation}
    \varpi^{-1}(0)\cap S=\frakp(S).
\end{equation}
\end{corollary}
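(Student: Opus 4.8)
The statement to prove is that for a sectoroid $S$, one has $\varpi^{-1}(0)\cap S = \frakp(S)$, where $\frakp\colon \wC^s_\hbar\to\bR/\bZ$ is the polar projection. The plan is to establish the two inclusions separately, using the preceding Lemma (nonemptiness/openness of $\varpi^{-1}(0)\cap S$) together with the scaling property $rS\subset S$ for $0\le r<1$ extracted in its proof.

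First I would prove $\varpi^{-1}(0)\cap S\subset \frakp(S)$, which is immediate from the definitions: if $(0,\theta)\in S$ then $\theta=\frakp(0,\theta)\in\frakp(S)$, since $\frakp$ is literally projection to the angular coordinate. For the reverse inclusion, take any $\theta\in\frakp(S)$, so there is a point $(z,\theta)\in S$ with $z\in\bC_\hbar$, necessarily with $\arg z = 2\pi\theta$ if $z\ne 0$. If $z=0$ we are done; otherwise I use the scaling property: for every $r$ with $0\le r<1$ the point $r\cdot(z,\theta)=(rz,\theta)$ lies in $S$, and letting $r\to 0$ the points $(rz,\theta)$ converge in $\wC_\hbar$ (hence in the coarser topology of $\wC^s_\hbar$) to $(0,\theta)$. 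Since $S$ is open in $\wC^s_\hbar$ and the scaled points accumulate at $(0,\theta)$ from inside $S$, together with the fact that open sectors — a basis for the topology — all contain their limiting boundary angles on the exceptional divisor once they contain a nearby interior point, one concludes $(0,\theta)\in S$, i.e. $\theta\in\varpi^{-1}(0)\cap S$.

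The only genuinely delicate point is the last step: passing from "$(rz,\theta)\in S$ for all $r\in[0,1)$" to "$(0,\theta)\in S$" in the sectoroid topology. The subtlety is that $\wC^s_\hbar$ carries the topology generated by open sectors, and one must check that the ray $\{(rz,\theta): 0<r<1\}$ being contained in a sectoroid $S$ forces $(0,\theta)\in S$. This follows because $S$, being open, contains an open sector $S'$ around each of its points; applying $rS'\subset S'$ to $S'$ shows that whenever an open sector contains a point with angle $\theta$ it also contains $(0,\theta)$ — indeed an open sector is determined by an angular interval $\theta_1<\theta<\theta_2$ and a radius bound, and it contains $(0,\theta)$ precisely when $\theta_1<\theta<\theta_2$, which holds as soon as it contains any $(w,\theta)$ with $w\ne 0$. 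Hence $(0,\theta)\in S'\subset S$, completing the argument. This is exactly the content flagged as "obvious" in the paper, and the proof is a short unwinding of the Lemma's scaling observation.
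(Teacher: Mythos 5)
Your proof is correct and takes essentially the same route the paper intends: the corollary follows from the scaling observation $rS\subset S$ established in the lemma's proof. One small remark: the limiting argument in your middle paragraph is redundant, since your final observation (an open sector contains $(0,\theta)$ as soon as it contains any $(w,\theta)$ with $w\neq 0$, hence so does any union of such sectors) already gives $\frakp(S)\subset\varpi^{-1}(0)\cap S$ directly — this is just the $r=0$ instance of $rS\subset S$.
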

\begin{proof}
This is a straight forward corollary of the above lemma.
\end{proof}

\begin{definition}
For a sectoroid $S$, we set
\begin{equation}
    S^\circ:=S\bs \varpi^{-1}(0).
\end{equation}
\end{definition}

We also prepare the terminology of cones. The scaling action on $\bC$ is the action of $\bR_{>0}$ on $\bC$ given by the multiplication.
\begin{definition}
Let $C$ be a subset of $\bC$. We say $C$ is a cone if it is stable under the scaling action.
\end{definition}

\begin{definition}
For a sectoroid $S$, we set 
\begin{equation}
    \Cone(S):=\lc h\in \bC_\hbar\relmid r\cdot \hbar\in \varpi(S^\circ) \text{ for some } r\in \bR_{>0}\rc,
\end{equation}
which is a cone in $\bC_\hbar$.
\end{definition}

For a cone $C$ in $\bC$, the hull $h(C)$ is defined by
\begin{equation}
    h(C):=\lc x\in \bC\relmid x=y+z \text{ for some } y, z\in C\rc.
\end{equation}
The hull $h(C)$ is again a cone and is now closed under addition.
\begin{definition}
We say a sectoroid $S$ is acute if $h(\Cone(S))$ is acute.
\end{definition}

Let $C$ be a cone of $\bC_\hbar$. We denote the dual space of $\bC_\hbar$ by $\bC^\vee_\hbar$. The polar dual $C^\vee$ is defined by
\begin{equation}
    C^\vee:=\lc a\in \bC_\hbar^\vee\relmid \Re(a\overline{b})\geq 0 \text{ for any $b\in C$}\rc
\end{equation}
where $\overline{b}$ is the complex conjugate of $b$. Note that $C^\vee$ and $h(C)^\vee$ are the same. If $S$ is not acute, the polar dual $\Cone(S)^\vee$ is the singlet $\{0\}$.

\section{Novikov rings}
\subsection{$\gamma$-complete product}
We first fix the following notation. Let $\gamma$ be a closed cone in $\bC$ defined by $\lc z\in \bC\relmid\theta_1 \leq \arg z\leq \theta_2 \rc$ for some $\theta_i\in \bR$. Let $\lc V_c\rc_{c\in \bC}$ be a set of vector spaces indexed by $\bC$. 

\begin{definition}\label{def: gammaprod}
Take $\hbar\in \mathrm{RelInt}(\gamma^\vee)$. For a subset $\frakc\subset \bC$, we say $\frakc$ is $\gamma$-finite if 
\begin{enumerate}
    \item there exists a $c\in \bC$ such that $c+\gamma$ contains $\frakc$, and 
    \item $\lc c\in \frakc\relmid  \Re(\overline{\hbar}c)\leq N\rc$ is finite for any $N\in \bR$.
\end{enumerate}
\end{definition}

It is easy to see that:
\begin{lemma}
The $\gamma$-finiteness does not depend on the choice of $\hbar\in\mathrm{RelInt}(\gamma^\vee)$.
\end{lemma}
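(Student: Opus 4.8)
The plan is to notice that the first condition defining $\gamma$-finiteness does not involve $\hbar$ at all, so everything reduces to showing that the second condition is independent of the choice of $\hbar\in\mathrm{RelInt}(\gamma^\vee)$. Fix two such elements $\hbar$ and $\hbar'$. The heart of the matter is a linear comparison of the corresponding functionals on the cone: I claim there is $\epsilon>0$ with
\begin{equation}
  \Re(\overline{\hbar'}\,z)\ \le\ \tfrac{1}{\epsilon}\,\Re(\overline{\hbar}\,z)\qquad\text{for all }z\in\gamma .
\end{equation}
To obtain this, observe that $\hbar'\in\gamma^\vee$ lies in the linear span of $\gamma^\vee$, which is also its affine span. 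Hence, by the standard characterization of the relative interior of a convex set (a point $x$ lies in $\mathrm{RelInt}(K)$ if and only if for every $y\in K$ one has $\mu x-(\mu-1)y\in K$ for some $\mu>1$), applied with $K=\gamma^\vee$, $x=\hbar$ and $y=2\hbar'$, there is $\mu>1$ with $\mu\hbar-2(\mu-1)\hbar'\in\gamma^\vee$; dividing by $\mu>0$ and using that $\gamma^\vee$ is a cone gives $\hbar-\epsilon\hbar'\in\gamma^\vee$ with $\epsilon:=2(\mu-1)/\mu>0$. Unwinding the definition of $\gamma^\vee$, this membership is precisely $\Re(\overline{\hbar}\,z)-\epsilon\,\Re(\overline{\hbar'}\,z)\ge 0$ on $\gamma$, i.e. the displayed inequality.

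Next I would transport this to the translate $c_0+\gamma$ that contains $\frakc$ (provided by the first condition). Writing an element of $\frakc$ as $w=c_0+z$ with $z\in\gamma$ and applying the inequality to $z$, one gets an affine comparison
\begin{equation}
  \Re(\overline{\hbar'}\,w)\ \le\ \tfrac{1}{\epsilon}\,\Re(\overline{\hbar}\,w)+D\qquad\text{for all }w\in\frakc,
\end{equation}
where $D:=\Re(\overline{\hbar'}\,c_0)-\tfrac{1}{\epsilon}\Re(\overline{\hbar}\,c_0)$ depends only on $c_0,\hbar,\hbar',\epsilon$. Consequently, for every $N\in\bR$,
\begin{equation}
  \lc c\in\frakc\relmid \Re(\overline{\hbar}\,c)\le N\rc\ \subseteq\ \lc c\in\frakc\relmid \Re(\overline{\hbar'}\,c)\le N/\epsilon+D\rc ,
\end{equation}
so finiteness of all sublevel sets measured by $\hbar'$ forces finiteness of all sublevel sets measured by $\hbar$. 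Running the same argument with the roles of $\hbar$ and $\hbar'$ exchanged gives the converse implication, which establishes the lemma.

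The only substantive input is the convex-analysis fact about relative interiors of cones, which is classical. The point that I expect to require a bit of care is the degenerate-shape case: when $\gamma$ is a half-plane (or a ray), $\gamma^\vee$ is lower dimensional, and $\mathrm{RelInt}(\gamma^\vee)$ must be understood inside the linear span of $\gamma^\vee$; it is exactly this convention that legitimizes the step "$\hbar'$ lies in $\mathrm{span}(\gamma^\vee)$" and keeps $\epsilon$ strictly positive. When $\gamma$ is so wide that $\gamma^\vee=\{0\}$ the statement is vacuous, so no separate treatment is needed there.
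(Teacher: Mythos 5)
Your proof is correct. The paper itself offers no proof (the lemma is prefaced only by ``It is easy to see that:''), so there is nothing to compare against; your argument supplies exactly the standard details. You rightly note that condition (1) of Definition~\ref{def: gammaprod} is $\hbar$-independent, reduce to condition (2), and produce the comparison $\Re(\overline{\hbar'}z)\le\tfrac{1}{\epsilon}\Re(\overline{\hbar}z)$ on $\gamma$ via the Rockafellar characterization of the relative interior applied to the convex cone $\gamma^\vee$; the passage from $\hbar-\epsilon\hbar'\in\gamma^\vee$ to the displayed inequality, the affine shift by $c_0$, and the symmetric finish are all sound. Two cosmetic remarks: the observation that $\hbar'$ lies in $\mathrm{span}(\gamma^\vee)$ is not actually needed, since the characterization is applied with $y\in\gamma^\vee$ itself rather than in its affine hull; and in the degenerate case $\gamma^\vee=\{0\}$ the statement is better described as trivially true (the only choice is $\hbar=0$) than as vacuous, but neither affects the argument.
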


We set
\begin{equation}
    {\prod_{c\in \gamma}}^\gamma V_c:=\lc \prod_{c\in \gamma}v_c\in \prod_{c\in \gamma}V_c \relmid \text{ the set $\lc c\in \bC\relmid v_c\neq 0\rc$} \text{ is $\gamma$-finite.} \rc
\end{equation}
If $\gamma$ is not acute, $\gamma^\vee=\{0\}$. Then we have ${\prod_{c\in \gamma}}^\gamma V_c:=\bigoplus_{c\in \gamma} V_c$

We would like to recall the notion of $\gamma$-topology introduced by Kashiwara--Schapira~\cite{KS} in our setting.

\begin{definition}[$\gamma$-topology]
A Euclidean open subset $U$ of $\bC$ is said to be $\gamma$-open if it satisfies 
\begin{equation}
    U+\gamma\subset U.
\end{equation}
The set of $\gamma$-open subsets forms a topology, which is called $\gamma$-topology.
\end{definition}
In this language, the first condition in Definition~\ref{def: gammaprod} can be rephrased as: The set $\frakc$ is compact in $\gamma$-topology.

\subsection{Novikov rings}
In this section, we introduce some versions of the Novikov ring.

Though the content in this section works over any field, we will work over the complex numbers $\bC$, which is our main interest. We consider the set of non-negative real numbers $\bR_{\geq 0}$ as a monoid by the usual addition. We denote the associated polynomial ring by $\bC[\bR_{\geq 0}]$ and the indeterminate by $T^c$ for $c\in \bR_{\geq 0}$. The Novikov ring $\Lambda_0$ is defined by
\begin{equation}
    \Lambda_0:=\lim_{\substack{\longleftarrow\\c\rightarrow \infty}}\bC[\bR_{\geq 0}]/T^c\bC[\bR_{\geq 0}].
\end{equation}
As a set, we can alternatively describe $\Lambda_0$ as the set of formal sums of the form
\begin{equation}
    \sum_{c\in \bR_{\geq 0}}a_cT^c
\end{equation}
with $a_c\in \bC$ and the set $\lc c\in \bR_{\geq 0}\relmid a_c\neq 0 \rc$ is without accumulation. The ring $\Lambda_0$ is a valuation ring. 

A slightly extended version is the complexified Novikov ring defined as follows. We set
\begin{equation}
    \bC_{\Re\geq 0}:=\lc c\in \bC\relmid \Re c\geq 0 \rc.
\end{equation}
This is also a monoid by the addition, and we can consider $\bC[\bC_{\Re\geq 0}]$. We denote the indeterminate by $T^c$ for $c\in \bC_{\Re \geq 0}$. 
Then the complexified Novikov ring $\Lambda_0^\bC$ is defined by
\begin{equation}
    \Lambda_0^\bC:=\lim_{\substack{\longleftarrow\\ \bR_{>0}\ni c\rightarrow \infty}}\bC[\bC_{\Re \geq 0}]/T^c\bC[\bC_{\Re \geq 0}].
\end{equation}
As a set, we can alternatively describe $\Lambda^\bC_0$ as a set of formal sums of the form
\begin{equation}
    \sum_{c\in \bC_{\Re \geq 0}}a_cT^c
\end{equation}
with $a_c\in \bC$ and the set $\lc c\in \bC_{\geq 0}\relmid a_c\neq 0 \rc$ is without accumulation and the map $\lc c\in \bC_{\geq 0}\relmid a_c\neq 0 \rc\rightarrow \bR_{\geq 0}$ taking real parts is a proper map.

More generally, we can define the Novikov ring associated to sectoroids as follows. Let $C$ be an open cone in $\bC$. The set
\begin{equation}
    \lc a+C^\vee\relmid a\in C^\vee\rc
\end{equation}
forms a poset by the inclusion relation. This gives the dual poset structure on the set
\begin{equation}
    \lc \bC[C^\vee]/T^c\bC[C^\vee]\relmid c\in C^\vee\rc.
\end{equation}
The limit over the latter poset is denoted by $\Lambda^C_0$. Concretely, the ring consists of formal sums of the form
\begin{equation}
    \sum_{c\in C^\vee}a_cT^c
\end{equation}
with $a_c\in \bC$ and the set $\lc c\in \bC_{\geq 0}\relmid a_c\neq 0 \rc$ is without accumulation and the map $\lc c\in \bC_{\geq 0}\relmid a_c\neq 0 \rc\rightarrow \bR_{\geq 0}$ defined by $a\mapsto \Re(a\overline{b})$ is proper for any $b\in \mathrm{RelInt}(h(C))$. Using the $\gamma$-complete product, we can write it as $\Lambda_0^C=\displaystyle{{\prod_{c\in \bC}}^{C^\vee}\bC}$.

\begin{example}
Take $C:=\bC_{\Re> 0}$. Then the dual cone is $C^\vee=\bR_{\geq 0}$. In this case,
\begin{equation}
    \Lambda_0^{\bC_{\Re> 0}}\cong \Lambda_0.
\end{equation}
\end{example}

\begin{example}
If $h(C)$ is not acute, $C^\vee=0$. Hence $\Lambda_0^C\cong \bC$.
\end{example}

\begin{example}
We would like to extend the defintion for the case when $C:=\bR_{\geq 0}$. The dual cone is $\bC_{\Re\geq 0}$. We set
\begin{equation}
    \Lambda_0^{\bR_{\geq 0}}:= \Lambda_0^\bC.
\end{equation}
\end{example}

\begin{definition}
For a sectoroid $S$, we set
\begin{equation}
    \Lambda_0^S:=\Lambda_0^{\Cone(S)}.
\end{equation}
This is an integral domain, the quotient field is denoted by $\Lambda^S$. For a sectoroid $S$, we will sometimes use the notation $S^\vee:=\Cone(S)^\vee$.
\end{definition}

\subsection{The sheaf of Novikov rings}
Let $\Open(\wC^s_\hbar)$ be the set of sectoroids. We consider the assignment
\begin{equation}
    \frakL_0\colon \Open(\wC^s_\hbar)\ni S \mapsto \Lambda^{S}_0 \in \mathrm{Ring}
\end{equation}
where $\mathrm{Ring}$ is the category of rings.
For an open inclusion $S\hookrightarrow S'$, there exists an associated inclusion $\Cone(S')^\vee\subset \Cone(S)^\vee$, which induces a ring homomorphism $\Lambda_0^{S'}\rightarrow \Lambda_0^{S}$. Hence the above assignment is a presheaf. It is easy to see it is actually a sheaf.

\begin{definition-lemma}
By the above maps, the assignment $\frakL_0$ forms a sheaf. We call this sheaf the Novikov sheaf.
\end{definition-lemma}

\section{Setup of structure sheaves}
In this section, we introduce several sheaves of functions coming from asymptotic analysis. The hierarchy is as follows:
\begin{equation}
    \cOae_{M\times S}\subset \cOc_{M\times S}\subset \cOse_{M\times S} \subset \cOexp_{M\times S}.
\end{equation}
We also have the completed version of the hierarchy
\begin{equation}
    \cOaeL_{M\times S}\subset \cOceL_{M\times S}\subset \cOseL_{M\times S}\subset \cOexpL_{M\times S}.
\end{equation}

\subsection{The largest}
For a complex manifold $X$, we denote the structure sheaf by $\cO_X$ i.e., the sheaf of holomorphic functions.

Let $M$ be a complex manifold. We have the following diagram:
\begin{equation}
    \xymatrix{
M\times (\wC_\hbar\bs \varpi^{-1}(0))\ar[r]^-\iota \ar[d]^{\id \times \varpi} & M\times \wC_\hbar \ar[d]^{\id\times \varpi} \\
M\times (\bC_\hbar\bs {0})\ar[r]_{}& M\times \bC_\hbar &.
}
\end{equation}
where the horizontal arrows are inclusions.

We set
\begin{equation}
    \cO_{M\times\widetilde{\bC}_\hbar}:=\iota_*(\id\times \varpi)^{-1}\cO_{M\times (\bC_\hbar\bs 0)}.
\end{equation}
Since it is defined by the push-forward, any section of $\cO_{M\times \wC_\hbar}$ can be regarded as a holomorphic function on an open subset $V$ of $M\times (\bC_\hbar\bs 0)$. We use this point of view freely without mentioning. In particular, for a sectoroid $S$ and an open subset $U$ in $M$, a section of $\cO_{M\times \wC_\hbar}$ over $S\times U$ is interpreted as a holomorphic function on $S^\circ \times U$.

We set
\begin{equation}
\begin{split}
    \cO_{M\times \wC_\hbar^s}&:=\frakf_*\cO_{M\times \wC_\hbar}.
\end{split}    
\end{equation}

\subsection{The sheaf $\cA$ and $\cOae$}
We would like to recall the sheaf $\cA$ in $\cite{Majima, SabbahIntrotoStokes}$. 

Let $C^{\infty}_{\wC_\hbar}$ be the sheaf of $C^\infty$-functions on $\wC_\hbar$.
On $\bC_\hbar$, we consider the real coordinate given by $re^{i\theta}$. Then $\overline{\hbar}\partial_{\overline{\hbar}}=\frac{1}{2}\lb r\partial_r+\sqrt{-1}\partial_\theta\rb$ can be lifted to act on $C^{\infty}_{\wC_\hbar}$. Then $\cA_{ \wC_\hbar}$ is locally defined by
\begin{equation}
    \cA_{\wC_\hbar}:=\ker(\overline{\hbar}\partial_{\overline{\hbar}}).
\end{equation}
This is a subsheaf of $\cO_{\wC_\hbar}$. We set $\cA_{\wC_\hbar^s}:=\frakf_*\cA_{\wC_\hbar}$. Then the sheaf admits the asymptotic expansion morphism
\begin{equation}
    \cA_{\wC_\hbar^s}\rightarrow \bC[[\hbar]].
\end{equation}
If a function becomes zero after the asymptotic expansion, we say the function is rapid decay.

We can similarly define $\cA_{M\times \wC_\hbar}$ by the kernel of $\overline{\hbar}\partial_{\overline{\hbar}}$ in $C^\infty_{M\times \wC_\hbar}$. We can also define $\cA_{M\times \wC_\hbar^s}$.

\begin{lemma}
Let $S$ be a sectoroid.
 For $c\in S^\vee$, we have $e^{-c/\hbar}$ is in $\cA_{M\times \wC^s_\hbar}(M\times S)$. For any $\psi\in \cA_{M\times \wC^s_\hbar}(U)$ for an open subset $U\subset M\times \wC^s_\hbar$ and $c\in S^\vee$, the product $e^{-c/\hbar}\psi$ is asymptotically expanded to $0$. 
\end{lemma}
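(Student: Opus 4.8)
The plan is to verify the claim by unwinding the definition of $\cA$ as the kernel of $\overline{\hbar}\partial_{\overline{\hbar}}$ and by using the polar-duality property of $S^\vee$. First I would observe that for $c\in\bC_\hbar^\vee$ the function $e^{-c/\hbar}$ is holomorphic on $M\times(\bC_\hbar\setminus 0)$, hence defines a section of $\cO_{M\times\wC_\hbar}$; it is independent of the $M$-variable, so it suffices to treat the one-variable situation. Since $e^{-c/\hbar}$ is holomorphic in $\hbar$, in particular $\partial_{\overline{\hbar}}e^{-c/\hbar}=0$, so $\overline{\hbar}\partial_{\overline{\hbar}}(e^{-c/\hbar})=0$ and therefore $e^{-c/\hbar}\in\cA_{M\times\wC_\hbar}$; pushing forward along $\frakf$ gives a section over $M\times S$ for any sectoroid $S$. (The same computation shows more generally that $\cO_{\wC_\hbar}\cap C^\infty_{\wC_\hbar}$-sections that extend holomorphically automatically lie in $\cA$.) The only real content here is that membership in $\cA$ over the open set $M\times S$ is a genuine statement about behavior up to the exceptional divisor: we need $e^{-c/\hbar}$ to extend to a $C^\infty$ function on $M\times S\subset M\times\wC_\hbar^s$, which is where the hypothesis $c\in S^\vee=\Cone(S)^\vee$ enters. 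By definition of the polar dual, for every $\hbar\in\Cone(S^\circ)$ we have $\Re(c\,\overline{\hbar})\geq 0$, equivalently $\Re(c/\hbar)\geq 0$ after rescaling, so $|e^{-c/\hbar}|=e^{-\Re(c/\hbar)}\leq 1$ on $S^\circ$; I would then show this bound (together with bounds on all $r\partial_r$, $\partial_\theta$ derivatives, which are again of the form $(\text{polynomial in }1/\hbar)\cdot e^{-c/\hbar}$ and hence bounded after multiplying by suitable powers of $r$) gives the required $C^\infty$-extension to the divisor with all derivatives rapidly decaying, i.e. $e^{-c/\hbar}$ is rapid decay and in particular lies in $\cA$.

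For the second assertion, let $\psi\in\cA_{M\times\wC_\hbar^s}(U)$ and $c\in S^\vee$. The product $e^{-c/\hbar}\psi$ again lies in $\cA$ since $\cA$ is a sheaf of rings (it is the kernel of a derivation, hence a subring of $C^\infty$). To see that its asymptotic expansion is $0$, I would argue locally: near a point of the exceptional divisor in $U\cap(M\times S)$, asymptotic expandability of $\psi$ means $\psi$ is bounded (indeed, for each $N$, $\psi$ minus a partial sum of its expansion is $O(|\hbar|^N)$, so in particular $\psi=O(1)$), while $e^{-c/\hbar}=e^{-\Re(c/\hbar)}e^{-i\Im(c/\hbar)}$ with $\Re(c/\hbar)\geq 0$; the key point is that on any closed subsector strictly inside $S$, $\Re(c/\hbar)$ grows at least like a positive constant times $1/|\hbar|$, which dominates every power of $|\hbar|$. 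Hence $e^{-c/\hbar}\psi=O(|\hbar|^N)$ for every $N$, so all Taylor coefficients of its asymptotic expansion vanish, i.e. it is asymptotically expanded to $0$.

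I expect the main obstacle to be the first point: carefully checking that $e^{-c/\hbar}$ (and its $\overline{\hbar}\partial_{\overline{\hbar}}$-related derivatives) genuinely extend to a $C^\infty$ section up to the exceptional divisor of $\wC_\hbar$, rather than merely being bounded on $S^\circ$. This requires a uniform estimate on all iterated $r\partial_r$ and $\partial_\theta$ derivatives of $e^{-c/\hbar}$, showing each is $O(r^{-k})$ for some $k$ and hence, after the real blow-up coordinates, that the function is Whitney-smooth up to the boundary with the constant-zero jet; the subtlety is purely that one must control behavior as $\arg\hbar$ approaches the edges $\theta_1,\theta_2$ of the sector, where $\Re(c/\hbar)$ may tend to $0$, so the rapid-decay estimate is only locally uniform in the interior — but this is exactly the content of working with sectoroids and taking $\frakf_*$, and the estimate on a sectoroid follows by exhausting it by closed subsectors. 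Once this is in place, the sheaf-of-rings property and the domination of polynomials by $e^{1/|\hbar|}$ make the remaining assertions immediate.
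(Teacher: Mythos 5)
Your proof is correct and rests on exactly the same observation the paper uses, namely that the polar-duality condition $c\in S^\vee$, $\hbar\in\Cone(S)$ forces $\Re(c/\hbar)\geq 0$ (strictly $>0$ for $c\neq 0$, which is tacitly assumed), so $e^{-c/\hbar}$ is rapidly decaying toward the exceptional divisor over $S$. The paper's proof is a one-line statement of this sign inequality; you supply the standard but unstated bookkeeping (holomorphy $\Rightarrow$ kernel of $\overline{\hbar}\partial_{\overline{\hbar}}$, $C^\infty$-extension via local uniform estimates on closed subsectors exhausting the sectoroid, boundedness of asymptotically expandable $\psi$, and domination of every power of $|\hbar|$ by $e^{-\delta/|\hbar|}$), all of which is correct.
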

\begin{proof}
For $0\neq c\in S^\vee$ and $b\in S$, we have $\Re(c\overline{b})>0$. Equivalently, $\Re(c/b)>0$. Hence $-\Re(c/\hbar) <0$ for $c\in S^\vee$ and $\hbar\in S$.
\end{proof}

We set $\cA_{M\times S}:=\cA_{M\times  \wC^s_{\hbar}}|_{M\times S}$. From here, we equip $S$ with the induced topology from $\wC_\hbar^s$.

\begin{definition}
A section $\psi\in \cA_{M\times S}(U)$ is said to be very rapid decay if $e^{c/\hbar}\psi\in \cA_{M\times S}(U)$ for any $c\in S^\vee$. The very rapid decay functions form an ideal subsheaf of $\cA_{M\times S}$. We denote it by $\cO^{vr}_{M\times S}$.
\end{definition}

We set
\begin{equation}
    \cO^{\ae}_{M\times S}:=\cA_{M\times S}/\cO_{M\times S}^{vr}.
\end{equation}

\subsection{Novikov completion $\cOaeL$}
We would like to introduce a decreasing filtration on $\cA_{M\times S}:=\cA_{M\times  \wC^s_{\hbar}}|_{M\times S}$. From here, we equip $S$ with the induced topology from $\wC_\hbar^s$.

Let $\frakD$ be the set of bounded open neighborhoods of $0$ in the dual space $\bC_\hbar^\vee$ of $\bC_\hbar$. We set
\begin{equation}
    F^{\leq D}\cA_{M\times  S}(U):=\lc f\in \cA_{M\times  S}(U)\relmid e^{r/\hbar}\psi \in \cA_{M\times  S}(U)\text{ for any $
    r\in D\cap S^\vee$}\rc
\end{equation}
for $D\in \frakD$ and an open subset $U\subset M\times S$. In this notation, $\cO^{vr}_{M\times S}$ is $F^{\leq \bC_\hbar^\vee}\cOae_{M\times S}$.
Hence the filtration induces a filtration on ${\cO}^{\ae}_{M\times S}$. We denote it by $F^{\leq D}\cOae_{M\times S}$.
We set
\begin{equation}
    \cOaeL_{M\times S}(U):=\lim_{\substack{\longleftarrow \\ D\in \frakD}}\cO^{\ae}_{M\times S}(U)/F^{\leq D}\cO^{\ae}_{M\times S}(U).
\end{equation}
This forms a sheaf on $M\times S$. By the definition, an element of this set can be represented as
\begin{equation}
    \sum_{i\in \bZ_{\geq 0}}e^{-r_i/\hbar}\psi_i
\end{equation}
where $r_i\in S^\vee$ and $\psi_i\in \cA_{M\times S}$ and the set $\lc r_i\relmid \psi_i\neq 0\rc$ is finite inside a compact region.

We also sometimes use the notation
\begin{equation}
    \Gr^{\leq D}\cOaeL_{M\times S}(U):= \Gr^{\leq D}\cOae_{M\times S}(U):= \cOae_{M\times S}(U)/F^{\leq D} \cOae_{M\times S}(U)
\end{equation}
\begin{remark}
From this expression, we can consider $\cOaeL_{M\times S}$ as a version of the ring of transseries~\cite{Ecalle, Costin}. However, they are certainly different, in the sense that we do not have a unique series expression.
\end{remark}

The subset of $\cOaeL_{M\times S}$ spanned by $\lc e^{-c/\hbar}\relmid c\in S^\vee\rc$ forms a ring and isomorphic to $\Lambda_0^S$. In particular, 
\begin{lemma}
The sheaf $\cOaeL_{M\times S}$ is a module over $\Lambda_0^S$.
\end{lemma}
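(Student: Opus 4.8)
The statement is essentially a corollary of the preceding observation that $\{e^{-c/\hbar}\mid c\in S^\vee\}$ spans a subring of the commutative ring $\cOaeL_{M\times S}$ isomorphic to $\Lambda_0^S$: a commutative ring is a module over each of its subrings, and the resulting inclusion $\Lambda_0^S\hookrightarrow\cOaeL_{M\times S}$ is manifestly compatible with restriction of opens, so that $\cOaeL_{M\times S}$ becomes a sheaf of $\Lambda_0^S$-modules in which $T^c$ acts by multiplication by $e^{-c/\hbar}$. For completeness I would spell out the two points on which this rests. First, for each $c\in S^\vee$ multiplication by $e^{-c/\hbar}$ is a well-defined endomorphism of the sheaf $\cOaeL_{M\times S}$. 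Indeed, by the lemma on $e^{-c/\hbar}\in\cA$ it is an $\cO_{M\times S}$-linear endomorphism of $\cA_{M\times S}$; since $\cO^{vr}_{M\times S}$ is an ideal subsheaf of $\cA_{M\times S}$ it descends to $\cOae_{M\times S}$; and it preserves each step of the filtration, for if $f\in F^{\leq D}\cA_{M\times S}(U)$ then $e^{r/\hbar}(e^{-c/\hbar}f)=e^{-c/\hbar}(e^{r/\hbar}f)\in\cA_{M\times S}(U)$ for $r\in D\cap S^\vee$, so $e^{-c/\hbar}f\in F^{\leq D}\cA_{M\times S}(U)$. Hence multiplication by $e^{-c/\hbar}$ induces compatible maps on the quotients $\cOae_{M\times S}/F^{\leq D}\cOae_{M\times S}$ and passes to their inverse limit $\cOaeL_{M\times S}$.

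Second, and this is the only substantial point, one must check that a general element $\lambda=\sum_c a_cT^c\in\Lambda_0^S$ acts, i.e.\ that $\sum_c a_c e^{-c/\hbar}f$ converges in $\cOaeL_{M\times S}$ for every local section $f$. It suffices to work on a basis of opens $U=M'\times S'$ with $S'$ a sector whose closure is compactly contained in $S$ in the angular direction, and to show that for each $D\in\frakD$ the image of $e^{-c/\hbar}$ in $\cOae_{M\times S}(U)/F^{\leq D}\cOae_{M\times S}(U)$ vanishes for all but finitely many $c$ in the support $\{c\mid a_c\neq 0\}$; since $F^{\leq D}\cOae_{M\times S}(U)$ is an ideal, the same holds with $e^{-c/\hbar}$ replaced by $e^{-c/\hbar}f$, the partial sums then stabilize modulo every $F^{\leq D}$, and a limit exists in $\cOaeL_{M\times S}(U)$. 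Concretely one needs $e^{(r-c)/\hbar}\in\cA_{M\times S}(U)$ for all $r\in D\cap S^\vee$. The definition of the polar dual gives $\Re(c/\hbar)\geq 0$ for $c\in S^\vee$ and $\hbar\in S^\circ$; since $\overline{S'}$ is compactly contained in $S$ in the angular direction, one upgrades this to a uniform bound $\Re(c/\hbar)\geq\delta|c|/|\hbar|$ with $\delta>0$ independent of $c\in S^\vee$ and $\hbar\in(S')^\circ$. As $D\cap S^\vee$ is bounded, for $r\in D\cap S^\vee$ we get $\Re((c-r)/\hbar)=\Re(c/\hbar)-\Re(r/\hbar)\geq(\delta|c|-C_D)/|\hbar|$ for a constant $C_D$ depending only on $D$, which is positive once $|c|>C_D/\delta$; then $e^{(r-c)/\hbar}$ is rapidly decaying on $(S')^\circ$ and hence a section of $\cA_{M\times S}(U)$. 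Discreteness of the support of $\lambda$, which is part of its $\gamma$-finiteness with $\gamma=S^\vee$, leaves only finitely many $c$ below the threshold $C_D/\delta$, and the convergence follows.

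With the action defined, the module axioms are formal: bilinearity is clear, $T^0$ acts as the identity, and associativity $(\lambda\mu)f=\lambda(\mu f)$ reduces to the identity $e^{-c/\hbar}e^{-c'/\hbar}=e^{-(c+c')/\hbar}$ together with the fact that the product on $\Lambda_0^S$ is exactly the corresponding convolution, which is the content of the ring isomorphism already invoked. Since every step commutes with restriction, $\cOaeL_{M\times S}$ is a sheaf of $\Lambda_0^S$-modules. The main difficulty in this argument is the convergence estimate of the second paragraph, i.e.\ relating the bounded neighborhoods $D$ and the filtration $F^{\leq D}$ to the polar-duality geometry of $S^\vee$.
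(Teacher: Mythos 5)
Your proof is correct, and it follows the same route as the paper, which deduces the lemma immediately from the observation that $\{e^{-c/\hbar}\mid c\in S^\vee\}$ spans a subring isomorphic to $\Lambda_0^S$; you simply spell out the convergence verification that the paper leaves implicit in that observation.
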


One can organize this module structure over $\wC_\hbar^s$ if one wants: Consider the category of $\cOaeL_{M\times S}$-modules $\Mod(\cOaeL_{M\times S})$. This forms a category of sheaves over $\wC_s$ we denote it by $\Mod(\cOaeL_{M\times \wC^s_\hbar})$.
\begin{lemma}
The sheaf $\Mod(\cOaeL_{M\times \wC^s_\hbar})$ is a module over $\frakL_0$.
\end{lemma}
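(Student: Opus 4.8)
The statement to prove is that $\Mod(\cOaeL_{M\times \wC^s_\hbar})$, as a category of sheaves over $\wC^s_\hbar$, is a module over $\frakL_0$.

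The plan is to unwind what each side of this assertion means at the level of a single sectoroid and then to check that the structure maps are compatible with restriction. By the preceding construction, objects of $\Mod(\cOaeL_{M\times \wC^s_\hbar})$ are, by definition, families $\{\cF_S\}_S$ indexed by sectoroids $S$ with $\cF_S \in \Mod(\cOaeL_{M\times S})$, glued along the restriction maps of $\cOaeL_{M\times -}$; and $\frakL_0$ is the sheaf $S \mapsto \Lambda_0^S$. Saying ``$\Mod(\cOaeL_{M\times \wC^s_\hbar})$ is a module over $\frakL_0$'' means: for each sectoroid $S$ the category $\Mod(\cOaeL_{M\times S})$ is a module category over the ring $\Lambda_0^S = \frakL_0(S)$, and these module-category structures are compatible with the restriction functors attached to open inclusions $S \hookrightarrow S'$ and with the restriction ring homomorphisms $\Lambda_0^{S'} \to \Lambda_0^S$.

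First I would recall from the previous lemma that $\cOaeL_{M\times S}$ is a $\Lambda_0^S$-algebra: the sub(pre)sheaf spanned by $\{e^{-c/\hbar} \mid c \in S^\vee\}$ is a ring isomorphic to $\Lambda_0^S$, and it acts on $\cOaeL_{M\times S}$ by multiplication, compatibly with the sheaf structure. Consequently the category $\Mod(\cOaeL_{M\times S})$ of sheaves of $\cOaeL_{M\times S}$-modules is $\Lambda_0^S$-linear, and in fact a module category over $\Mod(\Lambda_0^S)$ via $N \boxtimes \cF \mapsto (N \otimes_{\Lambda_0^S} \cOaeL_{M\times S}) \otimes_{\cOaeL_{M\times S}} \cF$ — or, more simply, the action of the ring $\Lambda_0^S$ is the one induced by the algebra structure on $\cOaeL_{M\times S}$. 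Second I would check the compatibility with restriction: for an open inclusion $j\colon S \hookrightarrow S'$ of sectoroids, there is the inclusion $\Cone(S')^\vee \subset \Cone(S)^\vee$ hence $S'^\vee \subset S^\vee$, giving the ring homomorphism $\Lambda_0^{S'} \to \Lambda_0^S$ (this is precisely how $\frakL_0$ was made a presheaf), and there is the corresponding map $\cOaeL_{M\times S'} \to j_*\cOaeL_{M\times S}$ — or rather, on the level of the sheaf $\cOaeL_{M\times \wC^s_\hbar}$, the restriction map — which sends each generator $e^{-c/\hbar}$ with $c \in S'^\vee$ to the same-named generator, now viewed with $c \in S^\vee$. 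Thus the algebra structure maps fit into a commuting square with the $\Lambda_0$-structure maps, and the restriction functor $\Mod(\cOaeL_{M\times S'}) \to \Mod(\cOaeL_{M\times S})$ intertwines the $\Lambda_0^{S'}$-action on the source with the $\Lambda_0^S$-action on the target along $\Lambda_0^{S'} \to \Lambda_0^S$. That is exactly the datum of a module over the sheaf of rings $\frakL_0$.

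The bulk of this is bookkeeping; the only genuinely substantive point — and the one I expect to be the main obstacle, or at least the one requiring care — is verifying that the identification of $\{e^{-c/\hbar}\}_{c\in S^\vee}$-span with $\Lambda_0^S$ is itself compatible with restriction of sectoroids, i.e.\ that the two ways of producing the map $\Lambda_0^{S'} \to \Lambda_0^S$ (one coming from the poset-theoretic definition of $\frakL_0$ via $\bC[C^\vee]/T^c\bC[C^\vee]$, the other coming from restricting the subring of $\cOaeL$) agree on the nose, not merely up to isomorphism; this hinges on the formal-sum description $\sum_i e^{-r_i/\hbar}\psi_i$ of elements of $\cOaeL_{M\times S}$ together with Lemma (on $e^{-c/\hbar} \in \cA_{M\times \wC^s_\hbar}(M\times S)$) ensuring the generators are the ``same'' functions before and after restriction. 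Granting that, and granting that $\cOaeL_{M\times -}$ is a sheaf of $\frakL_0$-algebras in the evident sense, the module statement follows formally: a sheaf of modules over a sheaf of $\frakL_0$-algebras is in particular a sheaf of $\frakL_0$-modules, and taking the associated category of sheaves over $\wC^s_\hbar$ preserves this.
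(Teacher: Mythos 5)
Your proof is correct; the paper in fact gives no argument for this lemma (it is stated without proof, as is the preceding lemma that $\cOaeL_{M\times S}$ is a $\Lambda_0^S$-module), so your unwinding — that $\cOaeL_{M\times S}$ is a $\Lambda_0^S$-algebra via the subring generated by $\{e^{-c/\hbar}\}$, that this algebra structure is compatible with the restriction maps along sectoroid inclusions, and hence that the associated module categories assemble into a module over $\frakL_0$ — is precisely the intended content. Your flag of the one nontrivial compatibility (that the map $\Lambda_0^{S'}\to\Lambda_0^S$ from the poset construction agrees on the nose with the one induced by restricting the subring of $\cOaeL$) is the right thing to single out and is handled correctly.
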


\subsection{Exponential version}
\begin{definition}
For an open subset $U\subset M\times S$, we say a section $f\in \cO_{M\times S}(U)$ is of exponential growth if there exists $r\in S^\vee$ such that $e^{-r/\hbar}f\psi\in \cA_{M\times S}(U)$. We denote the sheaf spanned by such sections by $\cA^{\exp}_{M\times S}$. 
\end{definition}

We set
\begin{equation}
    \cOexp_{M\times S}:=\cA^{\exp}_{M\times S}/ \cO^{vr}_{M\times S}.
\end{equation}

Of course, $\cOexp_{M\times \wC^s_\hbar}$ contains $\cO^{\ae}_{M\times \wC^s_\hbar}$ as a subsheaf.

We can similarly define a filtration:
Let $\frakD$ be the set of bounded open neighborhoods of $0$ in the dual space $\bC_\hbar^\vee$ of $\bC_\hbar$. We set
\begin{equation}
    F^{\leq D}\cA^{\exp}_{M\times  S}(U):=\lc \psi\in \cA^{\exp}_{M\times  S}(U)\relmid e^{r/\hbar}\psi \in \cA_{M\times  S}(U)\text{ for any $
    r\in D\cap S^\vee$}\rc
\end{equation}
for $D\in \frakD$ and an open subset $U\subset M\times S$. Again, this induces a filtration on $\cO^{\exp}_{M\times S}$.
We set
\begin{equation}
    \cO^{\Lambda}_{M\times S}(U):=\lim_{\substack{\longleftarrow \\ D\in \frakD}}\cOexp_{M\times S}(U)/F^{\leq D}\cOexp_{M\times S}(U).
\end{equation}
This forms a sheaf on $M\times S$. Similar to the previous case, this sheaf is a module over the Novikov field $\Lambda^S$.
\begin{lemma}
The sheaf $\cO^{\Lambda}_{M\times S}$ is a module over $\Lambda^S$.
\end{lemma}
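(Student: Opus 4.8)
The plan is to reduce this to the module structures already established. The key observation is that $\cOexpL_{M\times S}$ is, by its very construction, the Novikov completion (with respect to the filtration $F^{\leq D}$) of $\cOexp_{M\times S}$, exactly as $\cOaeL_{M\times S}$ is the completion of $\cOae_{M\times S}$. So the first step is to record that the subset of $\cOexpL_{M\times S}$ spanned by $\lc e^{-c/\hbar}\relmid c\in S^\vee\rc$ forms a ring isomorphic to $\Lambda_0^S$, by the same argument as in the $\cOaeL$ case: the $e^{-c/\hbar}$ lie in $\cA_{M\times S}\subset \cA^{\exp}_{M\times S}$, their products behave additively in the exponent, and the completion along $F^{\leq D}$ is exactly the $\gamma$-complete product with $\gamma = S^\vee$, giving $\Lambda_0^S = {\prod_{c}}^{S^\vee}\bC$. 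This already makes $\cOexpL_{M\times S}$ a $\Lambda_0^S$-module.

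The second step is to extend scalars to the quotient field $\Lambda^S$. Here is where the ``exp'' (rather than ``ae'') version is essential: I would show that every element $e^{-c/\hbar}$ with $c\in S^\vee$ becomes invertible in $\cOexpL_{M\times S}$. Indeed $e^{c/\hbar}$ is a section of $\cO_{M\times S}$, and it is of exponential growth because $e^{-r/\hbar}e^{c/\hbar}\in \cA_{M\times S}$ once $r\in S^\vee$ dominates $c$ (concretely, take $r$ in the ray through $c$ with larger modulus, so $\Re((r-c)/\hbar)\geq 0$ on $S$); thus $e^{c/\hbar}\in \cA^{\exp}_{M\times S}$, and its class in $\cOexpL_{M\times S}$ is a two-sided inverse of the class of $e^{-c/\hbar}$. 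Consequently the image of $\Lambda_0^S\setminus\{0\}$ acts invertibly: a nonzero element of $\Lambda_0^S$ has a ``leading term'' $a_{c_0}T^{c_0}$ (minimal for the proper function $c\mapsto \Re(c\bar b)$, $b\in\RelInt(h(\Cone(S)))$), and factoring it out leaves $a_{c_0}e^{-c_0/\hbar}(1+n)$ with $n$ in the maximal ideal, which is invertible by the geometric series $\sum (-n)^k$ — this sum converges in the $F^{\leq D}$-topology precisely because $\cOexpL$ is $F$-complete. Hence the $\Lambda_0^S$-module structure on $\cOexpL_{M\times S}$ factors through $\Lambda^S = \mathrm{Frac}(\Lambda_0^S)$.

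The main (though still routine) obstacle is checking that the geometric-series inverse really lands in $\cOexpL_{M\times S}$, i.e.\ that multiplying an exponential-growth function by the completion-convergent sum $\sum(-n)^k$ stays of bounded exponential type modulo very rapid decay. This is a matter of noting that a single common $r\in S^\vee$ dominates all the finitely-many exponents appearing below any fixed $D$, so each truncation $\cOexp_{M\times S}/F^{\leq D}\cOexp_{M\times S}$ is already a module over $\bC[C^\vee]/T^{c}$, and the inverse computation is the standard one in each such truncation; passing to the limit gives the claim. With invertibility of the indeterminates in hand, the $\Lambda^S$-module structure is automatic, which is exactly the assertion of the lemma. $\square$
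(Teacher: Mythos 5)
Your first two steps are sound and capture the genuine ``exp vs.\ ae'' phenomenon: the span of $\lc e^{-c/\hbar}\relmid c\in S^\vee\rc$ inside $\cOexpL_{M\times S}$ is a copy of $\Lambda_0^S$, and since $e^{c/\hbar}\in\cA^{\exp}_{M\times S}$ (take $r=c$ in the definition of exponential growth), each $e^{-c/\hbar}$ is a unit. The paper itself gives no argument beyond ``similar to the previous case,'' so this observation is exactly where the content lies.

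The gap is in the geometric-series step, where you claim that \emph{every} nonzero $a\in\Lambda_0^S$ acts invertibly --- as a module structure over the full fraction field $\Lambda^S=\mathrm{Frac}(\Lambda_0^S)$ requires. You factor out a ``leading term'' $a_{c_0}T^{c_0}$ and write the remainder as $1+n$ with $n$ ``in the maximal ideal,'' but this presupposes a \emph{unique} minimal exponent in the support of $a$. When $S^\vee$ is a two-dimensional cone this is false: for $S$ the $90^\circ$ sector about $\bR_{>0}$, both $1+i$ and $1-i$ lie on the boundary of $S^\vee$ at the same real level, and $a=T^{1+i}+T^{1-i}\in\Lambda_0^S$ maps to $e^{-(1+i)/\hbar}+e^{-(1-i)/\hbar}=2e^{-1/\hbar}\cos(1/\hbar)$, which vanishes along the sequence $\hbar_k=2/((2k+1)\pi)\to 0$ inside $S$ and so is not a unit of $\cOexpL_{S}$. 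Factoring out $T^{1+i}$ leaves $1+T^{-2i}$ with $-2i\notin S^\vee$, and the proposed ``geometric series'' $\sum(-1)^k e^{2ik/\hbar}$ has all its terms at level $0$, hence does not converge in the $F^{\leq D}$-topology. Your argument therefore yields a module structure only over the localization $\Lambda_0^S[T^{-c}\mid c\in S^\vee]$; to reach $\Lambda^S$ one needs either an extra hypothesis on $S$ (e.g.\ $S^\vee$ a ray, where leading terms are unique and the geometric series does work, as for the real Novikov ring $\Lambda_0$) or a different reading of what $\Lambda^S$ is meant to be.
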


\subsection{Subexponential growth functions}
In our situation, we may not have transseries expressions. In such a case, $\cOae$ is not so a natural place to consider. Instead, we consider the space of subexponential functions.

Let $S$ be a sectoroid. We define the space of subexponential functions by
\begin{equation}
    \cA_{M\times S}^{\mathrm{se}}(U):=\lc \psi\in \cA_{M\times S}^{\exp}(U)\relmid e^{-c/\hbar}\psi\in \cA_{M\times S}(U) \text{ for any } c\in S^\vee\bs \{0\}\rc,
\end{equation}
which contains $\cA_{M\times S}(U)$ as a subspace.

\begin{lemma}
The module $\cA_{M\times S}^{\mathrm{se}}$ forms a subring of $\cA_{M\times S}^{\exp}$.
\end{lemma}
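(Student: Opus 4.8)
The plan is to reduce the statement to two structural facts together with one small trick. The first fact is that $\cA_{M\times S}$ is a sheaf of rings: it is the kernel of the derivation $\overline{\hbar}\partial_{\overline{\hbar}}$, so its sections are closed under both sums and products. The second fact is that $S^\vee=\Cone(S)^\vee$, being a polar dual, is a closed convex cone, hence stable under addition and under scaling by positive reals; in particular $c/2\in S^\vee\bs\{0\}$ whenever $c\in S^\vee\bs\{0\}$. I would also first record that $\cA^{\exp}_{M\times S}$ is a ring, by the same mechanism: if $e^{-r/\hbar}\psi_1$ and $e^{-s/\hbar}\psi_2$ lie in $\cA_{M\times S}$ with $r,s\in S^\vee$, then $e^{-(r+s)/\hbar}\psi_1\psi_2\in\cA_{M\times S}$ with $r+s\in S^\vee$, and the analogous statement for sums follows after passing to a common exponent.

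Granting this, the argument proceeds as follows. Since $\cA^{\se}_{M\times S}\subset\cA^{\exp}_{M\times S}$ holds by definition, it suffices to verify that $\cA^{\se}_{M\times S}$ contains $1$ and is closed under $+$ and $\times$. For the unit, $1\in\cA_{M\times S}$, and $e^{-c/\hbar}\cdot 1=e^{-c/\hbar}\in\cA_{M\times S}(M\times S)$ for every $c\in S^\vee$ by the Lemma above, so $1\in\cA^{\se}_{M\times S}$. Closure under addition is immediate: for $\psi_1,\psi_2\in\cA^{\se}_{M\times S}(U)$ and $c\in S^\vee\bs\{0\}$, the sum $e^{-c/\hbar}\psi_1+e^{-c/\hbar}\psi_2$ lies in $\cA_{M\times S}(U)$ because $\cA_{M\times S}$ is closed under sums, while $\psi_1+\psi_2\in\cA^{\exp}_{M\times S}(U)$ since the latter is a ring.

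The only step requiring an idea — and hence the main, albeit modest, obstacle — is closure under multiplication: one cannot simply write $e^{-c/\hbar}(\psi_1\psi_2)=(e^{-c/\hbar}\psi_1)\psi_2$ and conclude, because $\psi_2$ may carry genuine exponential growth, so the right-hand side need not lie in $\cA_{M\times S}$. The fix is to distribute the exponent symmetrically: given $\psi_1,\psi_2\in\cA^{\se}_{M\times S}(U)$ and $c\in S^\vee\bs\{0\}$, I would use $c/2\in S^\vee\bs\{0\}$ to obtain $e^{-(c/2)/\hbar}\psi_1,\ e^{-(c/2)/\hbar}\psi_2\in\cA_{M\times S}(U)$, whence
\begin{equation}
    e^{-c/\hbar}\psi_1\psi_2=\bigl(e^{-(c/2)/\hbar}\psi_1\bigr)\bigl(e^{-(c/2)/\hbar}\psi_2\bigr)\in\cA_{M\times S}(U)
\end{equation}
by the ring property of $\cA_{M\times S}$. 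Since $\psi_1\psi_2\in\cA^{\exp}_{M\times S}(U)$ already, and $c\in S^\vee\bs\{0\}$ was arbitrary, this gives $\psi_1\psi_2\in\cA^{\se}_{M\times S}(U)$, completing the verification.
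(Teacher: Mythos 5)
Your proof is correct and uses the same key idea as the paper's proof, namely splitting the exponent $c$ as $c/2+c/2$ to handle closure under multiplication. You are more careful than the paper in spelling out the auxiliary facts (that $\cA_{M\times S}$ is itself a ring, that $S^\vee$ is closed under halving, that $1\in\cA^{\se}$, and that $\cA^{\exp}$ is a ring), but the essential content is identical.
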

\begin{proof}
For any $\psi, \phi\in  \cA_{M\times S}^{\mathrm{se}}(U)$, we have $e^{-c/\hbar}(\psi+\phi)=e^{-c/\hbar}\psi+e^{-c/\hbar}\phi\in \cA_{M\times S}(U)$ for any $c\in S^\vee\bs 0$, since $\psi, \phi\in \cA^\se_{M\times S}(U)$. We also have $e^{-c/\hbar}\psi\phi=(e^{-c/2}\psi)(e^{-c/2}\phi)\in \cA^{\se}_{M\times S}(U)$ by $\psi, \phi\in \cA^\se_{M\times S}(U)$. This completes the proof.
\end{proof}

We also set
\begin{equation}
    \cOse_{M\times S}:=\cA_{M\times S}^{\se}/ \cO^{vr}_{M\times S}.
\end{equation}

\begin{lemma}
The space $\cO_{M\times S}^{vr}$ is an ideal of $\cA_{M\times S}^{\se}$. Hence $\cOse_{M\times S}$ is a ring.
\end{lemma}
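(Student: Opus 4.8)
The plan is to verify directly the two defining properties of an ideal: that $\cO^{vr}_{M\times S}$ is an additive subsheaf of $\cA^{\se}_{M\times S}$, and that it is absorbed under multiplication by arbitrary sections of $\cA^{\se}_{M\times S}$. The first is essentially given: $\cO^{vr}_{M\times S}$ is already an ideal subsheaf of $\cA_{M\times S}$, and $\cA_{M\times S}\subset \cA^{\se}_{M\times S}$ (the inclusion noted right after the definition of $\cA^{\se}$), so $\cO^{vr}_{M\times S}$ is in particular an additive subsheaf of $\cA^{\se}_{M\times S}$; since all sheaves in play are commutative, one-sided and two-sided ideals coincide.

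The substantive point is the absorption property. Let $U\subset M\times S$ be open, let $\psi\in\cO^{vr}_{M\times S}(U)$ and $\phi\in\cA^{\se}_{M\times S}(U)$; I want $\psi\phi\in\cO^{vr}_{M\times S}(U)$, i.e. $e^{c/\hbar}\psi\phi\in\cA_{M\times S}(U)$ for every $c\in S^\vee$. Since $\phi$ is of exponential growth, there is $r\in S^\vee$ with $e^{-r/\hbar}\phi\in\cA_{M\times S}(U)$. Fix an arbitrary $c\in S^\vee$. Because $S^\vee=\Cone(S)^\vee$ is a polar dual cone, hence a closed convex cone, it is stable under addition, so $c+r\in S^\vee$, and by very rapid decay of $\psi$ we get $e^{(c+r)/\hbar}\psi\in\cA_{M\times S}(U)$. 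Then
\[
e^{c/\hbar}\,\psi\phi \;=\; \bigl(e^{(c+r)/\hbar}\psi\bigr)\cdot\bigl(e^{-r/\hbar}\phi\bigr)
\]
is a product of two sections of $\cA_{M\times S}(U)$, and $\cA_{M\times S}$ is a sheaf of rings (being the kernel of the derivation $\overline{\hbar}\partial_{\overline{\hbar}}$ acting on $C^\infty$-functions), so the product lies in $\cA_{M\times S}(U)$. As $c\in S^\vee$ was arbitrary, $\psi\phi\in\cO^{vr}_{M\times S}(U)$, which proves that $\cO^{vr}_{M\times S}$ is an ideal of $\cA^{\se}_{M\times S}$.

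Finally, $\cOse_{M\times S}=\cA^{\se}_{M\times S}/\cO^{vr}_{M\times S}$ is the quotient of the ring $\cA^{\se}_{M\times S}$ (a ring by the preceding lemma) by the ideal just constructed, hence a ring, and the quotient is formed sheaf-theoretically so the conclusion holds as a sheaf of rings. I do not anticipate a genuine obstacle here; the only mild subtlety is the uniform factorization above, which lets the exponential growth of $\phi$, bounded by a single $r\in S^\vee$, be absorbed into the arbitrarily strong very rapid decay of $\psi$, together with the convexity of $S^\vee$ needed to keep the exponent $c+r$ inside $S^\vee$.
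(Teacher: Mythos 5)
Your proof is correct, and it fills in exactly the argument the paper leaves implicit: the paper's own proof of this lemma is simply the single word ``Obvious.'' The key step you supply — absorbing the single exponential bound $r$ from the exponential-growth condition on $\phi$ into the arbitrarily strong decay of $\psi$, using closure of the polar dual cone $S^\vee$ under addition — is precisely the intended reasoning, so there is nothing further to compare.
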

\begin{proof}
Obvious.
\end{proof}

Similarly, we can define $\cOseL_{M\times S}$ as a completion.

\subsection{Subexponential support}
For $\psi\in \cOexpL_{S}(S)$, we set
\begin{equation}
    \supp_\se(\psi):=\Int\lc c\in \bC\relmid  \psi e^{c/\hbar}\in \cOseL_S(S)\rc.
\end{equation}
\begin{lemma}
The subset $\supp_\se(\psi)$ is a $\gamma$-open subset for $\gamma=-S^\vee$.
\end{lemma}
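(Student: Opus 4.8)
The plan is to reduce the claim to the two defining conditions of $\gamma$-openness for $\gamma = -S^\vee$: that $\supp_\se(\psi)$ is Euclidean-open, and that it is stable under addition by $-S^\vee$. Openness is immediate since $\supp_\se(\psi)$ is defined as an interior. So the real content is the inclusion
\begin{equation}
    \supp_\se(\psi) + (-S^\vee) \subset \supp_\se(\psi).
\end{equation}
Because adding a $\gamma$-open set to $\gamma$ again lands in the closure and the interior is monotone, it suffices to show that the (not-yet-interior) set $\lc c\in \bC \relmid \psi e^{c/\hbar}\in \cOseL_S(S)\rc$ is stable under adding elements of $-S^\vee$, and then pass to interiors.

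First I would unwind what membership in $\cOseL_S(S)$ means via the completion: $\psi e^{c/\hbar}$ lies in $\cOseL_S(S)$ iff, modulo each filtration step $F^{\leq D}$, it is represented by a (compactly-$\gamma$-finite) sum of terms $e^{-r_i/\hbar}\phi_i$ with $r_i\in S^\vee$ and $\phi_i$ subexponential in the sense of $\cA^{\se}_{M\times S}$. The key elementary observation, already in the spirit of the lemma preceding Definition of $\cO^{vr}$, is that for $d\in S^\vee$ multiplication by $e^{-d/\hbar}$ sends $\cA_{M\times S}$ into itself and, more to the point, sends $\cA^{\se}_{M\times S}$ into itself: if $e^{-c/\hbar}\psi'\in \cA_{M\times S}$ for all $c\in S^\vee\setminus\{0\}$, then $e^{-c/\hbar}(e^{-d/\hbar}\psi') = e^{-(c+d)/\hbar}\psi'$, and since $S^\vee$ is a cone closed under addition (it is the polar dual of $\Cone(S)$, hence equals $h(\Cone(S))^\vee$ which is closed under addition), we have $c+d\in S^\vee$, and it is nonzero when $c\neq 0$ provided we are slightly careful — writing $c+d = \tfrac12 c + (\tfrac12 c + d)$ with both summands in $S^\vee$ and the first nonzero handles the boundary case. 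Hence $e^{-d/\hbar}$ preserves $\cOse$ and, compatibly with the filtration $F^{\leq D}$, preserves $\cOseL_S(S)$. Therefore if $\psi e^{c/\hbar}\in \cOseL_S(S)$ and $d\in S^\vee$, then $\psi e^{(c-d)/\hbar} = e^{-d/\hbar}(\psi e^{c/\hbar})\in \cOseL_S(S)$, i.e. $c-d$ is again in the set. This is precisely stability under $-S^\vee$.

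To conclude, I would note that if $U$ denotes the raw set and $d\in S^\vee$, the translate $\Int(U) - d$ is open and contained in $U - d \subset U$, hence contained in $\Int(U) = \supp_\se(\psi)$; this gives $\supp_\se(\psi) + (-S^\vee)\subset \supp_\se(\psi)$, so $\supp_\se(\psi)$ is $\gamma$-open for $\gamma = -S^\vee$. The main obstacle, such as it is, is purely bookkeeping: verifying that multiplication by $e^{-d/\hbar}$ is well-defined and continuous on the inverse-limit presentation of $\cOseL_S(S)$ — i.e. that it respects the bounded-neighborhood filtration $\lc F^{\leq D}\rc_{D\in\frakD}$ well enough to descend to the completion — and the harmless boundary-case argument when $c$ lies on $\partial S^\vee$ so that $c+d$ might a priori fail to be in the open cone; both are handled by the convexity/additivity of $S^\vee = h(\Cone(S))^\vee$ noted above.
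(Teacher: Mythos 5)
Your proof is correct, and it is the natural unwinding of what the paper simply declares ``an obvious consequence of the definition of $-S^\vee$'': the paper offers no argument at all, and the content you supply — that multiplication by $e^{-d/\hbar}$ for $d\in S^\vee$ preserves $\cOseL_S$, so the raw set $\lc c : \psi e^{c/\hbar}\in\cOseL_S(S)\rc$ is stable under $-S^\vee$, and interiors inherit $\gamma$-stability — is exactly the implicit reasoning. The one simplification available is that you need not descend to $\cA^{\se}$ and the filtration by hand: since $e^{-d/\hbar}\in\Lambda_0^S$ and $\cOseL_S(S)$ is a $\Lambda_0^S$-module (indeed a ring), preservation under multiplication by $e^{-d/\hbar}$ is immediate, and your boundary-case worry about $c+d=0$ is vacuous because $S^\vee$ is a pointed cone when $S$ is acute (and $S^\vee=\{0\}$, so the statement is trivial, otherwise).
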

\begin{proof}
This is an obvious consequence of the definition of $-S^\vee$.
\end{proof}

\begin{lemma}
For any $\psi\in \cOseL_S(S)$ and $\phi\in \cOexpL_S(S)$, we have $\supp_{\se}(\phi)\subset\supp_{\se}(\psi\phi)$.
\end{lemma}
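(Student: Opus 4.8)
The statement to prove is: for $\psi \in \cOseL_S(S)$ and $\phi \in \cOexpL_S(S)$, one has $\supp_\se(\phi) \subset \supp_\se(\psi\phi)$. The plan is to unwind the definition of $\supp_\se$ and use the multiplicativity already established for the subexponential ring $\cOseL_S$. By definition, $\supp_\se(\phi) = \Int\{c \in \bC \mid \phi e^{c/\hbar} \in \cOseL_S(S)\}$, and similarly for $\psi\phi$. Since the interior operation is monotone, it suffices to show the inclusion of the underlying (pre-interior) sets: if $c \in \bC$ satisfies $\phi e^{c/\hbar} \in \cOseL_S(S)$, then $(\psi\phi) e^{c/\hbar} \in \cOseL_S(S)$ as well.

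First I would reduce to the non-completed statement, or simply invoke directly that $\cOseL_S(S)$ is a ring (this is the completion of $\cOse_{M\times S}$, which is a ring by the lemma above, and the completion of a ring is a ring; alternatively one checks multiplicativity termwise on the defining formal sums). Given $c$ with $\phi e^{c/\hbar} \in \cOseL_S(S)$, write $(\psi\phi)e^{c/\hbar} = \psi \cdot (\phi e^{c/\hbar})$. Both factors lie in $\cOseL_S(S)$ — the first by hypothesis, the second by what we just assumed — so their product lies in $\cOseL_S(S)$ because $\cOseL_S(S)$ is closed under multiplication. Hence $c$ belongs to the set defining $\supp_\se(\psi\phi)$. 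Taking interiors on both sides and using monotonicity of $\Int$ gives $\supp_\se(\phi) \subset \supp_\se(\psi\phi)$.

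The only point requiring a little care — and the one I would spell out — is the interface between the growth condition defining $\supp_\se$ and the ring $\cOseL_S$: namely that membership "$\phi e^{c/\hbar} \in \cOseL_S(S)$" is genuinely the same condition that appears inside $\supp_\se(\phi)$, and that multiplying by a fixed $\psi \in \cOseL_S(S)$ does not spoil it. This is exactly the multiplicative closure of $\cOseL_S(S)$, so there is no real obstacle; the lemma is essentially formal once the ring structure is in hand. I would present it in two or three lines: monotonicity of $\Int$, the identity $(\psi\phi)e^{c/\hbar} = \psi(\phi e^{c/\hbar})$, and closure under multiplication. If one wants to be fully rigorous about the completion, one can note that an element of $\cOseL_S$ is a formal sum $\sum_i e^{-r_i/\hbar}\psi_i$ with $\gamma$-finite support and $\psi_i \in \cA^\se$, and the product of two such, after regrouping, again has $\gamma$-finite support (since $\gamma = -S^\vee$ is closed under addition) with coefficients products in $\cA^\se$ — which is where the earlier ring lemma for $\cA^\se_{M\times S}$ enters.
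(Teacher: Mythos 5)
Your argument is correct and is essentially the paper's intended justification (the paper dismisses the lemma as ``also obvious''): the only content is that $\cOseL_S(S)$ is closed under multiplication, so $(\psi\phi)e^{c/\hbar}=\psi\cdot(\phi e^{c/\hbar})\in\cOseL_S(S)$ whenever $\phi e^{c/\hbar}\in\cOseL_S(S)$, and then monotonicity of $\Int$ finishes. You correctly identify the one point worth spelling out, namely that $\cOseL_S$ really is a ring (the filtration subsheaves $F^{\leq D}$ are ideals, so the quotients are rings and the inverse limit is a ring), so nothing further is needed.
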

\begin{proof}
This is also obvious.
\end{proof}

\begin{lemma}
For $\psi\in \cOexpL_S(S)$, suppose $\supp_{\se}(\psi)\supset -S^\vee$. Then $\psi\in \cOseL_S(S)$.
\end{lemma}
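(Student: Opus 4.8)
The statement to prove is: for $\psi \in \cOexpL_S(S)$, if $\supp_{\se}(\psi) \supset -S^\vee$, then $\psi \in \cOseL_S(S)$. The plan is to unwind the definitions and observe that membership in $\cOseL_S$ is itself a ``local-in-$c$'' condition that is already guaranteed by the hypothesis, modulo checking that the completion does not introduce obstructions.

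First I would recall what the two sides mean. By definition, $\supp_{\se}(\psi) = \Int\{c \in \bC \mid \psi e^{c/\hbar} \in \cOseL_S(S)\}$, and the hypothesis $\supp_{\se}(\psi) \supset -S^\vee$ means in particular that $0$ lies in the interior of $\{c \mid \psi e^{c/\hbar} \in \cOseL_S(S)\}$ (since $0 \in -S^\vee$, as $-S^\vee$ is a cone containing the origin — and in fact one should be slightly careful: if $S$ is not acute then $-S^\vee = \{0\}$ and the statement is about $0$ being interior, which forces a neighborhood of $0$ into the set). So there is a bounded open neighborhood $D \in \frakD$ of $0 \in \bC_\hbar^\vee$ with $\psi e^{c/\hbar} \in \cOseL_S(S)$ for all $c \in D$. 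Taking $c = 0$ already gives $\psi \in \cOseL_S(S)$ directly when $0$ is in the \emph{set} (not merely its closure); the content of the lemma is precisely that the interior hypothesis upgrades ``$0$ in the closure of the good set'' to ``$0$ in the good set,'' and the mechanism for this should be that $\cOseL_S(S)$ is closed under the Novikov-style limit, i.e. if $e^{c/\hbar}\psi \in \cOseL_S(S)$ for a sequence $c_i \to 0$ one can pass to the limit.

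The key steps, in order, would be: (1) Reduce to the non-trivial case where $S$ is acute, so $-S^\vee$ has nonempty relative interior; in the non-acute case $-S^\vee = \{0\}$ and $\cOseL = \cOexpL$ up to the relevant identifications so there is nothing to prove. (2) Use the hypothesis to extract, from $\supp_{\se}(\psi) \supset -S^\vee$, a cofinal family of directions: for every $c$ in a neighborhood of the origin (inside $-S^\vee$-translates), $e^{c/\hbar}\psi$ is subexponential, i.e. lies in $\cOseL_S(S)$. (3) Represent $\psi$ via its filtration pieces: by construction $\cOexpL_{M\times S} = \varprojlim_D \cOexp / F^{\leq D}\cOexp$, and similarly for $\cOseL$; the claim that $\psi$ lies in the subring $\cOseL$ is equivalent to checking that each graded/truncated piece $\psi \bmod F^{\leq D}$ lands in the image of $\cOseL \bmod F^{\leq D}$. (4) For a fixed $D \in \frakD$, choose $c \in (-S^\vee) \setminus \{0\}$ small enough that $c + D' \subset D$ for some $D' \in \frakD$; then $e^{c/\hbar}\psi$ being subexponential means $e^{c'/\hbar}(e^{c/\hbar}\psi) \in \cA_{M\times S}$ for all $c' \in S^\vee \setminus \{0\}$, i.e. $e^{(c+c')/\hbar}\psi \in \cA_{M\times S}$, and since $c + (S^\vee\setminus\{0\})$ exhausts a cofinal set of elements of $S^\vee \setminus\{0\}$ as $c \to 0$ through $-S^\vee$, one deduces $e^{c'/\hbar}\psi \in \cA_{M\times S}$ for all $c' \in S^\vee \setminus \{0\}$, which is exactly the defining condition for $\psi \in \cA^{\se}_{M\times S}$, hence (after the quotient by $\cO^{vr}$ and the completion) $\psi \in \cOseL_S(S)$.

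The main obstacle I expect is step (4): the interplay between ``$c$ ranges over a neighborhood of $0$ in $-S^\vee$'' and ``$c+c'$ ranges over a cofinal subset of $S^\vee\setminus\{0\}$'' requires care about which elements of $S^\vee$ one can actually reach — one needs that for every $c' \in S^\vee\setminus\{0\}$ there is a decomposition $c' = c + c''$ with $c \in (-S^\vee)$ small and $c'' \in S^\vee\setminus\{0\}$, which is not automatic pointwise but becomes true in the relevant $\gamma$-topological / asymptotic sense because $\cA^{\se}$ is defined by an \emph{open} condition ($c'$ ranging over $S^\vee\setminus\{0\}$, and the estimate $-\Re(c'/\hbar) < 0$ for $\hbar \in S$ from the lemma above is uniform on compacta). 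Concretely one should phrase the argument as: it suffices to verify $e^{c'/\hbar}\psi \in \cA_{M\times S}$ on each relatively compact $S' \Subset S$, and for such $S'$ the cone $S'^\vee$ is strictly larger than $S^\vee$, so a small perturbation $c = -\epsilon c'$ with $\epsilon > 0$ small has $c \in -S'^\vee$-translate landing inside the region where the hypothesis applies, and $e^{(1-\epsilon)^{-1}\cdot\text{stuff}}$-type rescalings close the gap. Once this uniform-on-compacta bookkeeping is set up, the rest is a direct unwinding of definitions and the passage to the projective limit defining $\cOseL$.
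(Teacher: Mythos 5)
You correctly observe early on that taking $c=0$ already finishes the proof, since $0\in -S^\vee$, $-S^\vee\subset\supp_\se(\psi)$, and $\supp_\se(\psi)=\Int\lc c\in\bC\relmid \psi e^{c/\hbar}\in \cOseL_S(S)\rc$ is by definition a \emph{subset} of $\lc c\in\bC\relmid \psi e^{c/\hbar}\in \cOseL_S(S)\rc$, whence $\psi\cdot e^{0/\hbar}=\psi\in\cOseL_S(S)$. That is the entire proof, and it is what the paper means by ``follows from the definition of $\cOseL_S$.''

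The gap in your reasoning is the sentence claiming ``the content of the lemma is precisely that the interior hypothesis upgrades `$0$ in the closure of the good set' to `$0$ in the good set.''' This has the topological inclusions backwards: the interior of a set is always contained in the set, so the hypothesis $0\in\Int\lc\cdots\rc$ is \emph{stronger}, not weaker, than $0\in\lc\cdots\rc$; there is no upgrade to perform. Everything in steps (2)--(4) is therefore unnecessary. Step (4) also has a sign error against the definition of $\cA^{\se}$, which requires $e^{-c'/\hbar}\psi\in\cA_{M\times S}$ (not $e^{c'/\hbar}\psi$) for $c'\in S^\vee\setminus\{0\}$; as written the manipulations with $e^{c'/\hbar}(e^{c/\hbar}\psi)$ and the cofinality claim about $c+(S^\vee\setminus\{0\})$ do not reconstitute the subexponential condition. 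None of this matters once you drop the extra machinery and simply take $c=0$.
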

\begin{proof}
This also follows from the definition of $\cOseL_S$.
\end{proof}

\subsection{Continuously extendable functions}
We set
\begin{equation}
    \cA^{\mathrm{ce}}_{M\times S}(U):= \lc \psi\in \cA^{\exp}_{M\times S}(U)\relmid \psi|_{\hbar=0} \text{ is well-defined and defines a holomorphic function on $M$}\rc.
\end{equation}
Note that $\cA^{\mathrm{ce}}_{M\times S}\subset \cA_{M\times S}^{\se}$. We also set
\begin{equation}
    \cOc_{M\times S}:=\cA^{\mathrm{ce}}_{M\times S}/ \cO^{vr}_{M\times S}.
\end{equation}
We can again complete and get $\cOceL_{M\times S}$.

\subsection{Moderate growth around poles}
Let $D$ be a normal crossing divisor in a complex manifold $U\subset \bC^n$.
Let $\varpi_D\colon \widetilde{U}\rightarrow U$ be the oriented real blow-up of $U$ at $D$.
As in \cite{SabbahIntrotoStokes}, we consider $\cA^{\mathrm{mod}D}_{\widetilde{U}}$ be the sheaf of moderate growth functions. It is known that $\varpi_{D*}\cA_{\widetilde{U}}^{\mathrm{mod} D}\cong \cO_U(*D)$.

For $\bullet\in \{{\ae}, \mathrm{ce}, \se, {\exp}\}$ or $\bullet\in \{{\ae}, \mathrm{ce}, \se, {\exp}\}\times \{\Lambda_0\}$, 
we set
\begin{equation}
    \cO_{\widetilde{U}\times S}^{\bullet, \mathrm{mod}D}:=\cA_{\widetilde{U}}^{\mathrm{mod}D}\otimes_{\pi^{-1}\cO_U}\varpi_D^{-1}\cO_{U\times S}^{\bullet}.
\end{equation}

\section{$\cD^\hbar$-modules}
Sheaf-theoretic treatments of $\hbar$-differential operator in the context of asymptotic analysis was first discussed by \cite{AKKT} and then formulated in general by Polesello--Schapira~\cite{PS}. Related materials are discussed in \cite{MHMproject} in the context of Rees modules. We collect some basic facts for our purpose.

\subsection{Basic notions}
Let $M$ be a complex manifold. Let $\bC_\hbar$ be the complex line with the standard coordinate $\hbar$. Let $\cD_{M\times \bC_\hbar/\bC_\hbar}$ be the sheaf of relative differential operators. Inside the ring, we consider the subring generated by $\hbar\partial, \cO_M, \hbar$ and denote it by $\cD^\hbar_M$. When we want to emphasize that the $\hbar$-independence is polynomial, we write it as $\cD^{\hbar, \mathrm{al}}_M$. This is a sheaf over $M\times \bC_\hbar$. 

The ring $\cD^\hbar$ has many names: In the context of twistor modules(e.g.~\cite{mochizukiMTM}), it is called $\cR$. In the project \cite{MHMproject}, it is called $\widetilde{\cD}$. We use the notation $\cD^\hbar$ to emphasize its relation to deformation quantization~\cite{KSDQ}:
\begin{lemma}
The quotient $\cD^\hbar_M/\hbar\cD_M^\hbar$ is isomorphic to the push-forward of the structure sheaf $\cO_{T^*M}$ of $T^*M$ to $M$ where we consider the fiber direction as algebraic.
\end{lemma}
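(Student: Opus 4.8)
The plan is to reduce the statement to a local computation in coordinates and then package the result coordinate-freely via the symbol map. First I would fix local coordinates $x_1,\dots,x_n$ on $M$ with dual fiber coordinates $\xi_1,\dots,\xi_n$ on $T^*M$, and record the relations holding in $\cD_{M\times\bC_\hbar/\bC_\hbar}$: one has $[\hbar\partial_i,\hbar\partial_j]=0$, $[\hbar\partial_i,f]=\hbar\,\partial_i f$ for $f\in\cO_M$, and $\hbar$ is central. From these a PBW-type normal form follows: every section of $\cD^\hbar_M$ is uniquely of the form $\sum_{\alpha\in\bZ_{\geq 0}^n} c_\alpha(\hbar\partial)^\alpha$ with $c_\alpha\in\cO_M[\hbar]$ and $(\hbar\partial)^\alpha=(\hbar\partial_1)^{\alpha_1}\cdots(\hbar\partial_n)^{\alpha_n}$; uniqueness is immediate because $(\hbar\partial)^\alpha=\hbar^{|\alpha|}\partial^\alpha$ and the $\partial^\alpha$ are linearly independent over $\cO_{M\times\bC_\hbar}$ inside $\cD_{M\times\bC_\hbar/\bC_\hbar}$. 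Hence $\cD^\hbar_M$ is a free left $\cO_M[\hbar]$-module on the $(\hbar\partial)^\alpha$, and so $\cD^\hbar_M/\hbar\cD^\hbar_M$ is a free $\cO_M$-module on their classes.

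Next I would check that $[\cD^\hbar_M,\cD^\hbar_M]\subseteq\hbar\,\cD^\hbar_M$ --- it suffices to verify this on the generators $\hbar\partial_i$, $\cO_M$, $\hbar$ and to extend by the Leibniz rule --- so that $\cD^\hbar_M/\hbar\cD^\hbar_M$ is a sheaf of commutative $\cO_M$-algebras. Then I would construct the comparison map: sending a local vector field $v=\sum a_i\partial_i$ to the class of $\sum a_i\hbar\partial_i$ in $\cD^\hbar_M/\hbar\cD^\hbar_M$ is manifestly coordinate-independent, since under a change $x\mapsto y$ one has $\hbar\partial_{x_i}=\sum_j(\partial y_j/\partial x_i)\,\hbar\partial_{y_j}$ exactly, with no lower-order correction. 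This yields an $\cO_M$-linear map $\Theta_M\to\cD^\hbar_M/\hbar\cD^\hbar_M$ into a commutative algebra, hence an $\cO_M$-algebra homomorphism $\mathrm{Sym}_{\cO_M}\Theta_M\to\cD^\hbar_M/\hbar\cD^\hbar_M$. In local coordinates it carries the monomial $\partial^\alpha$ to the class of $(\hbar\partial)^\alpha$ (using commutativity of the target), which by the first step forms a free $\cO_M$-basis; so the map is an isomorphism locally, hence globally. Finally I would identify $\mathrm{Sym}_{\cO_M}\Theta_M$ with $\pi_*$ of the subsheaf of $\cO_{T^*M}$ consisting of functions polynomial along the fibers, under $\partial_i\leftrightarrow\xi_i$ --- the standard description of functions on $T^*M$ that are algebraic in the cotangent directions.

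An essentially equivalent route is to observe that $\cD^\hbar_M$ is the Rees algebra of $\cD_M$ for the order filtration (with $\hbar$ the Rees parameter), whence $\cD^\hbar_M/\hbar\cD^\hbar_M\cong\mathrm{gr}\,\cD_M$, and then to invoke the classical identification $\mathrm{gr}\,\cD_M\cong\pi_*\cO_{T^*M}^{\al}$. But this merely repackages the same computation, since the identification $\cD^\hbar_M\cong\mathrm{Rees}\,\cD_M$ rests on exactly the PBW normal form above.

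The only genuinely delicate points are bookkeeping: verifying cleanly the inclusion $[\cD^\hbar_M,\cD^\hbar_M]\subseteq\hbar\,\cD^\hbar_M$, and checking that the vector-field-to-operator assignment glues --- i.e. that no lower-order terms appear, which is a feature of the combination $\hbar\partial$. Neither is hard; everything else is the standard PBW/symbol argument for $\cD$-modules adapted to the presence of $\hbar$.
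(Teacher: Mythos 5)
Your proof is correct and complete. The paper states this lemma without proof, treating it as the standard Rees-algebra fact that $\cD^\hbar_M/\hbar\cD^\hbar_M\cong\mathrm{gr}\,\cD_M\cong\pi_*\cO_{T^*M}^{\al}$, which is exactly what your PBW/symbol argument establishes (and your closing paragraph correctly identifies as the repackaged version).
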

For the economics of the notation, we denote $\cD^\hbar_M/\hbar\cD^\hbar_M$ by $\cO_{T^*M}$.

We will use various base-change of $\cD^\hbar_M$. In the previous section, we introduce various sheaves of functions. For a sectoroid $S,$ as a general notation, we set
\begin{equation}
    \cD^{\bullet}_{M\times S}:=\cO^{\bullet}_{M\times S}\otimes_{\varpi^{-1}\cO_{M}[\hbar]}\frakf_*\varpi^{-1}\cD^{\hbar}_{M}
\end{equation}
where $\bullet\in \{{\ae}, \mathrm{ce}, \se, {\exp}\}$ or $\bullet\in \{{\ae}, \mathrm{ce}, \se, {\exp}\}\times \{\Lambda_0\}$. 
For example, we have
\begin{equation}
    \cDceL_{M\times S}:=\cOceL_{M\times S}\otimes_{\varpi^{-1}\cO_{M}[\hbar]}\frakf_*\varpi^{-1}\cD^{\hbar}_{M}.
\end{equation}
We have already seen that $\cOceL_{M\times S}$ contains $\Lambda_0^S$. We denote the maximal ideal of $\Lambda_0^S$ by $\frakm_S$. We also denote the subsheaf of $\cOc_{M\times S}$ generated by the functions vanishing at $\hbar=0$ by $I_0$.
Let $\frakm$ be the subsheaf of $\cOceL_{M\times S}$ generated by $\frakm_S$ and $I_0$. Then the quotient $\cOceL_{M\times S}/\frakm$ is isomorphic to $\cO_M$. Similarly, we have
\begin{lemma}
We have
\begin{equation}
\begin{split}
     \cDceL_{M\times S}/\frakm \cDceL_{M\times S}&\cong \cO_{T^*M}\\
     \cDaeL_{M\times S}/\frakm \cDaeL_{M\times S}&\cong \cO_{T^*M}\\
\end{split}
\end{equation}
For the second line, $\frakm$ means $\frakm\cap \cDaeL_{M\times S}$.
\end{lemma}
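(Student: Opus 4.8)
The statement to prove is that
\[
\cDceL_{M\times S}/\frakm \cDceL_{M\times S}\cong \cO_{T^*M}
\quad\text{and}\quad
\cDaeL_{M\times S}/\frakm \cDaeL_{M\times S}\cong \cO_{T^*M}.
\]
The plan is to reduce this to two facts already available: first, the computation $\cDceL_{M\times S}=\cOceL_{M\times S}\otimes_{\varpi^{-1}\cO_M[\hbar]}\frakf_*\varpi^{-1}\cD^\hbar_M$ (the definition), and second, the isomorphism $\cOceL_{M\times S}/\frakm\cong\cO_M$ stated just before the lemma, together with the lemma $\cD^\hbar_M/\hbar\cD^\hbar_M\cong\cO_{T^*M}$. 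First I would observe that since $\cDceL_{M\times S}$ is obtained from $\frakf_*\varpi^{-1}\cD^\hbar_M$ by the (flat on the generators, but in any case right-exact) base change along $\varpi^{-1}\cO_M[\hbar]\to\cOceL_{M\times S}$, we get
\[
\cDceL_{M\times S}/\frakm\cDceL_{M\times S}
\cong
\bigl(\cOceL_{M\times S}/\frakm\bigr)\otimes_{\varpi^{-1}\cO_M[\hbar]}\frakf_*\varpi^{-1}\cD^\hbar_M,
\]
because tensoring is right exact and $\frakm\cDceL_{M\times S}$ is exactly the image of $\frakm\otimes\cD^\hbar_M$. This is the key structural step: it lets me push the quotient all the way onto the coefficient ring.

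Next I would identify how $\varpi^{-1}\cO_M[\hbar]$ acts through $\cOceL_{M\times S}/\frakm\cong\cO_M$. By construction $\frakm$ contains $I_0$, the ideal generated by functions vanishing at $\hbar=0$; in particular $\hbar\in\frakm$. Hence the composite $\varpi^{-1}\cO_M[\hbar]\to\cOceL_{M\times S}\to\cOceL_{M\times S}/\frakm\cong\cO_M$ is the evaluation $\hbar\mapsto 0$ followed by the identity on $\cO_M$. Therefore
\[
\cDceL_{M\times S}/\frakm\cDceL_{M\times S}
\cong \cO_M\otimes_{\varpi^{-1}\cO_M[\hbar]}\frakf_*\varpi^{-1}\cD^\hbar_M
\cong \cO_M\otimes_{\cO_M[\hbar],\ \hbar\mapsto 0}\cD^\hbar_M
\cong \cD^\hbar_M/\hbar\cD^\hbar_M,
\]
where in the middle I use that the relevant stalks of $\frakf_*\varpi^{-1}\cD^\hbar_M$ over points of $S$ are just $\cD^\hbar_M$ (pulled back along the proper map $\varpi$ and pushed along $\frakf$, both of which are the identity on the $M$-factor after restricting to a sectoroid), so the extra $\varpi^{-1},\frakf_*$ decoration is harmless. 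Finally, invoking $\cD^\hbar_M/\hbar\cD^\hbar_M\cong\cO_{T^*M}$ (the earlier lemma, with the fiber direction taken algebraically), I obtain the first isomorphism. The second isomorphism, with $\ae$ in place of $\ce$, is proved by the identical argument: $\cOaeL_{M\times S}/(\frakm\cap\cOaeL_{M\times S})\cong\cO_M$ by the same reasoning that gives the $\ce$-case (the functions of $\cOaeL$ still admit an asymptotic expansion, whose $0$-th term lands in $\cO_M$ and whose vanishing locus is precisely $\frakm\cap\cOaeL_{M\times S}$), and then base change onto $\cD^\hbar_M$ as above.

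\textbf{Main obstacle.} The one point that requires genuine care, rather than formal nonsense, is the compatibility of the tensor-product quotient with the quotient $\cOceL_{M\times S}/\frakm$: concretely, that $\frakm\cDceL_{M\times S}$ equals the image of $\frakm\otimes_{\varpi^{-1}\cO_M[\hbar]}\frakf_*\varpi^{-1}\cD^\hbar_M$ and that no further relations are created or destroyed. Since $\cD^\hbar_M$ is not flat as a left module over $\cO_M[\hbar]$ in a naive sense, I would argue at the level of the filtration by order of differential operator: $\cD^\hbar_M$ is filtered with associated graded a polynomial algebra over $\cO_M[\hbar]$ (this is the content behind $\cD^\hbar_M/\hbar\cong\cO_{T^*M}$, with $\gr$ the symbol algebra), so the base change is computed degree by degree on symbols, where everything is free, and the identification $\hbar\mapsto 0$ collapses $\cO_M[\hbar][\xi]$ to $\cO_M[\xi]=\cO_{T^*M}$. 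Carrying this filtration argument through both the $\ce$- and $\ae$-variants, and checking that the decorations $\varpi^{-1},\frakf_*$ commute with it on a sectoroid, is the real work; everything else is bookkeeping.
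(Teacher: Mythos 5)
Your proposal is correct and takes essentially the same route the paper intends, namely: unwind the definition $\cDceL_{M\times S}=\cOceL_{M\times S}\otimes_{\varpi^{-1}\cO_M[\hbar]}\frakf_*\varpi^{-1}\cD^{\hbar}_M$, push the quotient by $\frakm$ onto the coefficient ring via right-exactness of the tensor product, use the identification $\cOceL_{M\times S}/\frakm\cong\cO_M$ stated just above the lemma (and its analogue for $\cOaeL$), observe that $\hbar\in I_0\subset\frakm$ so that the composite $\cO_M[\hbar]\to\cOceL_{M\times S}\to\cO_M$ is the evaluation at $\hbar=0$, and then invoke the earlier lemma $\cD^{\hbar}_M/\hbar\cD^{\hbar}_M\cong\cO_{T^*M}$. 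The paper gives no explicit argument (it simply writes \emph{Similarly, we have} after establishing $\cOceL_{M\times S}/\frakm\cong\cO_M$), so what you have supplied is a legitimate filling-in of the implicit reasoning. One small remark: the worry you raise in the ``Main obstacle'' paragraph is somewhat overcautious. The isomorphism $(A\otimes_R M)/\frakm(A\otimes_R M)\cong (A/\frakm)\otimes_R M$ for a ring map $R\to A$ and an ideal $\frakm\subset A$ follows from right-exactness of $-\otimes_R M$ alone; no flatness of $\cD^{\hbar}_M$ over $\cO_M[\hbar]$ is needed for this step. The filtration-by-order argument you sketch is still a good sanity check (and is in fact the standard proof of the earlier lemma $\cD^{\hbar}_M/\hbar\cD^{\hbar}_M\cong\cO_{T^*M}$), but it is not required to justify the base-change collapse.
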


In this paper, a $\cD^{\bullet}_{M\times S}$-module means a left $\cD^{\bullet}_{M\times S}$-module. We will denote the bounded derived category of coherent $\cD^{\bullet}_{M\times S}$-modules by $D^b(\cD^{\bullet}_{M\times S})$. The subcategory spanned by cohomologically coherent modules will be denoted by $D^b_{\coh}(\cD^{\bullet}_{M\times S})$. When working with this category, all the operations will be derived without any particular notations.

\begin{remark}
Since $\Lambda_0^S$ is not Noetherian, the coherence is not equivalent to finite generation. In particular, a trivial module $\bC\cong \Lambda_0^S/\frakm_S$ is not coherent. 
\end{remark}

\subsection{Modules and Spectral variety}
For a $\cDaeL_{M\times S}$-module or $\cDceL_{M\times S}$-module $\cM$, we set
\begin{equation}
    \cM^{cl}:=\cM/\frakm\cM
\end{equation}
Then $\cM^{cl}$ is a module over $\cO_{T^*M}$. We set
\begin{equation}
    \Char(\cM):=\supp(\cM^{cl})\subset T^*M.
\end{equation}
We call it the characteristic variety of $\cM$.
\begin{remark}
One may prefer the term {\em spectral variety}, regarding the context of Higgs bundles.
\end{remark}

\begin{definition}
If $\cM$ is a coherent module and $\Char(\cM)$ is a Lagrangian subvariety, we say $\cM$ is a holonomic module. We will denote the subcategories of cohomologically holonomic modules by $D^b_{\hol}(\cDaeL_{M\times S})$ and $D^b_{\hol}(\cDceL_{M\times S})$
\end{definition}

Sometimes, it is useful to impose the torsion-freeness.
\begin{definition}
For a $\cDaeL_{M\times \wC^s_\hbar}$-module $\cM$ is strict if $\bC[\hbar]$-action is free (resp. $\frakn$-torsion free, resp, $\frakm$-torsion free).
\end{definition}

\begin{remark}
Suppose $\cM$ is strict. From $\cM$, we can cook up a $\cW_X(0)$-module~\cite{PS}. Then the support as a $\cW_X(0)$-module is the same as $\Char(\cM)$.
\end{remark}

\begin{remark}
To define characteristic varieties for $\cDseL$-modules, we have to microlocalize $\cDseL$, which we will not develop here.
\end{remark}

\subsection{Basic operations}
We collect here some basic operations. Since the proofs are standard, we omit them.

\subsubsection*{Canonical module}
Let $\Omega_M$ be the canonical sheaf of $M$. This is a right $\cD_M$-module by Lie derivations. We denote the projection $M\times S\rightarrow M$ by $p_\hbar$.

We set
\begin{equation}
    \Omega_{M\times S}^{\bullet}:=\cO^{\bullet}_{M\times S}\otimes_{p_\hbar^{-1}\cO_M} p_\hbar^{-1}\Omega_M.
\end{equation}
Then this is a right $\cD^{\bullet}_{M\times S}$-modules.

\subsubsection*{Side-change}
As usual, we have the side-changing functor
\begin{equation}
    \Omega^{\bullet}_{M\times S}\otimes (-)\colon D^b(\cD^{\bullet}_{M\times S})\rightarrow D^b((\cD^{\bullet}_{M\times S})^{\mathrm{op}}).
\end{equation}

\subsubsection*{Pull back}
Let $f\colon M\rightarrow N$ be a morphism between complex manifolds. The transfer module is defined by
\begin{equation}
    \cD^{\bullet}_{M\rightarrow N}:=\cO^{\bullet}_{M\times S}\otimes_{f^{-1}\cO^\bullet_{N\times S}}f^{-1}(\cD^{\bullet}_N).
\end{equation}
This is a bimodule over $(\cD^{\bullet}_{M\times S},f^{-1}\cD^{\bullet}_{N\times S})$. For a $\cD^{\bullet}_N$-module $\cM$, the pull-back along $f$ is defined by
\begin{equation}
    _{\cD}f^*\cM:=\cD^{\bullet}_{M\rightarrow N}\otimes_{f^{-1}\cD^{\bullet}_{N\times S}}f^{-1}\cM
\end{equation}

\subsubsection*{Push forward}
We next discuss push-forwards. We define the other transfer module:
\begin{equation}
    \cD^{\bullet}_{M\leftarrow N}:=f^{-1}\cD^{\bullet}_{N\times S}\otimes_{f^{-1}\cO^{\bullet}_{N\times S}}( \Omega_{M/N}\otimes_{\cO_{M}}\cO^{\bullet}_{M\times S})
\end{equation}
which is a $(f^{-1}\cD^{\bullet}_{N\times S}, \cD^{\bullet}_{M\times S})$-module. 
For a $\cD^{\bullet}_{M\times S}$-module $\cM$, we set
\begin{equation}
    _{\cD}f_*\cM:=f_*(\cD^{\bullet}_{M\leftarrow N}\otimes_{\cD^{\bullet}_{M\times S}} \cM).
\end{equation}

\subsubsection*{Tensor products}
We also define tensor products. Let $\delta\colon M\rightarrow M\times M$ be the diagonal map. For $\cM, \cN$ be two $\cD^{\bullet}_{M\times S}$-modules. 
\begin{equation}
    \cM\overset{\cO}{\otimes}\cN:={}_\cD\delta^{*}(\cM\boxtimes \cN)
\end{equation}
where $\delta$ is the diagonal. If $\cM$ is a usual $\cD^{\bullet}_{M\times S}$-module and $\cN$ is a right $\cD^{\bullet}_{M\times S}$-module, we can also define
\begin{equation}
    \cN\overset{\cD}{\otimes}\cM,
\end{equation}
which has no $\cD$-module structures. Similarly, we can define $\cHom_{\cD}(\cM, \cN)$ and $\cHom_\cO(\cM, \cN)$. 

\subsubsection*{Adjunction}
In this paper, we do not develop the notion of goodness for $\cD^{\bullet}_{M\times S}$-modules. Hence the adjunction statements are weaker than usual. Since all the proofs are standard, we omit them.
\begin{lemma}
Suppose $f$ is a closed embedding on $\supp(\cM)$. Then
\begin{equation}
    \Hom_{D^b(\cD^{\bullet}_{N\times S})}({}_{\cD}f_*\cM, \cN)\cong \Hom_{D^b(\cD^{\bullet}_{M\times S})}(\cM, _{\cD}f^*\cN).
\end{equation}
\end{lemma}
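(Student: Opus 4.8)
The plan is to reduce the adjunction to the classical $\cD$-module adjunction by analyzing the transfer bimodules in the presence of the base ring $\cO^\bullet_{M\times S}$, exactly as one does over $\bC$ for ordinary $\cD$-modules, being careful that all tensor and $\Hom$ operations involved are derived and that the closed-embedding hypothesis on $\supp(\cM)$ lets us dispense with goodness. First I would record the two ``projection formula'' type identities for the transfer modules: for a $\cD^\bullet_{M\times S}$-module $\cM$ one has, as $f^{-1}\cD^\bullet_{N\times S}$-modules,
\begin{equation}
    \cD^\bullet_{M\leftarrow N}\otimes_{\cD^\bullet_{M\times S}}\cM
    \cong f^{-1}\cD^\bullet_{N\times S}\otimes_{f^{-1}\cD^\bullet_{N\to M}}\lb\Omega^\bullet_{M\times S}\otimes_{\cO^\bullet_{M\times S}}\cM\rb\text{-type rewriting},
\end{equation}
and dually that $\cD^\bullet_{M\to N}=\cO^\bullet_{M\times S}\otimes_{f^{-1}\cO^\bullet_{N\times S}}f^{-1}\cD^\bullet_N$ is, by construction, $f^{-1}$ of the corresponding object upstairs tensored with the structure sheaves. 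These are formal manipulations with the definitions of $\cD^\bullet_{M\to N}$, $\cD^\bullet_{M\leftarrow N}$, $\Omega^\bullet_{M\times S}$ and the side-changing functor already set up in the ``Basic operations'' subsection.

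Next I would use the closed-embedding hypothesis. Since $f$ is a closed embedding when restricted to $\supp(\cM)$, the functor ${}_\cD f_*$ on such $\cM$ is (up to the side-changing twists) just sheaf-theoretic push-forward along a closed immersion, hence exact and with no higher cohomology, so ${}_\cD f_*\cM$ is again concentrated in the expected range and $f_*$ commutes with the relevant tensor products. This is the standard device — Kashiwara's equivalence in spirit — that replaces the goodness hypothesis one would otherwise need; here we only need the weaker statement that $f_* f^{-1}$ on objects supported on $\supp(\cM)$ behaves well, and that $\otimes$ and $f_*$ can be interchanged on such sheaves.

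Then I would assemble the chain of isomorphisms:
\begin{equation}
\begin{split}
\Hom_{D^b(\cD^\bullet_{N\times S})}\lb {}_\cD f_*\cM,\cN\rb
&\cong \Hom\lb f_*(\cD^\bullet_{M\leftarrow N}\otimes_{\cD^\bullet_{M\times S}}\cM),\cN\rb\\
&\cong \Hom_{D^b(f^{-1}\cD^\bullet_{N\times S})}\lb \cD^\bullet_{M\leftarrow N}\otimes_{\cD^\bullet_{M\times S}}\cM, f^{-1}\cN\rb\\
&\cong \Hom_{D^b(\cD^\bullet_{M\times S})}\lb \cM, \cHom_{f^{-1}\cD^\bullet_{N\times S}}(\cD^\bullet_{M\leftarrow N}, f^{-1}\cN)\rb\\
&\cong \Hom_{D^b(\cD^\bullet_{M\times S})}\lb \cM, {}_\cD f^*\cN\rb,
\end{split}
\end{equation}
where the second line is the $(f_*,f^{-1})$ adjunction using the closed-embedding support condition, the third line is the tensor–hom adjunction for the $(f^{-1}\cD^\bullet_{N\times S},\cD^\bullet_{M\times S})$-bimodule $\cD^\bullet_{M\leftarrow N}$, and the last line is the identification of $\cHom_{f^{-1}\cD^\bullet_{N\times S}}(\cD^\bullet_{M\leftarrow N}, f^{-1}\cN)$ with ${}_\cD f^*\cN$, which for a closed embedding follows from a local computation (choosing local coordinates adapted to $M\subset N$ and resolving $\cD^\bullet_{M\leftarrow N}$ by a Koszul-type complex of free $\cD^\bullet_{M\times S}$-modules, tensored up from the classical one over $\cO^\bullet_{M\times S}$).

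\textbf{Main obstacle.} The step I expect to be most delicate is the last identification $\cHom_{f^{-1}\cD^\bullet_{N\times S}}(\cD^\bullet_{M\leftarrow N}, f^{-1}\cN)\cong {}_\cD f^*\cN$ together with controlling the derived tensor products over the non-Noetherian ring $\cO^\bullet_{M\times S}$ (which contains $\Lambda_0^S$). Because $\Lambda_0^S$ is not Noetherian and coherence is weaker than finite generation (as the paper explicitly warns), one must check that the Koszul-type resolution of the transfer module by $\cD^\bullet_{M\times S}$-modules really is a resolution by objects adapted to computing $\cHom$ and $\otimes$, i.e. that the classical computation base-changes correctly along $\cO_N[\hbar]\to\cO^\bullet_{N\times S}$ without introducing spurious higher Tor's. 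The closed-embedding hypothesis on $\supp(\cM)$ is what makes this tractable: it localizes the question to a coordinate chart where the transfer module has an explicit finite free resolution over $\cD^\bullet_{M\times S}$, and then all the base rings involved are flat enough over $\varpi^{-1}\cO_M[\hbar]$ for the identifications to go through. Everything else is a bookkeeping exercise with the side-changing functor and the definitions already in place.
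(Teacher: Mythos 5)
The paper gives no proof of this lemma; immediately before the adjunction statements it says explicitly that "since all the proofs are standard, we omit them," so there is no argument of the author's to compare against. Your overall plan --- reduce to the tensor--hom adjunction over the $(f^{-1}\cD^\bullet_{N\times S},\cD^\bullet_{M\times S})$-bimodule $\cD^\bullet_{M\leftarrow N}$, then compute the resulting $R\cHom$ locally by a Koszul resolution --- is indeed the classical route the author is alluding to.

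There is, however, a genuine gap at the second line of your chain. For a closed embedding $f$, the sheaf push-forward $f_*$ is the \emph{right} adjoint of $f^{-1}$ and the \emph{left} adjoint of $f^!$; the identity you invoke, $\Hom(f_*\cA,\cB)\cong\Hom(\cA,f^{-1}\cB)$, simply does not hold. (Take $M=\{0\}\hookrightarrow N=\bR$, $\cA=\bC$, $\cB=\bC_N$: the left side is $R\Hom(\bC_0,\bC_N)\cong\bC[-1]$, the right side is $\bC$.) The correct identity is $\Hom(f_*\cA,\cB)\cong\Hom(\cA,f^!\cB)$ with $f^!\cB=f^{-1}R\Gamma_{f(M)}\cB$, and this differs from $f^{-1}\cB$ by exactly the codimension shift one expects in a closed-embedding $\cD$-module adjunction. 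The hypothesis that $f$ is a closed embedding on $\supp\cM$ does not close this gap: it ensures $f$ is proper on the support of $\cD^\bullet_{M\leftarrow N}\otimes_{\cD^\bullet_{M\times S}}\cM$, but it places no constraint on $\cN$, which is why $f^!\cN$ and $f^{-1}\cN$ genuinely differ in the $\Hom$ you need.

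A related bookkeeping problem sits in your last step: the Koszul resolution of $\cD^\bullet_{M\leftarrow N}$ as a left $f^{-1}\cD^\bullet_{N\times S}$-module computes $R\cHom_{f^{-1}\cD^\bullet_{N\times S}}(\cD^\bullet_{M\leftarrow N},f^{-1}\cN)$ as ${}_\cD f^*\cN$ only up to a shift by the codimension and a twist by $\det N_{M/N}$ (the twist cancels against $\Omega_{M/N}$; the shift does not). So even granting your step 2, the chain does not land on $\Hom(\cM,{}_\cD f^*\cN)$ without further accounting. To fix this you should either carry $f^!$ through honestly and track the resulting shift against the Koszul shift, or route the adjunction through $\cD$-module duality in the style of \cite{DK}, rather than through a naive $(f_*,f^{-1})$ sheaf adjunction. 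By contrast, the non-Noetherianness concern you flag as the main obstacle is benign: $\cD^\bullet_{M\leftarrow N}$ has a finite free Koszul resolution over $f^{-1}\cD^\bullet_{N\times S}$ in local coordinates, obtained by flat base change along $\varpi^{-1}\cO_M[\hbar]\to\cO^\bullet_{M\times S}$ from the usual $\cD^{\hbar,\mathrm{al}}$-module one, so the derived $\Hom$ and $\otimes$ computations localize as you hope.
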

\begin{lemma}
Suppose $f$ is smooth. Then
\begin{equation}
    \Hom_{D^b(\cD^{\bullet}_{N\times S})}(\cM, {}_{\cD}f_*\cN)\cong \Hom_{D^b(\cD^{\bullet}_{N\times S})}({}_{\cD}f^*\cM, \cN).
\end{equation}
\end{lemma}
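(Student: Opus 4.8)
The final statement to prove is the smooth adjunction for $\cD^\bullet_{M\times S}$-modules: for $f\colon M\to N$ smooth,
$$\Hom_{D^b(\cD^{\bullet}_{N\times S})}(\cM, {}_{\cD}f_*\cN)\cong \Hom_{D^b(\cD^{\bullet}_{N\times S})}({}_{\cD}f^*\cM, \cN).$$

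Wait—I need to look carefully. The statement has a typo-like quality: it should presumably read $\Hom_{D^b(\cD^{\bullet}_{N\times S})}(\cM, {}_{\cD}f_*\cN)\cong \Hom_{D^b(\cD^{\bullet}_{M\times S})}({}_{\cD}f^*\cM, \cN)$, i.e. an adjunction $({}_\cD f^*, {}_\cD f_*)$ between the categories over $N$ and over $M$. That is how I'll read it.

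This is the standard smooth-pullback adjunction ${}_\cD f^*\dashv {}_\cD f_*$ for $\cD$-modules, transported to the $\hbar$-deformed, Novikov-completed setting (we read the right-hand side as $\Hom_{D^b(\cD^\bullet_{M\times S})}({}_\cD f^*\cM,\cN)$, with $\cM\in D^b(\cD^\bullet_{N\times S})$ and $\cN\in D^b(\cD^\bullet_{M\times S})$). The plan is to run the classical argument, as in \cite{PS}: the only point to observe is that $f$, extended by the identity in the $\hbar$- and $S$-directions, is compatible with all the structure sheaves $\cO^\bullet_{\bullet\times S}$ by construction, so the transfer bimodules, the relative Spencer resolution, and the projection formula all behave exactly as in the undeformed case. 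All functors below are derived, following the conventions of this section.

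First I would use the tensor--hom adjunction for the $(\cD^\bullet_{M\times S},\,f^{-1}\cD^\bullet_{N\times S})$-bimodule $\cD^\bullet_{M\to N}$ to rewrite
\begin{equation}
\cHom_{\cD^\bullet_{M\times S}}\bigl({}_\cD f^*\cM,\ \cN\bigr)\;\simeq\;\cHom_{f^{-1}\cD^\bullet_{N\times S}}\Bigl(f^{-1}\cM,\ \cHom_{\cD^\bullet_{M\times S}}(\cD^\bullet_{M\to N},\,\cN)\Bigr).
\end{equation}
Next I would identify the inner object. Since $f$ is smooth, the relative Spencer complex $\cD^\bullet_{M\times S}\otimes_{\cO_M}\wedge^{\bullet}\Theta_{M/N}$ is a finite resolution of $\cD^\bullet_{M\to N}$ by free left $\cD^\bullet_{M\times S}$-modules --- this is purely a statement about relative differentiation along $f$, hence unaffected by the presence of $\hbar$ and of $S$ --- and dualizing it yields the relative de Rham description
\begin{equation}
\cHom_{\cD^\bullet_{M\times S}}(\cD^\bullet_{M\to N},\,\cN)\;\simeq\;\bigl(\cD^\bullet_{M\leftarrow N}\otimes_{\cD^\bullet_{M\times S}}\cN\bigr)[d_{M/N}],
\end{equation}
where $d_{M/N}=\dim M-\dim N$ and the twist by $\Omega_{M/N}$ is precisely the one already built into the definition of $\cD^\bullet_{M\leftarrow N}$. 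Finally, the sheaf-level adjunction $(f^{-1},f_*)$ together with the projection formula (to pull $f_*$ past the tensor product) turns the right-hand side of (1) into $\cHom_{\cD^\bullet_{N\times S}}(\cM,\,{}_\cD f_*\cN)$, and taking global sections and $H^0$ gives the claim, modulo the shift by $d_{M/N}$ implicit in this non-normalized convention.

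The only step that is not purely formal is the interaction of $f_*$ with the exotic base rings $\cO^\bullet_{M\times S}$: over a non-Noetherian ring such as $\Lambda_0^S$ one cannot expect $f_*$ to commute with arbitrary tensor products. This is where smoothness carries the weight --- being of finite relative dimension, $f$ makes the relative de Rham complex a \emph{bounded} complex whose terms are $\cO^\bullet_{M\times S}$-modules flat relative to $f$, so the projection formula and the commutation of $f_*$ with $f^{-1}(-)\otimes(-)$ hold for the usual reasons; and the genuinely pathological phenomena (such as the failure of coherence of the trivial module $\Lambda_0^S/\frakm_S$) never intervene, since $f$ does not touch the $\Lambda_0^S$-direction at all. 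This is the ``main obstacle'', and it is mild: it is exactly why the statement is recorded only at the level of $\Hom$-sets and without the goodness hypotheses that would be required to treat a non-proper $f$ more robustly.
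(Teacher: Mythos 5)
Your reading of the typo is right: the second $\Hom$ must be over $D^b(\cD^\bullet_{M\times S})$, since ${}_\cD f^*\cM$ and $\cN$ live over $M$. For the proof itself, the paper gives none --- it explicitly says ``Since all the proofs are standard, we omit them'' --- so what you are being asked to supply is precisely the standard argument, and that is what you give: tensor--hom adjunction through the transfer bimodule $\cD^\bullet_{M\to N}$, the relative Spencer resolution (which exists because smoothness of $f$ makes $\Theta_{M/N}$ a locally free $\cO_M$-module of finite rank, and the $\hbar$-deformation does not interfere with the fact that $\hbar\partial$ acts as a derivation), side-changing to identify the $\cD$-module dual of $\cD^\bullet_{M\to N}$ with the other transfer bimodule, and then the $(f^{-1},f_*)$ adjunction plus the projection formula. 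This is the argument Polesello--Schapira and D'Agnolo--Kashiwara use and which the paper is implicitly invoking.

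The one point worth sharpening is the shift. In the shift-free conventions used in the paper for ${}_\cD f^*$ and ${}_\cD f_*$, the Spencer resolution step produces
\begin{equation}
\cHom_{\cD^\bullet_{M\times S}}\bigl(\cD^\bullet_{M\to N},\,\cD^\bullet_{M\times S}\bigr)\;\simeq\;\cD^\bullet_{M\leftarrow N}\,[-d_{M/N}]
\end{equation}
as a $(f^{-1}\cD^\bullet_{N\times S},\cD^\bullet_{M\times S})$-bimodule, so that the chain of isomorphisms actually yields
\begin{equation}
\Hom_{D^b(\cD^\bullet_{M\times S})}\bigl({}_\cD f^*\cM,\,\cN\bigr)\;\cong\;\Hom_{D^b(\cD^\bullet_{N\times S})}\bigl(\cM,\,{}_\cD f_*\cN\,[d_{M/N}]\bigr).
\end{equation}
You flag this (``modulo the shift \dots implicit in this non-normalized convention''), which is the right instinct, but you should not wave it away: with the definitions of $\cD^\bullet_{M\to N}$ and $\cD^\bullet_{M\leftarrow N}$ as written in the paper, the shift is really there, and the lemma as printed is off by $[d_{M/N}]$ --- either the convention for $\cD^\bullet_{M\leftarrow N}$ silently absorbs it, or the statement should be read up to shift. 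Your remark about why smoothness makes the projection formula harmless over the non-Noetherian base $\cOdeL_S$ (bounded relative Spencer complex, $f$ acting trivially in the $\hbar$ and $S$ directions) is the correct thing to observe and matches the paper's comment that, absent a goodness theory, these adjunctions are stated only in this limited form.
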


\begin{lemma}[base-change]
Suppose the morphisms involved in the diagram below are all embeddings:
\begin{equation}
\xymatrix{
X\ar[r]^{\iota_2} \ar[d]^{\iota_1} & Y \ar[d]^{\iota_3} \\
Y'\ar[r]_{\iota_4}& Z 
}
\end{equation}
where the diagram is Cartesian. Then we have an isomorphism
\begin{equation}
    {}_{\cD}\iota_3^*{}_{\cD}\iota_{4*}\cM\cong {}_{\cD}\iota_{2*}{}_{\cD}\iota_1^*\cN
\end{equation}
\end{lemma}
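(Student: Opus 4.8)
The statement to prove is a $\cD^\bullet$-module base-change isomorphism
\[
{}_{\cD}\iota_3^*{}_{\cD}\iota_{4*}\cM\cong {}_{\cD}\iota_{2*}{}_{\cD}\iota_1^*\cN
\]
for a Cartesian square of embeddings. (I note that the two occurrences of $\cM$ and $\cN$ should refer to the same module, so really we start with a single $\cD^\bullet_{Y'\times S}$-module, pull it back along $\iota_4$, then restrict along $\iota_3$; the comparison is with restricting first along $\iota_1$ and pushing forward along $\iota_2$.) The plan is to reduce everything to an identity of transfer bimodules and then invoke the projection/flatness formalism for the tensor operations defined in the ``Basic operations'' subsection. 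Since the paper explicitly says all such proofs are standard, the right level of detail is to indicate the transfer-module manipulation and point to where the $\hbar$/Novikov coefficients require a small remark.

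First I would unwind both sides in terms of transfer modules. By definition ${}_{\cD}\iota_{4*}\cM = \iota_{4*}\bigl(\cD^\bullet_{Y'\leftarrow Z}\otimes_{\cD^\bullet_{Y'\times S}}\cM\bigr)$ — wait, the direction is $\iota_4\colon Y'\to Z$, so ${}_{\cD}\iota_{4*}\cM = \iota_{4*}\bigl(\cD^\bullet_{Z\leftarrow Y'}\otimes_{\cD^\bullet_{Y'\times S}}\cM\bigr)$, viewed as a $\cD^\bullet_{Z\times S}$-module, and then ${}_{\cD}\iota_3^*$ of that is $\cD^\bullet_{Y\to Z}\otimes_{\iota_3^{-1}\cD^\bullet_{Z\times S}}\iota_3^{-1}(-)$. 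On the other side, ${}_{\cD}\iota_1^*\cM = \cD^\bullet_{X\to Y'}\otimes_{\iota_1^{-1}\cD^\bullet_{Y'\times S}}\iota_1^{-1}\cM$ and ${}_{\cD}\iota_{2*}$ of that is $\iota_{2*}\bigl(\cD^\bullet_{Y\leftarrow X}\otimes_{\cD^\bullet_{X\times S}}(-)\bigr)$. Since all four maps are embeddings, the pull-back transfer modules $\cD^\bullet_{Y\to Z}$, $\cD^\bullet_{X\to Y'}$ are quotients (locally, by the defining ideal) of restrictions of the ambient $\cD^\bullet$, and for closed embeddings the push-forward transfer modules are built from these by the side-changing (canonical-sheaf) twist. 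The Cartesian condition $X = Y\times_Z Y'$ gives a canonical isomorphism $\iota_2^{-1}\cD^\bullet_{Y\to Z}\otimes\,\cdots \cong \iota_1^{-1}\cD^\bullet_{Y'\to Z}$ of the relevant transfer bimodules over $X$; concretely the defining ideal of $X$ in $Y$ pulls back from that of $Y'$ in $Z$ (and vice versa), which is exactly the statement that the square is Cartesian with $Z$ nonsingular enough along the images. Chasing this through and using base change for the underlying $\cO^\bullet_{\bullet\times S}$-modules (flat base change for $\iota_{3*}$ applied to an $\iota_{4*}$-pushforward, valid since $X=Y\times_Z Y'$) yields the asserted isomorphism.

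The key steps, in order: (1) replace all four $\cD$-operations by their transfer-bimodule definitions; (2) for the two closed embeddings $\iota_4,\iota_2$, rewrite the $\leftarrow$-transfer modules via the $\Omega$-twist so that everything is expressed through the $\rightarrow$-transfer modules $\cD^\bullet_{Y\to Z}$ and $\cD^\bullet_{X\to Y'}$; (3) use the Cartesian identification of defining ideals to produce the canonical iso of transfer bimodules over $X$, together with the matching of canonical/relative-dualizing sheaves $\Omega_{X/Y}\cong \iota_1^*\Omega_{Y'/Z}$; (4) commute $\iota_{2*}$ past the tensor (the supports are closed and the embeddings are affine on the relevant loci, so no higher direct images interfere) and apply ordinary flat base change $\iota_3^{-1}\iota_{4*}\cong \iota_{2*}\iota_1^{-1}$ for sheaves, which holds for the Cartesian square; (5) collect terms to recover the right-hand side. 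Throughout, the coefficient ring $\cO^\bullet_{\bullet\times S}$ (any of $\cOaeL$, $\cOceL$, etc.) only enters as the base over which the transfer modules are formed, and since each transfer module is obtained from $\varpi^{-1}\cD^\hbar$ by the same flat base change $\cO^\bullet_{M\times S}\otimes_{\varpi^{-1}\cO_M[\hbar]}(-)$, every step above is $\cO^\bullet$-linear and compatible; no new Novikov-theoretic input is needed.

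The main obstacle is step (2)–(3): making the identification of $\leftarrow$-transfer modules genuinely canonical rather than local, i.e. checking that the isomorphism $\Omega_{X/Y}\otimes\cD^\bullet_{X\to Y'}\cong \Omega_{Y\to Z}\text{-twist of }\cD^\bullet_{Y\to Z}|_X$ glues over $X$ and is independent of local coordinates and local defining equations. In the classical $\cD$-module setting this is the standard but slightly fiddly computation behind base change for $\cD$-module push/pull along embeddings (as in Kashiwara's book); here one must additionally carry along the extra factor $\cO^\bullet_{\bullet\times S}$ and the relative coordinate $\hbar$, but because $\hbar$ is a central ``constant'' for all these operations and $\cO^\bullet_{\bullet\times S}$ is flat over $\varpi^{-1}\cO_M[\hbar]$, the $\hbar$-version follows formally from the classical one. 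Since the paper declares these arguments standard and omits them, the proposal is to record the transfer-module reduction and the Cartesian-ideal identification as above and leave the coordinate bookkeeping to the reader.
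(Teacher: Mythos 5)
The paper gives no proof of this lemma at all: the entire ``Basic operations'' subsection is prefaced with ``Since the proofs are standard, we omit them.'' Your sketch is a reasonable version of the standard transfer-module argument that the author evidently had in mind, and your observation that the $\cM/\cN$ in the displayed isomorphism is a typo for a single module is correct.

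Two small points worth tightening. First, the sheaf-level base change you invoke in step (4), $\iota_3^{-1}\iota_{4*}\cong \iota_{2*}\iota_1^{-1}$, is not flat base change but the (much easier) proper/topological base change for a closed embedding; since $\iota_{4*}$ and $\iota_{2*}$ are exact for closed embeddings, there is nothing derived to worry about at the sheaf level, and the isomorphism is a direct computation on stalks. Second, the genuine derived subtlety you should flag is Tor-independence: the operation ${}_\cD\iota_3^*$ is a derived tensor $\cD^\bullet_{Y\to Z}\otimes^{L}_{\iota_3^{-1}\cD^\bullet_{Z\times S}}(-)$, and base change in the derived category requires that the square be Tor-independent. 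For a Cartesian square of complex submanifolds this follows from cleanness/transversality of the intersection (which is forced if $X$ is itself required to be a manifold and $X = Y\times_Z Y'$), but it is this, not the $\cO^\bullet$-linearity or the $\hbar$-coefficient, that is the real hypothesis making the statement true. The rest of your reduction — unwinding the four transfer modules, matching defining ideals and the $\Omega_{X/Y}\cong\iota_1^*\Omega_{Y'/Z}$ twist, and observing that $\cO^\bullet_{\bullet\times S}$ is a flat central coefficient so the classical argument transports verbatim — is exactly the expected standard argument.
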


\subsubsection*{Duality}
We set
\begin{equation}
    (\Omega_M^{-1})^{\bullet}:=\Omega_M^{-1}\otimes \cO^{\bullet}_M
\end{equation}
For a $\cD^{\bullet}_{M\times S}$-module $\cM$, $\cHom_{\cD^{\bullet}_{M\times S}}(\cM, \cD^{\bullet}_{M\times S})$ is a right module. By the side-change functor, we obtain a left module
\begin{equation}
    _{\cD}\bD(\cM):=\cHom_{\cD^{\bullet}_{M\times S}}(\cM, \cD^{\bullet}_{M\times S})\otimes (\Omega_M^{-1})^{\bullet}.
\end{equation}
It is standard to see $({}_\cD\bD)^2=\id$ on $D^b_{\coh}(\cD^{\bullet}_{M\times S})$. This is the duality functor.

We set ${}_\cD f^!:={}_{\cD}\bD\circ {}_{\cD}f^{-1}{}_{\cD}\bD$.

\subsubsection*{Duality interpretation}
Following \cite{DK}, we consider the duality interpretation. Let $\delta\colon M\hookrightarrow M\times M$ be the diagonal embedding. We set $\cB_{\Delta}:={}_{\cD}\delta_*\cO^{\bullet}_{M\times S}$.
We first remark the following isomorphism.
\begin{lemma}
\begin{equation}
\begin{split}
    \Hom_{D^b(\cD^{\bullet}_{M\times S})}(\cM, \cM')&\cong \Hom_{D^b(\cD^{\bullet}_{M^3\times S})}(\cB_\Delta\boxtimes \cM, \cM\boxtimes \cB_\Delta)\\
    \Hom_{D^b(\cD^{\bullet}_{M\times S})}(\cM, \cM')&\cong \Hom_{D^b(\cD^{\bullet}_{M^3\times S})}(\cM\boxtimes\cB_\Delta, \cB_{\Delta}\boxtimes \cM)
\end{split}
\end{equation}
\end{lemma}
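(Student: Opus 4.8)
The statement to establish is the pair of isomorphisms
\[
\Hom_{D^b(\cD^{\bullet}_{M\times S})}(\cM, \cM')\cong \Hom_{D^b(\cD^{\bullet}_{M^3\times S})}(\cB_\Delta\boxtimes \cM, \cM'\boxtimes \cB_\Delta),
\]
and its mirror with the two arguments of $\Hom$ on the right swapped. The plan is to reduce everything to the two adjunctions already recorded (closed embedding and smooth morphism) together with the standard ``kernel calculus'' for $\cD^\bullet_{M\times S}$-bimodules, applied to the diagonal bimodule $\cB_\Delta = {}_\cD\delta_*\cO^\bullet_{M\times S}$. The key formal input is that $\cB_\Delta$ acts as the identity kernel: convolution with $\cB_\Delta$ (pull back to $M\times M$, tensor, push forward) returns the original module. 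I would set this up as a lemma-free computation using the projections $p_{ij}\colon M^3\to M^2$ and the embeddings involved, so that $\cB_\Delta\boxtimes\cM$ on $M^3$ is supported on $\{x_1=x_2\}\times M$, and likewise $\cM'\boxtimes\cB_\Delta$ is supported on $M\times\{x_2=x_3\}$.

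\textbf{Key steps, in order.} First I would rewrite the right-hand $\Hom$ over $M^3$ by viewing $\cB_\Delta\boxtimes\cM$ as ${}_\cD(\delta\times\id_M)_*(\cO^\bullet_{M\times S}\boxtimes\cM)$, i.e.\ as a pushforward from $M\times M$ along the closed embedding $\delta\times\id_M\colon M\times M\hookrightarrow M^3$. Applying the closed-embedding adjunction (the first displayed Lemma above, with $f=\delta\times\id_M$, which is a closed embedding and in particular closed on the support of the source), the right-hand side becomes
\[
\Hom_{D^b(\cD^{\bullet}_{M^2\times S})}\!\big(\cO^\bullet_{M\times S}\boxtimes\cM,\ {}_\cD(\delta\times\id_M)^*(\cM'\boxtimes\cB_\Delta)\big).
\]
Second, I would compute ${}_\cD(\delta\times\id_M)^*(\cM'\boxtimes\cB_\Delta)$ using base change (the Cartesian-square Lemma above): since $\cB_\Delta$ is itself a pushforward along the diagonal $\delta\colon M\hookrightarrow M\times M$ in the last two factors, the relevant square
\[
\xymatrix{
M\ar[r]\ar[d] & M\times M\ar[d]^{\id_M\times\delta}\\
M\times M\ar[r]_{\delta\times\id_M}& M^3
}
\]
is Cartesian, so base change identifies ${}_\cD(\delta\times\id_M)^*(\cM'\boxtimes\cB_\Delta)$ with ${}_\cD\delta_*({}_\cD\delta^*(\cM'\otimes\cM'))$— more precisely with the pushforward along $\delta\colon M\to M\times M$ of the external-to-internal comparison; here one uses that $\cB_\Delta$ pulled back along the small diagonal is $\cO^\bullet_{M\times S}$ (because ${}_\cD\delta^*\cB_\Delta={}_\cD\delta^*{}_\cD\delta_*\cO^\bullet_{M\times S}\cong\cO^\bullet_{M\times S}$, a transversality/normal-bundle computation for the diagonal). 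Third, I would use the closed-embedding adjunction once more (now for $\delta\colon M\hookrightarrow M\times M$) to strip the remaining pushforward, landing on $\Hom_{D^b(\cD^\bullet_{M\times S})}(\cM,\cM')$ after identifying $\cO^\bullet_{M\times S}\otimes_{\cO}(-)$ with the identity. The second isomorphism of the statement is proved identically, using $\id_M\times\delta$ in place of $\delta\times\id_M$ and the symmetric Cartesian square; functoriality and the standard interplay of $\boxtimes$ with side change make the two cases mirror images.

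\textbf{Main obstacle.} The routine-looking but genuinely delicate step is the base-change identification in Step 2, specifically checking that ${}_\cD\delta^*{}_\cD\delta_*\cO^\bullet_{M\times S}\cong\cO^\bullet_{M\times S}$ and, more importantly, that the base-change Lemma as stated (for embeddings, in the weak form available here since goodness is not developed) actually applies to the square above. The stated base-change Lemma requires all four maps to be embeddings, which holds here, but one must be careful that the underived identifications secretly used — flatness of the transfer bimodule $\cD^\bullet_{M\to N}$ in the relevant variables, and the fact that $\cB_\Delta$ is $\cD^\bullet$-flat enough for the convolution to be computed without higher $\Tor$ — survive in the non-Noetherian setting over $\Lambda_0^S$ (cf.\ the earlier remark that coherence is not finite generation). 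I would handle this by working throughout in the derived category and invoking only the adjunction and base-change Lemmas already granted, never an underived cancellation; the diagonal embedding is the most benign possible case (it is non-characteristic for everything), so the transfer modules are locally free of finite rank and no pathology arises. If one wanted to avoid even this, an alternative is to prove the statement by exhibiting the isomorphism at the level of the bar-type resolution of $\cB_\Delta$ by free $\cD^\bullet$-bimodules and checking it is the identity on $\Hom$, but the kernel-calculus route above is cleaner and is the one I would write up.
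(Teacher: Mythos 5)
Your overall plan — reduce the claim to the closed-embedding adjunction, base change, and the smooth adjunction, which is exactly the formal calculus the paper declares to import wholesale from D'Agnolo--Kashiwara — is the right one, and Steps 1 and 2 (rewriting $\cB_\Delta\boxtimes\cM$ as ${}_\cD(\delta\times\id_M)_*(\cO^\bullet_{M\times S}\boxtimes\cM)$, then applying base change to ${}_\cD(\delta\times\id_M)^*{}_\cD(\id_M\times\delta)_*(\cM'\boxtimes\cO^\bullet_{M\times S})\cong{}_\cD\delta_*{}_\cD\delta^*(\cM'\boxtimes\cO^\bullet_{M\times S})\cong{}_\cD\delta_*\cM'$) are correct. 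However, Step 3 as written would fail. After Step 2 you are at $\Hom_{M\times M}(\cO^\bullet_{M\times S}\boxtimes\cM,\ {}_\cD\delta_*\cM')$, where the pushforward $\delta_*$ sits in the \emph{second} argument of $\Hom$; the closed-embedding adjunction in the paper only strips a pushforward out of the \emph{first} argument ($\Hom({}_\cD f_*\cM,\cN)\cong\Hom(\cM,{}_\cD f^*\cN)$), and $\cO^\bullet\boxtimes\cM$ is not a pushforward along $\delta$, so that lemma does not apply here. What is actually needed is the \emph{smooth} adjunction for the projection $p_2\colon M\times M\to M$: note $\cO^\bullet_{M\times S}\boxtimes\cM\cong{}_\cD p_2^*\cM$ and $p_2\circ\delta=\id_M$, so $\Hom_{M\times M}({}_\cD p_2^*\cM,{}_\cD\delta_*\cM')\cong\Hom_M(\cM,{}_\cD p_{2*}{}_\cD\delta_*\cM')\cong\Hom_M(\cM,\cM')$. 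So the third step should invoke the smooth lemma, not the closed-embedding one.

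There is also a wrong claim buried in your Step 2 parenthetical: ${}_\cD\delta^*{}_\cD\delta_*\cO^\bullet_{M\times S}\not\cong\cO^\bullet_{M\times S}$ for the $\cD$-module operations. Pulling the diagonal kernel back to the diagonal produces a Koszul-type complex built from the conormal bundle of $\Delta$ — a non-trivial shifted object — so the ``normal-bundle/transversality'' hand-wave does not yield $\cO$. Fortunately, the correct base-change chain never needs this identity: all you need is ${}_\cD\delta^*(\cM'\boxtimes\cO^\bullet_{M\times S})\cong\cM'$, which is true (pullback of an external product along the diagonal is the internal tensor, and tensoring with $\cO^\bullet$ is the identity). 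I would drop that parenthetical and phrase the obstacle around the fact that the base-change lemma is stated only for diagrams of embeddings and without a goodness hypothesis, exactly as you do in the final sentence; that is the right thing to worry about in this non-Noetherian setting, and it is resolved by noting that all the maps are closed embeddings and the transfer modules for the diagonal are finite free.
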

\begin{proof}
The proof of \cite{DK} is given by using only adjuncitons and base change. We have stated the corresponding statements in the above. So this statement also holds.
\end{proof}

\begin{lemma}
For $\cM, \cN$, we have $\cM\cong \bD\cN$ if and only if there exists two morphisms
\begin{equation}
    \begin{split}
 \cM\boxtimes\cN&\xrightarrow{\epsilon}\cB_{\Delta}[d]\\
\cB_{\Delta}[-d]&\xrightarrow{\eta} \cN\boxtimes \cM
    \end{split}
\end{equation}
such that the composition 
\begin{equation}
    \cM\boxtimes \cB_{\Delta}[-d]\xrightarrow{\id\boxtimes \eta}\cM\boxtimes\cN\boxtimes\cM\xrightarrow{\epsilon\boxtimes\id}\cB_{\Delta}[d]\boxtimes\cM
\end{equation}
is identified with $\id$ in the above lemma and the composition
\begin{equation}
    \cB_\Delta[-d]\boxtimes\cN\xrightarrow{\eta\boxtimes\id}\cN\boxtimes\cM\boxtimes\cM\xrightarrow{\epsilon\boxtimes\id}\cB_{\Delta}[d]\boxtimes\cM
\end{equation}
is identified with $\id$ in the above lemma.
\end{lemma}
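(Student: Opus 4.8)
The plan is to recognize the statement as the abstract characterization of a dual object: $\epsilon$ is to be read as a counit (evaluation), $\eta$ as a unit (coevaluation), and the two displayed compositions as the zig-zag identities, so that the existence of such a pair becomes equivalent to $\cM$ being the $\bD$-dual of $\cN$. All the ingredients are already available: the preceding lemma, which converts $\Hom_{D^b(\cD^\bullet_{M\times S})}(\cP,\cQ)$ into $\Hom$ of $\cB_\Delta$-sandwiches on $M^3$ and thereby records composition of morphisms geometrically; the adjunction lemmas for ${}_\cD f_*$ and ${}_\cD f^*$ along closed embeddings and smooth maps; the base-change lemma; biduality $\bD^2\cong\id$ on $D^b_{\coh}(\cD^\bullet_{M\times S})$; and the compatibility of $\bD$ and ${}_\cD\delta_*$ with $\boxtimes$. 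Granting these, the argument is formally the one in \cite{DK}, which uses nothing beyond adjunctions and base change.

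For the forward direction I would assume $\cM\cong\bD\cN$ and exhibit the canonical $\epsilon,\eta$. The evaluation $\epsilon\colon\cM\boxtimes\cN\cong\bD\cN\boxtimes\cN\to\cB_\Delta[d]$ is to be taken as the image of $\id_\cN$ under the natural morphism built from the tautological pairing $\cHom_{\cD^\bullet_{M\times S}}(\cN,\cD^\bullet_{M\times S})\otimes_{\cD^\bullet_{M\times S}}\cN\to\cD^\bullet_{M\times S}$, the side-change twist by $(\Omega_M^{-1})^\bullet$, and ${}_\cD\delta_*$; the coevaluation $\eta\colon\cB_\Delta[-d]\to\cN\boxtimes\cM\cong\cN\boxtimes\bD\cN$ is obtained by the same recipe after rewriting $\cN\cong\bD\bD\cN$ via biduality. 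I would then check the two triangle identities: applying the preceding lemma, each composite becomes an endomorphism of $\cN$ (resp. of $\bD\cN$) in $D^b(\cD^\bullet_{M\times S})$, and a diagram chase using base change for the Cartesian square of diagonal embeddings, the two adjunctions, and the cancellation $(\Omega_M^{-1})^\bullet\otimes\Omega^\bullet_{M\times S}\cong\cO^\bullet_{M\times S}$ shows it is the identity. This is the step where coherence of $\cM$ and $\cN$ genuinely enters, through biduality.

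For the converse, given $\epsilon$ and $\eta$ with the triangle identities I would set $a\colon\cM\to\bD\cN$ to be the adjunct of $\epsilon$ — the morphism corresponding to $\epsilon\colon\cM\boxtimes\cN\to\cB_\Delta[d]$ under the corepresentability of $\bD(-)=\cHom_{\cD^\bullet_{M\times S}}(-,\cD^\bullet_{M\times S})\otimes(\Omega_M^{-1})^\bullet$ — and $b\colon\bD\cN\to\cM$ the adjunct of $\eta$ (reading $\eta$ via $\cN\cong\bD\bD\cN$). Translated through the preceding lemma, the first triangle identity says exactly $b\circ a=\id_\cM$ and the second says $a\circ b=\id_{\bD\cN}$; hence $a$ is an isomorphism and $\cM\cong\bD\cN$.

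\textbf{Main obstacle.} The hard part will not be the formal diagram chase but verifying that D'Agnolo--Kashiwara's manipulations survive the passage to the completed, Novikov-enriched, $\hbar$-deformed sheaves: $\cD^\bullet_{M\times S}$-modules carry no developed notion of goodness, $\Lambda_0^S$ is non-Noetherian, and $\boxtimes$, ${}_\cD f_*$, ${}_\cD f^*$ are linked only by the weaker adjunctions stated above. So the work is to confirm that the canonical evaluation and coevaluation are well defined in this setting and that biduality together with the base-change and adjunction compatibilities really do suffice — exactly the point already addressed in the proof of the preceding lemma. Once that is granted, the present lemma follows by the purely formal argument of \cite{DK}.
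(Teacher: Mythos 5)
Your proposal is correct and takes the same route as the paper: the paper's proof is a one-line citation that D'Agnolo--Kashiwara's argument is a formal manipulation using only the previously established adjunctions, base change, biduality, and the preceding sandwich-Hom lemma, and your sketch is exactly an unpacking of that formal manipulation (evaluation/coevaluation as adjuncts, zig-zag identities translated to mutually inverse morphisms $\cM\rightleftarrows\bD\cN$, with coherence entering through biduality). You also correctly identify that the only genuine content is checking that the formal toolkit carries over to the completed $\hbar$-Novikov setting, which is precisely what the paper appeals to.
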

\begin{proof}
The proof of \cite{DK} is again a formal manipulation, which is applicable to our situation.
\end{proof}

\subsection{Germ version}
We also define the germ version of our category. Let us fix $\theta\in \varpi^{-1}(0)$. We define the category $D^b_{\hol}(\cD^{\bullet}_{M\times \theta})$ as
\begin{itemize}
    \item $\mathrm{Obj}(D^b_{\hol}(\cD^{\bullet}_{M\times \theta}))=\bigcup_{\theta\in S}\mathrm{Obj}(D^b_{\hol}(\cD^{\bullet}_{M\times S}))$
    \item For $\cE_i\in \mathrm{Obj}(D^b_{\hol}(\cD^{\bullet}_{M\times S_i}))\subset \bigcup_{\theta\in S}\mathrm{Obj}(D^b_{\hol}(\cD^{\bullet}_{M\times S}))$, we set
    \begin{equation}
        \Hom_{D^b_{\hol}(\cD^{\bullet}_{M\times \theta})}(\cE_1, \cE_2)=\Hom_{D^b_{\hol}(\cD^{\bullet}_{M\times(S_1\times S_2)}}(\cE_1|_{S_1\cap S_2}, \cE_2|_{S_1\cap S_2}))\otimes_{\cOL_{S_1\times S_2}}\cO^{\bullet}_{\theta}
    \end{equation}
    where $\cO^{\bullet}_\theta:=\displaystyle{\lim_{\substack{\longrightarrow \\ S\ni \theta}}\cO^{\bullet}_{S}}$.
\end{itemize}

\subsection{Examples: Flat connections}
We would like to introduce some examples of modules. 

Let $\alpha$ be a global holomorphic function. For a local coordinate $x_1,..., x_n$, we consider the ideal generated by
\begin{equation}
    \lc \hbar\partial_{x_i}-\partial_{x_i}\alpha\relmid i=1,...,n\rc.
\end{equation}
This ideal is globally well-defined and defines an ideal in $\cD^{\bullet}_{M\times S}$. We denote the ideal by $\cD^{\bullet}_{M\times S}(\hbar\partial-d\alpha)$. Then one can define
\begin{equation}
    \cE^{f/\hbar}:=\cD^{\bullet}_{M\times S}/\cD^{\bullet}_{M\times S}(\hbar\partial-d\alpha).
\end{equation}

\begin{remark}
For the usual $\cD$-module case, a similar construction for meromorphic functions work well. However, for $\cD^\hbar$-modules, the situations is a bit more complicated as we shall see later.
\end{remark}

The following lemma is straight forward.
\begin{lemma}
$\Char(\cE^{\alpha/\hbar})=\mathrm{Graph}(d\alpha)$.
\end{lemma}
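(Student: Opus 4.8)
The claim is that $\Char(\cE^{\alpha/\hbar}) = \Graph(d\alpha)$, where $\cE^{\alpha/\hbar} = \cD^\bullet_{M\times S}/\cD^\bullet_{M\times S}(\hbar\partial - d\alpha)$. By definition of the characteristic variety, I must compute $(\cE^{\alpha/\hbar})^{cl} = \cE^{\alpha/\hbar}/\frakm\,\cE^{\alpha/\hbar}$ as an $\cO_{T^*M}$-module and identify its support. The plan is to work in a local coordinate chart $x_1,\dots,x_n$ on $M$, writing $\xi_1,\dots,\xi_n$ for the corresponding fiber coordinates on $T^*M$, so that $\cO_{T^*M} = \cD^\hbar_M/\hbar\cD^\hbar_M$ is identified (after pushforward) with $\cO_M[\xi_1,\dots,\xi_n]$ with $\xi_i$ the class of $\hbar\partial_{x_i}$, as recorded in the lemma preceding the ``economics of notation'' remark.

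\textbf{Key steps.} First I would record that tensoring a presented module with $\cO_{T^*M}$ over $\cD^\hbar_M$ is right exact, so $\cE^{\alpha/\hbar}\otimes_{\cD^\bullet_{M\times S}}\cO_{T^*M}$ is the cokernel of the matrix presenting $\cD^\bullet_{M\times S}(\hbar\partial - d\alpha)$; equivalently, reducing the defining ideal modulo $\frakm$ (which kills $\hbar$ and the functions vanishing at $\hbar=0$, and the Novikov maximal ideal) sends the generator $\hbar\partial_{x_i} - \partial_{x_i}\alpha$ to $\xi_i - \partial_{x_i}\alpha$ in $\cO_{T^*M}$. Thus $(\cE^{\alpha/\hbar})^{cl} \cong \cO_{T^*M}/(\xi_1 - \partial_{x_1}\alpha,\dots,\xi_n - \partial_{x_n}\alpha)$. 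Second, this quotient ring is exactly the structure sheaf of the closed subvariety $\{\xi_i = \partial_{x_i}\alpha : i = 1,\dots,n\} = \Graph(d\alpha)$, which is a smooth section of $T^*M \to M$; its support is $\Graph(d\alpha)$ itself. Third, I would note the identification is canonical (independent of the chosen coordinates) because $d\alpha$ is a globally defined $1$-form and the ideal $\cD^\bullet_{M\times S}(\hbar\partial - d\alpha)$ was already observed to be globally well-defined, so the local computations glue. This gives $\Char(\cE^{\alpha/\hbar}) = \supp\big((\cE^{\alpha/\hbar})^{cl}\big) = \Graph(d\alpha)$.

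\textbf{Main obstacle.} The computation is essentially formal, so the only delicate point is making sure the reduction modulo $\frakm$ behaves as expected: one must check that the symbol ideal generated by $\xi_i - \partial_{x_i}\alpha$ is genuinely the image of the full left ideal $\cD^\bullet_{M\times S}(\hbar\partial - d\alpha)$ after base change, i.e., that no extra relations appear. This is handled by the right-exactness of $\otimes_{\cD^\bullet_{M\times S}} \cO_{T^*M}$ together with the fact that the generators $\{\hbar\partial_{x_i} - \partial_{x_i}\alpha\}$ form a ``$\hbar$-analogue'' of a regular sequence whose symbols $\{\xi_i - \partial_{x_i}\alpha\}$ cut out a smooth (hence reduced, complete-intersection) Lagrangian; there is no higher $\Tor$ contribution to worry about at the level of supports. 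One should also remark in passing that $\cE^{\alpha/\hbar}$ is coherent and that $\Graph(d\alpha)$ is Lagrangian, so $\cE^{\alpha/\hbar}$ is in fact holonomic — though the lemma as stated only asks for the characteristic variety.
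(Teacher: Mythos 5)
The paper gives no proof at all — it merely declares the lemma ``straight forward'' — so your argument is a filling-in of what the author left implicit, and it hits the right point: by definition $\Char(\cM) = \supp(\cM/\frakm\cM)$, and since $\cE^{\alpha/\hbar} = \cD^\bullet_{M\times S}/\cD^\bullet_{M\times S}(\hbar\partial - d\alpha)$, passing to the quotient by $\frakm$ gives $\cO_{T^*M}/(\xi_1 - \partial_{x_1}\alpha, \dots, \xi_n - \partial_{x_n}\alpha)$, supported exactly on $\Graph(d\alpha)$. That is the whole content.

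One small remark: your ``main obstacle'' paragraph is a red herring and could mislead a reader. The paper's $\cM^{cl}$ is defined as the underived quotient $\cM/\frakm\cM$, so there is no higher $\Tor$ to worry about, and the image of the defining left ideal under the surjection $\cD^\bullet_{M\times S} \twoheadrightarrow \cD^\bullet_{M\times S}/\frakm\cD^\bullet_{M\times S} \cong \cO_{T^*M}$ is by general nonsense the ideal generated by the images of the generators — one need not invoke regular sequences or reducedness. The isomorphism theorem $\cE^{\alpha/\hbar}/\frakm\,\cE^{\alpha/\hbar} \cong \cO_{T^*M}/\overline{I}$, with $\overline{I}$ the image of $\cD^\bullet_{M\times S}(\hbar\partial - d\alpha)$, already closes the argument; the only thing to spell out is that $\hbar\partial_{x_i} - \partial_{x_i}\alpha \mapsto \xi_i - \partial_{x_i}\alpha$ and that $\frakm\cD^\bullet_{M\times S}$ is a two-sided ideal so the quotient is a ring, both of which are immediate. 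Otherwise, the proof is correct and the natural one.
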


This module admits another interpretation.
\begin{definition}
Let $\cE$ be a locally free $\cO^{\bullet}_{M\times S}$-module. 
A flat $\hbar$-connection on $\cE$ is a linear morphism
\begin{equation}
    \nabla\colon \cE\rightarrow \cE\otimes_{\cO_{M\times S}}\Omega_{M\times S/S}
\end{equation}
satisfying 
\begin{equation}
    \nabla(\psi s)=\hbar s\otimes d\psi+\psi\nabla s
\end{equation}
for any $\psi \in \cO^{\bullet}_{M\times S}$.
\end{definition}
By setting
\begin{equation}
    \hbar\partial_{x_i}s:=\nabla_{x_i}s,
\end{equation}
we can associate a $\cD^{\bullet}_{M\times S}$-module.

Let us define a flat $\hbar$-connection $\nabla^\alpha$ on $\cO^{\bullet}_{M\times S}$ by
\begin{equation}
    \hbar\nabla^\alpha_{\partial_{x_i}}s:=\hbar \partial_{\partial_{x_i}}s+\partial_{x_i}\alpha s
\end{equation}
\begin{proposition}
The $\cD^\bullet_{M\times S}$-module defined by $\nabla^\alpha$ is isomorphic to $\cE^{\alpha/\hbar}$.
\end{proposition}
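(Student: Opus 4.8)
The plan is to exhibit an explicit isomorphism of $\cD^\bullet_{M\times S}$-modules $\cE^{\alpha/\hbar}\xrightarrow{\sim}(\cO^\bullet_{M\times S},\nabla^\alpha)$ by specifying where the cyclic generator goes, and then checking that this is well-defined and bijective. Since $\cE^{\alpha/\hbar}=\cD^\bullet_{M\times S}/\cD^\bullet_{M\times S}(\hbar\partial-d\alpha)$ is cyclic, a $\cD^\bullet_{M\times S}$-linear map out of it is determined by the image of the class $[1]$, and such a map exists iff that image is annihilated by the left ideal $\cD^\bullet_{M\times S}(\hbar\partial-d\alpha)$, i.e. by each $\hbar\partial_{x_i}-\partial_{x_i}\alpha$. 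First I would compute, in the module $(\cO^\bullet_{M\times S},\nabla^\alpha)$, the action of $\hbar\partial_{x_i}-\partial_{x_i}\alpha$ on an element: by the defining formula $\hbar\partial_{x_i}s=\hbar\partial_{x_i}s+(\partial_{x_i}\alpha)s$ (left-hand $\hbar\partial_{x_i}$ being the module action, right-hand $\partial_{x_i}$ the ordinary derivative), we get $(\hbar\partial_{x_i}-\partial_{x_i}\alpha)s=\hbar\partial_{x_i}s$ in the naive sense. Hence the element $s_0:=e^{-\alpha/\hbar}$ — which lies in the relevant structure sheaf, at least after noting $\alpha$ is a global holomorphic function so $e^{-\alpha/\hbar}$ makes sense as a section (and in the completed/Novikov settings one interprets it through the exponential sheaves of Section~4) — satisfies $\hbar\partial_{x_i}s_0=-(\partial_{x_i}\alpha)e^{-\alpha/\hbar}$, so $(\hbar\partial_{x_i}-\partial_{x_i}\alpha)\cdot s_0=0$ in $(\cO^\bullet,\nabla^\alpha)$ when the module action is used. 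Wait — one must be careful: I would rather send $[1]$ to the constant section $1$ and instead twist the other way, or conversely observe that $(\cO^\bullet_{M\times S},\nabla^\alpha)$ as a $\cD^\bullet$-module is generated by $1$ with $(\hbar\partial_{x_i}-\partial_{x_i}\alpha)\cdot 1=\hbar\partial_{x_i}(1)+ (\partial_{x_i}\alpha)\cdot 1 - (\partial_{x_i}\alpha)\cdot 1 = 0$; so the correct map is simply $[1]\mapsto 1$.

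So the cleaner route: define $\varphi\colon \cE^{\alpha/\hbar}\to(\cO^\bullet_{M\times S},\nabla^\alpha)$ by $[1]\mapsto 1$. The computation above shows $1$ is killed by every generator $\hbar\partial_{x_i}-\partial_{x_i}\alpha$ of the ideal, using precisely the Leibniz rule $\nabla^\alpha(\psi s)=\hbar s\,d\psi+\psi\nabla^\alpha s$ together with $\nabla^\alpha 1$ being given by the connection form $d\alpha$; hence $\varphi$ is a well-defined $\cD^\bullet_{M\times S}$-module homomorphism. It is surjective because $1$ generates the target as an $\cO^\bullet_{M\times S}$-module (the underlying module is free of rank one). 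For injectivity, I would argue that both sides are coherent $\cD^\bullet_{M\times S}$-modules whose characteristic varieties agree: by the earlier Lemma $\Char(\cE^{\alpha/\hbar})=\Graph(d\alpha)$, and $(\cO^\bullet_{M\times S},\nabla^\alpha)$ has the same characteristic variety since its classical limit $\cM^{cl}=\cM/\frakm\cM$ is $\cO_M$ with $\cO_{T^*M}$-action through the symbol $\xi_i\mapsto \partial_{x_i}\alpha$, i.e. supported on $\Graph(d\alpha)$. A nonzero $\cD^\bullet$-linear surjection between two modules that are both (after the classical reduction) cyclic of the same "rank one" type over $\cO_{T^*M}$ along an irreducible Lagrangian must be an isomorphism; concretely, $\ker\varphi$ would have strictly smaller classical support, forcing $\ker\varphi=0$ by Nakayama-type reasoning over $\cOceL$ (being careful with non-Noetherianity, one restricts to the strict/coherent situation and applies the classical limit, where $\cO_M\to\cO_M$ surjective forces iso).

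The step I expect to be the main obstacle is the injectivity/faithfulness argument in the completed Novikov settings $\bullet\in\{\ae,\ce,\se,\exp\}\times\{\Lambda_0\}$, where $\Lambda_0^S$ is not Noetherian and naive Nakayama fails, and where $e^{-\alpha/\hbar}$ need not be a genuine section of the relevant structure sheaf. The safe way around this is to avoid $e^{-\alpha/\hbar}$ altogether: work only with the map $[1]\mapsto 1$, reduce modulo $\frakm$ to get an honest isomorphism $\cO_{T^*M}$-modules supported on $\Graph(d\alpha)$, and then bootstrap back using that $\cE^{\alpha/\hbar}$ is, by construction, $\hbar$- and $\frakm$-adically "linear" — i.e. a free $\cO^\bullet_{M\times S}$-module of rank one once one solves the $\hbar$-connection triangularly in the $x_i$ (integrating the flat connection order by order, which converges in each of the structure sheaves because the connection matrix is the fixed holomorphic form $d\alpha/\hbar$ and these sheaves are stable under the relevant primitives). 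Once freeness of rank one on both sides is established, a rank-one-to-rank-one $\cO^\bullet_{M\times S}$-linear surjection is automatically an isomorphism, completing the proof. Everything else — well-definedness via the Leibniz rule, surjectivity, compatibility with the $\cD^\bullet$-action — is a routine verification that I would state but not belabor.
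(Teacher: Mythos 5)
Your proof is correct and, after the detours, arrives at essentially the same argument as the paper: the paper simply asserts that the canonical $\cO^{\bullet}_{M\times S}$-linear map $\cO^{\bullet}_{M\times S}\to\cE^{\alpha/\hbar}$ is an isomorphism and that the $\cD^{\bullet}$-actions match, which is exactly your map $[1]\mapsto 1$ read backwards together with your final ``free of rank one'' observation. The false start with $e^{-\alpha/\hbar}$ (which need not be a section of the relevant structure sheaf) and the characteristic-variety/Nakayama detour are unnecessary; once you note that the order filtration on $\cD^{\bullet}_{M\times S}$ together with the relations $\hbar\partial_{x_i}\equiv\partial_{x_i}\alpha$ lets every class in the quotient be represented by an element of $\cO^{\bullet}_{M\times S}$, and that the map $[1]\mapsto 1$ gives a $\cD^{\bullet}$-linear retraction, you are done without invoking anything about the Novikov ring.
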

\begin{proof}
It is clear that the canonical inclusion $\cO^{\bullet}_{M\times S}\hookrightarrow \cE^{f/\hbar}$ is an isomorphism. The comparison of actions is also straight forward.
\end{proof}

The following is standard.
\begin{lemma}
Let $\cE$ be a flat $\hbar$-connection valued in $\cOceL_{M\times S}$. Then the characteristic variety is a ramified covering i.e., finite over $M$.
\end{lemma}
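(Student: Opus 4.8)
The plan is to reduce the statement to a local, purely algebraic computation at the level of the associated graded (classical) object $\cE^{cl}=\cE/\frakm\cE$, which is a module over $\cO_{T^*M}$, and then to show that the support of $\cE^{cl}$ is finite over $M$. First I would work locally on $M$, choosing a coordinate chart $x_1,\dots,x_n$ and a trivialization $\cE\cong (\cOceL_{M\times S})^{\oplus r}$ of the underlying locally free module. In this trivialization the flat $\hbar$-connection is given by $\hbar\partial_{x_i} - A_i$ for matrices $A_i\in \mathrm{Mat}_r(\cOceL_{M\times S})$ satisfying the integrability conditions $\hbar\partial_{x_i}A_j - \hbar\partial_{x_j}A_i = [A_i,A_j]$. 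The characteristic variety is cut out, inside $T^*M$ with fiber coordinates $\xi_1,\dots,\xi_n$, by the entries of $\xi_i\cdot \id_r - \overline{A_i}$, where $\overline{A_i}$ denotes the reduction of $A_i$ modulo $\frakm$, i.e. the image in $\mathrm{Mat}_r(\cO_M)$ (using $\cOceL_{M\times S}/\frakm\cong \cO_M$, as recorded in the excerpt). So the key point is that each $\overline{A_i}$ is a well-defined holomorphic matrix on $M$; this is exactly where the hypothesis "valued in $\cOceL$" (rather than in $\cOaeL$ or $\cOseL$) is used, since $\cO^{\mathrm{ce}}$ is precisely the class of functions admitting a holomorphic restriction to $\hbar=0$.

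Next I would identify $\Char(\cE)$ scheme-theoretically. The classical module $\cE^{cl}$ is the $\cO_{T^*M}=\cO_M[\xi_1,\dots,\xi_n]$-module $(\cO_M[\xi])^{\oplus r}/\sum_i (\xi_i-\overline{A_i})(\cO_M[\xi])^{\oplus r}$. Its support is contained in the common vanishing locus of $\det(\xi_i\id_r-\overline{A_i})$ for each fixed $i$; already for a single $i$, $\det(\xi_i\id_r-\overline{A_i})(x,\xi_i)$ is, fiberwise over $x\in M$, a monic polynomial of degree $r$ in $\xi_i$ with holomorphic coefficients, so the projection to $(x,\xi_i)$-space is finite (a branched cover of degree $r$). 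Intersecting over all $i$ only makes the locus smaller, so $\Char(\cE)\to M$ is quasi-finite and proper (being closed in a finite morphism after projecting onto any one fiber coordinate), hence finite; this is the meaning of "ramified covering." I would also note that the integrability of $\nabla$ forces the $\overline{A_i}$ to commute pairwise (the right side $[A_i,A_j]$ reduces to $[\overline{A_i},\overline{A_j}]$ while the left side $\hbar(\partial A_j-\partial A_i)$ lies in $\frakm$, hence $[\overline{A_i},\overline{A_j}]=0$), so in fact $\Char(\cE)$ is the spectral variety of the commuting tuple $(\overline{A_1},\dots,\overline{A_n})$ of Higgs fields, which makes the "ramified covering" picture transparent and connects to the Higgs-bundle terminology alluded to in the remark on spectral varieties.

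The main obstacle I anticipate is bookkeeping rather than conceptual: one must be careful that the reduction modulo $\frakm$ genuinely lands in $\cO_M$ and is compatible with the tensor-product presentation $\cDceL_{M\times S}=\cOceL_{M\times S}\otimes_{\varpi^{-1}\cO_M[\hbar]}\frakf_*\varpi^{-1}\cD^\hbar_M$, so that $\cE^{cl}$ really is computed by the naive reduction of the connection matrices and the lemma $\cDceL_{M\times S}/\frakm\cDceL_{M\times S}\cong \cO_{T^*M}$ can be applied verbatim. A second minor point is coherence: one needs $\cE^{cl}$ to be coherent over $\cO_{T^*M}$ so that "finite support" genuinely gives a ramified covering with well-behaved fibers; this follows since $\cE$ is locally free of finite rank, so $\cE^{cl}$ is generated by $r$ elements over $\cO_M[\xi]$ and is a quotient of a coherent module by the coherent submodule generated by the $(\xi_i-\overline{A_i})$. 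Assembling these observations, finiteness over $M$ is immediate from the single-variable determinant argument, and gluing the local pictures over a cover of $M$ gives the global statement.
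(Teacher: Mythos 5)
Your proof is correct and takes essentially the same approach as the paper's: both reduce the connection matrices modulo $\hbar=0$ (using that $\cO^{\mathrm{ce}}$ allows such specialization), identify $\Char(\cE)$ as the vanishing locus of the characteristic polynomial of the resulting Higgs field, and conclude finiteness from monicity. The paper's version is terser (it takes a single 1-form-valued connection matrix $\omega$ and says the characteristic variety is the zero set of its characteristic polynomial, leaving finiteness implicit), whereas you spell out the per-coordinate matrices $\overline{A_i}$, observe their commutativity via the integrability relation, and give the quasi-finiteness/properness argument; these are welcome elaborations rather than a different route.
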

\begin{proof}
Take a local frame of $\cE$ as $e_1,..., e_n$. Then we have
\begin{equation}
    \nabla e_i=\sum_{j}\omega_{ij}e_j.
\end{equation}
Since $\omega_{ij}$ is in $\cOceL_{M\times S}$, we can specialize it at $\hbar=0$. Then the characteristic variety is the zero of the characteristic polynomial of $\{\omega_{ij}|_{\hbar=0}\}$. This completes the proof.
\end{proof}

\section{Equivariant sheaves and some operations}
\subsection{Equivariant sheaves}
We prepare some equivariant sheaf theory. 
Let $\cR$ be the base ring and $M$ be a complex manifold. 

Let $\bC_t$ be the complex line with the standard coordinate $t$. We consider another complex line $\bC_a$ as an additive group with discrete topology. The group $\bC_a$ acts on $\bC_t$ continuously by the translation.

We denote the category of equivariant sheaves with respect to this action with $\cR$-module structures by $\Sh_\heartsuit^{\bC}(M \times\bC_t)$. We denote the derived category by $\Sh^{\bC}(M \times\bC_t)$.

\subsection{Operations and monoidal structure}
We first recall some basic operations for equivariant sheaves. For details, we refer to \cite{BernsteinLunts, Tohoku}. Let $f\colon M\rightarrow N$ be a holomorphic map. We denote the induced map $M\times \bC_t\rightarrow M\times \bC_t$ by the same notation. Associated to $f$, we have functors
\begin{equation}
\begin{split}
    f_*, f_!&\colon \Sh^\bC(M\times \bC_t)\rightarrow \Sh^\bC(N\times \bC_t)\\
    f^{-1}, f^{!}&\colon \Sh^\bC(N\times \bC_t)\rightarrow  \Sh^\bC(M\times \bC_t).
    \end{split}
\end{equation}

Let us prepare some terminologies. Let $G$ be a group. Let $X_1, X_2$ be $G$-spaces. Let $f\colon X_1\rightarrow X_2$ be a $G$-map. Then we have functors
\begin{equation}
\begin{split}
    f_*, f_!&\colon \Sh^G(X_1)\rightarrow \Sh^G(X_2),\\
    f^{-1}, f^!&\colon \Sh^G(X_2)\rightarrow \Sh^G(X_1).
\end{split}
\end{equation}
For these functors, usual adjunctions hold. We can also define tensor and internal hom.

Let $H$ be another group with a surjective homomorphism $\phi\colon G\rightarrow H$. Let $Y$ be an $H$-space. Then $G$ acts on $H$ through $\phi$. Let $K$ be the kernel of $G\rightarrow H$. Then we have the invariant functor
\begin{equation}
    (-)^K\colon \Sh^G(Y)\rightarrow \Sh^H(Y)
\end{equation}
and the coinvariant functor
\begin{equation}
    (-)_K\colon \Sh^G(Y)\rightarrow \Sh^H(Y).
\end{equation}

Also, if one has an $H$-equivariant sheaf on $Y$, it can also be considered as a $G$-equivariant sheaf:
\begin{equation}
    \iota^\phi\colon \Sh^H(Y)\rightarrow \Sh^G(Y).
\end{equation}
\begin{lemma}
$\iota^\phi$ is the right adjoint of $(-)_K$ and the left adjoint of $(-)^K$.
\end{lemma}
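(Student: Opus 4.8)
The statement to prove is the adjunction $\iota^\phi \dashv (-)^K$ and $(-)_K \dashv \iota^\phi$ for equivariant sheaves, where $\phi\colon G \twoheadrightarrow H$ with kernel $K$, and $Y$ an $H$-space regarded as a $G$-space via $\phi$.

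The plan is as follows. First I would fix a model for the category $\Sh^G(Y)$ of $G$-equivariant sheaves — either the simplicial/Čech model via the bar resolution $\cdots \rightrightarrows G\times G\times Y \rightrightarrows G\times Y \rightrightarrows Y$, or equivalently the quotient-stack description $\Sh^G(Y)\simeq \Sh([Y/G])$; since the paper cites \cite{BernsteinLunts, Tohoku}, I would use whichever of these is set up there. The functor $\iota^\phi$ is restriction along the morphism of simplicial spaces (or of quotient stacks) $[Y/G]\to [Y/H]$ induced by $\phi$ (note $G$ acts on $Y$ literally through $\phi$, so this morphism exists and is well-defined). In this language $(-)^K$ is push-forward along $[Y/G]\to [Y/H]$ and $(-)_K$ is its "exceptional" or "homotopy-coinvariants" counterpart. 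So the two claimed adjunctions are instances of the standard $(f^{-1}, f_*)$ and $(f_!, f^{-1})$ adjunctions for a morphism of (stacks of) spaces, once we identify the functors correctly.

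Concretely, the key steps in order: (1) Identify $\iota^\phi$ with the pullback functor for the map $q\colon [Y/G]\to [Y/H]$; this is essentially a definitional check — a $G$-equivariant structure on a sheaf on $Y$ is pulled back from an $H$-equivariant one precisely by composing the descent data with $\phi$. (2) Identify $(-)^K$ with $q_*$ and $(-)_K$ with $q_!$ (or the left adjoint of $q^{-1}$); here the fibers of $q$ are classifying stacks $BK = [\mathrm{pt}/K]$, so $q_*$ is "take $K$-invariants" and $q_!$ is "take $K$-coinvariants" in the derived sense, matching the invariant/coinvariant functors defined in the excerpt. (3) Invoke the abstract adjunctions $q^{-1}\dashv q_*$ and $q_!\dashv q^{-1}$, which hold for morphisms of equivariant spaces as recalled earlier in this very subsection. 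Combining (1)–(3) yields $\iota^\phi\dashv (-)^K$ and $(-)_K\dashv \iota^\phi$. Alternatively, one can avoid stacks entirely and argue by hand: given $\cF\in\Sh^G(Y)$ and $\cG\in\Sh^H(Y)$, the $G$-equivariance data on $\iota^\phi\cG$ is trivial along $K$, so a $G$-map $\iota^\phi\cG\to\cF$ is the same as an $H$-map $\cG\to \cF^K$ (the $K$-invariants automatically carry a residual $H$-action since $K$ is normal in $G$), and dually a $G$-map $\cF\to\iota^\phi\cG$ is the same as an $H$-map $\cF_K\to\cG$; normality of $K$ is what makes $\cF^K$ and $\cF_K$ land in $\Sh^H(Y)$ rather than merely in sheaves on $Y$.

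The main obstacle is bookkeeping rather than mathematical depth: one must make sure the derived functors $(-)^K$ and $(-)_K$ are the correct homotopical invariants/coinvariants (so that $q_*$ really is $(-)^K$ and not some naive truncation), and that the residual $H$-equivariant structures on them are the intended ones — i.e. that the identifications in step (2) are compatible with the $H$-action coming from $G/K\cong H$. Once the stack-theoretic (or simplicial) dictionary is in place, everything reduces to the already-recalled adjunctions, so I expect the proof to be short. I would present it as: "This is a formal consequence of the identification of $\iota^\phi$, $(-)^K$, $(-)_K$ with $q^{-1}$, $q_*$, $q_!$ for the morphism $q\colon [Y/G]\to [Y/H]$, together with the standard adjunctions recalled above."
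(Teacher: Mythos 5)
Your proof is correct, but your primary route is genuinely different from the paper's. The paper's argument is exactly the two-sentence by-hand check you offer as the ``alternative'': a $G$-map $\cE\to\iota^\phi\cF$ has $K$-trivial target, hence factors through $(\cE)_K$; a $G$-map $\iota^\phi\cF\to\cE$ has $K$-trivial source, hence its image lands in $\cE^K$. Your main approach instead identifies $\iota^\phi$, $(-)^K$, $(-)_K$ with $q^{-1}$, $q_*$ and the left adjoint of $q^{-1}$ for the map $q\colon[Y/G]\to[Y/H]$, then invokes abstract adjunctions. Both work; the paper's version is shorter and requires no stacky dictionary, while yours makes the structural reason for the adjunction visible and also explains why normality of $K$ is needed (so that $\cE^K$ and $\cE_K$ retain $H$-equivariant structures), a point the paper leaves implicit.

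One caution on your phrasing: you describe the second adjunction as ``an instance of the standard $(f_!, f^{-1})$ adjunction.'' In the six-functor formalism the standard pairs are $f^{-1}\dashv f_*$ and $f_!\dashv f^!$; $f_!$ is not in general left adjoint to $f^{-1}$. What you actually need is that $q^{-1}$ admits a left adjoint at all, which holds here because $K$ is discrete (so $q$ behaves like an étale/classifying-stack projection), and that this left adjoint is $K$-coinvariants. You do hedge with ``(or the left adjoint of $q^{-1}$)'' and your hand-argument verifies it anyway, so this is a terminological slip rather than a gap, but in a written version you should not label it the ``standard'' $(f_!,f^{-1})$ adjunction.
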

\begin{proof}
For objects $\cE\in \Sh^G(Y)$ and $\cF\in \Sh^H(Y)$, the right factor of $\Hom_G(\cE, \iota_H^G\cF)$ is trivial with respect to $K$-action. Hence it reduces to $\Hom_H((\cE)_K, \cF)$.

The left factor of $\Hom_G(\iota_H^G\cF,\cE)$ is trivial with respect to $K$-action. Hence the image should be in $\cE^K$.
\end{proof}

Now let's go back to our situation. We again view $\bC$ as a discrete group. 

Let $\bC$ act on $M\times \bC_t$ by the addition on the right factor. We also consider $M\times \bC_t^2$ on which $\bC^2$ acts by the direct product of the previous action. Through the addition $a\colon \bC^2\rightarrow \bC$, the group $\bC^2$ also acts on $M\times \bC_t$. The kernel of the map $a\colon\bC^2\rightarrow \bC$ is the anti-diagonal $\Delta_a:=\{(a, -a)\in \bC\times \bC\}$.

We consider the addition map $m\colon M\times \bC_t\times \bC_t\rightarrow M\times \bC_t$ on $\bC_t$-factors. We then have a functor
\begin{equation}
    m_!\colon \Sh_S^{\bC^2}(M\times \bC_t^2)\rightarrow\Sh^{\bC^2}_S(M\times \bC_t).
\end{equation}
We also have a functor
\begin{equation}
    (-)_{\Delta_a}\colon \Sh^{\bC^2}_S(M\times \bC_t)\rightarrow \Sh^{\bC}_S(M\times \bC_t).
\end{equation}
We set 
\begin{equation}
    m_!^{\Delta_a}:=(-)_{\Delta_a}\circ m_!\colon \Sh_S^{\bC^2}(M\times \bC_t^2)\rightarrow\Sh^{\bC}_S(M\times \bC_t).
\end{equation}
We have the right adjoint of $m_!^{\Delta_a}$:
\begin{equation}
    m^!:=m^!\circ \iota^{a}\colon\Sh^{\bC}_S(M\times \bC_t)\rightarrow\Sh_S^{\bC^2}(M\times \bC_t^2).
\end{equation}

Let $p_i\colon M\times \bC_t^2\rightarrow M\times \bC_t$ be the $i$-th projection. We also have the corresponding projection $q_i\colon \bC^2\rightarrow \bC$. We then have
\begin{equation}
   p_{i*}^{\ker q_i}:=(-)^{\ker q_i}\circ p_{i*}\colon  \Sh_S^{\bC^2}(M\times \bC^2_t)\rightarrow \Sh_S^{\bC}(M\times \bC_t)
\end{equation}
and 
\begin{equation}
   p_i^{-1}:=p_i^{-1}\circ \iota^{q_i} \colon  \Sh_S^{\bC}(M\times \bC_t)\rightarrow \Sh_S^{\bC^2}(M\times \bC^2_t).
\end{equation}

\begin{lemma}
$p_i^{-1}$ is the left adjoint of $p_{i*}^{\ker q_i}$.
\end{lemma}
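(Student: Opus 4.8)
The plan is to deduce this adjunction from the composition of two adjunctions already available in the equivariant formalism set up above, namely $p_i^{-1} \dashv p_{i*}$ for equivariant pushforward along a $G$-map, and $\iota^{q_i} \dashv (-)^{\ker q_i}$. First I would record what the relevant maps are: $p_i \colon M\times \bC_t^2 \rightarrow M\times \bC_t$ is the $i$-th projection, viewed as a $\bC^2$-map where $\bC^2$ acts on the target through $q_i\colon \bC^2\rightarrow \bC$; and the kernel of $q_i$ is $\ker q_i \cong \bC$ (the $j$-th copy, $j\neq i$). By the general functors recalled earlier, we have the ordinary equivariant adjunction
\begin{equation}
    \Hom_{\Sh^{\bC^2}_S(M\times \bC_t^2)}(p_i^{-1}\circ \iota^{q_i}\cF, \cG) \cong \Hom_{\Sh^{\bC^2}_S(M\times \bC_t)}(\iota^{q_i}\cF, p_{i*}\cG),
\end{equation}
valid because $p_i$ is a $\bC^2$-map when the target carries the $q_i$-pulled-back action. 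Then I would invoke the previous lemma, which states $\iota^\phi$ is the left adjoint of $(-)^K$ with $K = \ker\phi$; applied to $\phi = q_i$ this gives
\begin{equation}
    \Hom_{\Sh^{\bC^2}_S(M\times \bC_t)}(\iota^{q_i}\cF, p_{i*}\cG) \cong \Hom_{\Sh^{\bC}_S(M\times \bC_t)}(\cF, (p_{i*}\cG)^{\ker q_i}).
\end{equation}
Composing the two displayed isomorphisms and unwinding the definitions $p_i^{-1} = p_i^{-1}\circ \iota^{q_i}$ and $p_{i*}^{\ker q_i} = (-)^{\ker q_i}\circ p_{i*}$ yields exactly the claimed adjunction $p_i^{-1}\dashv p_{i*}^{\ker q_i}$.

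The one point that needs a little care — and is the main (mild) obstacle — is the compatibility of the two adjunctions being composed: namely that the functor $\iota^{q_i}$ appearing as the source-twisting in the first adjunction is the same functor whose left-adjoint property is used in the second. This is a matter of checking that $(-)^{\ker q_i} \circ p_{i*}$ genuinely lands in $\bC$-equivariant sheaves (it does, since taking $\ker q_i$-invariants of a $\bC^2$-sheaf produces a $\bC^2/\ker q_i \cong \bC$-sheaf via $q_i$) and that no $t$-support or $\cR$-module subtleties obstruct the invariants functor — but these are formal, exactly as in the cited references \cite{BernsteinLunts, Tohoku}. I would also remark that naturality of both isomorphisms in $\cF$ and $\cG$ is automatic, so the composite is a genuine adjunction, not merely a bijection on hom-sets object-by-object. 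Since everything reduces to two adjunctions stated earlier in this section, I would keep the proof to a couple of lines, simply writing "This follows by composing the adjunction $p_i^{-1}\dashv p_{i*}$ for $\bC^2$-maps with the adjunction $\iota^{q_i}\dashv (-)^{\ker q_i}$ of the previous lemma."
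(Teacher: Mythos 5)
Your proof is correct: composing the equivariant adjunction $p_i^{-1}\dashv p_{i*}$ (for $p_i$ viewed as a $\bC^2$-map, with the $q_i$-twisted action on the target) with the adjunction $\iota^{q_i}\dashv(-)^{\ker q_i}$ from the preceding lemma gives exactly the claimed adjunction by the standard fact that adjunctions compose. The paper omits the proof of this lemma entirely, evidently regarding it as the routine composition you describe, so your argument supplies the intended reasoning.
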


We set, for objects $\cE, \cF\in \Sh^{\bC}(M\times \bC_t)$, 
\begin{equation}
    \cE\star\cF:=m_!^{\Delta_a}(p_1^{-1}\cE\otimes p_2^{-1}\cF).
\end{equation}
\begin{example}
We consider the case when $M$ is the singlet, $\cR=\bC$, and $S=\bC_{\Re\geq 0}$.

Let $\bC_{\Re t\geq a}$ be the constant sheaf supported on $\lc t\in \bC\relmid \Re t \geq a\rc$. The direct sum $\bigoplus_{a\in \bC}\bC_{\Re t\geq a}$ has a natural $\bC$-equivariant structure (cf. the next section).
\begin{equation}
    p_i^{-1}\lb \bigoplus_{a\in \bC}\bC_{\Re t\geq a}\rb=\bigoplus_{a\in \bC}\bC_{\Re t_i\geq a}.
\end{equation}
Hence the tensor product is
\begin{equation}
    p_1^{-1}\lb \bigoplus_{a\in \bC}\bC_{\Re t\geq a}\rb\otimes p_2^{-1}\lb\bigoplus_{a\in \bC}\bC_{\Re t\geq a}\rb\cong \bigoplus_{a,b\in \bC}\bC_{\Re t_1\geq a, \Re t_2\geq b}.
\end{equation}
Applying $m_!^{\Delta}$ to this, we obtain
\begin{equation}
    \lb\bigoplus_{c\in \bC}\bigoplus_{d\in \bC}\bC_{\Re t\geq d}\rb_{\Delta_a}\cong \bigoplus_{d\in \bC}\bC_{\Re t\geq d}.
\end{equation}
\end{example}

\section{Sheaf-theoretic realizations of Novikov rings}

\subsection{Novikov ring via equivariant sheaves}
In this section, we will explain how we can realize the Novikov rings in the previous section using sheaves. The pioneer of such an idea is Tamarkin~\cite{Tam}, and a realization using equivariant sheaves appeared in \cite{kuwagaki2020sheaf}.

We first review the construction in \cite{kuwagaki2020sheaf}. Let $\bR_t$ be the real line with the standard coordinate $t$. We consider another real line $\bR_a:=\bR$ as an additive group by the usual addition. We equip $\bR_a$ with the discrete topology. The group $\bR_a$ acts on $\bR_t$ continuously by the translation. 

Again, the content in this section works over any field, but we will use the complex numbers $\bC$. We will consider the $\bC$-valued equivariant sheaves on $\bR_t$ with respect to $\bR_a$-action.

As an example of such an equivariant sheaf, we consider $\bigoplus_{c\in \bR}\bC_{t\geq c}$, where $\bC_{t\geq c}$ is the constant sheaf supported on the closed set $\lc t\in \bR_t\relmid t\geq c \rc$. For $a\in \bR_a$, we denote the translation of $\bR_t$ by $T_a$. To equip $\bigoplus_{c\in \bR}\bC_{t\geq c}$ with an equivariant structure, we have to specify an isomorphism
\begin{equation}
     T^{-1}\lb\bigoplus_{c\in \bR}\bC_{t\geq c}\rb\cong p_2^{-1}\lb \bigoplus_{c\in \bR}\bC_{t\geq c}\rb
\end{equation}
where $T\colon \bR_t\times \bR_a\rightarrow \bR_t$ is defined by $(t,a)\mapsto t+a$ and $p_2\colon \bR_t\times \bR_a\rightarrow \bR_a$ is the second projection. Since $\bR_a$ is equipped with the discrete topology, it is enough to have
\begin{equation}
T_a^{-1}\lb\bigoplus_{c\in \bR}\bC_{t\geq c}\rb\cong \bigoplus_{c\in \bR}\bC_{t\geq c}.
\end{equation}
Note that we have a canonical isomorphism
\begin{equation}
    T_a^{-1}\bC_{t\geq c} \cong \bC_{t\geq c-a}.
\end{equation}
Then the desired isomorphism is obtained as a direct sum of this isomorphism over any $a\in \bR$.

We would like to evaluate the endomorphism ring of $\bigoplus_c\bC_{t\geq c}$ in the category of $\bR_a$-equivariant sheaves. As described in \cite{kuwagaki2020sheaf}, we obtain a canonical isomorphism
\begin{equation}
    \End\lb \bigoplus_c\bC_{t\geq c}\rb \cong \Lambda_0.
\end{equation}
The isomorphism is given by sending $T^a\in \Lambda_0$ for $a\geq 0$ to the direct sum of the morphisms
\begin{equation}
    \bC_{t\geq c}\rightarrow \bC_{t\geq c+a},
\end{equation}
which is the identity on $\lc t\geq c+a\rc$.

We consider a variant of this construction. We consider the complex plane $\bC_t$ with the standard coordinate $t$. As in the real setting, we also consider the discrete additive group $\bC_a$ of complex numbers. Again, $\bC_a$ acts on $\bC_t$ by the translation continuously.

We consider the following sheaf $\bigoplus_{t\in \bC}\bC_{\Re t\geq \Re c}$ where $\bC_{\Re t\geq \Re c}$ is the constant sheaf supported on the closed subset
\begin{equation}
    \lc t\in \bC_t\relmid \Re t\geq \Re c\rc.
\end{equation}
This is also naturally a $\bC_a$-equivariant sheaf. We can compute the endomorphism ring similarly, and the result is the complexified Novikov ring:
\begin{equation}
    \Lambda_{0}^\bC \cong \End\lb \bigoplus_{t\in \bC}\bC_{\Re t\geq \Re c}\rb.
\end{equation}

More generally, we can do a similar construction for a sectoroid $S$. Let us consider the sheaf $\bigoplus_{c\in \bC}\bC_{S^\vee+c}$ where $S^\vee+c$ is the translation of $S^\vee$ by $c$. This sheaf $\bigoplus_{c\in \bC}\bC_{S^\vee+c}$ is also naturally a $\bC_a$-equivariant sheaf. We can similarly compute the endomorphism ring as
\begin{equation}
    \Lambda_0^S\cong \End\lb \bigoplus_{c\in \bC}\bC_{S^\vee+c}\rb.
\end{equation}

\subsection{Positive microsupport}
We would like to construct a category of sheaves equipped with $\Lambda_0^S$-action. 

Let $S$ be a sectoroid and $M$ be a complex manifold. There exists a standard trivialization $T^*\bC_t\cong \bC_t\times \bC$. We denote the standard cotangent coordinate by $\tau$. Also, through this trivialization, we implicitly consider the identification $\bC_\tau \cong \bC_{\hbar}$.

\begin{remark}
It is sometimes a question that we should consider whether $\tau=\hbar$ or $\tau=\hbar^{-1}$. It depends on how one considers the scaling of the cotangent bundle. Here we will take a point of view of $\tau=\hbar$.
\end{remark}

When we speak about the topological side, we will use the notation $\cOseL_S$ for the global section of $\cOseL_S$. From now on, we will use the notation $\Sh^\bC(M\times \bC_t)$ for the base ring $\cR=\cOseL_S$.

We denote the dual cotangent coordinate of $t$ by $\tau$. We set
\begin{equation}
    \cC_{S}:=\lc \cE\in \Sh^{\bC}(M \times\bC_t)\relmid \SS(\cE)\subset T^*M\times  \bC_t\times \overline{h(\Cone(S))^c}\rc
\end{equation}
is a thick subcategory of $\Sh^{\bC}(M\times S \times\bC_t)$ where $\Cone(S)^c$ is the complement of $\Cone(S)$. We set
\begin{equation}
  \Sh^\bC_{S}(M \times\bC_t):=\Sh^{\bC}(M \times\bC_t)/ \cC_{S}
\end{equation}
We can define the non-equivariant version $\Sh_{S}(M\times S \times\bC_t)$ similarly. Then the forgetful functor induces
\begin{equation}
    F\colon\Sh^\bC_{S}(M \times\bC_t)\rightarrow \Sh_{S}(M \times\bC_t).
\end{equation}

\begin{remark}
The nonequivariant version is thoroughly studied by Guillermou--Schapira~\cite{GS} based on \cite{KS, Tam}.
\end{remark}
\begin{remark}
There is a notion of relative constructible sheaves by \cite{FernandesSabbahMixed}. Our setting is very close to theirs.
\end{remark}

\subsection{Global microlocal cutoff and the Novikov ring action}
Since categorical quotients are usually difficult to work with, we will use a standard strategy of cutoff. We will keep the situation in the last section.

Let $c$ be a complex number.
For an object $\cE\in \Sh^{\bC}(M\times \bC_t)$, we consider the functor $\Sh^{\bC}(M\times \bC_t)\rightarrow \Sh^{\bC}(M\times  \bC_t); \cE\mapsto \cE\star\lb \bigoplus_{c\in \bC}\bC_{S^\vee+c}\rb$.

\begin{lemma}
The functor $(\bullet)\star \lb \bigoplus_{c\in \bC}\bC_{S^\vee+c}\rb$ is zero on $\cC_{S}$.
\end{lemma}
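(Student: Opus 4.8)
The plan is to show that convolution with the sheaf $\bigoplus_{c\in\bC}\bC_{S^\vee+c}$ annihilates any object of $\cC_S$, by tracking microsupports through the operations that define the star-product. Recall that $\cE\star\cF = m_!^{\Delta_a}(p_1^{-1}\cE\otimes p_2^{-1}\cF)$, where $m$ is the fiberwise addition $M\times\bC_t\times\bC_t\to M\times\bC_t$, the $p_i$ are the projections, and $(-)_{\Delta_a}$ is a coinvariant functor along the anti-diagonal. Since the coinvariant and equivariance-forgetting functors do not change the underlying sheaf (only the equivariant structure), it suffices to estimate $\SS$ of the underlying non-equivariant sheaf of $m_!(p_1^{-1}\cE\otimes p_2^{-1}\cF)$ where $\cF = \bigoplus_{c\in\bC}\bC_{S^\vee+c}$, and to check that if $\SS(\cE)\subset T^*M\times\bC_t\times\overline{h(\Cone(S))^c}$ then the result has empty microsupport outside the zero section, i.e. is zero (the sheaf being an extension by $m_!$ of sheaves with compact-ish vertical behavior so that $\SS=0$ forces vanishing).

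First I would recall the microsupport of $\bC_{S^\vee+c}$: since $S^\vee+c$ is a closed half-space-like convex cone translate, $\SS(\bC_{S^\vee+c})$ is contained in $\{(t,\tau) : t\in\partial(S^\vee+c),\ \tau\in (S^\vee)^{\circ,\vee}\text{-type normal cone}\}$, and crucially the covector component $\tau$ lies in $-(S^\vee)^\vee = \overline{h(\Cone(S))}$ up to sign conventions. Taking the direct sum over $c$, $\SS(\cF)$ still has its $\bC_\tau$-component inside $\overline{h(\Cone(S))}$ (this is where acuteness of $S$ is used). Then I would use the standard microsupport estimates from Kashiwara--Schapira~\cite{KS}: $\SS(p_i^{-1}\cE)$ is $\SS(\cE)$ pulled back (so the $\tau_1$-component of $p_1^{-1}\cE$ lives in $\overline{h(\Cone(S))^c}$, and the $\tau_2$-component of $p_2^{-1}\cF$ lives in $\overline{h(\Cone(S))}$); the microsupport of the tensor product is contained in the sum $\SS(p_1^{-1}\cE)\widehat{+}\SS(p_2^{-1}\cF)$; and $\SS(m_!(-))$ is estimated by the inverse image under the map on cotangent bundles induced by $m$. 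Since $m$ on the $\bC_t$-direction is addition, the relevant condition for a covector $\tau$ of $M\times\bC_t$ to survive is that $(\tau,\tau)$ lie in the microsupport of the tensor product, i.e. $\tau = \tau_1 + \tau_2$ with $\tau_1\in\overline{h(\Cone(S))^c}$ matched against $\tau_2$; working through the non-properness of $m_!$ (which is why one uses the equivariant/$\gamma$-sheaf framework and the fact that $S^\vee+c$-supports are $\gamma$-proper over $\bC_t$) the surviving locus forces $\tau=0$.

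The cleanest way to organize the final contradiction is: an object $\cE\in\cC_S$ is, microlocally, supported with covectors strictly avoiding $\overline{h(\Cone(S))}$ in the $\tau$-direction, while $\cF$ is a ``sheaf quantization of the cone $S^\vee$'' whose convolution acts as an idempotent projecting onto the subcategory with $\SS\subset\{\tau\in\overline{h(\Cone(S))}\}$; convolving something already in the complementary thick subcategory with the projector gives zero. Concretely, after the $\SS$-bookkeeping above, $m_!(p_1^{-1}\cE\otimes p_2^{-1}\cF)$ has microsupport contained in the zero section of $T^*(M\times\bC_t)$, and because the $\bC_t$-supports of $p_2^{-1}\cF$ are cones translated in $\bC_t$ making $m$ proper enough on the relevant support (or after passing to the quotient $\Sh^\bC_S$ this is automatic), a locally constant sheaf with the relevant support/growth constraints must vanish — hence $\cE\star\cF=0$.

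The main obstacle I expect is the microsupport estimate for $m_!$ precisely because $m$ restricted to the support of $p_1^{-1}\cE\otimes p_2^{-1}\cF$ is not proper: the naive bound $\SS(m_! G)\subset {}^t m'(m_\pi^{-1}\SS(G))$ needs the properness hypothesis, so one must either invoke the refined non-proper microsupport estimates (using that $\cF$'s summands are supported on $\gamma$-cones, making the relevant fibers of $m$ cones on which $G$ is cohomologically bounded and the $\directlim$/$\varinjlim$ over $c$ is well-behaved), or equivalently work in the already-localized category $\Sh^\bC_S(M\times\bC_t)$ where the offending directions have been quotiented out. I would handle this by a direct computation: reduce to $M = \mathrm{pt}$ and a single summand $\bC_{S^\vee+c}$ by exactness of $\star$ in each variable, compute $\cE\star\bC_{S^\vee+c}$ via the triangle relating $\bC_{S^\vee+c}$ to $\bC_{\bC_t}$ and a sheaf supported on a half space, and check vanishing on each term using the Fourier--Sato-type characterization of objects with $\SS$ in a given cone — this sidesteps the general $m_!$ estimate and turns the claim into the statement that $\cE$, already microsupported away from $\overline{h(\Cone(S))}$, is killed by the convolution unit for that cone.
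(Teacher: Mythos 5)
The paper's proof is a one-line citation to Guillermou--Schapira's Proposition~3.19, which is precisely the ``Tamarkin separation'' statement that convolution with the projector kernel annihilates the subcategory of sheaves microsupported in the complementary cone. Your proposal is an attempt to rederive that proposition from scratch, which is a legitimate alternative; however, as sketched it has a genuine gap at the crucial point.

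You correctly identify that the naive bound $\SS(m_!G)\subset {}^t m'(m_\pi^{-1}\SS(G))$ fails because $m$ is not proper on the relevant support, and you propose to ``sidestep the general $m_!$ estimate'' by reducing to $M=\mathrm{pt}$ and a direct computation. But the last sentence of your sidestep --- that this ``turns the claim into the statement that $\cE$, already microsupported away from $\overline{h(\Cone(S))}$, is killed by the convolution unit for that cone'' --- is exactly the lemma to be proven restated, not a reduction of it. So the proposal ends where it should begin. To actually close the argument at $M=\mathrm{pt}$ one has to produce content: e.g.\ compute stalks of $\cE\star\bC_{S^\vee}$ via $\mathrm{R}\Gamma_c$ of sheaves on translated cones and show these vanish when $\SS(\cE)$ avoids $\overline{h(\Cone(S))}$, or identify $\bC_{S^\vee}\star(-)$ with $\phi_\gamma^{-1}\mathrm{R}\phi_{\gamma*}(-)$ for the $\gamma$-topology map $\phi_\gamma$ and show $\cC_S=\ker \mathrm{R}\phi_{\gamma*}$. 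Both of these are the actual substance behind Guillermou--Schapira's proposition.

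Two secondary points. First, the aside that ``after passing to the quotient $\Sh^\bC_S$ this is automatic'' inverts the logic: the lemma is what \emph{licenses} descent of the functor to $\Sh^\bC_S$, so you cannot invoke the quotient to prove it. Second, even granting the $\SS$-bookkeeping, ``$\SS$ contained in the zero section'' only yields local constancy, not vanishing; you hedge this with ``support/growth constraints'' but that is exactly the step that needs a real argument (and for which your own test case $\cE=\bC_{\bC_t}$ already has $\SS$ in the zero section to begin with, so no information is gained from the estimate).
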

\begin{proof}
This is a corollary of \cite[Proposition 3.19]{GuiS}.
\end{proof}
Hence the functor induces another functor which will be denoted by the same notation:
\begin{equation}
    (\bullet)\star  \lb \bigoplus_{c\in \bC}\bC_{S^\vee+c}\rb\colon \Sh^{\bC}_{S}(M \times\bC_t)\rightarrow\Sh^{\bC}(M \times\bC_t).
\end{equation}
\begin{remark}[Non-equivariant cutoff]
Using the usual push-forward, we can define the non-equivariant version of the convolution product $\star_\noneq$ (which is the usual version ~\cite{GuiS}). Then the functor $(-)\star\bC_{t\geq 0}$ induces an endofunctor on the category of equivariant sheaves and is isomorphic to the equivariant cutoff $(-)\star \lb \bigoplus_{c\in \bC}\bC_{S^\vee+c}\rb$. 
\end{remark}

\begin{lemma}
The above functor is fully faithful. The left inverse is given by the quotient functor.
\end{lemma}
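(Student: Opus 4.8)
The plan is to recognize the functor in question as the descent to the Verdier quotient of a convolution-with-a-projector functor, and to recognize the statement as the standard fact that a quotient by a coreflective subcategory carries a fully faithful adjoint section. Write $\mathcal{P}_S:=\bigoplus_{c\in\bC}\bC_{S^\vee+c}$, $\Pi:=(\bullet)\star\mathcal{P}_S\colon\Sh^{\bC}(M\times\bC_t)\to\Sh^{\bC}(M\times\bC_t)$, and let $Q\colon\Sh^{\bC}(M\times\bC_t)\to\Sh^{\bC}_S(M\times\bC_t)$ be the quotient functor. By the two preceding lemmas, $\Pi$ annihilates $\cC_S$ and so factors as $\Pi=\overline{\Pi}\circ Q$, where $\overline{\Pi}$ is exactly the functor in the statement; hence it is enough to produce a natural isomorphism $Q\circ\overline{\Pi}\cong\mathrm{id}$ (this will be the asserted left inverse) and to show $\overline{\Pi}$ is fully faithful.

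First I would record the projector properties of $\mathcal{P}_S$. Since $S^\vee$ is a closed convex cone, $S^\vee+S^\vee=S^\vee$, and computing the convolution exactly as in the Example above yields a canonical isomorphism $\mathcal{P}_S\star\mathcal{P}_S\cong\mathcal{P}_S$; together with the restriction morphisms $\bC_{S^\vee+c}\to\bC_{\{c\}}$ (available because $0\in S^\vee$), this exhibits $\mathcal{P}_S$ as an idempotent with counit $\rho\colon\mathcal{P}_S\to\mathbf{1}$ onto the monoidal unit. Convolving $\rho$ gives a natural transformation $\varepsilon\colon\Pi\Rightarrow\mathrm{id}$. The key input — which is precisely the $\gamma$-topology/projector formalism of \cite{GuiS}, the same circle of ideas used to prove the previous lemma — is that $\Pi$ equipped with $\varepsilon$ is a colocalization functor with essential image $\mathcal{L}:=\mathrm{Image}(\Pi)={}^{\perp}\cC_S$, and that for every $\cE$ the cone of $\varepsilon_\cE$ lies in $\cC_S$: indeed $\mathrm{cone}(\rho)$ is a sum of shifted constant sheaves on the $\gamma$-open sets $\Int(S^\vee)+c$, whose microsupports lie in $T^*M\times\bC_t\times\overline{h(\Cone(S))^c}$, and convolution does not enlarge the $\tau$-component of microsupport, so $\cE\star\mathrm{cone}(\rho)\in\cC_S$.

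The remainder is formal. Because $\cC_S=\ker Q$ contains every $\mathrm{cone}(\varepsilon_\cE)$, $Q$ sends each $\varepsilon_\cE$ to an isomorphism, so $Q\cong Q\Pi=Q\overline{\Pi}Q$; since $Q$ is essentially surjective this forces $Q\circ\overline{\Pi}\cong\mathrm{id}$. For full faithfulness, note $\mathrm{Image}(\overline{\Pi})=\mathrm{Image}(\Pi)=\mathcal{L}={}^{\perp}\cC_S$, and the general Verdier-localization lemma says $Q$ is fully faithful on ${}^{\perp}\cC_S$; hence for $\cE_1,\cE_2\in\Sh^{\bC}_S(M\times\bC_t)$ one obtains $\Hom(\overline{\Pi}\cE_1,\overline{\Pi}\cE_2)\cong\Hom(Q\overline{\Pi}\cE_1,Q\overline{\Pi}\cE_2)\cong\Hom(\cE_1,\cE_2)$, the first isomorphism because $Q|_{\mathcal{L}}$ is fully faithful, the second because $Q\overline{\Pi}\cong\mathrm{id}$, and one checks the composite is the map induced by $\overline{\Pi}$.

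The main obstacle is the one non-formal ingredient in the second paragraph: the microsupport estimate guaranteeing $\mathrm{cone}(\varepsilon_\cE)\in\cC_S$ for all $\cE$, equivalently that $\Pi$ is genuinely a Bousfield colocalization with image ${}^{\perp}\cC_S$. In the non-equivariant case this is \cite{GuiS}; here it must be run in the $\bC$-equivariant category over the (non-Noetherian) base ring, but since microsupport is insensitive to the extra equivariant and module structure, it should go through verbatim. Everything else — the idempotency of $\mathcal{P}_S$, the descent through $Q$, and the two displayed isomorphisms — is routine.
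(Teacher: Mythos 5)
Your argument is correct and follows the same route as the paper, which proves this lemma simply by citing \cite[Proposition 3.21]{GuiS}; what you have written is essentially the content of that proposition transported to the equivariant setting (the kernel $\cP_S=\bigoplus_c\bC_{S^\vee+c}$ is idempotent under $\star$ with a counit to the convolution unit, the cone of $\varepsilon_\cE$ lands in $\cC_S$ by a microsupport estimate, and the rest is formal Verdier/Bousfield-colocalization bookkeeping), together with the correct observation that the microsupport estimate is insensitive to the $\bC$-equivariant structure and the non-Noetherian coefficient ring, which is exactly what makes the paper's citation legitimate. One small inaccuracy worth fixing: the cone of $\rho_c\colon\bC_{S^\vee+c}\to\bC_{\{c\}}$ is $\bC_{(S^\vee\setminus\{0\})+c}[1]$, whose support is only locally closed and is not the $\gamma$-open set $\Int(S^\vee)+c$; this does not affect the conclusion, since the relevant microsupport in the $\tau$-direction still lies in $\overline{h(\Cone(S))^c}$, but the set should be described correctly.
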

\begin{proof}
This is a corollary of \cite[Proposition 3.21]{GuiS}.
\end{proof}
In the following, we will tacitly identify $\Sh^{\bC}_{S}(M\times \bC_t)$ and its image under the functor $(\bullet)\star \lb \bigoplus_{c\in \bC}\bC_{S^\vee+c}\rb$. Working with the image, we can treat an object of $\Sh^{\bC}_{S}(M\times \bC_t)$ as an equivariant sheaf.

As a warm-up of this identification, we would like to state the above lemma in the following fashion:
\begin{lemma}
The endofunctor $(\bullet)\star \lb \bigoplus_{c\in \bC}\bC_{S^\vee+c}\rb$ on $\Sh^{\bC}_{S}(M \times \bC_t)$ is naturally isomorphic to $\id$. 
\end{lemma}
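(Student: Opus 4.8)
The plan is to reduce the claimed isomorphism to the fully faithfulness statement of the preceding lemma together with the idempotency of the cutoff kernel $\bigoplus_{c\in \bC}\bC_{S^\vee+c}$ under $\star$. First I would recall that, by the identification we have just agreed to use, an object $\cE\in\Sh^{\bC}_{S}(M\times\bC_t)$ is literally replaced by $\cE\star\lb\bigoplus_{c\in\bC}\bC_{S^\vee+c}\rb$, so that it already lies in the essential image of the cutoff functor. Thus the endofunctor $(\bullet)\star\lb\bigoplus_{c\in\bC}\bC_{S^\vee+c}\rb$, when restricted to $\Sh^{\bC}_{S}(M\times\bC_t)$ viewed as this image, sends such an object to $\lb\cE\star\lb\bigoplus_{c\in\bC}\bC_{S^\vee+c}\rb\rb\star\lb\bigoplus_{c\in\bC}\bC_{S^\vee+c}\rb$. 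By associativity of $\star$ this equals $\cE\star\lb\lb\bigoplus_{c\in\bC}\bC_{S^\vee+c}\rb\star\lb\bigoplus_{c\in\bC}\bC_{S^\vee+c}\rb\rb$.

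The key computational input is therefore that the kernel is idempotent: $\lb\bigoplus_{c\in\bC}\bC_{S^\vee+c}\rb\star\lb\bigoplus_{c\in\bC}\bC_{S^\vee+c}\rb\cong\bigoplus_{c\in\bC}\bC_{S^\vee+c}$. I would verify this along the lines of the Example computed earlier in the text: $p_i^{-1}$ distributes over the direct sum, the tensor product of $\bC_{S^\vee+a}$ and $\bC_{S^\vee+b}$ on the two $\bC_t$-factors is the constant sheaf on the product of the two shifted cones, and then $m_!^{\Delta_a}$ contracts the antidiagonal, leaving $\bigoplus_{d}\bC_{S^\vee+S^\vee+d}$; since $S^\vee$ is a closed convex cone, $S^\vee+S^\vee=S^\vee$, which gives back $\bigoplus_{d\in\bC}\bC_{S^\vee+d}$. (This is exactly the structure that made $\Lambda_0^S$ a ring with unit $T^0$ in the earlier endomorphism-ring computation, so one may also phrase it as: the cutoff kernel is an idempotent algebra object in $(\Sh^{\bC}(M\times\bC_t),\star)$.) Combining this with the previous paragraph yields a natural isomorphism $\cE\star\lb\bigoplus_{c}\bC_{S^\vee+c}\rb\star\lb\bigoplus_{c}\bC_{S^\vee+c}\rb\cong\cE\star\lb\bigoplus_{c}\bC_{S^\vee+c}\rb$, i.e. the endofunctor is isomorphic to the identity on the image; naturality follows since the isomorphism is induced by a fixed morphism of kernels.

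Alternatively — and perhaps more cleanly for bookkeeping — I would argue purely formally: the previous lemma says $(\bullet)\star\lb\bigoplus_c\bC_{S^\vee+c}\rb\colon \Sh^{\bC}_S(M\times\bC_t)\to\Sh^{\bC}(M\times\bC_t)$ is fully faithful with left inverse the quotient functor $q$. Hence $q\circ\lb(\bullet)\star\lb\bigoplus_c\bC_{S^\vee+c}\rb\rb\cong\id$ on $\Sh^{\bC}_S$. But the endofunctor in the present statement is precisely this composite read the other way once we have identified $\Sh^{\bC}_S$ with its image: applying the cutoff and then reinterpreting the result inside $\Sh^{\bC}_S$ (which is $q$) recovers the original object, and conversely the cutoff of an already-cutoff object is itself by idempotency. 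I would present both the idempotency computation and the adjunction argument, using the former to supply the concrete isomorphism and the latter to see that it is the right one.

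The main obstacle I anticipate is not any deep point but bookkeeping around the two parallel descriptions of $\Sh^{\bC}_S(M\times\bC_t)$: as a Verdier quotient of $\Sh^{\bC}(M\times\bC_t)$ by $\cC_S$, and as the (equivalent) full subcategory of cutoff objects. One must be careful that "the endofunctor $(\bullet)\star\lb\bigoplus_c\bC_{S^\vee+c}\rb$ on $\Sh^{\bC}_S$" in the statement means the composite $\Sh^{\bC}_S\hookrightarrow\Sh^{\bC}\xrightarrow{\text{cutoff}}\Sh^{\bC}\xrightarrow{q}\Sh^{\bC}_S$, and that under the agreed identification this composite is exactly the cutoff-of-a-cutoff; once this is pinned down, the idempotency $S^\vee+S^\vee=S^\vee$ finishes the argument with no further work.
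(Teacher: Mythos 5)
Your proposal is correct and matches the paper's intent: the paper offers no explicit proof, presenting the lemma as a mere restatement of the preceding lemma (full faithfulness of the cutoff with left inverse the quotient functor) under the agreed identification of $\Sh^{\bC}_{S}(M\times \bC_t)$ with its essential image. Your supplementary idempotency computation $\lb \bigoplus_{c\in \bC}\bC_{S^\vee+c}\rb\star\lb \bigoplus_{c\in \bC}\bC_{S^\vee+c}\rb\cong \bigoplus_{c\in \bC}\bC_{S^\vee+c}$, resting on $S^\vee+S^\vee=S^\vee$ for the closed convex cone $S^\vee$, correctly supplies the concrete fact that makes the identification self-consistent and so is a useful addition rather than a deviation.
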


In view of this lemma, any object in $\Sh^\bC_S(M\times \bC_t)$ can be represented in a form that $\cE\star \lb \bigoplus_{c\in \bC}\bC_{S^\vee+c}\rb$. Since $\End\lb \bigoplus_{c\in \bC}\bC_{S^\vee+c}\rb\cong \Lambda_0^S$, we obtain the following proposition:
\begin{proposition}
There exists a canonical morphism $\Lambda_0^S\rightarrow \End(\cE)$ for any $\cE\in \Sh_S^\bC(M\times \bC_t)$.
\end{proposition}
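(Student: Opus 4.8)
The plan is to build the ring homomorphism $\Lambda_0^S \to \End(\cE)$ by transporting the canonical identification $\End\lb \bigoplus_{c\in \bC}\bC_{S^\vee+c}\rb \cong \Lambda_0^S$ through the star-convolution. First I would use the preceding lemma: every object $\cE \in \Sh^\bC_S(M\times \bC_t)$ is canonically isomorphic to $\cE\star \lb\bigoplus_{c\in \bC}\bC_{S^\vee+c}\rb$, so it suffices to produce a ring map $\Lambda_0^S \to \End\lb\cE\star \lb\bigoplus_{c\in \bC}\bC_{S^\vee+c}\rb\rb$. The natural source of such endomorphisms is to convolve $\cE$ with endomorphisms of $\bigoplus_{c\in \bC}\bC_{S^\vee+c}$: given $\varphi \in \End\lb\bigoplus_{c\in \bC}\bC_{S^\vee+c}\rb$, the morphism $\id_\cE \star \varphi$ is an endomorphism of $\cE\star\lb\bigoplus_{c\in \bC}\bC_{S^\vee+c}\rb$. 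Composing with the isomorphism $\Lambda_0^S \cong \End\lb\bigoplus_{c\in \bC}\bC_{S^\vee+c}\rb$ gives the desired assignment $\Lambda_0^S \to \End(\cE)$.

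The key steps, in order, are as follows. (1) Recall from the cutoff lemma that $(\bullet)\star\lb\bigoplus_{c\in \bC}\bC_{S^\vee+c}\rb$ is naturally isomorphic to the identity on $\Sh^\bC_S(M\times\bC_t)$, so we may and do work with the image, treating $\cE$ as literally equal to $\cE\star\lb\bigoplus_{c\in \bC}\bC_{S^\vee+c}\rb$. (2) Observe that $\star$ is functorial in each argument: for a fixed $\cE$, the operation $\cG \mapsto \cE\star\cG$ is a functor, hence sends $\End(\cG)$ into $\End(\cE\star\cG)$, compatibly with composition and identities. Taking $\cG = \bigoplus_{c\in \bC}\bC_{S^\vee+c}$ yields a (unital, multiplicative) map $\End\lb\bigoplus_{c\in \bC}\bC_{S^\vee+c}\rb \to \End\lb\cE\star\bigoplus_{c\in \bC}\bC_{S^\vee+c}\rb \cong \End(\cE)$. (3) Precompose with the isomorphism $\Lambda_0^S \xrightarrow{\ \sim\ } \End\lb\bigoplus_{c\in \bC}\bC_{S^\vee+c}\rb$ established in the section on Novikov rings via equivariant sheaves. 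The composite is a ring homomorphism $\Lambda_0^S \to \End(\cE)$. Canonicity follows because every ingredient — the cutoff isomorphism, the bifunctoriality of $\star$, and the identification with $\Lambda_0^S$ — is canonical; in particular, one should check naturality in $\cE$, i.e.\ that for any morphism $\cE \to \cE'$ the resulting square commutes, which again is immediate from bifunctoriality of $\star$.

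The point requiring the most care is the compatibility of the two descriptions of $\cE$: once we identify $\Sh^\bC_S(M\times\bC_t)$ with its image under the cutoff functor, we need the monoidal/associativity coherence $\lb\cE\star\bigoplus_{c}\bC_{S^\vee+c}\rb\star\bigoplus_{c}\bC_{S^\vee+c} \cong \cE\star\bigoplus_{c}\bC_{S^\vee+c}$ to be compatible with the idempotency $\bigoplus_{c}\bC_{S^\vee+c}\star\bigoplus_{c}\bC_{S^\vee+c} \cong \bigoplus_{c}\bC_{S^\vee+c}$, so that the $\Lambda_0^S$-action built on $\cE$ agrees with the one built on $\cE\star\bigoplus_c \bC_{S^\vee+c}$. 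This is where associativity of $\star$ (which comes from the associativity of $m_!^{\Delta_a}$ together with the projection-formula compatibilities for the coinvariant functors) is used. Everything else is a formal consequence of bifunctoriality of a convolution bifunctor and the already-established ring isomorphism $\End\lb\bigoplus_{c\in \bC}\bC_{S^\vee+c}\rb \cong \Lambda_0^S$, so no genuinely hard estimate is involved; the only real obstacle is bookkeeping the coherence isomorphisms so that "canonical" is justified rather than merely asserted.
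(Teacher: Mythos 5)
Your proposal matches the paper's construction: the paper argues that, in view of the cutoff lemma identifying $\cE$ with $\cE\star\lb\bigoplus_{c}\bC_{S^\vee+c}\rb$, the ring isomorphism $\End\lb\bigoplus_{c}\bC_{S^\vee+c}\rb\cong\Lambda_0^S$ immediately yields the action on $\End(\cE)$ by convolving in the second slot. Your spelling out of bifunctoriality and coherence is just an unwinding of the same idea.
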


\subsection{Novikov $\cHom$}
We would like to introduce Novikov $\cHom$, which is an analogue of $\cHom^E$ appeared in the theory of enhanced ind-sheaves~\cite{DK}.

Take $\cE, \cF\in \Sh^{\bC}_S(M\times \bC_t)$. Let $p_M\colon M\times \bC_t\rightarrow M$ be the first projection. Then $p_{M*}\cHom(F(\cE), F(\cF))$ is equipped with a diagonal $\bC$-action. We set
\begin{equation}
    \cHom^{\Lambda_0}(\cE, \cF):=\lb p_{M*}\cHom(F(\cE), F(\cF))\rb^{\bC}.
\end{equation}
et $\Lambda_{0M}$ be the constant sheaf of the Novikov ring on $M$. Then $ \cHom^{\Lambda_0}(\cE, \cF)$ is a module over $\Lambda_{0M}$.

\subsection{Module structure I}
We are considering the $\cOseL_S$-modules in the category $\Sh^{\bC}_{S}(M\times \bC_t)$. Hence we have a morphism
\begin{equation}
    \cOseL_S\rightarrow \cEnd^{\Lambda_0}(\cE).
\end{equation}
In particular, we get a morphism
\begin{equation}
    \Lambda_0\rightarrow \cEnd^{\Lambda_0}(\cE).
\end{equation}

On the other hand, as we have seen in the above, we have a morphism 
\begin{equation}
    \Lambda_0\rightarrow \cEnd^{\Lambda_0}(\cE).
\end{equation}
coming from the equivariant structure.

\begin{definition}
We say an object $\cE$ in the category $\Sh^{\bC}_{S}(M\times \bC_t)$ is compatible if the above two morphisms are isomorphic.
\end{definition}
The compatible modules form a triangulated subcategory of $\Sh_S^\bC(M\times \bC_t)$. We denote it by $\Sh_{\cO_S}^{\bC}(M\times \bC_t)$.

\subsection{Sheaf-theoretic subexponetial functions}

We consider $\cO^{\ae}_{\bC_t\times S}$ as a sheaf on $\bC_t$ by pushing-forward to $\bC_t$. This is canonically equipped with a $\bC_t$-equivariant structure. The differential operator $\hbar\partial_t-1$ acts on $\cO^{\ae}_{\bC_t\times S}$ equivariantly. We denote the kernel by $1_S$, which is also an equivariant sheaf.

\begin{lemma}
The ring $\cOseL_S$ acts on $1_S$.
\end{lemma}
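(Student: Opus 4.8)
The plan is as follows. We may view $1_S$ as a subsheaf of $\cOseL_{\bC_t\times S}$ (pushed forward to $\bC_t$) through $1_S\subset\cOae_{\bC_t\times S}\subset\cOse_{\bC_t\times S}\subset\cOseL_{\bC_t\times S}$, and $\cOseL_{\bC_t\times S}$ carries an action of its subring $\cOseL_S$ of $t$-independent sections. The operator $\hbar\partial_t-1$ acts on $\cOseL_{\bC_t\times S}$ and, being $t$-independent up to the constant, commutes with the $\cOseL_S$-action; hence $K:=\ker\lb\hbar\partial_t-1\colon\cOseL_{\bC_t\times S}\to\cOseL_{\bC_t\times S}\rb$ is a $\cOseL_S$-submodule of $\cOseL_{\bC_t\times S}$, and $1_S\subseteq K$. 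It therefore suffices to show that $1_S$ is stable under the $\cOseL_S$-action on $K$, i.e.\ that for a local section $g$ of $1_S$ over an open $W\subset\bC_t$ and any $\phi\in\cOseL_S$, the product $\phi g$ again lies in $\cOae_{\bC_t\times S}(W\times S)$.

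The first key point is a structural property of sections of $1_S$. Solving $\hbar\partial_t-1=0$ shows that every such $g$ is, modulo very rapid decay, of the form $e^{t/\hbar}\psi$ with $\psi$ a $t$-independent section; since the coefficients of the asymptotic expansion of $g$ would then have to be non-constant in $t$ unless they all vanish, $g$ is necessarily rapid decay. More quantitatively, for any two points $t_0,t_1\in W$ the representatives satisfy $g(t_1,\hbar)=g(t_0,\hbar)\,e^{(t_1-t_0)/\hbar}$, and $g$ is bounded near $\hbar=0$ locally uniformly in $t$; choosing $t_1-t_0=\epsilon e^{2\pi\sqrt{-1}\theta}$ with $\epsilon>0$ small and $\theta\in\frakp(S)$, the factor $e^{(t_1-t_0)/\hbar}$ grows like $e^{\epsilon/|\hbar|}$ as $\hbar\to0$ along the ray of argument $2\pi\theta$, so $g(t_0,\cdot)$ must decay at exponential rate at least $\epsilon$ along that ray. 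Hence on any relatively compact open $W'\subset W$ the restriction $g|_{W'\times S}$ decays exponentially at a rate bounded below by some $\delta_{W'}>0$, uniformly in all directions of $S$.

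The second key point is that every $\phi\in\cOseL_S$ has non-positive exponential rate along every ray in $S$: writing $\phi$ as a Novikov combination $\sum_j e^{-r_j/\hbar}\chi_j$ with $r_j\in S^\vee$ and $\chi_j$ sub-exponential, each $e^{-r_j/\hbar}$ is non-growing along every ray in $S$ because $\Re(r_j/\hbar)\geq 0$ for $\hbar\in S$, and each $\chi_j$ has non-positive rate by sub-exponentiality. Combining the two points, $\phi g|_{W'\times S}$ decays exponentially at rate at least $\delta_{W'}$, hence is asymptotically expandable with vanishing expansion, i.e.\ lies in $\cOae_{\bC_t\times S}(W'\times S)$. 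Since $\cOae_{\bC_t\times S}$ is a subsheaf of $\cOseL_{\bC_t\times S}$ and such $W'$ cover $W$, the section $\phi g$ of $\cOseL_{\bC_t\times S}(W\times S)$ in fact lies in $\cOae_{\bC_t\times S}(W\times S)$, so $\phi g\in 1_S(W)$. Finally the resulting $\cOseL_S$-module structure on $1_S$ is compatible with its $\bC$-equivariant structure, since translation by $a\in\bC$ acts through multiplication by $e^{a/\hbar}$, which commutes with multiplication by $\cOseL_S$; this is routine.

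The main obstacle is the structural statement of the first step — that a section of $1_S$ decays exponentially at a uniform positive rate on every relatively compact open — together with the rate estimate for $\cOseL_S$ of the second step; these are the points where the definitions of $1_S$, of sub-exponential functions, and of the Novikov completion $\cOseL_S$ enter essentially. Once they are established, the module structure and its compatibility with the equivariant structure are purely formal.
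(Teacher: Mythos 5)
Your argument is correct and follows the same essential mechanism as the paper's proof: both exploit that a section of $1_S$ has the form $e^{t/\hbar}\psi_0(\hbar)$, so that translating $t$ into the open set reveals strictly positive exponential decay, which then absorbs the (at most) non-positive exponential rate of any $\phi\in\cOseL_S$. The only cosmetic difference is the shift direction — the paper translates by a single $c\in S^\vee\setminus\{0\}$ and factors $\psi\phi=e^{-c/2\hbar}\bigl((e^{-c/2\hbar}\phi)(e^{t'/\hbar}\psi_0)\bigr)$ pointwise, whereas you translate by $\epsilon e^{2\pi\sqrt{-1}\theta}$ with $\theta$ ranging over $\frakp(S)$ to get a uniform decay rate on relatively compact opens; both work, and the underlying idea is the same.
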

\begin{proof}
Take an open subset $U$ in $\bC_t$. Note that any section $\psi$ of $1_S$ over $U$ has a form $\psi=\psi_0(\hbar)e^{t/\hbar}$. Take $\phi\in \cOseL_S$, then there $\phi e^{-c/\hbar}\in \cOaeL_S$ for any $c\in S^\vee\bs\{0\}$. For any $t\in U$, there exists $t'\in U$ and $c\in S^\vee\bs \{0\}$ such that $t=t'-c$. Hence we have
\begin{equation}
    \psi \phi =e^{t/\hbar}\psi_0\phi=e^{-c/2\hbar}\lb (e^{-c/2\hbar}\phi)\cdot e^{t'/\hbar}\psi_0\rb.
\end{equation}
Since $e^{t'/\hbar}\phi_0\in \cOaeL_S$ and $e^{-c/2\hbar}\psi$ is bounded, the function in the parentheses is bounded. Since $e^{-c/2\hbar}$ is rapid decay, $\psi\phi$ can be extended as a $C^\infty$-function on $U\times S$ (as a rapid decay function). This completes the proof.
\end{proof}
Hence we can consider $1_S$ as an object of $\Sh_S^\bC(\bC_t)$, but we do not go to this point of view for a while.

As a sheaf, $1_S$ admit the following description. Let $\psi$ be an element of $\cOexpL_S$. We have defined $\supp_\se(\psi)$ as a subset of $\bC_\hbar$. By the definition of the subexponential support, we have
\begin{equation}
    e^{t/\hbar}\psi\in \Gamma(\supp_{\se}(\psi), 1_S).
\end{equation}
By the isomorphism $\Gamma(\supp_{\se}(\psi), 1_S)\cong \Hom(\bC_{\supp_\se(\psi)}, 1_S)$, this specifies an inclusion map
\begin{equation}
    \bC_{\supp_\hbar(\psi)}\hookrightarrow 1_S.
\end{equation}
We denote the image of this morphism by the same notation. For a subset $F\subset \cO^{\Lambda}_S$, we can consider the subsheaf $\sum_{\psi\in F}\bC_{\supp_\se(\psi)}$. By the definition, we have
\begin{lemma}\label{lem: Lemma 7.11}
\begin{equation}
    \lim_{\substack{\longrightarrow \\ F\subset \cOexp_S}}\sum_{\psi\in F}\bC_{\supp_\se(\psi)}\cong 1_S.
\end{equation}
\end{lemma}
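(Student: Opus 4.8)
The plan is to unwind both sides of the claimed isomorphism as subsheaves of the equivariant sheaf $1_S$ on $\bC_t$ and check agreement on sections over an arbitrary open $U\subset\bC_t$. Recall that $1_S=\ker(\hbar\partial_t-1)$ inside the push-forward of $\cO^\ae_{\bC_t\times S}$ to $\bC_t$, so a section of $1_S$ over $U$ is of the form $\psi_0(\hbar)e^{t/\hbar}$; after identifying such a section with $\psi_0\in\cO^\Lambda_S$ (via the ``value at $t$'', i.e. multiplication by $e^{-t/\hbar}$), the stalk/section description of $1_S$ over $U$ becomes exactly the set of $\psi\in\cO^\Lambda_S$ whose subexponential support $\supp_\se(\psi)$ contains $U$ — this is the content already encoded in the inclusion $\bC_{\supp_\se(\psi)}\hookrightarrow 1_S$ built just above the lemma, together with the fact (Lemma on $\supp_\se$ being $\gamma$-open for $\gamma=-S^\vee$) that these supports are the natural open sets in play.

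\textbf{Key steps.} First I would make the identification of the left-hand side explicit: the colimit $\directlim{F\subset\cOexp_S}\sum_{\psi\in F}\bC_{\supp_\se(\psi)}$, being a filtered colimit of subsheaves of $1_S$, is just the subsheaf $\sum_{\psi\in\cOexp_S}\bC_{\supp_\se(\psi)}$ of $1_S$, so the lemma reduces to showing this sum is all of $1_S$. Second, I would compute sections over a fixed open $U$: I claim $\bigl(\sum_{\psi}\bC_{\supp_\se(\psi)}\bigr)(U)$ contains $e^{t/\hbar}\psi$ for every $\psi\in\cOexp_S$ with $\supp_\se(\psi)\supset U$, and that every section of $1_S$ over $U$ arises this way. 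The nontrivial inclusion ``$\supseteq$'' is where the earlier lemma ``$\supp_\se(\psi)\supset -S^\vee\Rightarrow\psi\in\cOseL_S$'' gets used in localized form: given a section $\sigma=\psi_0 e^{t/\hbar}$ of $1_S$ over $U$, one shows for each point $t_0\in U$ that on a small neighbourhood the function $\psi_0$ lies in $\cOseL_S$ after translating, hence $\sigma$ is locally a section coming from some $\bC_{\supp_\se(\psi)}$; the subexponential-growth estimate of $1_S$ (the proof that $\cOseL_S$ acts on $1_S$, using $e^{-c/2\hbar}$ rapid decay) is exactly what guarantees $\psi_0 e^{t/\hbar}$ extends as a rapid-decay $C^\infty$ function past $\hbar=0$ on the relevant sector, so that $\psi_0\in\cOexp_S$ with $U\subset\supp_\se(\psi_0)$. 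Third, I would check the sum is a sum of \emph{sub}sheaves compatibly, i.e. that the transition maps agree, which is immediate since all maps are restrictions of the single embedding into $1_S$. Fourth, a sheafification remark: the equality of two subsheaves of $1_S$ can be checked on stalks, and on stalks the statement is just the tautology that the germ of any section of $1_S$ at a point $t_0$ is the germ of $e^{t/\hbar}\psi$ for $\psi$ its ``value'', with $t_0\in\supp_\se(\psi)$ by construction.

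\textbf{Main obstacle.} The genuine work is the local surjectivity in the second step: verifying that an arbitrary local section $\psi_0(\hbar)e^{t/\hbar}$ of $1_S$ really does have $\psi_0\in\cOexp_S$ with subexponential support containing a neighbourhood of the given point — in other words that the membership in $\cO^\ae_{\bC_t\times S}$ (asymptotic expandability, with the rapid-decay quotient) translates precisely into the subexponential-growth bound $e^{-c/\hbar}\psi_0\in\cOaeL_S$ for all $c\in S^\vee\setminus\{0\}$ on a slightly shrunk sector. This is a growth-estimate bookkeeping argument entirely parallel to the proof that $\cOseL_S$ acts on $1_S$ given just above, reorganized to read off the support rather than to multiply into it; I expect it to be routine but it is the only place where genuine analysis (as opposed to formal colimit/subsheaf manipulation) enters. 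Everything else — the reduction of the filtered colimit to a sum of subsheaves, the stalkwise check, the compatibility of transition maps — is formal.
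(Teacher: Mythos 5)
Your proposal is correct and carries out the argument the paper intends: the paper prefaces the lemma with ``By the definition, we have'' and gives no further proof, so your unwinding is exactly the missing bookkeeping. The reduction of the filtered colimit of subsheaves of $1_S$ to the union $\sum_{\psi}\bC_{\supp_\se(\psi)}$, the identification of a section over $U$ with $e^{t/\hbar}\psi_0(\hbar)$ modulo very-rapid-decay, and the observation that $e^{t_0/\hbar}\psi_0\in\cO^{\ae}_S\subset\cOseL_S$ for every $t_0\in U$ (hence $U\subset\supp_\se(\psi_0)$, so the section factors through $\bC_{\supp_\se(\psi_0)}$) are all the right steps; I would only add that the ``main obstacle'' you flag is in fact quite light, since the inclusion $\cA_S\subset\cA^\se_S$ already gives the needed subexponential estimate directly without further analysis.
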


Using this presentation, we can write the action of $\cOseL_S$ on $1_S$ more explicitly. For $\psi\in \cOseL_S$ and $\phi\in \cOexpL_S$, we have remarked that $\supp_\se(\psi\phi)\supset \supp_\se(\phi)$. Hence we have a canonical nonzero morphism 
\begin{equation}
    \bC_{\supp_\se(\phi)}\rightarrow \bC_{\supp_\se(\psi\phi)}.
\end{equation}
These morphisms induce an endomorphism of $\displaystyle{\lim_{\substack{\longrightarrow \\ F\subset \cOexp_S}}}\sum_{\phi\in F}\bC_{\supp_\se(\phi)}$.

Now let us consider $1_S$ as an object of $\Sh_S^\bC(\bC_t)$. Only in this section, to emphasize the situation, we write it as $1_S\star \bC_{S^\vee}$. 
\begin{lemma}
The object $1_S\star \bC_{S^\vee}$ is compatible.
\end{lemma}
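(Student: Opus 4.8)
The plan is to evaluate both $\Lambda_0^S$-module structures on $1_S$ through the explicit model supplied by Lemma~\ref{lem: Lemma 7.11}, and to check that, on the generating sections of $1_S$, each of them is multiplication by $e^{-c/\hbar}$.

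First I would make the module action explicit. For $c\in S^\vee$ the indeterminate $T^c\in\Lambda_0^S$ is by construction the element $e^{-c/\hbar}\in\cOseL_S$, and from the description of $\supp_\se$ one gets $\supp_\se(e^{-c/\hbar}\psi)=\supp_\se(\psi)+c$ for every $\psi\in\cOexpL_S$. These are open subsets, and $(-S^\vee)$-openness of $\supp_\se(\psi)$ gives $\supp_\se(\psi)\subset\supp_\se(\psi)+c$; hence multiplication by $e^{-c/\hbar}$ carries the canonical inclusion $\bC_{\supp_\se(\psi)}\hookrightarrow 1_S$, whose generating section is $e^{t/\hbar}\psi$, into the composite $\bC_{\supp_\se(\psi)}\hookrightarrow\bC_{\supp_\se(\psi)+c}\hookrightarrow 1_S$, the target term being generated by $e^{t/\hbar}(e^{-c/\hbar}\psi)$. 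Passing to the colimit of Lemma~\ref{lem: Lemma 7.11}, the module action of $T^c$ on $1_S$ is therefore the endomorphism assembled from the canonical inclusions $\bC_{\supp_\se(\psi)}\hookrightarrow\bC_{\supp_\se(\psi)+c}$, i.e.\ the operation ``enlarge the support by $c$''.

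Next I would trace the equivariant action. After the cutoff identification, $1_S\star\bC_{S^\vee}$ carries the action of $\Lambda_0^S\cong\End\lb\bigoplus_{a\in\bC}\bC_{S^\vee+a}\rb$ through $\id_{1_S}\star(-)$, and $T^c$ is represented on $\bigoplus_{a\in\bC}\bC_{S^\vee+a}$ by the direct sum of the maps $\bC_{S^\vee+a}\to\bC_{S^\vee+a+c}$. Since the $\bC_a$-equivariant structure on $1_S$ is the geometric one, induced by $e^{t/\hbar}\psi(t)\mapsto e^{(t+a)/\hbar}\psi(t+a)$, the convolution isomorphisms $\bC_{\supp_\se(\psi)}\star\bC_{S^\vee+a}\cong\bC_{\supp_\se(\psi)+S^\vee+a}$ — together with their compatibility with the coinvariance $(-)_{\Delta_a}$ and with the transition maps of the colimit of Lemma~\ref{lem: Lemma 7.11} — identify $\id_{1_S}\star T^c$ with the endomorphism of $1_S$ built from the very same inclusions ``enlarge the support by $c$''. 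So the equivariant action of $T^c$ coincides with multiplication by $e^{-c/\hbar}$ as well.

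Comparing the two, both the module and the equivariant $\Lambda_0^S$-structures send each $T^c$ (for $c\in S^\vee$) to multiplication by $e^{-c/\hbar}$ on $1_S$, hence they are isomorphic as $\Lambda_0^S$-module structures; this is the asserted compatibility. The step I expect to be the main obstacle is the equivariant one: one has to verify that the convolution of the $\gamma$-sheaves $\bC_{\supp_\se(\psi)}$ (which are open in the $(-S^\vee)$-topology) with the translates $\bC_{S^\vee+a}$ genuinely computes the Minkowski sum up to the coinvariance, that this interacts harmlessly with the filtered colimit of Lemma~\ref{lem: Lemma 7.11}, and that $\id_{1_S}\star T^c$ is literally the support-enlargement operator rather than merely isomorphic to it after further identifications. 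Once these are in hand the remaining checks are routine.
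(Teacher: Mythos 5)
Your proof is correct and follows the same underlying idea as the paper's: show that both $\Lambda_0^S$-actions send $T^c$ to multiplication by $e^{-c/\hbar}$ on $1_S$. However, you take a longer technical route through the colimit description of Lemma~\ref{lem: Lemma 7.11} and Minkowski-sum computations for convolutions of $\gamma$-sheaves, and you correctly flag that checking $\id_{1_S}\star T^c$ is literally the support-enlargement operator (not just abstractly isomorphic to it) is a genuine issue to discharge. The paper avoids this entirely by reading off the equivariance isomorphism directly: since sections of $1_S$ have the form $e^{t/\hbar}\psi_0(\hbar)$, the $\bC_a$-equivariant structure is given concretely by $1_S(U)\to 1_S(U+c)$, $\psi\mapsto\psi e^{-c/\hbar}$, so that after the cutoff identification the $T^c$-action from the equivariant structure is manifestly multiplication by $e^{-c/\hbar}$, which is also the module action; no Minkowski-sum or coinvariance bookkeeping is needed.
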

\begin{proof}
The equivariant structure of $1_S$ is given by the following: For an open subset $U\subset \bC$ and $t\in \bC$, we have an isomorphism
\begin{equation}
    1_S(U)\rightarrow 1_S(U+c);\psi\mapsto \psi e^{-c/\hbar}.
\end{equation}
Since the action of $\cOseL_S$ is the multiplication, the action commutes with the equivariant structure. From this, the action of $T^c$ is identified with $e^{-c/\hbar}$, which implies the compatibility.
\end{proof}
Hence we can consider $1_S$ as an object of $\Sh_{\cO_S}^\bC(\bC_t)$. The following is the main claim in this section:
\begin{proposition}\label{prop: endocalculation}
There exists a ring inclusion
\begin{equation}
\cOseL_S\hookrightarrow H^0(\End(1_S\star\bC_{S^\vee})). 
\end{equation}
\end{proposition}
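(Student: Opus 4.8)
The plan is to compute $\End(1_S\star\bC_{S^\vee})$ in the quotient category $\Sh^{\bC}_S(\bC_t)$ and exhibit a ring monomorphism from $\cOseL_S$ into its $0$-th cohomology. First I would pass to the non-equivariant picture via the fully faithful cutoff functor, so that it suffices to understand $\Hom(1_S, 1_S)$ (or rather, the relevant microlocalized/cutoff version thereof) as computed from the presentation in Lemma~\ref{lem: Lemma 7.11}, namely $1_S\cong \directlim{F\subset \cOexp_S}\sum_{\psi\in F}\bC_{\supp_\se(\psi)}$. The multiplication action of $\cOseL_S$ on $1_S$, made explicit right before the statement through the canonical maps $\bC_{\supp_\se(\phi)}\to\bC_{\supp_\se(\psi\phi)}$ for $\psi\in\cOseL_S$, $\phi\in\cOexpL_S$, gives the ring homomorphism $\cOseL_S\to H^0(\End(1_S\star\bC_{S^\vee}))$; the work is in showing (i) it lands in degree $0$, (ii) it is a ring homomorphism (compatibility with products of $\psi$'s), and (iii) it is injective.

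For injectivity, the strategy is to use the compatibility established in the preceding lemma: under the identification $\End(\bigoplus_{c}\bC_{S^\vee+c})\cong\Lambda_0^S$, the action of $T^c$ is identified with multiplication by $e^{-c/\hbar}$, so the composite $\Lambda_0^S\hookrightarrow\cOseL_S\to H^0(\End(1_S\star\bC_{S^\vee}))$ is the canonical (faithful) Novikov action. Thus the kernel of $\cOseL_S\to H^0(\End)$ is a $\Lambda_0^S$-submodule intersecting $\Lambda_0^S$ trivially. To upgrade this to full injectivity I would argue pointwise (germ-wise in $\hbar$, i.e. over a small sector/ray): an element $\psi\in\cOseL_S$ that acts by zero on $1_S$ must in particular kill each section $e^{t/\hbar}\psi_0$ on each $\gamma$-open $\supp_\se(\phi)$, and since the action is literally multiplication of genuine functions/asymptotic classes, $\psi\cdot(e^{t/\hbar}\psi_0)=0$ in $\cOaeL$ forces $\psi=0$ (the classes $e^{t/\hbar}\psi_0$ are nonzerodivisors, as $\cOaeL_S$ and $1_S$ are modules of honest functions modulo very-rapid-decay, and $e^{t/\hbar}$ is invertible). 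Concretely one reduces to: if $\psi e^{-c/2\hbar}$ times a bounded nonvanishing quantity is very rapid decay for all $c$, then $\psi$ itself has the required decay, i.e. $\psi\in\cO^{vr}$, hence $\psi=0$ in the quotient — but that contradicts $\psi$ being a nonzero class in $\cOseL_S$.

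For the ring-homomorphism property, the key point is that all the structure maps $\bC_{\supp_\se(\phi)}\to\bC_{\supp_\se(\psi\phi)}$ are, after the inclusion into $1_S$, literally induced by multiplication of functions, so that $\psi_1\cdot(\psi_2\cdot -)=(\psi_1\psi_2)\cdot -$ is just associativity of multiplication of (asymptotic classes of) holomorphic functions, together with the containments $\supp_\se(\phi)\subset\supp_\se(\psi_2\phi)\subset\supp_\se(\psi_1\psi_2\phi)$ from the second Lemma on $\supp_\se$. Passing to the direct limit over finite $F\subset\cOexp_S$ respects composition since the limit is filtered. That the image lies in $H^0$ (no shift) is immediate because all these are honest sheaf morphisms, not just maps in the derived category; one only needs that the cutoff functor $(-)\star\bC_{S^\vee}$ and the quotient are exact in the relevant range and do not introduce negative $\Ext$-contributions killing the map, which follows from full faithfulness of the cutoff.

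The main obstacle I expect is injectivity at the level of the full ring $\cOseL_S$, as opposed to its Novikov subring $\Lambda_0^S$: one must rule out a nonzero $\psi\in\cOseL_S$ whose multiplication action becomes zero only after the quotient by $\cC_S$ and the cutoff, i.e. a priori the vanishing of the induced endomorphism of $1_S\star\bC_{S^\vee}$ in $\Sh^{\bC}_S(\bC_t)$ is weaker than vanishing of multiplication on $1_S$ in $\Sh^{\bC}(\bC_t)$. To handle this I would use that, by construction, $1_S$ already has microsupport constrained so that the cutoff $(-)\star\bC_{S^\vee}$ acts as the identity on it (it is an object ``coming from'' the $S$-side), together with the full faithfulness of the cutoff, which guarantees $\Hom_{\Sh^{\bC}_S}(1_S\star\bC_{S^\vee},1_S\star\bC_{S^\vee})\cong\Hom_{\Sh^{\bC}}(1_S,1_S\star\bC_{S^\vee})$; then the multiplication action of $\psi$ is detected already on global sections over $\gamma$-open subsets, reducing to the honest-function statement above. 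This is exactly the point where the earlier lemmas on $\supp_\se$ and the explicit presentation of $1_S$ are indispensable, so I would organize the proof to invoke them directly rather than re-deriving the cutoff formalism.
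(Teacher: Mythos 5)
Your overall skeleton matches the paper's: the ring homomorphism is multiplication, the presentation of $1_S$ from Lemma~\ref{lem: Lemma 7.11} and the monotonicity of $\supp_\se$ furnish the homomorphism, and the whole burden is injectivity. You also correctly flag the real difficulty — that vanishing of the induced endomorphism in $\Sh^\bC_S(\bC_t)$ (i.e.\ after the cutoff $(-)\star\bC_{S^\vee}$) is a priori weaker than vanishing of multiplication on honest sections. But the resolution you sketch does not actually close that gap. The appeal to full faithfulness of the cutoff only identifies $\Hom$ between cutoff objects before and after quotient; it does not by itself turn the cutoff of the multiplication morphism back into a literal multiplication operator on sections. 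Nor is the claim that the cutoff ``acts as the identity on $1_S$'' established or used precisely — the paper is careful to write $1_S\star\bC_{S^\vee}$ throughout, and the morphisms of constant sheaves $\bC_{\supp_\se(\phi)}\to\bC_{\supp_\se(\psi\phi)}$ you use, while individually nonzero, could in principle become zero in the colimit once you pass to the quotient category.

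The missing idea is the paper's explicit witness. From a nonzero class $\psi\in\cOseL_S$ (hence not very rapid decay) one extracts $c_0\in S^\vee\setminus\supp_\se(\psi)$; since $\supp_\se(\psi)$ is $\gamma$-open for $\gamma=-S^\vee$, the closed translate $c_0+S^\vee$ is disjoint from $\supp_\se(\psi)$, giving inclusions $\bC_{-\Int(S^\vee)}\hookrightarrow\bC_{\supp_\se(\psi)}\hookrightarrow\bC_{(c_0+S^\vee)^c}$. Applying $\star\bC_{S^\vee}$ turns the composite into the canonical nonzero morphism $\bC_{S^\vee}\to\bC_{c_0+S^\vee}$, forcing the first arrow (which represents $\psi$ acting on the generator $\bC_{S^\vee}$) to be nonzero. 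This is the step that survives the cutoff. Your analytic fallback — ``if $\psi e^{-c/2\hbar}$ times a bounded quantity is very rapid decay for all $c$, then $\psi\in\cO^{vr}$'' — is not shown to follow from the vanishing of the endomorphism, and it also has a subtlety at the boundary of the cone $S^\vee$: testing against sections $e^{t/\hbar}$ with $t\in-\Int(S^\vee)$ only yields $e^{c/\hbar}\psi\in\cA$ for $c$ in the interior of $S^\vee$, not for boundary $c$, which is what $\cO^{vr}$ requires. You should replace that reduction with the $c_0$-witness argument (or an equivalent concrete device) to make injectivity rigorous.
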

\begin{proof}
We have already seen that there exists a homomorphism $\cOseL_S\rightarrow H^0(\End(1_S\star \bC_{S^\vee}))$. We will see this is injective.  

The each element $\psi\in \cOseL_S$ gives a homomorphism $\bC_{S^\vee}\rightarrow \bC_{\supp_\se(\psi)}\star \bC_{S^\vee}$. It is enough to see this is nonzero. The morphism is the image of the morphism $\bC_{-\Int(S^\vee)}\hookrightarrow \bC_{\supp_\se(\psi)}$ under the functor $(-)\star \bC_{S^\vee}$. Since $\psi$ is not very rapid decay, there exists a point $c_0\in S^\vee$ such that $c_0$ is not contained in $\supp_\se(\psi)$. Consider the complement $(c_0+S^\vee)^c$ of $c_0+S^\vee$. This is an open subset containing $\supp_\se(\psi)$. We now have the inclusions 
\begin{equation}
    \bC_{-\Int(S^\vee)}\hookrightarrow \bC_{\supp_\se(\psi)}\hookrightarrow\bC_{(c_0+S^\vee)^c}.
\end{equation}
Applying $\star \bC_{S^\vee}$ to this sequence, we obtain 
\begin{equation}
    \bC_{S^\vee}\rightarrow \bC_{\supp_\se(\psi)}\star \bC_{S^\vee}\rightarrow\bC_{c_0+S^\vee}
\end{equation}
and the composition is the canonical morphism given by the inclusion of the closed subsets $c_0+S^\vee\subset S^\vee$. Hence the first morphism in the sequence is nonzero. This completes the proof.
\end{proof}

We set
\begin{equation}
    \cO^{\mathrm{de}, \Lambda_0}_S:= \End(1_S\star \bC_{S^\vee}).
\end{equation}
The above theorem implies that this is a ring over $\cOseL_S$:
\begin{equation}
    \cOseL_S\rightarrow \cO^{\mathrm{de}, \Lambda_0}_S\cong \End(1_S\star \bC_{S^\vee}).
\end{equation}

\subsection{Module structure II}
Now we would like to slightly modify the formulation of the module structures. The modified definition is as follows: Instead of considering $\cOseL_{S}$-modules, we will consider $\cOdeL_S$-modules. From now on $\Sh_S^\bC(M\times \bC_t)$ represents the $\cOdeL_S$-version.

\begin{definition}
We say an object $\cE$ in the category $\Sh^{\bC}_{S}(M\times \bC_t)$ is compatible if the two morphisms $\Lambda_0\rightarrow \cEnd(\cE)$ are the same.
\end{definition}

\section{Sheaf quantization and Non-conic microsupport}
We introduce non-conic microsupport following \cite{Tam}. In the following, we assume that $M$ is a complex manifold.

\subsection{Over a point}

We denote the minimal triangulated subcategory of $\Sh^\bC_S(\{*\}\times  \bC_t)$ containing $1_S$ by $\SQ_S(T^*\{*\})$. By the definition of $\cOdeL_S$, we have:
\begin{corollary}\label{cor: overapoint}
Let $\mathrm{Perf}(\cOdeL_S)$ be the category of perfect modules over $\cOdeL_S$. We then have
\begin{equation}
    \SQ_S(T^*\{*\})\cong \mathrm{Perf}(\cOdeL_S).
\end{equation}
\end{corollary}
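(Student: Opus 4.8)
The plan is to prove the equivalence by exhibiting $\SQ_S(T^*\{*\})$ as the thick (Karoubi-completed) triangulated envelope of a single generator $1_S \star \bC_{S^\vee}$ whose endomorphism dg-algebra is, by the very definition in the previous section, $\cOdeL_S := \End(1_S\star\bC_{S^\vee})$. Then the standard Morita-type recognition principle identifies the triangulated category generated by a compact object with perfect modules over its endomorphism ring. Concretely, first I would invoke Proposition~\ref{prop: endocalculation} (and the discussion immediately following it) to record that $\End(1_S\star\bC_{S^\vee})\cong \cOdeL_S$ as a ring, with $\cOseL_S\hookrightarrow\cOdeL_S$; this is the input data. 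Second, I would observe that $1_S\star\bC_{S^\vee}$ is a compact generator of the ambient category $\Sh^\bC_{\cO_S}(\{*\}\times\bC_t)$ in which we take the triangulated envelope, so that the functor $\RHom(1_S\star\bC_{S^\vee}, -)$ is conservative on the subcategory it generates.

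The key steps in order: (1) Set $\cA := \cOdeL_S$ viewed as a dg-algebra (concentrated in degree $0$, or with its natural $A_\infty$/dg refinement coming from $\End$ in the derived category of equivariant sheaves). (2) Consider the functor $\Phi := \RHom_{\Sh^\bC_S}(1_S\star\bC_{S^\vee}, -)\colon \SQ_S(T^*\{*\})\to \mathrm{Mod}(\cA)$; by construction $\Phi(1_S\star\bC_{S^\vee})\cong \cA$ as an $\cA$-module. (3) Since $\SQ_S(T^*\{*\})$ is \emph{defined} as the minimal triangulated subcategory containing $1_S$ (equivalently $1_S\star\bC_{S^\vee}$ after the identification of Module structure II), and since $\Phi$ is exact and sends the generator to the free rank-one module, the essential image of $\Phi$ is precisely the thick subcategory of $\mathrm{Mod}(\cA)$ generated by $\cA$, which is $\mathrm{Perf}(\cA) = \mathrm{Perf}(\cOdeL_S)$. (4) Full faithfulness: it holds on the generator by step (2), hence on all finite extensions and retracts of the generator by the five-lemma / dévissage in the triangulated category, hence on all of $\SQ_S(T^*\{*\})$. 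Steps (3) and (4) together give the equivalence.

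The main obstacle I expect is purely foundational bookkeeping rather than a deep argument: one must be careful that the $\End$ appearing in the definition of $\cOdeL_S$ is the \emph{derived} endomorphism object (so that $\mathrm{Perf}$ is the correct target) and that the identification of $\SQ_S(T^*\{*\})$ with the split-closed triangulated envelope is exactly the one for which the Morita recognition theorem applies — i.e. that $1_S\star\bC_{S^\vee}$ is a compact object of a suitable cocomplete ambient category and that $\mathrm{Perf}(\cOdeL_S)$ is idempotent-complete. If $\SQ_S(T^*\{*\})$ is taken to be the minimal triangulated subcategory \emph{without} taking retracts, one should check that it is already idempotent-complete here, or else take Karoubi envelopes on both sides; given that $\cOdeL_S$ arises as an endomorphism ring this is harmless. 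Modulo these conventions, the result is an instance of the classical theorem that the perfect derived category of a ring $R$ is generated by $R$ and is equivalent to the thick subcategory generated by any compact generator with endomorphism ring $R$; since no further property of $\cOdeL_S$ is needed, the proof is essentially the statement that $\End$ has been defined so as to make Corollary~\ref{cor: overapoint} a tautology.
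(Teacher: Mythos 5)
Your proposal is correct and matches the paper's approach: the paper records the corollary as an immediate consequence of having defined $\cOdeL_S := \End(1_S\star\bC_{S^\vee})$, and you have simply unpacked this as the standard Morita-theoretic recognition that the thick triangulated envelope of a single generator is $\mathrm{Perf}$ of its endomorphism dg-ring. The caveats you flag (that $\End$ must be taken in the derived sense, and that one should idempotent-complete both sides) are exactly the foundational points the paper leaves implicit in its one-line ``by the definition of $\cOdeL_S$'' justification.
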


\subsection{Sheaf quantization}
We first define the absolute case. For an object $\cE\in \Sh_S^{\bC}(M\times\bC_t)$, its microsupport is well-defined in $T^*M\times \bC_t\times \Int(h(\Cone(S)))$. Let $\rho\colon T^*M\times \bC_t\times \Int(h(\Cone(S)))\rightarrow T^*M$ be the map defined by
\begin{equation}
    (x, \xi, t, \hbar)\mapsto (x, \xi/\hbar).
\end{equation}
We set
\begin{equation}
    \musupp(\cE):=\rho(\SS(\cE)\cap T^*M\times  \bC_t\times \Int(h(\Cone(S))).
\end{equation}

In this paper, we will only treat algebraic Lagrangians:
\begin{definition}
Let $L$ be a (possibly singular) holomorphic Lagrangian submanifold of $T^*M$. We say $L$ is algebraic if the local defining ideal is generated by functions in $\cO_M[\xi_1,...,\xi_n]$ where each $\xi_i$ is the cotangent coordinate.
\end{definition}
Now, we can define sheaf quantization.
\begin{definition}
An object $\cE\in \Sh_S^{\bC}(M\times\bC_t)$ is a sheaf quantization if 
\begin{enumerate}
    \item $\musupp(\cE)$ is an algebraic Lagrangian submanifold,
    \item the restriction $\cE_x$ is in $\SQ_S(T^*\{*\})$ for any point $x\in M$.
\end{enumerate}
\end{definition}
Note that a sheaf quantization is always compatible by the condition 2.

\begin{example}
Let $\alpha$ be a holomorphic function. We consider the sheaf
\begin{equation}
    \bigoplus_{c\in \bC}\bC_{S^\vee-\alpha(x)+c}.
\end{equation}
Note that the condition $t\in S^\vee-\alpha(x)$ is equivalent to
\begin{equation}
    \Re\lb \frac{t+\alpha}{\hbar}\rb \in \bR_{\geq 0}
\end{equation}
for $\hbar\in S$. This is equipped with an equivariant structure. We set
\begin{equation}
    S_\alpha:=\lb \bigoplus_{c\in \bC}\bC_{S^\vee-\alpha(x)+c} \rb\star 1_S.
\end{equation}
We claim that this is a sheaf quantization. Only the nontrivial to check is to compute $\musupp$.

For a constant $t\in \bC$,
the conormal of $t+\alpha(x)=0$ is spanned by $\tau+\frac{\partial \alpha}{\partial x_i}dx_i$. Let us denote the rays of $S$ by $R_1$ and $R_2$ and the positive orthogonal of them by $R^\perp_1$ and $R^\perp_2$. Then the microsupport is 
\begin{equation}
\begin{split}
    &\SS(\bC_{S^\vee-\alpha(x)})\\
    &=
    \lc \lb x, \tau \frac{\partial \alpha}{\partial x_i}, \alpha(x), \tau\rb\relmid \tau\in h(\Cone(S))\rc \cup\bigcup_{i=1,2} \lc \lb x, r_i \frac{\partial \alpha}{\partial x_i}, \alpha(x)+r_i^{\perp}, r_i \rb\relmid r_i\in R_i, r_i^\perp\in R_i^\perp \rc.
\end{split}
\end{equation}
Hence $\musupp\lb \bigoplus_{c\in \bC}\bC_{S^\vee-\alpha(x)+c} \rb=\Graph(d\alpha)$. Since $S_f$ is obtained as a certain colimit of $\bigoplus_{c\in \bC}\bC_{S^\vee-\alpha(x)+c}$, we also have $\musupp(S_\alpha)=\Graph(d\alpha)$. 
\end{example}

We denote the full subcategory spanned by sheaf quantizations by $\SQ_S(T^*M)$.
\begin{lemma}
The category $\SQ_S(T^*M)$ is closed under forming cones.
\end{lemma}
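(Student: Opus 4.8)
The plan is to show that the two defining conditions of a sheaf quantization — the algebraicity of $\musupp$ and the pointwise condition $\cE_x \in \SQ_S(T^*\{*\})$ — are each stable under forming cones, working in the triangulated category $\Sh^\bC_S(M\times\bC_t)$.

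First I would handle the microsupport condition. Given a distinguished triangle $\cE_1 \to \cE_2 \to \cE_3 \xrightarrow{+1}$ with $\cE_1, \cE_2$ sheaf quantizations, the standard triangle inequality for microsupport gives $\SS(\cE_3) \subset \SS(\cE_1)\cup\SS(\cE_2)$, hence after applying $\rho$ and intersecting with $T^*M\times\bC_t\times\Int(h(\Cone(S)))$ we get $\musupp(\cE_3)\subset\musupp(\cE_1)\cup\musupp(\cE_2)$. This is a union of two algebraic Lagrangians, so it is a (possibly reducible, possibly with lower-dimensional pieces) algebraic isotropic subset; the content is to show $\musupp(\cE_3)$ is itself an algebraic \emph{Lagrangian} — i.e. pure-dimensional of dimension $\dim M$ with no extra components and no collapsing. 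For this I would pass to the pointwise picture: the second condition forces $(\cE_3)_x$ to be a perfect $\cOdeL_S$-complex (see below), and a perfect complex over $\cOdeL_S$ restricted from an object with microsupport the graph data — combined with Corollary~\ref{cor: overapoint} — pins down the rank/support behaviour fiberwise, which upgrades the isotropic bound to Lagrangian. Concretely, algebraicity of $L\cap T^*_xM$ (finiteness of connected components, as in the Appendix remark) lets one check Lagrangianity stalk-by-stalk over $M$.

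Second, the pointwise condition: for each $x\in M$, restriction $\cE \mapsto \cE_x$ is exact (it is an inverse image functor, hence triangulated), so the triangle $\cE_1 \to \cE_2 \to \cE_3 \xrightarrow{+1}$ restricts to a triangle $(\cE_1)_x \to (\cE_2)_x \to (\cE_3)_x \xrightarrow{+1}$ in $\Sh^\bC_S(\{*\}\times\bC_t)$. By hypothesis $(\cE_1)_x, (\cE_2)_x \in \SQ_S(T^*\{*\})$, and $\SQ_S(T^*\{*\})$ was \emph{defined} as the minimal triangulated subcategory containing $1_S$, hence it is closed under cones; therefore $(\cE_3)_x \in \SQ_S(T^*\{*\})$. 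Equivalently, via Corollary~\ref{cor: overapoint}, $(\cE_1)_x$ and $(\cE_2)_x$ are perfect over $\cOdeL_S$ and perfect complexes are closed under cones, so $(\cE_3)_x$ is perfect. This step is essentially formal.

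The main obstacle is the first step: ensuring that the union of the two algebraic Lagrangians coming from $\musupp(\cE_1)$ and $\musupp(\cE_2)$ does not produce a non-Lagrangian microsupport for the cone — a priori $\musupp(\cE_3)$ could be a proper closed subset of the union (if cancellation occurs) which might fail to be pure-dimensional, or conversely the union of two distinct Lagrangians is still Lagrangian as a set but one must confirm it is cut out by an \emph{algebraic} ideal (a union of algebraic subvarieties is algebraic, so this part is fine). The real work is ruling out the degenerate case: here the pointwise perfectness over $\cOdeL_S$ is the key leverage, since it controls the stalks of $\cE_3$ and, via the microlocal cutoff identification of $\Sh^\bC_S$ with honest equivariant sheaves (Section 7), forces $\SS(\cE_3)$ away from the zero section to be conic-times-algebraic-Lagrangian of the right dimension. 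I expect this to go through using exactly the algebraicity hypothesis that was singled out in the introduction (finiteness of $L\cap T^*_xM$), together with the fact, already available in the excerpt, that objects of $\SQ_S(T^*\{*\})$ are exactly the perfect $\cOdeL_S$-modules.
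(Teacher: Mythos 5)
The paper's own proof is a single sentence: ``This is clear from the definition.'' Your proposal is considerably more elaborate, and the comparison is instructive.

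Your second step (stability of the pointwise condition) is correct and essentially the whole content of the paper's implicit argument: restriction to a point $x$ is a triangulated functor, $\SQ_S(T^*\{*\})$ is defined as a triangulated subcategory (the minimal one containing $1_S$), so it is closed under cones, and via Corollary~\ref{cor: overapoint} this is the same as saying perfect $\cOdeL_S$-complexes are closed under cones. No issues there.

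Your first step is where the substance lies, and you correctly identify the genuine subtlety that the paper glosses over: the microsupport triangle inequality gives $\musupp(\cE_3)\subset\musupp(\cE_1)\cup\musupp(\cE_2)$, which is an algebraic Lagrangian, but a \emph{proper closed subset} of a Lagrangian need not be pure-dimensional and so need not itself be Lagrangian. However, you do not close this gap; you end with ``I expect this to go through\ldots'' The mechanism you propose (pointwise perfectness over $\cOdeL_S$ controls stalks, hence controls microsupport) does not work as stated: microsupport is a microlocal invariant, not determined by stalks, so stalk-wise perfectness cannot by itself rule out $\musupp(\cE_3)$ being a lower-dimensional subset. The standard and clean way to close the gap is involutivity (coisotropicity) of microsupport in the Kashiwara--Schapira/Tamarkin framework: $\musupp(\cE_3)$ is a closed coisotropic subset of $T^*M$ contained in the Lagrangian $L:=\musupp(\cE_1)\cup\musupp(\cE_2)$, and a coisotropic closed analytic subset of a pure-dimensional Lagrangian is necessarily a union of irreducible components of $L$ (or empty), hence itself an algebraic Lagrangian. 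This is the lemma your argument is missing, and it is likely what the author has in mind when writing ``clear.'' So: your structure (check each condition in the definition is stable under cones) matches the only sensible reading of the paper's terse proof, your second step is right, but your first step has a real gap at exactly the point you flag, and the fix is coisotropicity rather than stalk-wise control.
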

\begin{proof}
This is clear from the definition.
\end{proof}

\begin{remark}
Our formulation of sheaf quantization is slightly different from \cite{kuwagaki2020sheaf}. We explain how to translate them. 

Fix an $\hbar\in \bC_\hbar^\times$. We consider the half plane 
\begin{equation}
    S_{\hbar}:=\lc r\in \bC^\times \relmid \Re(r\overline{\hbar})\geq 0 \rc.
\end{equation}
Then the dual $S^\vee$ is a ray $\bR_{\geq 0}\cdot \hbar$. For a Lagrangian $L$, take a local primitive $\alpha$. The local model should be the one given in the above example:
\begin{equation}
    \bigoplus_{c\in \bC}\bC_{\bR_{\geq 0}\cdot \hbar-\alpha(x)+c}.
\end{equation}
We rotate $\hbar^{-1}$ by $\bC_\hbar$, the sheaf becomes
\begin{equation}
    \bigoplus_{c\in \bC}\bC_{\bR_{\geq 0}+(-\alpha(x)+c)/\hbar}
\end{equation}
This is now very close to the construction in \cite{kuwagaki2020sheaf} where we have defined 
\begin{equation}
    \bigoplus_{c\in \bR}\bC_{t\geq \Re(\alpha/\hbar)+c}
\end{equation}
in this situation. The only difference here is we are seeing how sheaves connected in the imaginary direction in $\bC_t$ which is responsible for the B-filed effect.
\end{remark}

\subsection{Stratification for sheaf quantization}
\begin{proposition}\label{prop: stratification}
Let $\cE$ be a sheaf quantization. Then there exists an analytic stratification $\cS=\{S_i\}$ of $M$ satisfying the following: For any $x\in S_i$, there exists a neighborhood $U\subset S_i$ of $x$ and a constructible sheaf $\cE_0$ defined on $U\times \bC_t$ and an object $\cE_1\in \SQ_S(T^*\{*\})$ such that $\cE|_{U}\cong (\bigoplus_cT_c\cE_0)\star  \cE_1$.
\end{proposition}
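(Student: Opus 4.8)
The idea is to combine the pointwise structure result (Corollary~\ref{cor: overapoint}) with the algebraicity of $\musupp(\cE)$ to get a global stratification, and then to trivialize the sheaf along each stratum. First I would use the assumption that $\musupp(\cE)$ is an algebraic Lagrangian: by the result of the Appendix on algebraic Lagrangians, the projection $\musupp(\cE)\to M$ is (generically) finite, and the image of its singular locus, together with the branch locus of this covering, is a proper analytic subset of $M$. Taking a Whitney stratification $\cS=\{S_i\}$ of $M$ adapted to these loci, we may assume that over each stratum $S_i$ the Lagrangian $\musupp(\cE)$ restricts to a (possibly disconnected) covering, and moreover that $\cE$ is non-characteristic for the inclusion of the normal slices, so that the restriction functors $\cE\mapsto \cE|_{\{x\}\times\bC_t}$ vary locally constantly in $x\in S_i$.

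Next, fix $x\in S_i$ and a small contractible neighborhood $U\subset S_i$. The pointwise restriction $\cE_x$ lies in $\SQ_S(T^*\{*\})\cong \mathrm{Perf}(\cOdeL_S)$, so it can be written as a finite iterated cone of shifts of $1_S$; equivalently, it is of the form $(\bigoplus_c T_c \cM_0)\star 1_S$ for a perfect $\cOdeL_S$-module $\cM_0$, which we package as the object $\cE_1\in\SQ_S(T^*\{*\})$ in the statement. The content then is that this local form propagates over $U$ with only $U$-directional constructibility added: on $U\times\bC_t$ the microsupport of $\cE$ lives over the conic Lagrangian $\rho^{-1}(\musupp(\cE)|_U)$, which over the covering $\musupp(\cE)|_U\to U$ is, up to the $\bC_t$-translation and the fiber scaling, the product of a conic Lagrangian coming from the covering sheets (a constructible-sheaf microsupport on $U$) with the fixed microsupport datum of $1_S$ in the $\bC_t\times\bC_\hbar$ directions. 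I would make this precise by a microlocal cutoff / convolution argument (as in the previous section, using $(-)\star\bC_{S^\vee}$ and Proposition~\ref{prop: endocalculation}): after identifying $\cE|_U$ with $\cE'\star 1_S$ for some $\bC$-equivariant $\cE'$ whose microsupport in the $\bC_t$-direction is controlled by $S^\vee$, the residual sheaf $\cE'$ on $U\times\bC_t$ is, away from the $\bC_t$-scaling, pulled back from a constructible sheaf $\cE_0$ on $U$ (constructibility being inherited from the covering structure on each stratum), so $\cE|_U\cong(\bigoplus_c T_c\cE_0)\star\cE_1$.

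The main obstacle is the middle step: genuinely producing the constructible sheaf $\cE_0$ on $U$ from the microlocal data, i.e.\ showing that once one quotients out the $1_S$-factor the remaining equivariant sheaf really does come from $U$ (plus translations in $\bC_t$) and is constructible there. This requires knowing that the non-conic microsupport over $U$ has no ``extra'' components beyond the covering $\musupp(\cE)|_U$ and its limiting directions, which is exactly where the algebraicity hypothesis and the finiteness of $L\cap T^*_xM$ (finiteness of connected components of the fibers) are used, together with a Guillermou--Kashiwara--Schapira-type propagation/triangulation argument along $U$. A secondary technical point is checking that the stratification can be chosen so that the pointwise object $\cE_1\in\SQ_S(T^*\{*\})$ is locally constant in $x$ over each $S_i$ (so that one and the same $\cE_1$ works on all of $U$); this follows from non-characteristic deformation once $U$ is taken inside a single stratum, but it must be stated carefully since $\SQ_S(T^*\{*\})$ is only a triangulated, not abelian, category.
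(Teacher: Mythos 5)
Your plan follows the paper's argument: stratify $M$ using the algebraicity of $\musupp(\cE)$ and the Appendix, reduce to the case where $\musupp(\cE)$ restricts to an unramified covering over each stratum, then use $\cE|_x\in\SQ_S(T^*\{*\})$ as the factor $\cE_1$ and a contractible $U$ to trivialize. The gap you flag in the middle step — producing the constructible $\cE_0$ — is exactly the point, and the paper fills it quite concretely rather than by an abstract propagation/cutoff argument. Once the covering $\musupp(\cE)|_U\to U$ is trivial with sheets indexed by $i$, one takes a primitive $\alpha_i$ of the restriction of the Liouville form to each sheet; the paper's formula is
\begin{equation}
\cE|_U\cong\Bigl(\bigoplus_i S_{\alpha_i}\Bigr)\star\cE|_x,
\end{equation}
where $S_{\alpha_i}=(\bigoplus_{c\in\bC}\bC_{S^\vee-\alpha_i+c})\star 1_S$, so the sheaf you were looking for is simply $\cE_0=\bigoplus_i\bC_{S^\vee-\alpha_i}$ on $U\times\bC_t$ and $\cE_1=\cE|_x$. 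In other words, the ``residual'' equivariant sheaf you hoped to extract from a microlocal cutoff is not produced abstractly but is read off from the Lagrangian geometry itself. The only non-formal input beyond this is a homotopy argument on the contractible $U$ showing that a sheaf quantization whose $\musupp$ is the trivialized covering is indeed classified by its fiber at $x$; this is what lets one identify $\cE|_U$ with the right-hand side, and it also takes care of your ``secondary technical point'' about $\cE_1$ being locally constant over $S_i$. So you had the right strategy, but stopping at ``propagation should give a constructible $\cE_0$'' leaves the one substantive step unproved; using the primitives $\alpha_i$ to write $\cE_0$ explicitly is what closes it.
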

\begin{proof}
As presented in Appendix, take an adapted stratification $\cS$ of $L:=\musupp(\cE)$. For each $S\in \cS$, a standard calculation shows $\musupp(\cE|_S)$ is $\widetilde{L_S}/T^*_SM$ and it is a ramified covering of $S$. Hence we can assume that $\musupp(\cE)$ is a ramified covering from the outset. Further refining the stratification, we can assume that the covering is unramified covering. 

Let $U$ be a contractible open subset in $S_i$. Now the covering is trivial over $U$. We denote the primitives of the restriction of the Liouville form to the covering by $\{\alpha_i\}_i$ where the index set is the sheets of the covering. By a homotopy argument, $\cE_U$ can be classified by $\cE|_x$. Namely, we have the following description: We consider $\cE|_x$ as an object of $\SQ_S(T^*\{*\})$. Then we have
\begin{equation}
    \cE|_U\cong \lb \bigoplus_iS_{\alpha_i}\rb\star \cE|_x.
\end{equation}
This completes the proof.
\end{proof}

\subsection{Operations}
Let $f\colon M\rightarrow N$ be a holomorphic map. We have already introduced the functors
\begin{equation}
\begin{split}
    f_*, f_!&\colon \Sh^\bC(M\times \bC_t)\rightarrow \Sh^\bC(N\times \bC_t)\\
    f^{-1}, f^{!}&\colon \Sh^\bC(N\times \bC_t)\rightarrow  \Sh^\bC(M\times \bC_t).
    \end{split}
\end{equation}
Each of the functors induces a functor between $\Sh^\bC_S(M\times \bC_t)$ and $\Sh^\bC_S(N\times \bC_t)$. The followings are the direct consequences of the usual adjunctions.
\begin{proposition}
We have adjunctions:
\begin{equation}
    \begin{split}
        \Hom(\cF, f_*\cE)&\cong \Hom(f^{-1}\cF, \cE)\\
        \Hom(f_!\cE, \cF)&\cong \Hom(\cE, f^!\cF).
    \end{split}
\end{equation}
for $\cE\in \Sh^\bC_S(M\times \bC_t)$ and $\cF\in \Sh^\bC_S(N\times \bC_t)$. 
\end{proposition}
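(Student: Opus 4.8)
The plan is to deduce the proposition directly from the corresponding adjunctions in the category of $\bC$-equivariant sheaves, and then check that these adjunctions descend to the quotient by $\cC_S$. First I would recall that for a holomorphic map $f\colon M\rightarrow N$, inducing $f\colon M\times \bC_t\rightarrow N\times \bC_t$ which is $\bC$-equivariant (acting trivially relative to the $\bC_t$-factor), the standard six-functor formalism for equivariant sheaves over the base ring $\cR=\cOdeL_S$ gives the two adjoint pairs $(f^{-1}, f_*)$ and $(f_!, f^!)$ on $\Sh^\bC(-\times \bC_t)$. These are cited already in Section~6 (following \cite{BernsteinLunts, Tohoku}), so I may take them as given.

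The heart of the argument is the passage to $\Sh^\bC_S(-\times \bC_t) = \Sh^\bC(-\times\bC_t)/\cC_S$. I would first show each of $f_*, f_!, f^{-1}, f^!$ sends $\cC_M$-type objects to $\cC_N$-type objects (and conversely for the pullbacks), where $\cC_S$ is the thick subcategory cut out by the microsupport condition $\SS(\cE)\subset T^*M\times\bC_t\times\overline{h(\Cone(S))^c}$. For the pullback functors this is immediate, since $\SS(f^{-1}\cF)$ and $\SS(f^!\cF)$ are controlled by $f_d f_\pi^{-1}\SS(\cF)$ in the $T^*M$-directions while leaving the $\bC_t\times\bC_\hbar$-directions untouched; hence the cone condition in the last factor is preserved. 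For $f_*$ and $f_!$ one uses the microsupport estimates for direct images (the $\bC_t\times\bC_\hbar$ factor again being a spectator, since $f$ is the identity there), so again the membership in $\cC$ is preserved. Therefore all four functors descend to the quotient categories, and — this is the key formal point — a left (resp.\ right) adjoint pair on a category descends to an adjoint pair on a Verdier quotient as soon as both functors preserve the respective thick subcategories, because the quotient functor is a localization. This gives the displayed isomorphisms $\Hom(\cF, f_*\cE)\cong\Hom(f^{-1}\cF,\cE)$ and $\Hom(f_!\cE,\cF)\cong\Hom(\cE, f^!\cF)$ in $\Sh^\bC_S$.

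Finally I would note that the identification $\Sh^\bC_S(-\times\bC_t)$ with the image of the cutoff functor $(\bullet)\star(\bigoplus_c\bC_{S^\vee+c})$ is compatible with all this, since that functor is itself built from $\star$, which commutes with $f^{-1}$ and is harmless under $f_*, f_!$ on the relevant microsupport strata; thus the adjunctions can equivalently be read on the cutoff models. The main obstacle I anticipate is the careful bookkeeping of microsupport under $f_*$ when $f$ is not proper — one must either restrict to cases where the relevant support conditions make $f_!\cong f_*$ on the objects at hand, or invoke the general non-proper microsupport estimate — but since the proposition only asserts the adjunction isomorphisms and not any microsupport bound on the output, it suffices to know that the four functors are well-defined on $\Sh^\bC_S$, which follows from the preservation of $\cC_S$ established above. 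No delicate estimate beyond that is needed, so the proof is essentially a formal consequence of the equivariant six-functor formalism plus the localization principle for adjoints.
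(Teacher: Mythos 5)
Your overall strategy — use the equivariant six-functor adjunctions and check they descend to the Verdier quotient — is exactly the content the paper compresses into ``direct consequences of the usual adjunctions''; the paper offers no further proof, so your write-up is the expected filling-in. The one step that is shakier than you present it is the assertion that $f_*$ preserves $\cC_S$: the microsupport estimate $\SS(f_*\cE)\subset f_\pi f_d^{-1}\SS(\cE)$ from Kashiwara--Schapira requires $f$ to be proper on $\supp\cE$, and there is no ``general non-proper microsupport estimate'' that would give you what you want, so invoking microsupport bounds directly leaves a real gap for the $f_*$ (and $f^!$) direction when $f$ is not proper.

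The clean way to close this, and the one implicitly available in the paper, is to bypass microsupport altogether and use the characterization of $\cC_S$ already established two lemmas earlier: by the results cited from Guillermou--Schapira, $\cC_S$ is precisely the kernel of the convolution functor $(\bullet)\star\bigl(\bigoplus_{c\in\bC}\bC_{S^\vee+c}\bigr)$, and the image of this functor realizes $\Sh^\bC_S(M\times\bC_t)$ as a full (localizing) subcategory of $\Sh^\bC(M\times\bC_t)$. Since $f$ is the identity on the $\bC_t$-factor and the convolution is carried out entirely in that factor, each of $f^{-1}$, $f^!$, $f_*$, $f_!$ commutes with $(\bullet)\star K$ (base change along the addition map $m$, which is transverse to $f\times\id$), so each sends the kernel of $\star K$ to the kernel of $\star K$. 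This gives the preservation of $\cC_S$ for all four functors with no properness hypothesis. Equivalently, one can say: under the identification of $\Sh^\bC_S$ with the full subcategory of objects fixed by $\star K$, all four functors restrict, and the adjunction isomorphisms are simply the restrictions of the ambient ones. That is the argument I would substitute for the microsupport bookkeeping in your last two paragraphs; the rest of your proof is fine as written.
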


Applying to sheaf quantization, we obtain the following results.

\begin{lemma}
The functors $f^{-1}, f^!$ induce functors $\SQ_S(N)\rightarrow \SQ_S(M)$. 
\end{lemma}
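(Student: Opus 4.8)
The plan is to verify, for $\cE\in\SQ_S(T^*N)$ and with $g$ denoting either $f^{-1}$ or $f^!$, the two defining conditions of a sheaf quantization for $g\cE$: that $\musupp(g\cE)$ is an algebraic Lagrangian submanifold of $T^*M$, and that $(g\cE)_x\in\SQ_S(T^*\{*\})$ for every $x\in M$. That $g\cE$ already lies in $\Sh^\bC_S(M\times\bC_t)$ was recorded just before the statement, so only conditions $(1)$ and $(2)$ remain (compatibility, hence membership in $\Sh^\bC_{\cO_S}$, is automatic once $(2)$ holds). The main device is Proposition~\ref{prop: stratification}: near any point of $N$ one has $\cE\cong\lb\bigoplus_c T_c\cE_0\rb\star\cE_1$ with $\cE_0$ constructible and $\cE_1\in\SQ_S(T^*\{*\})$. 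Since the pullback functors act only in the $M$-directions, they commute with the translations $T_c$ and with the convolution $(-)\star\cE_1$ along $\bC_t$ (base change against $m_!$ and the $\bC_t$-projections, which are Cartesian over $M$), so locally $g\cE\cong\lb\bigoplus_c T_c\, g\cE_0\rb\star\cE_1$.

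Condition $(2)$ then follows quickly. The sheaf $g\cE_0$ is again constructible, so $(g\cE_0)_x$ is a constructible complex on $\bC_t$ of the type inherited from $\cE$ being a sheaf quantization, and since restriction to $x\in M$ also commutes with $(-)\star\cE_1$ we get $(g\cE)_x\cong\lb\bigoplus_c T_c(g\cE_0)_x\rb\star\cE_1$, which lies in the minimal triangulated subcategory generated by $1_S$, i.e.\ in $\SQ_S(T^*\{*\})\cong\mathrm{Perf}(\cOdeL_S)$ by Corollary~\ref{cor: overapoint}; for $g=f^{-1}$ this simply reproduces $(f^{-1}\cE)_x\cong\cE_{f(x)}$. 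For condition $(1)$ I would use the microsupport estimate $\SS(g\cE)\subseteq g_d g_\pi^{-1}\SS(\cE)$ (\cite{KS}, in the equivariant setting), which leaves the $\bC_\hbar$-coordinate untouched, together with $\SS(\cE)\cap\lb T^*M\times\bC_t\times\Int(h(\Cone(S)))\rb\subseteq\rho^{-1}(L)$ for $L:=\musupp(\cE)$, to get $\musupp(g\cE)\subseteq(df)^{\ast}L$, the fibered-product pullback of $L$ along $f$. The local model pins this down: $f^{-1}$ sends the building block $S_\alpha$ to $S_{\alpha\circ f}$ (immediate from $f^{-1}\bC_{S^\vee-\alpha(y)+c}\cong\bC_{S^\vee-\alpha(f(x))+c}$ and commutation with $(-)\star1_S$), while $f^!S_\alpha$ differs from $f^{-1}S_\alpha$ only by a shift once one factors $f=p\circ\Gamma_f$ through its graph, with $p$ a smooth projection ($p^!\cong p^{-1}[2\dim_\bC M]$, using that complex manifolds are canonically oriented) and $\Gamma_f$ a closed embedding. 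Hence near each point $\musupp(g\cE)=\bigcup_i\Graph\lb d(\alpha_i\circ f)\rb$, where the $\alpha_i$ are the local primitives of the Liouville form on $L$, so $\musupp(g\cE)$ is locally a Lagrangian submanifold.

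The real obstacle is passing from this local, piecewise picture to the global assertion that $(df)^{\ast}L$ is an \emph{algebraic} Lagrangian submanifold of $T^*M$: one must control how the graphs $\Graph(d(\alpha_i\circ f))$ join and branch over the analytic locus where the ramified covering $L\to N$ degenerates or where $df$ drops rank, and check that the resulting stratified subvariety is still, locally, cut out by elements of $\cO_M[\xi_1,\dots,\xi_n]$. This is precisely the kind of statement isolated in the Appendix on algebraic Lagrangians in cotangent bundles, and I would appeal to it here; granting it, conditions $(1)$ and $(2)$ are established, and functoriality under composition ($(g\circ h)^{-1}\cong h^{-1}g^{-1}$, $(g\circ h)^!\cong h^!g^!$) is inherited from the ambient categories $\Sh^\bC(-\times\bC_t)$, which completes the argument.
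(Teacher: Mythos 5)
Your argument is essentially the paper's: both reduce to the local description from Proposition~\ref{prop: stratification}, observe that $f^{-1}$ (and $f^!$) commute with the $(\bigoplus_c T_c(-))\star\cE_1$ structure, and conclude that the pullback again has the form of a sheaf quantization. You spell out the two defining conditions more explicitly (the microsupport computation via $S_\alpha\mapsto S_{\alpha\circ f}$ and the appeal to the Appendix for algebraicity, points the paper leaves implicit), but the strategy and the key ingredient are the same.
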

\begin{proof}
For an object $\cE$ of $\SQ_S(N)$, by Proposition~\ref{prop: stratification}, there exists an analytic stratification of $N$ such that the restriction of $\cE$ to each stratum can be written as  $(\bigoplus_cT_c\cE_0)\star\cE_1$ for a constructible sheaf $\cE_0$ and an object $\cE_1\in \SQ_S(T^*\{*\})$. Then the $f^{-1}\cE$ is described as $\bigoplus_{c}T_cf^{-1}\cE_0\star \cE_1$ on the inverse image of the stratum. Hence $f^{-1}\cE\in \SQ(M)$. For $f^!$, similar proofs work.
\end{proof}

\begin{lemma}
If $f$ is proper, $f_*$ induces $\SQ_S(M)\rightarrow \SQ_S(N)$.
\end{lemma}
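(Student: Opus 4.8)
The plan is to verify, for $\cE\in\SQ_S(M)$, that $f_*\cE$ satisfies the two defining conditions of a sheaf quantization. Since $f$ is proper we may freely replace $f_*$ by $f_!$, and we already know that $f_*$ descends to a functor $\Sh^\bC_S(M\times\bC_t)\to\Sh^\bC_S(N\times\bC_t)$; so the task reduces to checking (1) that $\musupp(f_*\cE)$ is an algebraic Lagrangian and (2) that $(f_*\cE)_y\in\SQ_S(T^*\{*\})$ for every $y\in N$.

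For (2), I would invoke proper base change: for $y\in N$ one has $(f_*\cE)_y\cong R\Gamma\bigl(f^{-1}(y)\times\bC_t,\,\cE|_{f^{-1}(y)\times\bC_t}\bigr)$, and $f^{-1}(y)$ is compact because $f$ is proper. Then apply Proposition~\ref{prop: stratification}, refining the analytic stratification so that $f^{-1}(y)$ is a union of strata, and use the open--closed distinguished triangles attached to this stratification together with the triangulatedness of $\SQ_S(T^*\{*\})$: it suffices to treat $R\Gamma_c$ over each smooth, locally closed, relatively compact stratum. There $\cE\cong(\bigoplus_cT_c\cE_0)\star\cE_1$ with $\cE_0$ constructible and $\cE_1\in\SQ_S(T^*\{*\})$ pulled back from a point. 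Since $\star$ is a fibrewise operation along $\bC_t$ and $\cE_1$ is constant along the stratum, $R\Gamma_c$ commutes with $(-)\star\cE_1$, reducing the computation to $R\Gamma_c$ of a $\bC$-equivariant constructible sheaf convolved with $\cE_1$; as $\SQ_S(T^*\{*\})\cong\mathrm{Perf}(\cOdeL_S)$ is closed under shifts, cones and direct summands and the stratification is finite (with finite-rank local systems), the claim follows by dévissage.

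For (1), I would apply the microsupport estimate for proper direct images \cite{KS} to $f\times\id_{\bC_t}$: $\SS(f_*\cE)$ is contained in the image of $\SS(\cE)$ under the Lagrangian correspondence attached to $f\times\id_{\bC_t}$. This correspondence acts on the cotangent fibres through the linear map ${}^tdf$, so the defining equations of $\musupp(\cE)$ — polynomial in the $\xi$-variables with $\cO$-coefficients — are carried to polynomial equations in the cotangent fibre variables of $T^*N$, and properness of $f$ makes the relevant projection proper, hence the image closed. On the other hand, condition (2) together with the structure exhibited in the proof of Proposition~\ref{prop: stratification} (over each stratum $\musupp(\cE)$ is a ramified, hence finite, covering) shows that over each stratum of $N$ the microsupport of $f_*\cE$ is again a ramified covering, in particular of the expected dimension; combined with the containment above, this forces $\musupp(f_*\cE)$ to be exactly an algebraic Lagrangian (possibly singular) submanifold.

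The main obstacle will be part (1): tracking the non-conic microsupport through $\rho$ and showing that the image of an algebraic Lagrangian under the $f$-correspondence is again \emph{algebraic} and a genuine Lagrangian of the correct dimension, rather than merely contained in one — this is precisely where properness is essential, both to close up the image and to keep the covering over each stratum finite. The commutation of $R\Gamma_c$ with $(-)\star\cE_1$ in part (2) is the other delicate point, but it is a base change statement along the $\bC_t$-direction once the stratification is fixed.
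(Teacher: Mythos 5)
Your approach is genuinely different from the paper's. The paper reuses the structural decomposition of Proposition~\ref{prop: stratification} directly: after stratifying, $\cE$ has the local form $(\bigoplus_c T_c\cE_0)\star\cE_1$, and the paper simply observes that $f_*\cE\cong\bigoplus_c T_c(f_*\cE_0)\star\cE_1$ with $f_*\cE_0$ again constructible by properness of $f$, so the same structure theorem recognizes $f_*\cE$ as a sheaf quantization in one step. You instead verify the two defining conditions of a sheaf quantization separately (microsupport and stalkwise perfectness). This is more work but also more robust, and it makes explicit where properness is used in each condition, which the paper suppresses. The paper's route is shorter because it outsources both conditions to the structure theorem; a cost is that it implicitly assumes a stratification of $N$ compatible with $f$, which requires refining by Whitney/Thom theory and is glossed over.

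Two points in your write-up need tightening. First, the base-change formula $(f_*\cE)_y\cong R\Gamma\bigl(f^{-1}(y)\times\bC_t,\,\cE|_{f^{-1}(y)\times\bC_t}\bigr)$ is not what condition (2) asks for: $(f_*\cE)_y$ there denotes the restriction to $\{y\}\times\bC_t$, so proper base change gives the pushforward along $f^{-1}(y)\times\bC_t\to\{y\}\times\bC_t$ of $\cE|_{f^{-1}(y)\times\bC_t}$, not global sections of the whole product. The subsequent d\'evissage should be carried out relative to $\bC_t$, so that the operation commuting with $(-)\star\cE_1$ is pushforward to $\bC_t$, not $R\Gamma$. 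Second, for (1) the Kashiwara--Schapira estimate only gives containment of $\SS(f_*\cE)$ in the image of $\SS(\cE)$; to conclude equality with an algebraic Lagrangian you really do need the explicit description of $f_*\cE$ over strata of $N$ (i.e., essentially the paper's argument), since containment alone leaves open that components drop out or that the image fails to be algebraically defined. In other words, your part (1) eventually has to invoke the same structural fact the paper uses, so it is not an independent shortcut, just a more detailed unpacking.
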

\begin{proof}
Again, for an object $\cE$ of $\SQ_S(M)$, we can again consider the strata decomposition. Using the notation in the previous proof, we have $f_*\cE\cong \bigoplus_{c}T_cf_*\cE_0\star \cE_1$. Hence this is also a sheaf quantization.
\end{proof}

\subsection{Monoidal structure}
We have introduced the convolution product $\star$. By replacing the tensor product in the definition of $\star$ with the tensor product over $\cOdeL_S$, we define $\star_\cO$. Via the equivalence Corollary~\ref{cor: overapoint}, this is the usual tensor product in $\mathrm{Perf}(\cOdeL_S)$. 

We next would like to construct the adjoint. 
\begin{equation}
    \cHom^{\star_\cO}(\cE, \cF):= p_{1*}\cHom(p_2^{-1}\cE, m^!\cF).
\end{equation}

\begin{lemma}
For $\cE, \cF, \cG\in \Sh^\bC_S(M\times \bC_t)$, we have
\begin{equation}
    \Hom(\cE\star_\cO \cF, \cG)\cong \Hom(\cE, \cHom^{\star_\cO}(\cF,\cG)).
\end{equation}
\end{lemma}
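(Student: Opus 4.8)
The plan is to establish the adjunction $\Hom(\cE\star_\cO \cF, \cG)\cong \Hom(\cE, \cHom^{\star_\cO}(\cF,\cG))$ by unwinding both sides into $\Hom$-groups on the product space $M\times \bC_t^2$ and matching them via the standard $(m_!^{\Delta_a}, m^!)$ and $(p_i^{-1}, p_{i*}^{\ker q_i})$ adjunctions already recorded in the excerpt. Concretely, I would first observe that $\cE\star_\cO\cF = m_!^{\Delta_a}(p_1^{-1}\cE\otimes_{\cOdeL_S} p_2^{-1}\cF)$ by the definition of $\star_\cO$ (the $\cOdeL_S$-linear variant of $\star$), so that by the $(m_!^{\Delta_a}, m^!)$-adjunction
\begin{equation}
    \Hom(\cE\star_\cO\cF, \cG)\cong \Hom\lb p_1^{-1}\cE\otimes_{\cOdeL_S} p_2^{-1}\cF,\ m^!\cG\rb,
\end{equation}
where the right-hand $\Hom$ is taken in $\Sh^{\bC^2}_S(M\times \bC_t^2)$.

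Next I would use the tensor-hom adjunction for the $\cOdeL_S$-linear tensor product inside $\Sh^{\bC^2}_S(M\times \bC_t^2)$ to rewrite
\begin{equation}
    \Hom\lb p_1^{-1}\cE\otimes_{\cOdeL_S} p_2^{-1}\cF,\ m^!\cG\rb\cong \Hom\lb p_1^{-1}\cE,\ \cHom_{\cOdeL_S}(p_2^{-1}\cF, m^!\cG)\rb.
\end{equation}
Then I would apply the $(p_1^{-1}, p_{1*}^{\ker q_1})$-adjunction to pass back down to $M\times \bC_t$, giving
\begin{equation}
    \Hom\lb p_1^{-1}\cE,\ \cHom_{\cOdeL_S}(p_2^{-1}\cF, m^!\cG)\rb\cong \Hom\lb \cE,\ p_{1*}^{\ker q_1}\cHom_{\cOdeL_S}(p_2^{-1}\cF, m^!\cG)\rb,
\end{equation}
and the point is that $p_{1*}^{\ker q_1}\cHom_{\cOdeL_S}(p_2^{-1}\cF, m^!\cG)$ is precisely $\cHom^{\star_\cO}(\cF,\cG)$ by definition (with the convention that the internal hom and pushforward are taken in the equivariant, $\cOdeL_S$-linear setting, matching how $\star_\cO$ was defined). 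Composing the three isomorphisms yields the claim. I would also note that the whole computation descends from $\Sh^\bC(M\times\bC_t)$ to the quotient $\Sh^\bC_S(M\times\bC_t)$ because all functors involved ($p_i^{-1}$, $m^!$, $p_{1*}$, and the invariants functors) preserve the microsupport conditions defining $\cC_S$, as is used repeatedly in the preceding sections.

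The main obstacle I anticipate is bookkeeping the equivariance and the $\cOdeL_S$-linearity simultaneously. The functors $p_i^{-1}$, $m^!$, $p_{1*}^{\ker q_i}$, and $(-)_{\Delta_a}$ involve composing geometric operations with invariants/coinvariants functors along subgroups of $\bC$ or $\bC^2$, and one must check that the tensor-hom adjunction over $\cOdeL_S$ is compatible with these — in particular, that $m^!$ commutes appropriately with $\cHom_{\cOdeL_S}$ and that $p_2^{-1}$ is $\cOdeL_S$-linear in the relevant sense. The cleanest route is probably to verify the unenriched statement ($\cR=\bC$, i.e. the ordinary $\star$ and $\cHom^\star$) first using the adjunctions exactly as stated in the ``Operations and monoidal structure'' subsection, and then observe that every step is natural in the base ring and hence upgrades verbatim to $\cOdeL_S$-modules, since the base change only affects the coefficient system and not the underlying topological manipulations. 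Everything else is a formal chase of adjunctions, so no serious difficulty beyond this is expected.
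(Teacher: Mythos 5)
Your proof is correct and spells out precisely the chain of adjunctions that the paper's proof (``This is clear from the definition'') leaves implicit: the $(m_!^{\Delta_a}, m^!)$-adjunction, the $\cOdeL_S$-linear tensor--hom adjunction, and the $(p_1^{-1}, p_{1*}^{\ker q_1})$-adjunction, all recorded earlier in the Operations subsection. This is essentially the same approach; you are just making the formal chase explicit.
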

\begin{proof}
This is clear from the definition.
\end{proof}
From this, over a point, the adjoint is identified with usual internal hom in $\mathrm{Perf}(\cOdeL_S)$ via the equivalence Corollary~\ref{cor: overapoint}.

\begin{lemma}
For $\cE, \cF \in \Sh^\bC_S(M\times \bC_t)$, we have
\begin{equation}
    \Hom(\cE, \cF)\cong \Hom(1_S, \cHom^{\star_\cO}(\cE, \cF)).
\end{equation}
\end{lemma}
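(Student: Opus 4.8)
The plan is to reduce the statement to the assertion that $1_S$ is the monoidal unit for $\star_\cO$, and then to prove that.

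First I would apply the preceding lemma, i.e. the adjunction $(-\star_\cO\cF)\dashv\cHom^{\star_\cO}(\cF,-)$, with the substitution $\cE\mapsto 1_S$, $\cF\mapsto\cE$, $\cG\mapsto\cF$. It gives a natural isomorphism
\begin{equation*}
\Hom(1_S\star_\cO\cE,\cF)\cong\Hom(1_S,\cHom^{\star_\cO}(\cE,\cF)).
\end{equation*}
Concretely this just unwinds $\cHom^{\star_\cO}(\cE,\cF)=p_{1*}\cHom(p_2^{-1}\cE,m^!\cF)$ through the adjunctions $p_1^{-1}\dashv p_{1*}$, tensor--hom over $\cOdeL_S$, and $m_!^{\Delta_a}\dashv m^!$. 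Hence it suffices to construct a natural isomorphism $1_S\star_\cO\cE\cong\cE$, i.e. to check that \emph{$1_S$ is the unit of $\star_\cO$}.

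To prove $1_S\star_\cO(-)\cong\id$ I would first treat $M=\{*\}$. On $\SQ_S(T^*\{*\})\cong\mathrm{Perf}(\cOdeL_S)$ (Corollary~\ref{cor: overapoint}) the equivalence is monoidal for $(\star_\cO,\otimes_{\cOdeL_S})$ and sends $1_S$ to the rank-one free module $\cOdeL_S$, which is the unit of $\otimes_{\cOdeL_S}$; since $\star_\cO$ commutes with colimits and, by Lemma~\ref{lem: Lemma 7.11}, $1_S$ and a general object of $\Sh^\bC_S(\{*\}\times\bC_t)$ are built as colimits of cutoff generators, one extends $1_S\star_\cO(-)\cong\id$ from $\SQ_S(T^*\{*\})$ to all of $\Sh^\bC_S(\{*\}\times\bC_t)$ — alternatively one computes it directly from the presentation of $1_S$ in Lemma~\ref{lem: Lemma 7.11} together with the fact that $\bigoplus_{c\in\bC}\bC_{S^\vee+c}$ is the unit for $\star$, twisted by the $\cOdeL_S$-action on $1_S$. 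For general $M$, a morphism of sheaves on $M\times\bC_t$ is an isomorphism iff its pullback $i_x^{-1}(-)$ along $\{x\}\times\bC_t\hookrightarrow M\times\bC_t$ is one for every $x\in M$; since $p_i^{-1}$, $\otimes_{\cOdeL_S}$, $m_!$ and $(-)_{\Delta_a}$ all commute with $i_x^{-1}$ by base change, we get $i_x^{-1}(1_S\star_\cO\cE)\cong 1_S\star_\cO i_x^{-1}\cE$, and $i_x^{-1}1_S\cong 1_S$, so the point case plus naturality of the unit morphism yields $1_S\star_\cO\cE\cong\cE$. Composing with the isomorphism of the first step proves the lemma.

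The main obstacle is extending the unit property from $\SQ_S$ to all of $\Sh^\bC_S$ over a point: the equivalence with $\mathrm{Perf}(\cOdeL_S)$ only records sheaf quantizations, so one must argue by a colimit-density statement or by an explicit computation, in either case carefully tracking the $\cOdeL_S$-linearity through the cutoff functor $(-)\star\bigl(\bigoplus_{c}\bC_{S^\vee+c}\bigr)$ and the colimit of Lemma~\ref{lem: Lemma 7.11}. The remaining globalization over $M$ is then a routine base-change argument.
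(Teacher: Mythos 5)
Your proof follows the same route as the paper: specialize the preceding adjunction at $\cE = 1_S$ and then invoke $1_S\star_\cO\cE\cong\cE$. The paper simply cites this unit isomorphism as already established (coming from the identification $\SQ_S(T^*\{*\})\cong\mathrm{Perf}(\cOdeL_S)$, under which $1_S\mapsto\cOdeL_S$ and $\star_\cO\mapsto\otimes_{\cOdeL_S}$, plus the fact that the underlying $\star$-convolution with $\bigoplus_c\bC_{S^\vee+c}$ is already $\id$ on the cutoff image), whereas you spend a paragraph re-deriving it; the two are the same argument, and your extra paragraph is unnecessary detail rather than a different approach.
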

\begin{proof}
This is a corollary of the above lemma and $\cE\star_\cO 1_S\cong \cE$. 
\end{proof}

\begin{lemma}\label{Lemma: HomSQ}
For objects in $\cE, \cF\in \SQ(M)$, the objects $\cE\star_\cO\cF$ and $\cHom^{\star_\cO}(\cE, \cF)$ is again in $\mathrm{SQ}(M)$.
\end{lemma}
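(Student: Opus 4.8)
\end{lemma}

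\begin{proof}
The plan is to check, for each of $\cE\star_\cO\cF$ and $\cHom^{\star_\cO}(\cE,\cF)$, the two defining conditions of a sheaf quantization, handling the fiberwise condition $(2)$ and the microsupport condition $(1)$ separately.

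For $(2)$: the operations $\star_\cO$ and $\cHom^{\star_\cO}$ are assembled from $m_!$, $p_i^{-1}$, $m^!$ and $\cHom$ on $M\times\bC_t^2$, all performed fiberwise over $M$, so for $x\in M$ we get $(\cE\star_\cO\cF)_x\cong\cE_x\star_\cO\cF_x$ and --- once restriction is commuted past $p_{1*}$, which is the delicate point addressed below --- $\cHom^{\star_\cO}(\cE,\cF)_x\cong\cHom^{\star_\cO}(\cE_x,\cF_x)$, both sides now computed over the point. By Corollary~\ref{cor: overapoint} we have $\cE_x,\cF_x\in\mathrm{Perf}(\cOdeL_S)$, and over a point $\star_\cO$ is $\otimes_{\cOdeL_S}$ while $\cHom^{\star_\cO}$ is the internal $\cHom$ of $\mathrm{Perf}(\cOdeL_S)$. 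Since perfect complexes are stable under derived tensor product and derived internal $\cHom$, both restrictions lie in $\mathrm{Perf}(\cOdeL_S)\cong\SQ_S(T^*\{*\})$, which is $(2)$.

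For $(1)$: since $\otimes_{\cOdeL_S}$ is a quotient of $\otimes$ (a colimit of shifts of it), $\SS(\cE\star_\cO\cF)\subset\SS(\cE\star\cF)$, and the standard estimates for $m_!$, $p_i^{-1}$ and $\otimes$ bound $\SS(\cE\star\cF)$ by the set obtained from $\SS(\cE),\SS(\cF)$ by adding $\xi$-components while identifying $\tau$-components; applying $\rho$ yields
\begin{equation}
    \musupp(\cE\star_\cO\cF)\subset\musupp(\cE)+\musupp(\cF),
\end{equation}
the fiberwise Minkowski sum inside $T^*M$, and similarly $\musupp(\cHom^{\star_\cO}(\cE,\cF))\subset-\musupp(\cE)+\musupp(\cF)$. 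Now $\musupp(\cE)$ and $\musupp(\cF)$ are algebraic Lagrangians, so (by the Appendix) away from their branch loci they are finite unions of graphs $\Graph(d\alpha_i)$ and $\Graph(d\beta_j)$; their fiberwise sum and difference are then generically the unions of the $\Graph(d(\alpha_i+\beta_j))$ and $\Graph(d(\beta_j-\alpha_i))$, whose closures are again algebraic Lagrangians. Finally microsupport is involutive, so a closed conic subset of a Lagrangian is a union of its irreducible components; hence $\musupp(\cE\star_\cO\cF)$ and $\musupp(\cHom^{\star_\cO}(\cE,\cF))$ are themselves algebraic Lagrangians, giving $(1)$. For $\star_\cO$ one may alternatively invoke Proposition~\ref{prop: stratification} to reduce to the local models $\bigl(\bigoplus_c T_c\cE_0\bigr)\star\cE_1$ and compute directly.

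The main obstacle is the handling of $\cHom^{\star_\cO}(\cE,\cF)=p_{1*}\cHom(p_2^{-1}\cE,m^!\cF)$: since $p_1$ is not proper, neither the commutation of $p_{1*}$ with restriction to a point nor the microsupport estimate for $p_{1*}$ (rather than $p_{1!}$) is free. I would resolve this by working inside $\Sh^\bC_S(M\times\bC_t)$, where every object is identified with its microlocal cutoff $(\bullet)\star\bigl(\bigoplus_c\bC_{S^\vee+c}\bigr)$; on such objects the excess microsupport in the $t$-direction is confined to $\overline{h(\Cone(S))^c}$, which is exactly what makes $p_*$ and $p_!$ coincide modulo the negligible subcategory $\cC_S$ and makes the needed base change available. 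As a fallback, Proposition~\ref{prop: stratification} reduces both statements to explicit local models, at the price of tracking the interaction of $\star$ with $\star_\cO$ there. A secondary, bookkeeping-type point is to verify at the level of the defining ideals in $\cO_M[\xi_1,\dots,\xi_n]$ that fiberwise sums and differences of algebraic Lagrangians remain algebraic, which should follow from the Appendix.
\end{proof}
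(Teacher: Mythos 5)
Your primary approach is genuinely different from the paper's. The paper proves this entirely via Proposition~\ref{prop: stratification}: it restricts $\cE$ and $\cF$ to strata where each has the explicit form $(\bigoplus_c T_c\cE_0)\star\cE_1$, computes $\cE\star_\cO\cF$ in closed form as $(\bigoplus_c T_c(\cE_0\star_{\noneq}\cF_0))\star(\cE_1\star_\cO\cF_1)$ there, and then glues across strata by induction, with the $\cHom^{\star_\cO}$ case declared analogous. You instead try to verify the two axioms of a sheaf quantization directly, via base change for the fiberwise perfectness and via microsupport estimates plus an involutivity argument for the Lagrangian condition. This is a reasonable alternative strategy, and you correctly flag the real obstruction: $p_{1*}$ in the definition of $\cHom^{\star_\cO}$ is a full pushforward and does not commute with $i_x^{-1}$ without additional input, and likewise the microsupport estimate one knows off the shelf is for $p_{1!}$. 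Your proposed fix---work in the cutoff subcategory where $p_*$ and $p_!$ agree modulo $\cC_S$---is the right idea, but note that it is exactly the point the paper avoids having to check by going through the explicit local models of Proposition~\ref{prop: stratification} (which you only offer as a ``fallback''; in the paper it is the entire proof). Two further places where your version needs more care: (i) the claim that the fiberwise Minkowski sum/difference of two algebraic Lagrangians is again an algebraic Lagrangian is not in the Appendix and would have to be proved (the Appendix is about structure of a single algebraic Lagrangian, not about sums); and (ii) the involutivity step---``a closed conic subset of a Lagrangian is a union of irreducible components''---is stated for $\SS$, not for the non-conic $\musupp=\rho(\SS\cap\cdots)$, so you would need to argue that coisotropicity survives the map $\rho$. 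Both of these are swept under the rug in the paper's route because the local models $S_\alpha$ make the microsupport a graph $\Graph(d\alpha)$ outright. So: different, plausible, but with identified gaps that the paper's stratification argument is specifically designed to sidestep.
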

\begin{proof}
Again, by Proposition~\ref{prop: stratification}, we have a stratification such that we have presentations $\cE\cong (\bigoplus_{c\in \bC}T_c\cE_0)\star \cE_1$ and $\cF\cong (\bigoplus_cT_c\cF_0)\star \cF_1$ on each stratum. 
On the stratum, the object $\cE\star_\cO\cF$ is isomorphic to $(\bigoplus_{c\in \bC}T_c(\cE_0\star_{\noneq}\cF_0))\star (\cE_1\star_\cO \cF_1))\in \SQ_S(M)$. Inductively, one can show $\cE\star_\cO \cF\in \SQ_S(M)$. A similar discussion proves the statement for $\cHom^{\star_\cO}(\cE, \cF)$.
\end{proof}

\subsection{Duality}
Let $\omega$ be the orientation sheaf of $M$. 

We set
\begin{equation}
    \omega_S:=1_S\otimes \omega.
\end{equation}
We get the following duality functor
\begin{equation}
   \bD\cE:=\cHom^{\star_\cO}(\cE, \omega_S).
\end{equation}
By the definition and Lemma~\ref{Lemma: HomSQ}, $\bD\cE\in \SQ_S(M)$ for any $\cE\in \SQ_S(\cM)$.

\begin{lemma}
$\bD^2=\id$ on $\SQ_S(M)$. 
\end{lemma}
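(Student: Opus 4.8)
The plan is to realize $\bD$ as the duality functor attached to a dualizing object in a closed symmetric monoidal category, reduce the biduality isomorphism to the fibers $\{x\}\times\bC_t$, and there invoke biduality of perfect complexes. First I would record that $(\SQ_S(M),\star_\cO,1_S)$ is closed symmetric monoidal: $\star_\cO$ and its internal hom $\cHom^{\star_\cO}$ preserve $\SQ_S(M)$ by Lemma~\ref{Lemma: HomSQ}, the unit constraint is $\cE\star_\cO 1_S\cong\cE$, and the adjunction $\Hom(\cE\star_\cO\cF,\cG)\cong\Hom(\cE,\cHom^{\star_\cO}(\cF,\cG))$ has already been established. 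From the evaluation $\bD\cE\star_\cO\cE\to\omega_S$ one then gets, by adjunction, a natural biduality morphism $\eta_\cE\colon\cE\to\bD\bD\cE$ (the target lies in $\SQ_S(M)$ by Lemma~\ref{Lemma: HomSQ}), and it remains to prove $\eta_\cE$ is an isomorphism.

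Since an isomorphism of sheaves on $M\times\bC_t$ can be tested on stalks, and the stalk at $(x,t)$ is a stalk of $i_x^{-1}$ of the sheaf, where $i_x\colon\{x\}\times\bC_t\hookrightarrow M\times\bC_t$, it suffices to show $i_x^{-1}\eta_\cE$ is an isomorphism for every $x\in M$. The key preliminary step is that $\bD$ commutes with $i_x^{-1}$: that $i_x^{-1}\bD\cE$ is identified, compatibly with $\eta$, with the $\SQ_S(T^*\{*\})$-duality of $i_x^{-1}\cE$. This should follow because $\cHom^{\star_\cO}(-,\omega_S)=p_{1*}\cHom(p_2^{-1}(-),m^!(-))$ is built only from the maps $p_1,p_2,m$, each the identity on the $M$-factor, together with a local $\cHom$: $m$ being smooth with trivial relative orientation, $m^!$ is $m^{-1}$ up to a shift and commutes with $i_x^{-1}$; the cutoff hypothesis puts $\SS(\cE)$ in the half-space $\overline{h(\Cone(S))^c}$, which forces $\cHom(p_2^{-1}\cE,m^!\omega_S)$ to have microsupport in a cone on which $p_1$ is non-characteristic, so $p_{1*}$ agrees with $p_{1!}$ on it and proper base change applies; the interchange $i_x^{-1}\cHom\cong\cHom(i_x^{-1}(-),i_x^{-1}(-))$ follows similarly from Guillermou--Schapira-type microsupport estimates.

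Granting the fiber reduction, I would finish on $M=\{*\}$. There $\SQ_S(T^*\{*\})\cong\mathrm{Perf}(\cOdeL_S)$ by Corollary~\ref{cor: overapoint}, carrying $\star_\cO$ to $\otimes_{\cOdeL_S}$; the orientation datum of a point is trivial, so $\omega_S\cong 1_S$, corresponding to $\cOdeL_S$ as a free rank-one module. Hence $\bD$ becomes $\mathrm{RHom}_{\cOdeL_S}(-,\cOdeL_S)$ and $\eta$ the canonical evaluation $P\to\mathrm{RHom}_{\cOdeL_S}(\mathrm{RHom}_{\cOdeL_S}(P,\cOdeL_S),\cOdeL_S)$, an isomorphism for perfect $P$. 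A second, more hands-on route to the same end uses Proposition~\ref{prop: stratification}: on a neighborhood $U$ in each stratum $\cE|_U\cong(\bigoplus_c T_c\cE_0)\star\cE_1$, and one computes that $\bD$ sends this to $(\bigoplus_c T_c\bD'\cE_0)\star\bD''\cE_1$, with $\bD'$ Verdier duality of the constructible sheaf $\cE_0$ (twisted by $\omega$) and $\bD''$ the ring-theoretic duality of $\cE_1$; classical biduality on each factor makes $\eta$ an isomorphism on each stratum, hence everywhere.

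I expect the main obstacle to be exactly the interplay of $\cHom^{\star_\cO}$ with restriction to fibers — equivalently, controlling the non-proper pushforward $p_{1*}$ in its definition — which is where the cutoff/positive-microsupport conditions on sheaf quantizations must be used; once that commutation is in place, the rest is formal together with the standard biduality statements for constructible sheaves and for perfect complexes over a ring.
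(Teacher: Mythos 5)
Your proposal is essentially correct and, in its ``second, more hands-on route,'' matches the paper's proof exactly: construct the biduality morphism $\eta_\cE\colon \cE\to \bD^2\cE$ by adjunction, use Proposition~\ref{prop: stratification} to reduce to neighborhoods where $\cE|_U\cong(\bigoplus_c T_c\cE_0)\star\cE_1$, and on each such piece invoke classical biduality for the constructible factor $\cE_0$ together with biduality of perfect complexes in $\SQ_S(T^*\{*\})\cong \mathrm{Perf}(\cOdeL_S)$ for the factor $\cE_1$.

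Your primary route, restricting to fibers $i_x\colon\{x\}\times\bC_t\hookrightarrow M\times\bC_t$ and checking that $\bD$ commutes with $i_x^{-1}$ via microsupport/cutoff arguments for the non-proper pushforward $p_{1*}$, is a genuinely different slicing that the paper does not take. It has the appeal of reducing directly to the cleanest statement (biduality of perfect $\cOdeL_S$-modules) with no appeal to the stratified structure theorem, but the cost is precisely the commutation $i_x^{-1}\bD\cE\cong\bD(i_x^{-1}\cE)$, which you rightly flag as the obstacle. The paper's stratification-based reduction avoids this delicacy: on a neighborhood where $\cE$ is explicitly of the form $(\bigoplus_c T_c\cE_0)\star\cE_1$, one can compute $\cHom^{\star_\cO}(-,\omega_S)$ directly (by Lemma~\ref{Lemma: HomSQ} and its proof this again splits as a box-product of a constructible Verdier dual with a ring-theoretic dual), so no non-trivial base-change statement for $p_{1*}$ is needed. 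Thus the two approaches buy roughly the same thing, but the paper's route trades a potentially delicate microlocal commutation for a use of the already-established local normal form.
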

\begin{proof}
We note that the isomorphism
\begin{equation}
    \Hom(\cE, \bD^2\cE)\cong \Hom(\cE\star_\cO \cHom^{\star_\cO}(\cE, \omega_S), \omega_S)\cong \Hom(\cHom^{\star_\cO}(\cE, \omega_S), \cHom^{\star_\cO}(\cE, \omega_S)).
\end{equation}
By importing $\id$ on the right most side via this isomorphism, we get a morphism $\cE\rightarrow \bD^2\cE$. Again, it is enough to see on each stratification. Then this is a simple consequence of the perfectness appered in $\SQ_S(T^*\{0\})\cong \mathrm{Perf}(\cOdeL_S)$.
\end{proof}

We also would like to give an interpretation of the duality functor. Let $\delta\colon M\rightarrow M\times M$ be the diagonal embedding. We set
\begin{equation}
\begin{split}
    \bC_{\Delta}&:=\delta_*1_S\\
    \omega_{\Delta}&:=\delta_*(\omega_M\otimes 1_S).
\end{split}    
\end{equation}
where $\omega_M$ is the orientation sheaf.

For $\cE\in \SQ_S(M)$ and $\cF\in \SQ_S(N)$, we set $\cE\overset{\star_\cO}{\boxtimes}\cF:=p_1^{-1}\cE\star_\cO p_2^{-1}\cF\in \SQ_S(M\times N)$ where $p_i$ is the $i$-th projection.
We first remark the following isomorphism.
\begin{lemma}
\begin{equation}
    \begin{split}
        \Hom(\cE\overset{\star_\cO}{\boxtimes} \bC_\Delta, \omega_\Delta\overset{\star_\cO}{\boxtimes} \cF)&\cong\Hom(\cE, \cF)\\
        \Hom(\bC_\Delta\overset{\star_\cO}{\boxtimes} \cE, \cF\overset{\star_\cO}{\boxtimes} \omega_{\Delta})&\cong \Hom(\cE,\cF)
    \end{split}
\end{equation}
for $\cE, \cF\in \SQ_S(M)$.
\end{lemma}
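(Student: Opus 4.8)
The plan is to reproduce, for sheaf quantizations, the purely formal argument D'Agnolo--Kashiwara use for the enhanced-ind-sheaf analogue in \cite{DK}: exactly as in the $\cD$-module statements recorded above, it uses only adjunctions, base change for Cartesian squares of (closed) embeddings, and the monoidal structure. The concrete ingredients are the adjunctions for $f_*,f^{-1},f_!,f^!$ from the Operations subsection, the monoidal adjunction $\Hom(\cE\star_\cO\cF,\cG)\cong\Hom(\cE,\cHom^{\star_\cO}(\cF,\cG))$ and the unit isomorphism $\cE\star_\cO1_S\cong\cE$ from the Monoidal structure subsection, the identifications $\bC_\Delta=\delta_*1_S$ and $\omega_\Delta=\delta_*(\omega_M\otimes1_S)$, and the stability of $\SQ_S$ under $f^{-1},f^!$ and under $f_*$ for proper $f$ (here $f$ will be a diagonal embedding). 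So no new input is needed, only bookkeeping.

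Concretely, write $M^3=M_1\times M_2\times M_3$, let $\iota\colon M_1\times M\hookrightarrow M^3$ be the closed embedding onto $M_1\times\Delta_{23}$, $\delta'\colon M\hookrightarrow M_1\times M$ the remaining diagonal, and $q_{23}\colon M^3\to M_2\times M_3$ the projection; none of these maps touches the $\bC_t$-factor over which $\star_\cO$ operates. Since $\bC_\Delta=\delta_*1_S$ and the square presenting $q_{23}^{-1}\delta_*$ through $\iota_*$ is Cartesian, base change together with a projection formula for $\iota_*$ against $\star_\cO$ give $\cE\overset{\star_\cO}{\boxtimes}\bC_\Delta\cong\iota_*(\cE\overset{\star_\cO}{\boxtimes}1_S)$ on $M^3$. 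The adjunction $\iota_*\dashv\iota^!$ then rewrites the left-hand $\Hom$ as $\Hom_{M_1\times M}(\cE\overset{\star_\cO}{\boxtimes}1_S,\ \iota^!(\omega_\Delta\overset{\star_\cO}{\boxtimes}\cF))$. Now $\omega_\Delta\overset{\star_\cO}{\boxtimes}\cF$ is supported on $\Delta_{12}\times M_3$, which meets $M_1\times\Delta_{23}$ transversally along the small diagonal, identified with $M$; base change and the projection formula identify $\iota^!(\omega_\Delta\overset{\star_\cO}{\boxtimes}\cF)$ with $\delta'_*$ of $\omega_M\otimes(1_S\star_\cO\cF)$, twisted by the relative orientation of $\iota$. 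That orientation twist is the sheaf-theoretic counterpart of the $\Omega_M^{-1}$ appearing in the $\cD$-module duality functor: it cancels the $\omega_M$ built into $\omega_\Delta$, and with $1_S\star_\cO\cF\cong\cF$ we obtain $\iota^!(\omega_\Delta\overset{\star_\cO}{\boxtimes}\cF)\cong\delta'_*\cF$. Finally $(\delta')^{-1}\dashv\delta'_*$ and $(\delta')^{-1}(\cE\overset{\star_\cO}{\boxtimes}1_S)\cong\cE\star_\cO1_S\cong\cE$ yield $\Hom_M(\cE,\cF)$. The second isomorphism of the lemma follows from the first by the symmetry $M_1\leftrightarrow M_3$, or by applying $\bD$ and using $\bD^2=\id$ on $\SQ_S(M)$; all intermediate objects stay in $\SQ_S$ by the stability statements above together with Lemma~\ref{Lemma: HomSQ}.

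The one genuinely delicate point is the orientation and shift bookkeeping in the identification $\iota^!(\omega_\Delta\overset{\star_\cO}{\boxtimes}\cF)\cong\delta'_*\cF$: one has to verify that, in the non-shifted conventions used here for $\bC_\Delta$, $\omega_\Delta$ and the operations, the twist produced by $\iota^!$ annihilates $\omega_M$ exactly, with nothing left over --- which is precisely why the statement carries $\omega_\Delta$ on one side and $\bC_\Delta$ on the other. As an independent check I would also run the argument through Proposition~\ref{prop: stratification}: on each stratum $\cE\cong(\bigoplus_cT_c\cE_0)\star\cE_1$ and $\cF\cong(\bigoplus_cT_c\cF_0)\star\cF_1$, so both $\Hom$-spaces split as the classical Kashiwara--Schapira identity for the constructible sheaves $\cE_0,\cF_0$ on $M\times M$ (where $\bC_\Delta$ and $\omega_\Delta$ realise the identity kernel) together with the corresponding internal-hom identity in $\mathrm{Perf}(\cOdeL_S)$ via Corollary~\ref{cor: overapoint}, and one then glues these over the stratification.
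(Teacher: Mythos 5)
Your proposal is correct and follows essentially the same route as the paper, which simply defers to the formal argument of \cite[Lemma 9.4.4]{DK}; you have spelled out exactly what that import amounts to in this setting --- base change along the diagonal embeddings, the projection formula against $\star_\cO$ (which commutes with $\iota_*$ because $\iota$ does not touch the $\bC_t$-factor), the adjunction $\iota_*\dashv\iota^!$, and the cancellation of the relative orientation twist against the $\omega_M$ built into $\omega_\Delta$. The stratification cross-check at the end is a useful sanity verification not present in the paper.
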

\begin{proof}
One can import the proof of \cite[Lemma 9.4.4]{DK} to this setup.
\end{proof}

Similar to the differential equation case, we have the following criterion.
\begin{proposition}
For $\cE, \cF\in \SQ_S(M)$, we have $\cE\cong \bD\cF$ if and only if there exist two morphisms
\begin{equation}
    \begin{split}
 \cE\overset{\star}{\boxtimes}\cF&\xrightarrow{\epsilon}\omega_{\Delta}\\
\bC_{\Delta}\xrightarrow{\eta} \cF\overset{\star}{\boxtimes} \cE
    \end{split}
\end{equation}
such that the composition 
\begin{equation}
    \cE\overset{\star}{\boxtimes} \bC_{\Delta}\xrightarrow{\id\boxtimes \eta}\cE\overset{\star}{\boxtimes}\cF\overset{\star}{\boxtimes}\cE\xrightarrow{\epsilon\boxtimes\id}\omega_{\Delta}\overset{\star}{\boxtimes}\cE
\end{equation}
is identified with $\id$ in the above lemma and the composition
\begin{equation}
    \bC_\Delta\overset{\star}{\boxtimes}\cF\xrightarrow{\eta\boxtimes\id}\cF\overset{\star}{\boxtimes}\cE\overset{\star}{\boxtimes}\cE\xrightarrow{\epsilon\boxtimes\id}\omega_{\Delta}\overset{\star}{\boxtimes}\cE
\end{equation}
is identified with $\id$ in the above lemma.
\end{proposition}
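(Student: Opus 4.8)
The plan is to deduce this exactly as the analogous criterion for $\cD^\bullet_{M\times S}$-modules proved above, i.e.\ as a formal consequence of the monoidal and duality formalism already in place, mimicking \cite[\S 9.4]{DK}. The key observation is that the ``kernel'' formulation in the statement — morphisms $\epsilon\colon\cE\overset{\star}{\boxtimes}\cF\to\omega_\Delta$ and $\eta\colon\bC_\Delta\to\cF\overset{\star}{\boxtimes}\cE$ on $M\times M$, together with the two zig-zag identities read through the preceding lemma $\Hom(\cE\overset{\star_\cO}{\boxtimes}\bC_\Delta,\omega_\Delta\overset{\star_\cO}{\boxtimes}\cE)\cong\Hom(\cE,\cE)$ — is interchangeable with the ``internal hom'' formulation via $\cHom^{\star_\cO}(-,\omega_S)=\bD$, using the adjunction $\Hom(\cE\star_\cO\cF,\cG)\cong\Hom(\cE,\cHom^{\star_\cO}(\cF,\cG))$, the unit isomorphism $\cE\star_\cO 1_S\cong\cE$, and the lemma $\Hom(\cE,\cF)\cong\Hom(1_S,\cHom^{\star_\cO}(\cE,\cF))$.

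Concretely, I would first transpose $\epsilon$ through the preceding lemma and the $\cHom^{\star_\cO}$-adjunction into a morphism $a\colon\cE\to\bD\cF=\cHom^{\star_\cO}(\cF,\omega_S)$, and, using also $\bD^2=\id$ on $\SQ_S(M)$ proven above, transpose $\eta$ into a morphism $b\colon\bD\cF\to\cE$. The first zig-zag identity, reinterpreted under the identification of the preceding lemma, then says precisely $b\circ a=\id_\cE$, and the second says $a\circ b=\id_{\bD\cF}$; hence $\cE\cong\bD\cF$. For the converse, if $\cE\cong\bD\cF$ I would take $\epsilon$ to be the form of the evaluation $\cF\star_\cO\bD\cF\to\omega_S$ (adjoint to $\id_{\bD\cF}$) obtained by adjunction along the diagonal, and $\eta$ the coevaluation $\bC_\Delta\to\cF\overset{\star}{\boxtimes}\cE$ extracted from $\id_\cE$ by the same adjunctions; the triangle identities for the internal hom relative to $1_S$ then give the two required relations. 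All of this is the same formal manipulation as in \cite{DK}, where only adjunctions and the interpretation lemmas are used.

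The genuine content — and the step I expect to be the main obstacle — is making these ``formal'' manipulations legitimate inside $\SQ_S$. Two things must be checked. First, every auxiliary object ($\cE\star_\cO\cF$, $\cHom^{\star_\cO}(\cF,\omega_S)$, and their $\overset{\star_\cO}{\boxtimes}$'s with $\bC_\Delta$ and $\omega_\Delta$) must lie in the relevant $\SQ_S$; this follows from Lemma~\ref{Lemma: HomSQ} together with the stability of $\SQ_S$ under $f^{-1}$, $f^!$ and proper $f_*$. Second, the triangle identities for $\cHom^{\star_\cO}$ with unit $1_S$ — which in an abstract monoidal category would hold automatically only for dualizable objects — have to be verified; I would reduce this to a point by Proposition~\ref{prop: stratification}, write $\cE$ and $\cF$ locally as $(\bigoplus_c T_c\cE_0)\star\cE_1$ and $(\bigoplus_c T_c\cF_0)\star\cF_1$, and then appeal to Corollary~\ref{cor: overapoint}, $\SQ_S(T^*\{*\})\cong\mathrm{Perf}(\cOdeL_S)$, where the statement is the standard rigidity of perfect complexes over the ring $\cOdeL_S$. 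This stratumwise reduction, rather than any new computation, is where the real work sits.
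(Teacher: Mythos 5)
Your proposal is correct and takes the same approach as the paper: the paper's entire proof is a one-line citation to \cite[Section 9.4]{DK}, and you have correctly identified that the argument is the same formal manipulation of adjunctions and the two boxtimes $\Hom$-identification lemmas, plus $\bD^2 = \id$. Your worry about the triangle identities is reasonable but largely pre-packaged: the reduction to a point via Proposition~\ref{prop: stratification} and perfectness over $\cOdeL_S$ is already what powers $\bD^2 = \id$ and Lemma~\ref{Lemma: HomSQ}, so once those are in hand the zig-zag argument is genuinely formal in the sense of \cite{DK}.
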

\begin{proof}
This follows from the arguments of \cite[Section 9.4]{DK}.
\end{proof}

\begin{example}\label{ex: dualityexample}
Let $M$ be a contractible manifold. Let $f$ be a global holomorphic function. Then it is easy to see that $S_f\overset{\star}{\boxtimes} S_{-\alpha}\cong S_{\alpha}\overset{\star}{\boxtimes} S_f\cong \bC_{\Delta}$. This implies that $\bD S_f\cong S_{-f}$. If $M$ is not contractible, we have
$\bD S_\alpha\cong S_{-\alpha}\otimes \omega_M$. 
\end{example}

\section{Construction of the functor}

\subsection{The definition of the solution functor}
Let us fix a sectoroid $S$ and equip it with the sectoroid topology. Consider $\cOaeL_{M\times \bC_t\times S}$ as in Section 4. Consider the action of $\hbar\partial_t-1$ on it and take the kernel. We denote it by $\cR_{M\times \bC_t\times S}$. If $M$ is the singlet, this is $1_S$. Similar to the case of $1_S$, $\cR_{M\times \bC_t\times S}$ admits an action of $\cDdeL_{M\times S}$. The sheaf is equipped with a canonical $\bC_t$-equivariant structure.

\subsection{The definition of the solution functor}
The following construction is very much inspired from D'Agnolo--Kashiwara's construction~\cite{DK}.

We denote the quotient functor $\Sh^\bC(M\times \bC_t)\rightarrow \Sh_{S}^\bC(M\times \bC_t)$ by $[-]$. We get a functor
\begin{equation}
\begin{split}
    \Sol^\hbar_{S}\colon D^b(\cDdeL_{M\times S})&\rightarrow \Sh_{S}^\bC(M\times \bC_t) ;\\ \cM & \mapsto    p_{S*}\cHom_{\cDdeL_{M\times S}}(\cM,\cR_{M\times S\times \bC_t}).
    \end{split}
\end{equation}
where $p_S$ is the projection forgetting $S$.
This is our solution functor.

We will restrict the domain of the functor later. In particular, it will be locally contained in the image of 
\begin{equation}
    (-)\otimes_{\cOseL_S}\cOdeL_S\colon D^b(\cDseL_{M\times S})\rightarrow D^b(\cDdeL_{M\times S}).
\end{equation}
For an object $\cM\in D^b(\cDseL_{M\times S})$, we have
\begin{equation}
    \Sol^\hbar_{S}(\cM\otimes_{\cOseL_S}\cOdeL_S)\cong p_{S*}\cHom_{\cDseL_{M\times S}}(\cM,\cR_{M\times S\times \bC_t}).
\end{equation}
Hence we sometimes talk about the right hand side.

\subsection{De Rham functor}
As usual, we also introduce the de Rham functor. We first consider
\begin{equation}
     \Omega^{\cR}_{M\times  \bC_t\times S}:=\Omega_{M\times \bC_t}\otimes \cR_{M\times  \bC_t\times S}\
\end{equation}
This is canonically a right $\cDdeL_{M\times S}$-module.
We then get a functor
\begin{equation}
\begin{split}
    \DR_{S}\colon D^b(\cDdeL_{M\times S})&\rightarrow \Sh_{S}^\bC(M\times \bC_t) ;\\ \cM & \mapsto [p_{S*}(\Omega^{\cR}_{M\times  \bC_t\times S}\overset{\cD}{\otimes}\cM)].
    \end{split}
\end{equation}
Again, for an object of the form $\cM\otimes_{\cOseL_S}\cOdeL_S$, we have
\begin{equation}
    \DR_S(\cM\otimes_{\cOseL_S}\cOdeL_S)\cong [p_{S*}(\Omega^{\cR}_{M\times  \bC_t\times S}\overset{\cD}{\otimes}\cM)].
\end{equation}

Let $\bD$ be the Verdier dual functor.
\begin{proposition}[Verdier duality]
We have
\begin{equation}
    \DR_S\circ \bD\cong \Sol^\hbar_{S}.
\end{equation}
\end{proposition}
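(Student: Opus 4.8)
The plan is to prove this as the $\hbar$-analogue of the classical fact that $\cD$-module duality exchanges the solution and de Rham functors (the enhanced version of which is treated in \cite{DK}); here $\bD$ denotes the duality functor ${}_{\cD}\bD$ of the Duality subsection. Both $\Sol^\hbar_S$ and $\DR_S\circ\bD$ arise by applying $[p_{S*}(-)]$ to, respectively, the complexes $\cHom_{\cDdeL_{M\times S}}(\cM,\cR_{M\times S\times\bC_t})$ and $\Omega^{\cR}_{M\times\bC_t\times S}\overset{\cD}{\otimes}{}_{\cD}\bD\cM$ on $M\times\bC_t\times S$, so it suffices to produce a natural isomorphism
\[
\Omega^{\cR}_{M\times\bC_t\times S}\overset{\cD}{\otimes}{}_{\cD}\bD\cM\;\cong\;\cHom_{\cDdeL_{M\times S}}(\cM,\cR_{M\times S\times\bC_t})
\]
in $D^b$ of $\cOdeL_S$-modules, functorial in $\cM\in D^b(\cDdeL_{M\times S})$; this is local on $M$, and applying $[p_{S*}(-)]$ afterwards gives the proposition.

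To get this isomorphism I would unwind ${}_{\cD}\bD\cM=\cHom_{\cDdeL_{M\times S}}(\cM,\cDdeL_{M\times S})\otimes_{\cO}(\Omega_M^{-1})^{\bullet}$ and, by associativity of $\otimes$ together with the side-changing identities, reduce to $\cM=\cDdeL_{M\times S}$: since $\cM$ is coherent it is locally represented by a bounded complex of finite free $\cDdeL_{M\times S}$-modules, and $\overset{\cD}{\otimes}$, $\cHom_{\cDdeL_{M\times S}}(-,-)$ and ${}_{\cD}\bD$ all commute with finite direct sums. For $\cM=\cDdeL_{M\times S}$ the right-hand side is $\cR_{M\times S\times\bC_t}$, while the left-hand side is
\[
\Omega^{\cR}_{M\times\bC_t\times S}\overset{\cD}{\otimes}\bigl(\cDdeL_{M\times S}\otimes_{\cO}(\Omega_M^{-1})^{\bullet}\bigr)\;\cong\;\Omega^{\cR}_{M\times\bC_t\times S}\otimes_{\cO_M^{\bullet}}(\Omega_M^{-1})^{\bullet}\;\cong\;\Omega^1_{\bC_t}\otimes_{\cO_{\bC_t}}\cR_{M\times S\times\bC_t},
\]
using $\Omega_{M\times\bC_t}\cong p_M^{*}\Omega_M\otimes_{\cO}p_{\bC_t}^{*}\Omega^1_{\bC_t}$ and the cancellation of the $\Omega_M$-factor against the side-change twist; trivializing $\Omega^1_{\bC_t}\cong\cO_{\bC_t}\,dt$ identifies this with $\cR_{M\times S\times\bC_t}$, and this trivialization is precisely why no cohomological shift appears. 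Gluing the local identifications along the resolution, checking independence of the resolution, and verifying naturality in $\cM$ is then routine homological algebra. Alternatively, the statement should also follow formally from the $\cB_{\Delta}$-duality interpretation recorded above, along the lines of \cite[\S9]{DK}, all of whose inputs (adjunctions and base change) have been reproduced.

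The step I expect to require the most care is the non-Noetherianity of the coefficient ring $\Lambda_0^S$, hence of $\cOdeL_S$ and $\cDdeL_{M\times S}$: coherence is strictly weaker than finite presentation here, so one must know that objects of $D^b_{\coh}(\cDdeL_{M\times S})$ are nevertheless locally perfect, so that $\cHom_{\cDdeL_{M\times S}}(\cM,\cDdeL_{M\times S})$ genuinely computes ${}_{\cD}\bD\cM$ and the reduction to a finite free resolution is legitimate. This is the same reflexivity of coherent objects already used in asserting $({}_{\cD}\bD)^2=\id$ on $D^b_{\coh}$, and I would invoke it; if one preferred to avoid it, one could run the argument on the subcategory of perfect $\cDdeL_{M\times S}$-modules, which generates, and then pass to the limit. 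Everything else is formal.
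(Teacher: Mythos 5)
Your strategy matches the paper's: both arguments establish a natural isomorphism $\Omega^{\cR}_{M\times\bC_t\times S}\overset{\cD}{\otimes}{}_{\cD}\bD\cM\cong\cHom_{\cDdeL_{M\times S}}(\cM,\cR_{M\times S\times\bC_t})$ and then apply $p_{S*}$. The paper records this as a two-step chain
\[
\Omega^{\cR}\overset{\cD}{\otimes}\bigl(\cHom_{\cD}(\cM,\cDdeL_{M\times S})\otimes(\Omega_M^{-1})^{\mathrm{de},\Lambda_0}\bigr)\cong\cHom_{\cD}(\cM,\cDdeL_{M\times S})\otimes_{\cO}\cR\cong\cHom(\cM,\cR),
\]
citing these as standard side-change and Hom--tensor identities, whereas you rederive the same chain by reducing to $\cM=\cDdeL_{M\times S}$ along a local finite free resolution; these are the same argument at different levels of explicitness. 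Your two caveats are both genuine points the paper leaves implicit: the leftover $\Omega^1_{\bC_t}$ factor (since $\Omega^{\cR}$ carries $\Omega_{M\times\bC_t}$ but ${}_\cD\bD$ only untwists by $\Omega_M^{-1}$) has to be killed by the canonical $dt$, and the coherent-to-perfect step over the non-Noetherian base $\cOdeL_S$ is exactly the reflexivity hypothesis the paper already invokes to assert $({}_\cD\bD)^2=\id$ on $D^b_{\coh}$. Recording them is an improvement in rigor, not a deviation.
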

\begin{proof}
For a $\cDdeL_{M\times S}$-module, we have
\begin{equation}
\begin{split}
\Omega^{\cR}_{M\times \bC_t\times S}\overset{\cD}{\otimes}(\cHom_{\cD}(\cM, \cDdeL_{M\times S})\otimes (\Omega_M^{-1})^{\mathrm{de}, \Lambda_0})
&\cong \cHom_{\cD}(\cM, \cDdeL_{M\times S})\otimes_{\cO} \cR_{M\times \bC_t\times S}\\
&\cong \cHom(\cM, \cR_{M\times \bC_t\times S}).
\end{split}
\end{equation}
By pushing-forward along $p_S$ both sides, we complete the proof.
\end{proof}

\begin{proposition}[Duality 2]
\begin{equation}
    \bD\circ \DR_S\cong \DR_S\circ \bD.
\end{equation}
\end{proposition}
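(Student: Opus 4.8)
The plan is to play the two duality criteria against each other: on the $\cD^\bullet_{M\times S}$-side, the criterion characterizing ${}_{\cD}\bD\cN$ by morphisms $\cM\boxtimes\cN\to\cB_\Delta[d]$ and $\cB_\Delta[-d]\to\cN\boxtimes\cM$ subject to the two triangle identities; on the sheaf-quantization side, its analogue characterizing $\bD\cF$ by morphisms $\cE\overset{\star_\cO}{\boxtimes}\cF\to\omega_\Delta$ and $\bC_\Delta\to\cF\overset{\star_\cO}{\boxtimes}\cE$ subject to the corresponding identities. Before this can be carried out I would first record three compatibilities of $\DR_S$ with operations, all of the same flavour as the identities already established for $\cD^\bullet_{M\times S}$-modules and for sheaf quantizations: (i) a K\"unneth isomorphism $\DR_S(\cM\boxtimes\cN)\cong\DR_S(\cM)\overset{\star_\cO}{\boxtimes}\DR_S(\cN)$; (ii) a computation of $\DR_S(\cB_\Delta)$ in terms of the diagonal sheaves $\bC_\Delta$ and $\omega_\Delta$, up to the appropriate shift by $d=\dim M$ and orientation twist, which reduces to the de Rham resolution of the structure sheaf $\cO^\bullet_{M\times S}$ pushed forward along the diagonal $\delta$; and (iii) commutation of $\DR_S$ with diagonal pushforward, the relevant smooth pullbacks, and external products, in a form strong enough to transport compositions.

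Granting (i)--(iii), the argument is formal. Fix $\cN$ and set $\cM:={}_{\cD}\bD\cN$; the $\cD$-module criterion supplies $\epsilon\colon\cM\boxtimes\cN\to\cB_\Delta[d]$ and $\eta\colon\cB_\Delta[-d]\to\cN\boxtimes\cM$ with the two triangle identities holding on $M^3$. Applying $\DR_S$ over $M\times M$ and using (i)--(ii) turns $\epsilon$ and $\eta$ into morphisms $\DR_S(\cM)\overset{\star_\cO}{\boxtimes}\DR_S(\cN)\to\omega_\Delta$ and $\bC_\Delta\to\DR_S(\cN)\overset{\star_\cO}{\boxtimes}\DR_S(\cM)$; applying $\DR_S$ over $M^3$ to the triangle identities and using (iii) turns them into exactly the identities demanded by the sheaf-quantization criterion. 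That criterion then gives $\DR_S(\cM)\cong\bD(\DR_S(\cN))$, i.e.\ $\DR_S({}_{\cD}\bD\cN)\cong\bD(\DR_S(\cN))$; renaming $\cN$ and checking that this isomorphism is natural in the argument (it is forced, since everything is manufactured functorially from the $\cD$-module duality data) yields the assertion. As a variant one may instead combine with the already-proven ``Verdier duality'' $\DR_S\circ\bD\cong\Sol^\hbar_S$ together with $\bD^2=\id$ on both sides, reducing the statement to the single interchange $\bD\circ\Sol^\hbar_S\cong\DR_S$, which is proved by the same criterion and is a bit more symmetric to organize.

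The hard part will be (i), the K\"unneth isomorphism for $\DR_S$ against the convolution product $\star_\cO$: this is where the kernel $\cR$ enters in an essential way. One must check that solving $\hbar\partial_t-1$ is compatible with addition of the $\bC_t$-coordinates --- that is, that under the addition map $m$ on the $\bC_t$-factors the sheaf $\cR_{(M\times N)\times\bC_t\times S}$ is reconstructed from $\cR_{M\times\bC_t\times S}$ and $\cR_{N\times\bC_t\times S}$, the multiplicativity $e^{t_1/\hbar}e^{t_2/\hbar}=e^{(t_1+t_2)/\hbar}$ of the exponential kernel being the point --- and then that this identification is $\bC_t$-equivariant and $\cOdeL_S$-linear, so that nothing is lost on passing to the quotient category $\Sh^\bC_S(M\times\bC_t)$. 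By contrast (ii) is a local de Rham computation and (iii) is bookkeeping of adjunctions; once they are in place, the transport of the triangle identities needs no further calculation.
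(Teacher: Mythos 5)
The proposal is correct and takes essentially the same approach as the paper: the paper's proof is a one-line citation of the argument of \cite[Theorem~9.4.8]{DK}, which is exactly the criterion-based argument you describe, transporting the $(\epsilon,\eta)$ duality data through $\DR_S$ via the K\"unneth and diagonal compatibilities and then invoking the two duality-interpretation criteria (with $\cB_\Delta$ on the $\cD^\bullet$-side and $\bC_\Delta,\omega_\Delta$ on the sheaf-quantization side) set up in the preceding subsections. You have merely unpacked what the paper delegates to D'Agnolo--Kashiwara.
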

\begin{proof}
It is straight forward to follow the argument of the proof of \cite[Theorem 9.4.8]{DK}.
\end{proof}

By combining the above two results,
\begin{corollary}\label{cor: derhamduality}
We have
\begin{equation}
    \bD\circ \DR_S\cong \Sol^\hbar_{S}.
\end{equation}
\end{corollary}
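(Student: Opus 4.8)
The plan is simply to chain the two propositions immediately preceding the corollary. The ``Duality 2'' proposition supplies a natural isomorphism $\bD\circ \DR_S\cong \DR_S\circ \bD$, where the outer $\bD$ on the left is the Verdier dual on $\Sh^\bC_S(M\times\bC_t)$ and the $\bD$ on the right is the $\cD$-module duality functor on $D^b_{\coh}(\cDdeL_{M\times S})$. The ``Verdier duality'' proposition supplies $\DR_S\circ \bD\cong \Sol^\hbar_{S}$. Composing these two natural isomorphisms gives
\[
\bD\circ \DR_S\;\cong\;\DR_S\circ \bD\;\cong\;\Sol^\hbar_{S},
\]
which is exactly the assertion. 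So the corollary is obtained by transitivity of isomorphism, with no additional computation.

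The only points worth a line of verification are bookkeeping: that the two named isomorphisms are natural transformations of functors with the same source — both are defined on $D^b(\cDdeL_{M\times S})$, respectively its coherent subcategory where $({}_\cD\bD)^2=\id$ — so that their composite is meaningful, and that one does not actually invoke involutivity of $\bD$ to conclude (only the two isomorphisms themselves are used). Hence there is no genuine obstacle here; the substantive content has already been discharged in the two propositions, in particular in ``Duality 2'', whose proof imports the formal manipulations of \cite[Section~9.4]{DK} (notably \cite[Theorem~9.4.8]{DK}). If desired, one may additionally record that restricting to $D^b_{\coh}$ and using $({}_\cD\bD)^2\cong\id$ on the source together with $\bD^2\cong\id$ on $\SQ_S(M)$ yields the companion identity $\DR_S\cong \bD\circ \Sol^\hbar_{S}$, matching the classical Riemann--Hilbert picture.
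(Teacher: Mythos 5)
Your argument is exactly the paper's: the corollary is stated as an immediate consequence of composing the two preceding propositions (Duality~2 gives $\bD\circ\DR_S\cong\DR_S\circ\bD$, and Verdier duality gives $\DR_S\circ\bD\cong\Sol^\hbar_S$). Your additional bookkeeping remarks are sound but not needed beyond what the paper already asserts.
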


\subsection{Example: 0-dimensional case}
We consider the case when $M$ is the singlet $\{*\}$. Then $\cOseL_S=\cDseL_S$. The rank 1 flat $\hbar$-connection in this category is $\cOseL_{S}$. Applying the solution functor
\begin{equation}
    \Sol^\hbar_{S}(\cO_S)=p_{S*}\cHom_{\cOseL_S}(\cOseL_S, \cR_{S\times \bC_t})\cong p_{S*}\cR_{S\times \bC_t}\cong 1_S
\end{equation}

\subsection{Example: 1st order}

Next consider the first order case.
The solutions for exponential $\cD^\hbar$-modules play important roles in the later discussion.

\begin{lemma}\label{lem: expsolutions}
Let $f$ be a holomorphic function on $M$. The solution for the exponential $\cDseL_{M\times S}$-module $\cE^{\alpha/\hbar}$ can be computed as
\begin{equation}
\begin{split}
    \Sol^\hbar_{S}(\cE^{\alpha/\hbar})&\cong S_\alpha\\
    \DR_S(\cE^{\alpha/\hbar})&\cong S_{-\alpha}
\end{split}
\end{equation}
\end{lemma}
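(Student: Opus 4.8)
The plan is to compute both sides directly from the definitions, using the presentation of $\cE^{\alpha/\hbar}$ as a quotient of $\cDseL_{M\times S}$ by the ideal $\cDseL_{M\times S}(\hbar\partial - d\alpha)$ established just before the statement. Since $\cE^{\alpha/\hbar}$ has the free resolution given by the Koszul complex on the commuting operators $\hbar\partial_{x_i} - \partial_{x_i}\alpha$, applying $\cHom_{\cDseL_{M\times S}}(-,\cR_{M\times S\times\bC_t})$ turns $\Sol^\hbar_S(\cE^{\alpha/\hbar})$ into the de Rham-type complex $\cR_{M\times S\times\bC_t} \xrightarrow{} \cR_{M\times S\times\bC_t}\otimes\Omega^1 \to \cdots$ where the differential is $s \mapsto \sum_i(\hbar\partial_{x_i} - \partial_{x_i}\alpha)(s)\,dx_i$ (pushed forward along $p_S$). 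The gauge transformation $s \mapsto e^{\alpha/\hbar}s$ conjugates this complex to the plain relative de Rham complex of $\cR_{M\times S\times\bC_t}$ twisted by the exponential weight; since $\cR$ is by definition the kernel of $\hbar\partial_t - 1$ in $\cOaeL_{M\times\bC_t\times S}$ and sections of $\cR$ locally have the form $\psi_0(\hbar)e^{t/\hbar}$, solving $\hbar\partial_{x_i}s = \partial_{x_i}\alpha\cdot s$ amounts to showing that the solution sheaf is $e^{-\alpha/\hbar}$ times $\cR$, i.e. the kernel of $\hbar\partial_t - 1$ relative to the shifted coordinate $t \mapsto t + \alpha(x)$.

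Concretely, I would first treat the $0$-dimensional computation already recorded in the excerpt, $\Sol^\hbar_S(\cO_S)\cong 1_S$, and then observe that multiplication by $e^{-\alpha/\hbar}$ gives an isomorphism of $\cDseL_{M\times S}$-modules $\cE^{\alpha/\hbar}\xrightarrow{\sim}\cO^{\se,\Lambda_0}_{M\times S}$ after the corresponding automorphism of $\cR_{M\times S\times\bC_t}$ (translation in $t$ by $-\alpha(x)$, which is well-defined because $\alpha$ is holomorphic). Under this identification the solution complex becomes $p_{S*}$ of the kernel of $\hbar\partial_t - 1$ pulled back along $t\mapsto t - \alpha(x)$, which is exactly the sheaf $1_S$ with its support translated: the sheaf whose sections over an open set are $\psi_0(\hbar)e^{(t+\alpha(x))/\hbar}$. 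Comparing supports with the computation in Example (the one defining $S_\alpha$), where $\SS$ of $\bigoplus_c\bC_{S^\vee-\alpha(x)+c}$ was worked out and shown to have $\musupp = \Graph(d\alpha)$, and using Lemma~\ref{lem: Lemma 7.11} to express $1_S$ as the colimit $\directlim{F\subset\cOexp_S}\sum_\psi\bC_{\supp_\se(\psi)}$, I would identify the outcome with $S_\alpha = \lb\bigoplus_c\bC_{S^\vee-\alpha(x)+c}\rb\star 1_S$.

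For the de Rham statement, I would invoke Verdier duality (the proposition $\DR_S\circ\bD\cong\Sol^\hbar_S$) together with Example~\ref{ex: dualityexample}, which gives $\bD S_\alpha\cong S_{-\alpha}$ on a contractible $M$ (and $\bD S_\alpha\cong S_{-\alpha}\otimes\omega_M$ in general, matching the orientation-sheaf twist hidden in $\Omega^{\cR}$); alternatively one computes $\DR_S(\cE^{\alpha/\hbar})$ directly by the same gauge transformation, noting the sign flip comes from $\Omega^{\cR}$ being a right module so the twist is by $e^{+\alpha/\hbar}$ rather than $e^{-\alpha/\hbar}$. The routine bookkeeping — checking the Koszul differential is what I claim, checking the gauge transformation is a morphism of the relevant sheaves of rings-with-connection, and checking that the resulting equivariant sheaf is the stated one rather than a shift — I would relegate to a remark. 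The main obstacle is the last identification: verifying that the kernel of $\hbar\partial_t-1$, after the $t$-translation by $-\alpha(x)$ and pushforward along $p_S$, really lands (as an object of $\Sh^\bC_S(M\times\bC_t)$, i.e. modulo $\cC_S$ and after the microlocal cutoff $\star\bC_{S^\vee}$) in the subcategory generated appropriately and is isomorphic to $S_\alpha$ on the nose; this requires the microsupport estimate of the cited Example and a small argument that the cutoff does not lose information, which is where I would spend most of the care.
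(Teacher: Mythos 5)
Your route is essentially the paper's: reduce to the $\Sol^\hbar_{S}$ statement via Corollary~\ref{cor: derhamduality} and Example~\ref{ex: dualityexample}; read the gauge transformation by $e^{\alpha/\hbar}$ as a translation in $t$; identify the result via Lemma~\ref{lem: Lemma 7.11}. But you have relegated to ``routine bookkeeping'' the step that actually carries the weight of the proof. The Koszul resolution of $\cE^{\alpha/\hbar}$ gives, after gauging away $\alpha$, the relative de Rham complex of $\cR_{M\times S\times\bC_t}$. You correctly identify its $H^0$ as the sheaf of sections $\psi_0(\hbar)e^{(t+\alpha(x))/\hbar}$, but nowhere establish that the complex is concentrated in degree $0$. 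This is not automatic: $\cR$ is the kernel of $\hbar\partial_t-1$ inside $\cOaeL_{M\times\bC_t\times S}$, a sheaf cut out by asymptotic constraints, and a priori a primitive of a closed $\cR$-valued form need not again lie in $\cR$. The paper resolves this by observing that on the relevant support $e^{t/\hbar}g$ is rapid decay, so the primitive supplied by the usual holomorphic de Rham lemma can be taken rapid decay and hence again in $\cR$. Without this, you have computed $H^0$ of $\Sol^\hbar_{S}(\cE^{\alpha/\hbar})$, not $\Sol^\hbar_{S}(\cE^{\alpha/\hbar})$ itself.

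Conversely, the place you flag as ``the main obstacle'' --- that the cutoff $\star\,\bC_{S^\vee}$ does not lose information in matching the outcome to $S_\alpha$ --- is dispatched in the paper in a single line: Lemma~\ref{lem: Lemma 7.11} directly gives $1_S\star_{\noneq}\bC_{S^\vee-\alpha(x)}\cong 1_S\star\bigoplus_{c\in\bC}\bC_{S^\vee-\alpha(x)+c}\cong S_\alpha$, with no separate microsupport estimate needed. So while your overall strategy is sound and matches the paper's, you have misjudged where the genuine technical content lies: it is the higher-cohomology vanishing of the de Rham complex in $\cR$, not the compatibility with the cutoff.
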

\begin{proof}
By and Corollary~\ref{cor: derhamduality} and Example~\ref{ex: dualityexample}, it is enough to show one of the statements.
The solutions of $\cE^{\alpha/\hbar}\boxtimes \cE^{t/\hbar}$ in $\cO^{\ae}_{M\times S\times \bC_t}$ can be written as $e^{(f(x)+t)/\hbar}\psi(\hbar)$. For each $g\in \cO^{\Lambda}_S$ and a fixed $x$, the corresponding section $e^{\alpha(x)+t/\hbar}\psi(\hbar)$ gives a constant sheaf on $\supp_\se(\psi)-\alpha(x)$. From the description of Lemma~\ref{lem: Lemma 7.11}, the zeroth cohomology of the desired solution sheaf is isomorphic to $1_S\star_{\noneq} \bC_{S^\vee-\alpha(x)}\cong 1_S\star \bigoplus_{c\in \bC}\lb \bC_{S^\vee-\alpha(x)+c}\rb\cong S_\alpha$. 

To complete the proof, we have to show the higher cohomology vanishing. It is enough to show after the twisting by $\alpha$, namely, the case when $\alpha=0$. Also, it is enough to show that the higher cohomology vanishing for the de Rham complex. For a closed form $\phi$ valued in $\cR$, we would like to solve $\hbar\partial \psi=e^{t/\hbar}\phi$ by $\psi\in \cR$. Note that $e^{t/\hbar}g$ is rapid decay. By the usual proof of the holomorphic de Rham lemma, we can solve the above equation, by a rapid decay function, in particular, in $\cR$. This completes the proof.
\end{proof}

\subsection{Example: Local system}
Let us explain the correspondence on the level of local systems. We take $M$ to be the punctured disk $\bD^*$ in the complex line $\bC$. We denote the invertible elements of $\cOseL_S$ by $(\cOseL_S)^{\times}$.

We consider a sheaf quantization $\cL_\Psi$ which is locally isomorphic to $1_S\boxtimes \bC_M$ and the monodromy is given by $\Psi\in (\cOseL_S)^{\times}$. Note that $\musupp(\cL_M)$ is the zero section.
\begin{lemma}
For $\Psi\in (\cOseL_S)^{\times}$, we have
\begin{equation}
    \lim_{\hbar\rightarrow 0}\hbar\log(\Psi)=0.
\end{equation}
\end{lemma}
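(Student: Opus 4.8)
The plan is to reduce the statement to the asymptotic behaviour of the ``leading subexponential part'' of $\Psi$, treating the rest as negligible, and then to deduce the estimate for that part from a growth bound plus a complex-analytic (Schwarz/Borel--Carath\'eodory) argument. First I would use that $\Psi$ is a unit: comparing $\Psi\cdot\Psi^{-1}=1$ with the $\Lambda_0^S$-valuation structure on $\cOseL_S$ shows that the minimal Novikov level of $\Psi$ is $0$ and that the level-$0$ part is invertible in $\cOse_S$; after shrinking $S$ this part is represented by an honest unit $\psi_0\in\cA^{\se}_S$ (nowhere vanishing, with $\psi_0^{-1}\in\cA^{\se}_S$), and one gets a factorization
\begin{equation}
\Psi=\psi_0\cdot(1+\mathfrak n),
\end{equation}
where $\mathfrak n$ has strictly positive Novikov valuation, i.e. is a (Novikov-locally finite) sum of terms $e^{-r/\hbar}\phi_r$ with $\phi_r\in\cA^{\se}_S$ and $r$ in the relevant part of $S^\vee$. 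Since $\Psi$ and $\psi_0$ are nowhere zero on the simply connected $S^\circ$, fixing a branch of $\log$ gives $\log\Psi=\log\psi_0+\log(1+\mathfrak n)$. The factor $1+\mathfrak n$ is easy: $\mathfrak n(\hbar)\to 0$ as $\hbar\to 0$ in $S$ (each $e^{-r/\hbar}$ beats the subexponential growth of $\phi_r$, and Novikov-local-finiteness makes this uniform), so $\log(1+\mathfrak n)\to 0$ and \emph{a fortiori} $\hbar\log(1+\mathfrak n)\to 0$.

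It remains to show $\hbar\log\psi_0\to 0$. Set $h:=\log\psi_0$, holomorphic on $S^\circ$. From $\psi_0,\psi_0^{-1}\in\cA^{\se}_S$ one gets, for every $c\in S^\vee\setminus\{0\}$, a constant $C_c$ with $|\psi_0|^{\pm1}\le C_c\,e^{\Re(c/\hbar)}\le C_c\,e^{|c|/|\hbar|}$, hence $|\Re h(\hbar)|\le |c|/|\hbar|+\log C_c$; fixing $c$, letting $|\hbar|\to 0$, and then letting $c\to 0$ gives $|\hbar|\,|\Re h(\hbar)|\to 0$. This controls the real part of $h$. To control the argument (the imaginary part) I would pass to a proper subsector, which is harmless for the limit and makes discs $D(\hbar_0,\kappa|\hbar_0|)$ lie in $S$; on such a disc the above bound yields $\Re h\le A$ with $A\le 2|c|/((1-\kappa)|\hbar_0|)$ once $|\hbar_0|$ is small relative to $c$, and the Schwarz--Pick inequality for the map $h\colon D(\hbar_0,\kappa|\hbar_0|)\to\{\Re w<A\}$ gives $|h'(\hbar_0)|\le 4A/(\kappa|\hbar_0|)\le K|c|/|\hbar_0|^2$. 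Integrating $h'$ along a path of monotone radius from a fixed base point $\hbar_*$ to $\hbar_0$ inside the subsector then gives $|h(\hbar_0)-h(\hbar_*)|\le K'|c|/|\hbar_0|$, so $\limsup_{\hbar_0\to 0}|\hbar_0|\,|h(\hbar_0)|\le K'|c|$; since $c$ is arbitrary this is $0$. Combining with the first paragraph, $\hbar\log\Psi\to 0$.

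The main obstacle is this last step: subexponentiality controls only $\log|\psi_0|$, not $\arg\psi_0$, so one genuinely needs the complex-analytic input (Schwarz--Pick on shrinking discs, or an equivalent chained Borel--Carath\'eodory estimate) together with the trick of bounding $h'$ and integrating. A secondary point to be careful about is the factorization in the first paragraph: because transseries representations in $\cOseL_S$ are not unique, one must check that the leading Novikov level is well defined and that invertibility of $\Psi$ passes to its level-$0$ part --- this is exactly where $\Psi^{-1}\in\cOseL_S$ and the cone/order structure on $S^\vee$ enter, and it is the place where one should take care with the parts of $S^\vee$ lying on its boundary.
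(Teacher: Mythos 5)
Your core strategy matches the paper's: subexponentiality of both $\Psi$ and $\Psi^{-1}$ bounds $\Re\log\Psi$ by $\Re(c/\hbar)$ for every $c\in S^\vee$, and a separate complex-analytic input is then needed to control the argument, which the modulus bound alone does not reach. There are two divergences. First, the preliminary factorization $\Psi=\psi_0(1+\mathfrak n)$ into a level-$0$ unit $\psi_0\in\cA^{\se}_S$ and a positive-valuation remainder is unnecessary, and your own caveat about it is well placed: because of the non-uniqueness of transseries representatives in $\cOseL_S$, extracting a canonical $\psi_0$ is delicate. The paper works with $\Psi$ directly, observing that $\Psi e^{-c/\hbar}\to 0$ and $\Psi^{-1}e^{-c/\hbar}\to 0$ as $\hbar\to 0$ for every nonzero $c\in S^\vee$ (split $c=c_1+c_2$ with $c_1,c_2\in S^\vee\setminus\{0\}$, so that $\Psi e^{-c_1/\hbar}$ is bounded while $e^{-c_2/\hbar}\to 0$), whence $|\Re\log\Psi(\hbar)|<\Re(c/\hbar)$ for $\hbar$ small; the estimates you then run for $\psi_0$ apply verbatim to $\Psi$ itself. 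Second, for the imaginary part, the paper uses a rescaling device in place of Schwarz--Pick: along a ray $\theta_i=\theta$ it sets $\psi_i(\hbar):=r_i\log\Psi(r_i\hbar/r_0)$ on a fixed compact set $r_0U\subset S$, shows $\Re\psi_i\to 0$ uniformly there, and then transfers uniform convergence of the real parts on a fixed domain to the holomorphic functions themselves. Your chained Schwarz--Pick bound $|h'(\hbar_0)|\le K|c|/|\hbar_0|^2$ followed by radial integration is a legitimate quantitative variant of the same move, and both proofs take the limit ray-by-ray. The proposal is correct in substance, but you should simply drop the factorization: it opens a gap you then have to patch, when the direct bounds on $\Psi^{\pm 1}$ already suffice.
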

\begin{proof}
We first note that $\Psi e^{-c/\hbar}$ and $\Psi^{-1}e^{-c/\hbar}$ both go to zero as $\hbar\rightarrow 0$ for any $c\in S^\vee$. Hence
\begin{equation}
    |\Re \log \Psi(\hbar)|<\Re \frac{c}{\hbar}.
\end{equation}
for small $\hbar$.
We write $\hbar=r_\hbar e^{i\theta_\hbar}$. Then we have
\begin{equation}
    |\hbar \Re \log \Psi(\hbar)|=|r_\hbar \Re \log \Psi(\hbar)|<\Re \frac{c}{e^{i\theta_\hbar}}.
\end{equation}
Since $c$ can be taken arbitrary small, we conclude that $\lim_{\hbar\rightarrow 0}|\hbar| \Re \log \Psi(\hbar)=0$. 

Take a small compact open subset $U$ in $S$. We also take a sequence $\hbar_i=r_ie^{\sqrt{-1}\theta_i}\in S$ satisfying
\begin{enumerate}
\item $r_i$ is strictly decreasing and $r_i\rightarrow 0$ as $i\rightarrow \infty$,
\item $\theta_i$ is equal to some $\theta$ for any $i$, and 
\item $r_iU \subset S$.
\end{enumerate}
Then we consider a sequence of holomorphic functions $\psi_i$ on $r_0U$ by
\begin{equation}
   \psi_i(\hbar):=r_i\log \Psi|_{r_iU}\lb \frac{r_i}{r_0}\hbar\rb.
\end{equation}
The real part of this holomorphic function converges to zero by the above claim. In a small compact set, this is a uniform convergence. Hence $\psi_i$ also converges to zero. This completes the proof.
\end{proof}
By the above lemma, the $\hbar$-differential equation
\begin{equation}
    \lb \hbar d-\frac{\hbar}{2\pi \sqrt{-1}}\log \Psi\rb \psi=0
\end{equation}
is defined over $\cOceL_{M\times S}$ and the characteristic variety is the zero section. The solution set is spanned by $x^{\frac{\log \Psi}{2\pi\sqrt{-1}}}$, whose monodromy is $\Psi$.

\section{Localization}
\subsection{A remark}
We first remark that there is no hope to formulate $\hbar$-Riemann--Hilbert correspondence at the level of $\cDaeL$ or $\cDceL$ i.e., without inverting $\hbar$.

Consider the following two differential equations
\begin{equation}
\begin{split}
    \lb \hbar\partial +\frac{\hbar}{x}\rb \psi&=0,\\
    \lb \hbar\partial +\frac{2\hbar}{x}\rb \psi&=0.
\end{split}
\end{equation}
These two equations have the same characteristic variety $x\xi=0$. The first equation has a solution $\frac{1}{x}$ and the second one has a solution $\frac{2}{x}$. Hence both have the trivial monodromy, but they are not equivalent as $\cD^\hbar$-modules. 
This example tells us that $\hbar$-Riemann--Hilbert correspondence does not hold without inverting $\hbar$. 

Another issue is on the definition of localization in $\hbar$-setting, which is known in the context of Rees modules \cite{mochizukiMTM, MHMproject}.
We will consider on the complex plane $\bC$ and consider $0$ as a divisor. For an explanation, we will consider algebraically. We set
\begin{equation}
    \cO[*0]=\cD^\hbar_\bC\cdot (\bC[x, \hbar]\frac{1}{x})
\end{equation}
It is generated over $\bC[x,\hbar]$ by $\frac{1}{x}, \frac{\hbar}{x^2}, \frac{\hbar^2}{x^3}$ and so on. 

We consider the tensor product $\cO[*0]\otimes \cO[*0]$ over $\bC[x,\hbar]$ and the map 
\begin{equation}
    \cO[*0]\otimes \cO[*0]\rightarrow \cO[*0]; \psi\otimes \phi\mapsto \psi \phi x.
\end{equation}
This is a $\bC[x, \hbar]$-isomorphism. By this isomorphism, we induce $\cD^\hbar$-module structure on the RHS. Explicitly, it is defined by
\begin{equation}
    \hbar\partial (\psi):=\hbar\partial \psi+\frac{\hbar \psi}{x}.
\end{equation}
In other words, this is a flat $\hbar$-connection $\hbar\partial+\frac{\hbar}{x}$. This is not isomorphic to $\cO[*0]$. Hence it is not possible to define a reasonable notion of the localization using $\otimes \cO[*0]$. To remedy the situation, we should invert $\hbar$. After the inversion, the above two modules are isomorphic.
Note that $\cOseL$ and $\cOdeL$ contains $\hbar^{-1}$, hence the inversion is automatic.

\subsection{Localization modules}
We now explain a general setup.

We consider $\cOseL_{M\times S}$ as a $\cDseL_{M\times S}$-module. Let $D$ be a divisor in $M$. We can also consider $\cOseL_{M\times S}(*D)$ which admits poles on $D$, this is a coherent $\cDseL_{M\times S}$-module by the remark above.

For an object $\cM$ of $D^b_\hol(\cDseL_{M\times S})$, we set
\begin{equation}
    \cM(*D):=\cM\otimes \cO(*D)
\end{equation}
in $D^b_\hol(\cDseL_{M\times S})$.
The following is also standard.
\begin{lemma}\label{Lem: MD}
\begin{equation}
    \cHom(\cO(*D), \cM(*\cD))\cong \cM(*D).
\end{equation}
\end{lemma}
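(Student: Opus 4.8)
The plan is to mimic the standard $\cD$-module argument for the localization functor being idempotent (as in the usual theory of $\cO(*D)$-modules, e.g. Kashiwara's lectures). The statement to prove is
\begin{equation}
    \cHom(\cO(*D), \cM(*D))\cong \cM(*D),
\end{equation}
where $\cM(*D) = \cM\otimes_{\cOseL_{M\times S}}\cOseL_{M\times S}(*D)$. Since both sides are computed locally on $M\times S$, I would first reduce to the affine/local model: work on a coordinate chart where $D = \{x_1 = 0\}$ for a local coordinate system $x_1,\dots,x_n$, so that $\cO(*D) = \cOseL_{M\times S}[x_1^{-1}]$.

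First I would establish the key special case $\cM = \cOseL_{M\times S}$, i.e.
\begin{equation}
    \cHom(\cO(*D), \cO(*D))\cong \cO(*D).
\end{equation}
Here the point is that $\cO(*D)$, as a $\cDseL_{M\times S}$-module, is generated by $x_1^{-1}$ with the relations coming from $(\hbar\partial_{x_1})x_1^{-1} = -\hbar x_1^{-2}$, and crucially $\hbar$ is invertible in $\cOseL_S$ (and hence in $\cOdeL_S$), as emphasized in the localization remark of the excerpt — this is what makes the localized module behave like the classical one and what rescues the functor from the pathology described for $\cDaeL$/$\cDceL$. Concretely I would present $\cO(*D)$ as the cokernel of a Koszul-type free presentation (or directly as $\cDseL_{M\times S}\otimes_{\bC[\hbar]}\bC[\hbar,\hbar^{-1}]$ modulo the left ideal generated by $x_1\hbar\partial_{x_1} + \hbar$ together with $\hbar\partial_{x_j}$ for $j\ge 2$, after the shift identifying $x_1^{-1}$ with the cyclic generator) and compute $\cHom_{\cDseL}(\cO(*D), \cO(*D))$ from that presentation; the only solutions in $\cO(*D)$ of the defining equations are $\cO(*D)\cdot x_1^{-1}\cong \cO(*D)$, and the higher $\Ext$'s vanish because the single relation is a nonzerodivisor after inverting $\hbar$. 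Then for general holonomic $\cM$, I would use that $\cM(*D) = \cM\otimes\cO(*D)$ and that $\otimes$-$\cHom$ adjunction together with the projection formula reduces $\cHom(\cO(*D), \cM(*D))$ to $\cM\otimes\cHom(\cO(*D),\cO(*D))\cong \cM\otimes\cO(*D) = \cM(*D)$; this uses the operations $\overset{\cO}{\otimes}$, $\cHom_\cD$ and the adjunctions recorded in Section~5, plus coherence of $\cM$ to commute $\cHom$ past the tensor factor.

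The main obstacle I expect is controlling the completed/Novikov nature of the base ring $\cOseL_{M\times S}$: it is not Noetherian (the excerpt flags this explicitly), so I cannot freely invoke finiteness arguments, and I must check that forming $\cHom(\cO(*D),-)$ really commutes with the inverse-limit defining $\cOseL$ from $\cOae$ and with the tensor $\otimes\cO(*D)$. The way I would handle this is to work filtration-wise: the localization $\cO(*D)$ is finitely presented over $\cDseL_{M\times S}$ (as a consequence of the localization lemma / the remark in the excerpt that $\cOseL_{M\times S}(*D)$ is coherent), so $\cHom$ out of it is computed by a finite complex of the module being mapped into, and finite limits commute with finite complexes and with the relevant filtered colimits; thus the completion does not obstruct. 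A secondary point to be careful about is that $\cM$ lies in the \emph{derived} category $D^b_\hol(\cDseL_{M\times S})$, so the isomorphism should be stated and checked at the derived level — but since $\cO(*D)$ has the finite free presentation above, $R\cHom(\cO(*D),-)$ is a well-behaved finite functor and the argument goes through degreewise. Once the local computation is in hand, the global statement follows by patching, the identifications being canonical (given by the natural map $\cM(*D)\to \cHom(\cO(*D),\cM(*D))$ adjoint to the multiplication $\cO(*D)\otimes\cM(*D)\to \cM(*D)$), which is the standard verification that I would not spell out in detail.
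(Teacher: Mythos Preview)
The paper gives no proof of this lemma: it is prefaced only by the sentence ``The following is also standard'' and then stated without argument. So there is nothing in the paper to compare your proposal against directly.

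Your proposal is a reasonable expansion of what ``standard'' means here. The essential input you identify --- that $\hbar$ is invertible in $\cOseL_S$, so $\cOseL_{M\times S}(*D)$ is coherent over $\cDseL_{M\times S}$ and behaves like the classical localization --- is exactly the point the paper's preceding subsection is making. One small comment: the reduction of the general case to the special case $\cM=\cOseL_{M\times S}$ via a ``projection formula'' $\cHom(\cO(*D),\cM\otimes\cO(*D))\cong\cM\otimes\cHom(\cO(*D),\cO(*D))$ is not a pure tensor--hom adjunction, and you should be explicit that it follows from $\cO(*D)$ admitting a finite free $\cDseL$-resolution (which you do argue, via coherence). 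Alternatively, one can argue even more directly: $\cM(*D)$ is a module over the ring object $\cO(*D)$, so the natural evaluation-at-$1$ map $\cM(*D)\to\cHom_{\cD}(\cO(*D),\cM(*D))$ is an isomorphism in degree zero (a $\cD$-linear map out of $\cO(*D)$ is determined by the image of $1$), and the higher $\Ext$'s vanish by the same finite-resolution argument --- this bypasses the special case entirely. Either route is acceptable for a lemma the author deems standard.
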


\section{Flat $\hbar$-connection and WKB-regularity}
We introduce the fundamental constituent of $\cD^\hbar$-modules.

\subsection{Flat $\hbar$-connection}
We would like to begin with a remark (which is not needed logically for our main purpose). Let $\cM$ be a coherent $\cDceL_{M\times S}$-module. We set $L:=\Char(\cM)$. By the classification of algebraic Lagrangians, there exists a divisor $D$ in $M$ such that $L\bs \pi^{-1}(D)\rightarrow M\bs D$ is a ramified covering. By pulling-back to each lower-dimensional stratum, we can again find a Zariski open subset on which the characteristic variety is a ramified covering. So the $\cDceL$-modules with ramified covering characteristic varieties are basic constituents of $\cDceL$-modules. So if one wants to impose some conditions to $\cDceL$-modules, it is enough to define $\cDceL$-modules with ramified covering characteristic varieties. This is our basic strategy to define the WKB-regularity and the summability.

For the later use, we would like to prepare the following definition.
\begin{definition}
We say $\cM$ is a meromorphic flat $\hbar$-connection if there exists a divisor $D$ such that 
\begin{enumerate}
    \item $\cM|_{M\bs D}$ is a flat $\hbar$-connection, and 
    \item $\cM\hookrightarrow \cM(*D)$ is an isomorphism $\cM\cong \cM(*D)$.
\end{enumerate}
\end{definition}

\begin{definition}
Given a $\cDceL_{M\times S}$-module $\cM$, we denote $\mathrm{Pole}(\cM)$ be the subset of $M$ where $\Char(\cM)\rightarrow M$ is not finite. 
\end{definition}

\subsection{WKB-regularity}
For flat $\hbar$-connections, we will introduce the notion of WKB-regularity. We would like to keep the notation in the previous section. 

Let $\cM$ be a flat $\hbar$-connection valued in  $\cOc_{M\times S}$. Then, for any $\hbar\in S$, we can define the corresponding (usual) flat connection by the specialization $\cM_\hbar$ at $\hbar$. Then the set of poles of $\cM_\hbar$ is contained in $\mathrm{Pole}(\cM)$.
For any point of $x$ in the pole divisor of $\cM_\hbar$, if $\cM_\hbar$ is holonomic, Sabbah--Mochizuki--Kedlaya theorem~\cite{SabbahIrreg, WildTwistor, Kedlaya} tells us that there exists a local modification $p\colon U'\rightarrow U$ on $U\ni x$ such that the pull back of $\cM_\hbar$ has a formal decomposition. Namely, there exists a set of multi-valued meromorphic functions $\{\alpha_i\}$ such that the formalization of the module is isomorphic to
\begin{equation}
    \bigoplus_{i\in I}\cE^{\alpha_i}\otimes \cR_i
\end{equation}
where each $\cR_i$ is a regular module and $\cE^{\alpha_i}$ is a connection $d-d\alpha_i$ (or its Galois-invariant version if $\alpha_i$ is multi-valued). Also, the graph of $d\alpha_i$ is an algebraic Lagrangian.

Since we are working with $\cDceL_{M\times S}$-module, we also include SMK theorem as an axiom, though we believe it holds for this setting as well.
\begin{definition}
Let $\cM$ be a meromorphic flat $\hbar$-connection. We set $L:=\Char(\cM)$.
We say $\cM$ is WKB-regular if the following holds: For any point of $x\in \mathrm{Pole}(\cM)$, there exists a local modification $p\colon U'\rightarrow U$ of a neighborhood $U\ni x$ such that
\begin{enumerate}
    \item the exceptional divisor is normal crossing type,
    \item after further pulling-back along a ramified map (to cancel the multi-valuedness), the formalization of $p^*\cM\otimes_{\cOceL_{M\times S}}\cOseL_{M\times S}$ at any point $y$ of the exceptional divisor of $p$ admit  a decomposition of the form
    \begin{equation}
    \bigoplus_{i\in I}\cM_i
\end{equation}
where each $\cE_i$ is a flat $\hbar$-connection, $\Char(\cM_i)\rightarrow M$ is a simple trivial covering on its image,
\item taking a primitive $\alpha_i$ of the Liouville form restricted to $\Char(\cM_i)$, we have an isomorphism
\begin{equation}
    \cM_i\otimes \cOseL_{M\times S}\cong \cE_i^{\alpha_i/\hbar}\otimes \cR_i
\end{equation}
where $\cR_i$ is a flat $\hbar$-connection with $\Char(\cR_i)\cap \pi^{-1}p^{-1}(M\bs D)\subset T^*_MM$ and $I_{\Char(\cR_i)}\cdot \cR_i/\hbar\cR_i=0$ where $I_{\mathrm{Char(\cR_i)}}$ is the ideal sheaf of $\Char(\cR_i)$. 
\end{enumerate}
\end{definition}
The meaning of the definition is that $\Char(\cM)$ represents the formal type. We would like to extend the definition to the case of $\cDceL$-modules.

\begin{definition}\label{def: WKB-regularD-mod}
An object $\cM$ of $D^b_{\hol}(\cDceL_{M\times S})$ is said to be WKB-regular if each cohomology module satisfy the following: there exists an analytic stratification $\cS$. For any $S\in \cS$, take a smoothing $p_S\colon S'\rightarrow \overline{S}\rightarrow M$. We suppose the localization $p_S^*\cM(S'\bs p_S^{-1}(S))$ is WKB-regular for any $S$.
\end{definition}

\section{Solving differential equations with Novikov coefficients}
We would like to define the summability of $\cD^\hbar$-modules in the next section. It is about solving $\hbar$-differential equation in the Novikov coefficient. We first would like to fix the notion of a solution in this setup.

\subsection{Linear}\label{subsection: linear}
We first would like to mention a reasonable definition of the solutions in the Novikov setting. Consider the following differential equation:
\begin{equation}
    \hbar d\psi+A\psi=0
\end{equation}
where $f$ is a complex-valued function and $A$ is a constant. We view this equation as a flat $\hbar$-connection valued in $\cOceL_{M\times S}$. It is natural to consider the solution of this equation is of the form $\exp(-\int A/\hbar)$, which is not in $\cOceL_{M\times S}$ in general, but in $\cO^{\Lambda}_{M\times S}$. 

\begin{definition}
Given a linear $\hbar$-differential equation of the form
\begin{equation}
    \hbar d\psi +A\psi =0
\end{equation}
where $\psi$ is a complex-vector-valued function and $A$ is a matrix valued in $\cOceL_{M\times S}$, a solution to this equation is a complex-vector-valued function $\psi$ with each entry in $\cO^{\Lambda}_{M\times S}$.
\end{definition}

\begin{theorem}\label{Theorem: Linear}
Let $\cE, \nabla$ be a rank 1 flat $\hbar$-connection valued in $\cOceL_{M\times S}$. Given an initial condition in $\cO^{\Lambda}_S$, we can solve it uniquely.
\end{theorem}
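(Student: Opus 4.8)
The plan is to reduce the equation to the model exponential connection $\cE^{\alpha/\hbar}$ and read off $e^{-\alpha/\hbar}$ as its solution. The statement being local (uniqueness, proved below, lets local solutions patch), I would work on a small connected polydisk $U\subset M$: choosing a local frame of $\cE$ the connection becomes $\hbar d+\omega$ for a $1$-form $\omega$ with coefficients in $\cOceL_{M\times S}$, and in rank one flatness is exactly $d\omega=0$. The homotopy operator of the holomorphic Poincaré lemma (substitution $x\mapsto tx$, integration over $t\in[0,1]$, multiplication by the $x_i$) preserves asymptotic expandability and continuous extendability at $\hbar=0$, hence preserves $\cOceL_{M\times S}$ and its Novikov filtration; so after possibly shrinking $U$ there is a primitive $\alpha\in\cOceL_{U\times S}$ with $\omega=d\alpha$, unique up to a section of $\cOceL_S$. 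By the Proposition of Section 5 the associated $\cDceL_{U\times S}$-module is $\cE^{\alpha/\hbar}$, whose solutions satisfy $(\hbar\partial_{x_i}+\partial_{x_i}\alpha)\psi=0$, i.e. formally $\psi=c\,e^{-\alpha/\hbar}$.

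The heart of the existence proof is that $e^{-\alpha/\hbar}\in\cO^{\Lambda}_{M\times S}$ — which is exactly where the hypothesis ``valued in $\cOceL$'' is used: if $\alpha$ grew like $\hbar^{-1}$, then $e^{-\alpha/\hbar}$ would grow like $e^{\hbar^{-2}}$ and fall outside the exponential range, in line with the warning of Section 10. Write $\alpha=\alpha_0+\beta$ with $\alpha_0:=\alpha|_{\hbar=0}\in\cO_U$ and $\beta:=\alpha-\alpha_0\in\cOceL_{U\times S}$ vanishing at $\hbar=0$, and shrink $U$ so that $\alpha_0$ is bounded there and $\beta\to 0$ uniformly on $U$ as $\hbar\to 0$. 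For $S$ acute one can pick $r\in S^\vee$ with $r-\alpha_0(x)\in S^\vee$ for all $x\in U$, whence $e^{-(r-\alpha_0)/\hbar}$ is bounded and $e^{-\alpha_0/\hbar}\in\cA^{\exp}_{U\times S}$ (and likewise with $\alpha_0$ replaced by $-\alpha_0$); and for every $c\in S^\vee\bs\{0\}$ the function $e^{-(c+\beta)/\hbar}$ decays faster than any power of $\hbar$ once the $\hbar$-neighbourhood is small relative to $c$, so $e^{\pm\beta/\hbar}\in\cA^{\se}_{U\times S}$. Multiplying, $e^{\pm\alpha/\hbar}\in\cA^{\exp}_{U\times S}$, and a very-rapid-decay change of $\alpha$ changes $e^{-\alpha/\hbar}$ by a very rapid decay function (exponential growth times very rapid decay is very rapid decay, for $S$ acute), so $e^{\pm\alpha/\hbar}$ descend to mutually inverse units of $\cOexp_{U\times S}\subset\cO^{\Lambda}_{U\times S}$, with $e^{-\alpha/\hbar}$ manifestly annihilated by each $\hbar\partial_{x_i}+\partial_{x_i}\alpha$. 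The same estimates over a point give $e^{\pm\alpha(x_0)/\hbar}\in\cO^{\Lambda}_S$, so for an initial value $\psi_0\in\cO^{\Lambda}_S$ at a base point $x_0$ the section $\psi:=\psi_0\,e^{-(\alpha-\alpha(x_0))/\hbar}\in\cO^{\Lambda}_{U\times S}$ solves the equation with $\psi(x_0)=\psi_0$.

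For uniqueness, if $\psi,\psi'$ solve the equation with the same value at $x_0$, set $g:=(\psi-\psi')\,e^{\alpha/\hbar}\in\cO^{\Lambda}_{U\times S}$; from $d\psi=-\hbar^{-1}(d\alpha)\psi$ (the exterior derivative being in the $M$-directions) one gets $dg=0$, and the relative Poincaré lemma — the $x$-closed sections of $\cO^{\Lambda}_{M\times S}$ over a connected open are exactly those of $\cO^{\Lambda}_S$ — forces $g$ to be $x$-independent, so $g(x_0)=0$ gives $g=0$ and $\psi=\psi'$. Finally these local solution spaces patch into a locally constant sheaf $\cHom_{\cDceL_{M\times S}}(\cE,\cO^{\Lambda}_{M\times S})$, locally free of rank one over $\cO^{\Lambda}_S$ (locally $\cO^{\Lambda}_S\cdot e^{-\alpha/\hbar}$), so an initial value determines a global solution on a simply connected $M$ and, in general, a unique solution up to the $\cO^{\Lambda}_S$-linear monodromy. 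The step I expect to be the main obstacle to carry out carefully is the growth estimate placing $e^{-\alpha/\hbar}$ inside $\cO^{\Lambda}_{M\times S}$ and checking compatibility with the Novikov completion; everything else is the formal structure of $\cE^{\alpha/\hbar}$ together with the Poincaré lemma.
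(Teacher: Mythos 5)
Your route is genuinely different from the paper's, though the answers agree. You take a primitive $\alpha$ of the entire $\hbar$-dependent form $\omega$ via the holomorphic Poincar\'e lemma and exhibit $e^{-\alpha/\hbar}$ directly as the solution, reducing the theorem to the single growth estimate $e^{-\alpha/\hbar}\in\cO^{\Lambda}_{M\times S}$. The paper instead splits off the $\hbar$-independent Liouville-form primitive $\alpha_0$ of $\omega|_{\hbar=0}$ (your $\alpha_0$), sets $\widetilde\psi=e^{\alpha_0/\hbar}\psi$ so that the residual connection matrix $A_0$ satisfies $A_0|_{\hbar=0}=0$, and then solves the gauged equation in dimension one by explicit quadrature $\widetilde\psi=\exp(\int B_0/\hbar)\exp(\int B_{>0}/\hbar)$, with $A_0=B_0+B_{>0}$ the $\gamma$-finite Novikov decomposition and the second factor defined through the Taylor series of $\exp$; higher dimensions are handled by induction on slices (following Sabbah), using integrability to glue, and uniqueness by the classical trick that $(\widetilde\psi\widetilde\psi_1^{-1})'=0$. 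Your approach is cleaner conceptually and avoids the induction, while the paper's is more explicit precisely on the two points you flag as the main obstacles: the $\gamma$-finite decomposition makes ``$\exp(\int B_{>0}/\hbar)\in\cOseL$'' a convergent statement in the Novikov topology term by term, which is exactly what is needed to turn your ``$\beta\to0$ uniformly'' heuristic (which does not literally apply to a formal Novikov series) into a proof, and to justify that the homotopy operator preserves $\cOceL$. If you carry out that term-by-term analysis, and verify your relative Poincar\'e lemma (that $d$-closed sections of $\cO^{\Lambda}_{M\times S}$ over a connected open are pulled back from $\cO^{\Lambda}_S$) at the level of the completion, your proof is complete and yields Corollary~\ref{cor: rank1summability} as a byproduct, just as the paper's does.
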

Since the connection is rank 1, its characteristic variety is locally a trivial covering of $M$. Take a primitive $\alpha$ of the restriction of the Liouville form to the characteristic variety. Then $\nabla=\hbar d+ d\alpha+A_0$ where $A_0|_{\hbar}=0$. We set $\widetilde{\psi}:=e^{\alpha/\hbar}\psi$, then we have
\begin{equation}
    \hbar d\widetilde{\psi}+A_{0}\widetilde{\psi}=0
\end{equation}
where $A_0$ is a function in $\cOceL_{M\times S}$ with $A_{0}|_{\hbar=0}=0$.
We will concentrate on this problem.

We first consider the 1-dimensional case. Since the question is local, we can assume the base manifold is just $\bC$. We denote $A_{0}$ in the form 
\begin{equation}
\begin{split}
    A_0&=B_0+B_1e^{-c_1/\hbar}+B_2e^{-c_2/\hbar}+\cdots=B_0+B_{>0}
\end{split}
\end{equation}
where each entry of $B_0$ and $B_i$ is in $\cOc_{\bC\times S}$ and the set $\{c_i\}$ is $\gamma$-finite for $\gamma=S^\vee$. Since $B_0|_{\hbar=0}=0$, we have $\exp(\int B_0/\hbar)\in \cOseL_{\bC\times S}$. We also note that $\exp(\int B_{>0}/\hbar)$ can be considered as an element of $\cOseL_{\bC\times S}$ by understanding it through the Taylor expansion. Hence we get a solution
\begin{equation}
\widetilde{\psi}=\exp\lb \int B_0/\hbar\rb \exp\lb \int B_{>0}/\hbar\rb \in \cOseL_{\bC\times S}.
\end{equation}

The uniqueness follows from a usual discussion as follows: Let $\widetilde{\psi}$ and $\widetilde{\psi}_1$ be solutions with $\widetilde{\psi}(0)=\widetilde{\psi}_1(0)=1$. Then $\widetilde{\psi}_1^{-1}$ solves 
\begin{equation}
\begin{cases}
        \hbar\frac{\partial \widetilde{\psi}_1^{-1}}{\partial z}&=-A_0\widetilde{\psi}_1^{-1}\\
        \widetilde{\psi}_1^{-1}(0)&=1.
\end{cases}
\end{equation}
This further implies that $(\widetilde{\psi}\widetilde{\psi}_1^{-1})'=0$. Evaluating at $0$, we can see that $\widetilde{\psi}\widetilde{\psi}_1^{-1}=1$. Hence $\widetilde{\psi}\widetilde{\psi}=\widetilde{\psi}\widetilde{\psi}_1$.

Now let's go to the higher dimensional case. We will prove by induction (we will mimic the discussion in \cite{SabbahIsomonodromic}).
Let $v$ be an initial condition. Let us suppose we have the statement for $\dim=n-1$. Consider the restricted problem to $z_1=0$. Associated to the initial condition, we can solve $n-1$-dimensional equation and get a solution $\sigma(0, z_2,...,z_{n-1})$. Then we next solve the 1-dimensional problems with the initial conditions $\sigma(0, z_2,...,z_{n-1})$. The resulting solution solves the original problem by the integrability, which is a unique solution. Note that we can carry out this procedure in $\cOseL_{M\times S}$.

Going back to the original problem, we get a solution of the form $\exp(\alpha/\hbar)\widetilde{\psi}$ where $\widetilde{\psi}\in \cOseL_{M\times S}$. 
An outcome of this consideration is that the $\cDseL_{M\times S}$-module defined by $\hbar d\psi +A\psi$ is isomorphic to $\cE^{\alpha/\hbar}$. We would like to state it as a corollary:

\begin{corollary}\label{cor: rank1summability}
Let $\cE, \nabla$ be a rank 1 flat $\hbar$-connection valued in $\cOceL_{M\times S}$ whose characteristic variety has a primitive $\alpha$. Then it is isomorphic to $\cE^{\alpha/\hbar}$ as a $\cDseL_{M\times S}$-module.
\end{corollary}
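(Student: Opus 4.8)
The plan is to read the statement off the construction in the proof of Theorem~\ref{Theorem: Linear}: the fundamental solution produced there, normalized to have initial value $1$, is a \emph{unit} of $\cOseL_{M\times S}$, and multiplication by it is the gauge transformation identifying $\cE$ with $\cE^{\alpha/\hbar}$. Concretely, I would first work locally on $M$, fix a generator $e$ of the rank $1$ module $\cE$, and write $\nabla e=\omega\otimes e$ with $\omega$ a $1$-form valued in $\cOceL_{M\times S}$; then $\hbar\partial_{x_i}$ acts by $v e\mapsto(\hbar\partial_{x_i}v+v\,\omega_i)e$. Reducing modulo $\frakm$ identifies $\cE^{cl}$ with $\cO_{T^*M}/(\xi_i-\omega_i|_{\hbar=0})$, so $\Char(\cE)=\Graph(\omega|_{\hbar=0})$; the hypothesis $\Char(\cE)=\Graph(d\alpha)$ therefore forces $\omega|_{\hbar=0}=d\alpha$, and so $A_0:=\omega-d\alpha$ is a $1$-form valued in $\cOceL_{M\times S}$ vanishing at $\hbar=0$ --- exactly the normalized data fed into the proof of Theorem~\ref{Theorem: Linear}. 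Applying that construction to $\hbar d\widetilde\psi+A_0\widetilde\psi=0$ with initial value $1$ produces a solution $\widetilde\psi\in\cOseL_{M\times S}$; splitting $A_0=B_0+B_{>0}$ as there and running the same argument with the opposite sign shows $\widetilde\psi^{-1}\in\cOseL_{M\times S}$ as well, so $\widetilde\psi$ is a unit.

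Next I would define the $\cOseL_{M\times S}$-linear map $\Phi$ sending the generator of $\cE^{\alpha/\hbar}$ (base changed along $\cOceL_{M\times S}\to\cOseL_{M\times S}$) to $\widetilde\psi\, e$. Using $\hbar\partial_{x_i}\widetilde\psi=-(A_0)_i\widetilde\psi$ one computes $\hbar\partial_{x_i}(\widetilde\psi\, e)=\hbar(\partial_{x_i}\widetilde\psi)\,e+\widetilde\psi\,\omega_i\, e=\widetilde\psi\,(\omega_i-(A_0)_i)\,e=(\partial_{x_i}\alpha)(\widetilde\psi\, e)$, which shows that $\Phi$ intertwines the two actions of $\hbar\partial_{x_i}$ and is hence a morphism of $\cDseL_{M\times S}$-modules --- equivalently, in the frame $\widetilde\psi\, e$ the connection form $\omega$ of $\cE$ becomes $\omega+\hbar\,d\log\widetilde\psi=\omega-A_0=d\alpha$. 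Since $\widetilde\psi$ is a unit, $\Phi$ is an isomorphism $\cE^{\alpha/\hbar}\xrightarrow{\ \sim\ }\cE\otimes_{\cOceL_{M\times S}}\cOseL_{M\times S}$. The local isomorphisms glue: with $\alpha$ fixed, a horizontal change of frame of a rank $1$ module is unique up to a locally constant unit (the uniqueness half of Theorem~\ref{Theorem: Linear}), which yields the global statement whenever $\cE$ is $\cOseL_{M\times S}$-free, as it is in the local situations where the corollary is used.

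The only genuine work is the membership $\widetilde\psi,\widetilde\psi^{-1}\in\cOseL_{M\times S}$ --- controlling the exponential of a $\gamma$-finite series in the functions $e^{-c_i/\hbar}$, $c_i\in S^\vee$, in the Novikov-completed topology --- and this is exactly what the proof of Theorem~\ref{Theorem: Linear} establishes, so here it need only be invoked. The remaining points --- the identification of $\Char(\cE)$ at $\hbar=0$, the one-line frame-change computation, and the gluing by uniqueness --- are formal.
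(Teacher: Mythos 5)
Your proof follows exactly the route the paper takes: the corollary is read off from the construction in the proof of Theorem~\ref{Theorem: Linear}, with the fundamental solution $\widetilde\psi\in\cOseL_{M\times S}$ serving as the gauge transformation that carries $\nabla$ to $\hbar d+d\alpha$. You merely spell out what the paper leaves as ``an outcome of this consideration'' --- the identification $\omega|_{\hbar=0}=d\alpha$ from $\Char(\cE)=\Graph(d\alpha)$, the fact that $\widetilde\psi$ is a unit (automatic there since it is written as a product of exponentials), the intertwining computation, and gluing by the uniqueness part of the theorem --- so the substance is identical.
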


\subsection{Non-linear}
Of course, we do not treat general non-linear equations. Here we would like to present a Novikov version of \cite[Theorem 26.1]{Wasow}.

\begin{theorem}\label{Theorem: Non-linear}
Let $f(x, z, \hbar)$ be an element of $\cOaeL_{\bC\times \bC^n\times S}$. Consider the differential equation
\begin{equation}
    \hbar\partial_x\psi=f(x, \psi,\hbar).
\end{equation}
The formal version of this equation is
\begin{equation}
        \hbar\partial_x\psi=[f(x, \psi,\hbar)]
\end{equation}
where $[f]\in \cOaeL/T^{>0}\cOaeL\cong \cO_{\bC\times \bC^n}[[\hbar]]$. 
We assume that Jacobian of $[f]$ with respect to $z$-variables is nondegenerate at $\hbar\rightarrow 0$. Take a ray $\gamma$ of $S$. Suppose there exists a formal solution
\begin{equation}
    \sum_{i=1}^\infty \psi_i(x, z)\hbar^i
\end{equation}
to the differential equation. Then it admits a lift to a solution of the original equation in a neighborhood of $\gamma$.
\end{theorem}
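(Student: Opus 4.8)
The plan is to run Wasow's argument for \cite[Theorem 26.1]{Wasow} with the coefficients carried in $\cOaeL$; since the base variable $x$ is already one-dimensional, no reduction of dimension is needed. First I would realize the formal series by an actual function: by the sectorial Borel--Ritt lemma there is, on a small disc $U\subset\bC$ and a thin sectoroid $S'$ that is a neighbourhood of the ray $\gamma$, an asymptotically expandable $\tilde\psi$ on $U\times S'$ with asymptotic expansion $\sum_{i\ge 1}\psi_i\hbar^i$; in particular $\tilde\psi|_{\hbar=0}=0$. Substituting $\psi=\tilde\psi+u$ turns the equation into
\begin{equation}
  \hbar\partial_x u=A(x,\hbar)\,u+Q(x,u,\hbar)+R(x,\hbar),
\end{equation}
where $A(x,\hbar):=\partial_z f(x,\tilde\psi(x,\hbar),\hbar)$, the term $Q$ is the Taylor remainder of $f$ in the unknown (holomorphic in $u$ near $0$, vanishing there to second order), and $R(x,\hbar):=f(x,\tilde\psi,\hbar)-\hbar\partial_x\tilde\psi$. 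Two facts make this tractable near $\gamma$: first, $A(x,0)=\partial_z f(x,0,0)$ is the Jacobian of $[f]$ in the $z$-variables at $\hbar\to 0$, hence invertible by hypothesis, so $A$ is invertible in $\cOaeL$ by a Neumann series (using that $\cOaeL$ is $\frakm$-complete); second, because $\sum_i\psi_i\hbar^i$ solves the formal equation while the $T^{>0}$-part of $f$ contributes something exponentially small along $\gamma$ (the occurring exponents being $\gamma$-finite and $\Re(r/\hbar)>0$ for $r\in S^\vee\setminus\{0\}$), the asymptotic expansion of the $T^0$-part of $R$ vanishes. Thus the task becomes: produce a solution $u$ of the displayed equation over $U\times S'$ whose $T^0$-part is rapid decay.

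For the $T^0$-level this is precisely Wasow's theorem, which I would run as follows. Using the classical asymptotic theory of linear systems with a large parameter --- the situation being turning-point free since $A(x,0)$ has no zero eigenvalue, and eigenvalue collisions being removed beforehand by Sibuya's block-diagonalization --- construct a fundamental matrix $Y(x,\hbar)$ of the linear part $\hbar\partial_x u=Au$, whose columns behave like $\exp\bigl(\hbar^{-1}\!\int\lambda_j\,dx\bigr)$ with $\lambda_j$ the eigenvalues of $A(x,0)$. After shrinking $U$ and $S'$ so that the signs of $\Re(\lambda_j(x)/\hbar)$ stay constant, let $G$ be the Green's operator of $\hbar\partial_x-A$ built from $Y$ by variation of parameters, the limit of integration being placed, component by component, at the end of the path along which the corresponding exponential decays; then $G$ is a bounded operator carrying rapid-decay functions to rapid-decay functions. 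The fixed-point map $\Phi(u):=G(Q(x,u,\hbar))+G(R)$ maps the ball of rapid-decay functions of sup-norm $\le\delta$ into itself and is a contraction there once $\delta$ and the $|\hbar|$-radius of $S'$ are small (using that $R$ is rapid decay, hence small on $S'$, and that $Q$ vanishes to second order in $u$); its unique fixed point $u$ is the sought solution. Unwinding, $\psi=\tilde\psi+u$ solves $\hbar\partial_x\psi=f(x,\psi,\hbar)$ over $U\times S'$, and since the asymptotic expansion of the $T^0$-part of $u$ vanishes, the image of $\psi$ in $\cOaeL/T^{>0}\cOaeL\cong\cO_{\bC\times\bC^n}[[\hbar]]$ is the given formal solution --- this is the asserted lift.

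The Novikov coefficients need only bookkeeping: the construction is performed along the $\frakD$-filtration of $\cOaeL$, so that on each $\Gr^{\le D}$ only the finitely many exponential scales $e^{-r/\hbar}$ with $r\in D\cap S^\vee$ (uniformly exponentially small along $\gamma$) intervene; the fundamental matrix $Y$ and the fixed point $u$ are obtained scale by scale, the contribution of one scale being governed by the lower ones through a linear inhomogeneous equation solved by the same $G$, and the full solution is assembled using the completeness of $\cOaeL$. The passage from actual functions on $U\times S'$ to classes in $\cOaeL$ (shrink $S'$ slightly and use Cauchy estimates) is routine. I expect the real obstacle to be the construction of the bounded Green's operator $G$: one must produce $Y$ with estimates uniform as $\hbar\to 0$ inside $S'$ and choose the integration limits so that $G$ is genuinely bounded, which forces $S'$ to be thin and $U$ small so that no Stokes direction of the $\lambda_j$ is crossed --- this is the technical heart of Wasow's argument and the only place where the singular-perturbation analysis is essentially used. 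The contraction estimates, the bootstrap to rapid decay of the $T^0$-part, and the uniformity over Novikov scales are then routine.
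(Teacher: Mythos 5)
Your overall strategy matches the paper's (which follows Wasow's \cite[Theorem 26.1]{Wasow} essentially verbatim): realize the formal series by Borel--Ritt, change variables so the right-hand side is asymptotically zero, run a Picard iteration through an integral operator, and verify convergence on each graded piece $\Gr^{\le D}$ of the $\frakD$-filtration, assembling the Novikov solution from the $\gamma$-finite collection of exponential scales. The reduction steps, the role of the Jacobian hypothesis, and the observation that the unknown correction must have rapid-decay $T^0$-part are all in the paper.

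The one substantive deviation is the construction of the Green's operator, and it is where your proposal is weaker than what the paper actually does. Wasow does not build a fundamental matrix $Y$ of the variable-coefficient linearization $\hbar\partial_x - A(x,\hbar)$; he takes the \emph{constant} matrix $J = A(0,0)$, placed in Jordan normal form (with nonzero diagonal because of the nondegeneracy hypothesis), uses the elementary kernel $\exp((x-t)J/\hbar)$, and pushes the variable remainder $(B(x,\hbar)-J)u$ into the inhomogeneity of the iteration. The boundedness of the resulting operator is then \cite[Lemma 27.1]{Wasow}, a self-contained estimate on these constant-coefficient exponentials for a component-by-component choice of integration path. Your version, by contrast, asks for an asymptotic fundamental matrix whose columns behave like $\exp(\hbar^{-1}\int\lambda_j\,dx)$, which (i) presupposes the eigenvalues of $A(x,0)$ are separated, something the hypothesis (mere invertibility of the Jacobian) does not grant, and (ii) leans on Sibuya-type block-diagonalization of a variable-coefficient singularly perturbed system --- but in Wasow's development that diagonalization is proved \emph{using} Theorem~26.1, and in the paper the analogous statements in Section~15 likewise rest on this theorem, so invoking them here is circular. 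Switching to the constant-coefficient kernel $\exp((x-t)J/\hbar)$ both removes the circularity and eliminates the extra eigenvalue-separation assumption; everything else in your write-up (the Novikov scale-by-scale bookkeeping, the rapid-decay bootstrap, the identification of the asymptotic expansion of $\psi=\tilde\psi+u$ with the given formal solution) goes through unchanged.
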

\begin{proof}
The proof is a minor modification  of that in \cite{Wasow}. For the reader's convenience, we would like to provide the proof in detail.

We would like to decompose $f$ into the constant term, the linear term, and the higher term:
\begin{equation}
    f(x, z, \hbar)=a(x, \hbar)+A(x, \hbar)z+g(x, z, \hbar).
\end{equation}
Let $T$ be a linear transformation which makes $A(0,0)$ into a Jordan normal form with nonzero diagonals. Without loss of generality, we can a priori assume that $A(0,0)$ is such a Jordan matrix.

By the Borel--Ritt lemma, there exists an analytic function $\phi^*(x,z,\hbar)$ whose asymptotic expansion is $\sum_{i=1}^\infty \psi_i(x, z)\hbar^i$. By setting $u:=\psi-\psi^*$, we can take the differential equation into the form
\begin{equation}
    \hbar \partial_x u=b(x, \hbar)+B(x,\hbar)z+h(x, u, \hbar)
\end{equation}
where $b$ is asymptotically $0$ and $B(0,0)=J$. Now it is enough to prove the existence of the solution to the above equation with 0 asymptotics.

We next set
\begin{equation}
    V(x,\hbar)=\exp\lb\frac{Jx}{\hbar} \rb.
\end{equation}
We set 
\begin{equation}
    u(x, \hbar)=\int_{\Gamma(x)}\exp\lb \frac{(x-t)J}{\hbar}\rb\frac{b(t,\hbar)+(B(t,\hbar)-J)u+h(t, u,\hbar)}{\hbar}dt.
\end{equation}
Note that the right hand side of the equation should be understood as a power series in $e^{-c/\hbar}$. The integration path $\Gamma(x)$ is the same as in \cite[\S 27.3]{Wasow}. 

We now consider the operator
\begin{equation}
     \cP u(x, \hbar):=\int_{\Gamma(x)}\exp\lb \frac{(x-t)J}{\hbar}\rb\frac{b(t,\hbar)+(B(t,\hbar)-J)u+h(x, u,\hbar)}{\hbar}
\end{equation}
and discuss the convergence of the series
\begin{equation}
    \sum_{i=0}^\infty(\cP^{i+1}u_0-\cP^iu_0).
\end{equation}
for $u_0=0$. It is enough to show that the summation is convergent in $\Gr_D$ for any $D$. We can write $b, B, h$ as $b=b_D+b_{>D}, B=B_D+B_{>D}, h=h_D+h_{>D}$ where $b_D, B_D, h_D\in \cO^{\ae}$ and  $b_D, B_D, h_D$ are zero in $\Gr_D$. Let us set
\begin{equation}
    \begin{split}
    \cP_Du(t, \hbar)&:=\int_{\Gamma(x)}\exp\lb \frac{(x-t)J}{\hbar}\rb\frac{b_D(t,\hbar)+(B_D(t,\hbar)-J)u+h_D(t, u,\hbar)}{\hbar}\\
    \cP_{>D}u&:=\cP u-\cP_0u=\int_{\Gamma(x)}\exp\lb \frac{(x-t)J}{\hbar}\rb\frac{b_{>D}(t,\hbar)+B_{>D}(t,\hbar)u+h_{>D}(t, u,\hbar)}{\hbar}\\
\end{split}
\end{equation}
We claim that $\cP_{D}$ and $\cP_{>D}$ acts on $\Gr_D$. In fact, by \cite[Lemma 27.1]{Wasow}, there exists a constant $K$ independent of $\chi$ and $\hbar$ such that
\begin{equation}
    \left | \int_{\Gamma(x)}\exp\lb \frac{(x-t)J}{\hbar}\rb\frac{\chi(x)}{\hbar}\right|\leq K\max ||\chi||.
\end{equation}
Hence it preserves the asymptotics. In particular, $\cP_{>D}$ act as $0$ on $\Gr_D$. Hence it is enough to show that the convergence of 
\begin{equation}
    \sum_{i=0}^\infty(\cP_D^{i+1}u_0-\cP_D^iu_0).
\end{equation}
This can be proved exactly in the same way as in \cite{Wasow}. The resulting convergent series $\sum_{i=0}^\infty(\cP^{i+1}u_0-\cP^iu_0)$ gives the desired solution.
\end{proof}

\section{Summability}
Motivated by the study of exact WKB analysis and the definition of WKB-regularity, we would like to axiomatize the existence of solutions.

\subsection{Summability}
Let $S$ be a sectoroid. Let $\cM$ be an $\cOceL_{M\times S}$-valued flat $\hbar$-connection of $\rank =m$ defined on the sectoroid $S$. We set $D:=\mathrm{Pole}(\cM)$. As we have noted earlier, $L:=\Char(\cM)$ is a ramified covering of the complement of a divisor $D$ in $M$. We set
\begin{equation}
\begin{split}
    \mathrm{Ram}(\cM)&:=\lc x\in \Char(\cM)\relmid x \text{ is a ramified/singular point of }\Char (\cM)\rc\\
    \mathrm{Turn}(\cM)&:=p_{T^*M}(\mathrm{Ram}(\cM))\subset M\bs D.
\end{split}
\end{equation}

For a point $x\in M\bs (\mathrm{Turn}(\cM)\cup D)$, there exists a local neighborhood $U$ of $x$ such that $T^*U\cap L$ is a trivial smooth covering with $n$ sheets; we denote it by $\bigsqcup_{i=1}^n L_i$. Each sheet is equipped with the multiplicity $n_i$. Note that $\sum_{i=1}^nn_i=m$. Fix a primitive $\alpha_i$ of $L_i$ for each $i$.

\begin{definition}
For a point $x\in M\bs (\mathrm{Turn}(\cM)\cup D)$, we say $\cM$ is locally semisimple at $x$ if there exists a decomposition of $\cM\otimes_{\cOceL_{M\times S}}\otimes \cOseL_{M\times S}$ defined in a neighborhood $U$ of $x$ such that 
\begin{equation}
    \cM\otimes_{\cOceL_{M\times S}}\otimes \cOseL_{M\times S}|_U\cong \bigoplus_{i=1}^n(\cE^{\alpha_i/\hbar})^{\oplus n_i}.
\end{equation}
\end{definition}

\begin{remark}
By Corollary~\ref{cor: rank1summability}, to check the local semisimplicity, it is enough to show that $\cM\otimes_{\cOceL_{M\times S}}\otimes \cOseL_{M\times S}$ is decomposed into rank 1 flat connection coming from $\cOceL_{M\times S}$. 
\end{remark}

\begin{definition}
For a point $x\in D$, let us first take a local modification $U$ which makes the divisor into a normal crossing form. Then take the oriented real blow-up $\varpi\colon \widetilde{U}\rightarrow U$ of the divisor and consider the pull-back $\cM$. 
We say $\cM$ is locally semisimple at $x$ if, for any point $x\in \widetilde{U}$, there exists an open neighborhood $V$ of $x$ and a decomposition 
\begin{equation}
    ((\varpi^{-1}\cM\otimes_{\varpi^{-1}\cOceL_{U\times S}} \cO^{\mathrm{ce},\Lambda_0, \mathrm{mod}D}_{\widetilde{U}\times S})\otimes_{\cOceL_{M\times S}}\otimes \cOseL_{M\times S})|_V\cong \bigoplus_{i=1}^n(\cE^{\alpha_i/\hbar})^{\oplus n_i}.
\end{equation}

\end{definition}

\begin{definition}
We say $\cM$ is semi-globally semisimple if $\cM$ is locally semisimple at any point of $M$. 
\end{definition}

\begin{remark}
The semi-global summability implies the WKB-regularity.
\end{remark}

\begin{definition}
For a point $x\in M\bs (\mathrm{Turn}(\cM)\cup D)$, we say $\cM$ is locally summable at $x$ if there exists a basis of solutions $\lc \psi_{ij}\relmid 1\leq i\leq n, 1\leq j\leq n_i\rc$ defined in a neighborhood of $x$ such that $\psi_{ij}=e^{\alpha_i/\hbar}\widetilde{\psi}_{ij}$ is $\widetilde{\psi}_{ij}\in \cOseL_{U\times S}$ with the property that the matrix given by $\{\widetilde{\psi}_{ij}\}$ is an invertible matrix in $\cOseL_{U\times S}$.

We can similarly define the semi-global summability, which we call the summability.
\end{definition}

\begin{proposition}
The local semisimplicity and the local summability are equivalent.
\end{proposition}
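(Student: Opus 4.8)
The plan is to reduce both conditions to a single statement about a fundamental solution matrix and then observe that each of them is that statement. Work near a point $x\in M\bs(\Turn(\cM)\cup D)$ (the case $x\in D$ is handled identically after the local modification and oriented real blow-up, with $\cOseL$ replaced by its $\mathrm{mod}\,D$-version $\cO^{\mathrm{se},\Lambda_0,\mathrm{mod}D}$), choose a contractible neighborhood $U$ small enough that the locally free $\cOceL_{U\times S}$-module underlying $\cM$ has a frame, and let $\Omega$ be the resulting $\hbar$-connection matrix. By the rank-$1$ discussion of Section 12 (Theorem~\ref{Theorem: Linear} and Corollary~\ref{cor: rank1summability}), once a primitive $\alpha_i$ of the sheet $L_i$ is fixed a solution of $\cE^{\alpha_i/\hbar}$ is a multiple of $e^{\alpha_i/\hbar}$ by an $\hbar$-constant; hence $\bigoplus_{i=1}^{n}(\cE^{\alpha_i/\hbar})^{\oplus n_i}$ has the diagonal fundamental solution matrix $E:=\mathrm{diag}(e^{\alpha_i/\hbar}I_{n_i})$, and any fundamental solution matrix of it differs from $E$ by a constant invertible factor. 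With this notation in place, I claim that \emph{both} local semisimplicity and local summability at $x$ are equivalent to: a fundamental solution matrix $\Phi$ of $\cM$ can be written as $\Phi=\Psi E$ with $\Psi\in\mathrm{GL}_m(\cOseL_{U\times S})$.

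First, local summability $\Leftrightarrow$ this factorization, essentially by unwinding the definition: a basis of solutions $\{\psi_{ij}\}$ is exactly a fundamental solution matrix $\Phi=(\psi_{ij})$, the condition $\psi_{ij}=e^{\alpha_i/\hbar}\widetilde\psi_{ij}$ says $\Phi=\Psi E$ with $\Psi=(\widetilde\psi_{ij})$ and $E$ as above, and "$\{\widetilde\psi_{ij}\}$ invertible over $\cOseL_{U\times S}$" says $\Psi\in\mathrm{GL}_m(\cOseL_{U\times S})$. Second, local semisimplicity $\Leftrightarrow$ the same factorization, by the standard equivalence between a gauge transformation and a fundamental solution matrix of prescribed shape: an isomorphism $\cM\otimes_{\cOceL_{U\times S}}\cOseL_{U\times S}\cong\bigoplus_i(\cE^{\alpha_i/\hbar})^{\oplus n_i}$ is, in the chosen frames, an $\cOseL_{U\times S}$-linear change of frame $\Psi$ (invertible over $\cOseL_{U\times S}$ because so is its inverse), and under such a change the connection matrix $\Omega$ is conjugated — in the sense $\Psi^{-1}\Omega\Psi\pm\hbar\Psi^{-1}d\Psi$ — to $\mathrm{diag}(d\alpha_i\,I_{n_i})$; combining $\hbar\,d\Phi=\Omega^{T}\Phi$ with $\hbar\,dE=\mathrm{diag}(d\alpha_i\,I_{n_i})\,E$ and cancelling the invertible $E$ shows that the existence of such a $\Psi$ is precisely the existence of a fundamental solution matrix $\Phi=\Psi E$ with $\Psi\in\mathrm{GL}_m(\cOseL_{U\times S})$. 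Chaining these two equivalences gives the proposition, and the $x\in D$ case is identical after the blow-up and over the ring $\cO^{\mathrm{se},\Lambda_0,\mathrm{mod}D}$.

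The step that needs genuine care — and is the only real obstacle — is the bookkeeping: one must fix once and for all the transpose of $\Omega$ built into the identification of solutions with $\cD$-module morphisms (so that the diagonal entries come out as $d\alpha_i$, matching $\Char(\cE^{\alpha_i/\hbar})=\Graph(d\alpha_i)$, rather than $-d\alpha_i$), and one must remember that $e^{\alpha_i/\hbar}$ itself is \emph{not} a section of $\cOseL_{U\times S}$ but only of $\cOexpL_{U\times S}$ on $U$ shrunk so that the relevant $\supp_{\se}$ condition holds; it is exactly the ratios $\widetilde\psi_{ij}=e^{-\alpha_i/\hbar}\psi_{ij}$ that live over $\cOseL_{U\times S}$, so the factorization $\Phi=\Psi E$ is an equality in $\mathrm{GL}_m(\cOexpL_{U\times S})$ with the distinguished factor $\Psi$ in $\mathrm{GL}_m(\cOseL_{U\times S})$. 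Once these conventions are pinned down, both implications are just the two readings of this single factorization together with the rank-$1$ fact (already used in Corollary~\ref{cor: rank1summability}) that a rank-$1$ flat $\hbar$-connection over $\cOceL_{U\times S}$ whose characteristic variety has primitive $\alpha$ becomes $\cE^{\alpha/\hbar}$ after $\otimes\,\cOseL_{U\times S}$.
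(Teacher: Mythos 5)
Your proof is correct and is essentially the same argument the paper gives, with the same two ingredients — solutions $e^{\alpha_i/\hbar}$ of the summands and the invertible matrix $(\widetilde\psi_{ij})$ used as a gauge transformation — just organized more carefully around the single factorization $\Phi=\Psi E$ and with the base-change and transpose conventions made explicit.
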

\begin{proof}
The implication from the local semisimiplicity to the local summability is obvious: Each $\cE^{\alpha_i/\hbar}$ admit a solution $e^{\alpha_i/\hbar}$.

Let us show the converse: Suppose that $\cM$ is locally summable. The matrix with rows are $\widetilde{\psi_{ij}}$ is an invertible matrix valued in $\cOseL_{M\times S}$. Hence we can gauge-transform using the matrix and obtain the local semisimplicity.
\end{proof}

\begin{definition}
Let $\cM$ be a $\cDceL_{M\times S}$-module. We say $\cM$ is a summable flat $\hbar$-connection if there exists a divisor $D$ such that 
\begin{enumerate}
\item $\cM|_{M\bs D}$ is a summable flat $\hbar$-connection,
    \item the projection $\Char(\cM)\cap T^*(M\bs D)\rightarrow M$ is an uramified covering, and 
    \item $\cM\cong \cM(*D)$. 
\end{enumerate}
\end{definition}

Standing on this notion, we can generalize it to $D^b(\cDceL_{M\times S})$.
\begin{definition}
For $\cM\in D^b(\cDceL_{M\times S})$, we say it is summable if the following holds: There exists an analytic stratification such that the restriction of the cohomology modules to each stratum is summable in the same way as in Definition~\ref{def: WKB-regularD-mod}. We denote the subcategory of summable objects by $D^b_{\mathrm{sum}}(\cDceL_{M\times S})$. 
\end{definition}

Let $\cM$ be an object of $D^b(\cDseL_{M\times S})$. We say $\cM$ is locally of $\mathrm{ce}$ if there exists a neighborhood $U$ for any point $x\in M$ and $\cM_U\in D^b(\cDceL_{M\times S})$ such that $\cM|_U\cong \cM_U\otimes_{\cOceL_{U\times S}}\cOseL_{U\times S}$. In this situation, we say $\cM_U$ is a local $\ce$-lattice.

\begin{definition}
We say an object $\cM\in D^b(\cDseL_{M\times S})$ is summable if it is locally of $\mathrm{ce}$ and local $\ce$-lattices can be taken to be summable.

We denote the subcategory of $D^b(\cDseL_{M\times S})$ generated by the summable objects by $\cD_{\mathrm{sum}}(\cDseL_{M\times S})$.
Similarly, we define  $D^b_{\mathrm{sum}}(\cDdeL_{M\times S})$.
\end{definition}

We also have the germ version. Let $\theta\in S\cap \varpi^{-1}(0)$. 
\begin{definition}
We say $\cM\in D^b(\cDceL_{M\times \theta})$ is summable if the following holds: For any point $x\in M$, there exists a neighborhood $U$ of $x$ and a sectoroid $S$ containing $\theta$ such that $\cM|_U$ has a representative in $D^b(\cDceL_{U\times S})$ and the representative is summable.

We can similarly define the summability of an object of $\cM\in D^b(\cDseL_{M\times \theta})$.
\end{definition}

Later, we will provide a sufficient condition of the germ summability.

\subsection{Stability of summable modules}
In this section, we will prove that the localization and restriction preserves summability.

We start with a simple lemma.
\begin{lemma}
Let $\cM$ be a summable flat connection. Let $i\colon Z\hookrightarrow\cM$ be a subvariety. Then the restriction ${}_\cD i^*\cM$ is summable.
\end{lemma}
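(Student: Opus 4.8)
The plan is to reduce to the local description of summability and show that restriction of a summable flat $\hbar$-connection to a subvariety is again summable, by tracking how the defining data of summability pull back. First I would recall that summability of a flat $\hbar$-connection $\cM$ valued in $\cOceL_{M\times S}$ means: away from the divisor $D = \mathrm{Pole}(\cM)$ and the turning locus $\mathrm{Turn}(\cM)$, near each point $x$ there is a basis of solutions $\psi_{ij} = e^{\alpha_i/\hbar}\widetilde\psi_{ij}$ with $\widetilde\psi_{ij}\in \cOseL_{U\times S}$ forming an invertible matrix over $\cOseL_{U\times S}$ (equivalently, local semisimplicity, by the Proposition equating the two), together with the localization condition $\cM \cong \cM(*D)$ and the covering condition on $\Char(\cM)$. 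So the task is to produce analogous data on $Z$ for $_{\cD}i^*\cM$.

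The key steps, in order. Step 1: by definition of the $\cD$-module pull-back, $_{\cD}i^*\cM = \cD^\bullet_{Z\to M}\otimes_{i^{-1}\cD^\bullet_{M\times S}} i^{-1}\cM$; since $\cM$ is a flat $\hbar$-connection, i.e. locally free over $\cOceL_{M\times S}$ away from $D$, the pull-back is, away from $i^{-1}(D\cup\mathrm{Turn}(\cM))$, simply the restriction of the vector bundle with the induced connection, hence again a flat $\hbar$-connection valued in $\cOceL_{Z\times S}$. Step 2: choose the divisor for $_{\cD}i^*\cM$ to be $D_Z := i^{-1}(D)\cup i^{-1}(\mathrm{Turn}(\cM))$, intersected suitably so it is a genuine divisor in $Z$ (here one may need to shrink or stratify $Z$ — this is where one passes from "flat connection" to "object of $D^b$" and invokes the stratified Definition of summability rather than the naive one; but as stated the hypothesis is that $\cM$ is a summable flat connection, so I will first treat the generic stratum and then remark that the general case follows by the same argument on each stratum). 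Step 3: over $Z\setminus D_Z$, the primitives $\alpha_i$ of the sheets $L_i$ of $\Char(\cM)$ restrict to primitives $\alpha_i|_Z$ of the sheets of $\Char({}_{\cD}i^*\cM)$ — using that $\Char({}_{\cD}i^*\cM)\subset$ the pull-back of $\Char(\cM)$ and that the Liouville form is natural — and the solutions $\widetilde\psi_{ij}$ restrict to sections of $\cOseL_{Z\times S}$ (restriction in the $M$-variables, $S$ fixed, is well-defined on all the structure sheaves $\cOceL,\cOseL$). Step 4: the invertibility of the matrix $\{\widetilde\psi_{ij}|_Z\}$ over $\cOseL_{Z\times S}$ follows from invertibility over $\cOseL_{U\times S}$ since restriction is a ring homomorphism and $\cOseL_{Z\times S}/\frakm \cong \cO_Z$, so an element invertible upstairs restricts to something invertible downstairs (its reduction mod $\frakm$ is a nonvanishing holomorphic function). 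Step 5: the localization condition $_{\cD}i^*\cM\cong {}_{\cD}i^*\cM(*D_Z)$ follows from Lemma~\ref{Lem: MD} together with $\cM\cong\cM(*D)$ and compatibility of pull-back with localization; the covering condition on $\Char$ is arranged by the choice of $D_Z$ and, if necessary, further stratification.

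The main obstacle I expect is Step 2–Step 3: controlling what happens where $i(Z)$ meets $\mathrm{Turn}(\cM)$ or $D$ — the restricted characteristic variety $\Char({}_{\cD}i^*\cM)$ need not be flat/finite there, and $i(Z)$ could lie entirely inside the turning locus or the pole divisor, in which case the "generic stratum" argument is vacuous and one genuinely needs the stratified Definition~\ref{def: WKB-regularD-mod}-style formulation of summability for $D^b$-objects to even make sense of the claim. The clean way around this is: one does not attempt to prove summability of $_{\cD}i^*\cM$ as a flat connection in general, but rather observes that $_{\cD}i^*\cM$ is an object of $D^b(\cDceL_{Z\times S})$ (or $\cDseL$), stratify $Z$ compatibly with $\cS_M\times$(a stratification adapted to $\Char(\cM)$) pulled back, and on each stratum the above Steps 1–5 go through verbatim; functoriality of $i^*$ under composition of embeddings lets one reduce to the transversal and the "bad" pieces separately. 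I would flag that the heavy lifting — that the formal/asymptotic decomposition data survive pull-back — is essentially the content of Corollary~\ref{cor: rank1summability} applied sheet-by-sheet, and is "standard" in the sense the paper uses that word elsewhere, so the proof can be kept short.
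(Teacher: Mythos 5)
The paper's own proof of this lemma is literally the single phrase ``This is clear,'' so there is no detailed argument in the paper for you to have matched or missed. Your plan is a reasonable reconstruction of what the author presumably had in mind: the $\cD$-module pullback of a flat $\hbar$-connection along $i$ is the restricted vector bundle with the induced connection; the local solution bases $e^{\alpha_i/\hbar}\widetilde\psi_{ij}$ restrict (the structure sheaves $\cOceL,\cOseL$ restrict coherently in the base variables, and restriction is a ring homomorphism so invertibility of the matrix $(\widetilde\psi_{ij})$ descends); primitives $\alpha_i$ restrict to primitives on $\Char({}_\cD i^*\cM)$ because the Liouville form is natural; and the localization axiom is stable. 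This is the content one would expect behind ``clear,'' and you supply it.

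One remark worth keeping in mind. The subtlety you flag in Steps 2--3 --- that $i(Z)$ could sit entirely inside $\mathrm{Turn}(\cM)$ or $D$, making the generic-stratum argument vacuous --- is real, and the paper's one-word proof does not address it. Note, however, that the paper itself immediately follows this lemma with a second lemma that handles general summable holonomic modules by reducing (via extensions and, implicitly, the stratified Definition~\ref{def: WKB-regularD-mod}-style definition of summability) to the flat-connection case. So the intended reading is probably that this first lemma is meant for the transversal/generic situation, and the degenerate cases you worry about are absorbed into the induction on strata in the surrounding argument rather than needing to be resolved inside this lemma itself. Your proposal is consistent with that reading, and appropriately cautious about the cases where it is not immediate.
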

\begin{proof}
This is clear.
\end{proof}

\begin{lemma}
Let $\cM$ be a summable holonomic $\cD^\hbar$-module. Let $i\colon Z\hookrightarrow M$ be a subvariety of $M$. Then the restriction ${}_\cD i^*\cM$ is again summable.
\end{lemma}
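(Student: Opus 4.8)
The plan is to reduce the statement about a general summable holonomic $\cD^\hbar$-module to the case of a summable flat $\hbar$-connection, which is handled by the previous lemma. First I would recall that summability of an object of $D^b(\cDceL_{M\times S})$ (or its $\se$/$\de$-versions) is defined stratum-wise: there is an analytic stratification $\cS=\{S_j\}$ of $M$ such that the cohomology modules of $\cM$, restricted to each stratum (after a local smoothing and localization along the exceptional divisor, as in Definition~\ref{def: WKB-regularD-mod}), become summable flat $\hbar$-connections. So the content to be verified is that this stratum-wise structure is inherited by ${}_\cD i^*\cM$ for a subvariety $i\colon Z\hookrightarrow M$.

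The key steps, in order, would be: (1) Refine the given stratification $\cS$ of $M$ together with $Z$, i.e.\ choose an analytic stratification $\cS'$ of $M$ compatible with both $\cS$ and $Z$, so that $Z$ is a union of strata of $\cS'$ and each stratum of $\cS'$ is contained in a stratum of $\cS$; this is possible by standard stratification theory. (2) Since restriction is exact on the relevant triangulated categories only up to the usual spectral sequence, pass to cohomology: it suffices to treat each cohomology module, and for these one uses that on each stratum $\cM$ looks like a summable flat $\hbar$-connection pulled back from an open set. (3) On a stratum $T\subset Z$ of $\cS'$, we have $T\subset S_j$ for some $j$, and $\cM|_{S_j}$ (with the smoothing/localization bookkeeping) is a summable flat $\hbar$-connection; restricting a summable flat $\hbar$-connection further to the subvariety $T$ is summable by the preceding lemma (the one for flat connections). (4) Assemble: the stratification $\cS'|_Z$ of $Z$ witnesses the summability of ${}_\cD i^*\cM$, using that ${}_\cD i^*$ commutes with the localization and smoothing operations up to the identifications already recorded (Lemma~\ref{Lem: MD} and the base-change lemma for $\cD^\hbar$-modules).

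I expect the main obstacle to be purely bookkeeping: making the interaction between the restriction functor ${}_\cD i^*$, the local smoothings $p_S\colon S'\to\overline S$, and the localizations $(*D)$ precise, so that the pulled-back module genuinely falls under Definition~\ref{def: WKB-regularD-mod} with the new stratification. Concretely, one must check that for a stratum $T$ of $Z$ sitting inside a stratum $S_j$ of $M$, a smoothing of $\overline T$ can be chosen to be compatible with the smoothing of $\overline{S_j}$ used to witness summability of $\cM$, so that the formal decomposition of $\cM$ restricts to a formal decomposition of ${}_\cD i^*\cM$ of the required shape $\bigoplus_i(\cE^{\alpha_i/\hbar})^{\oplus n_i}$. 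None of this is deep, but it is the step where care is needed; everything else is a direct invocation of the flat-connection case plus exactness arguments on cohomology. I would phrase the write-up so that this compatibility of smoothings is stated explicitly and its (routine) verification is left to the reader, mirroring the paper's stated convention of omitting standard arguments.

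\begin{proof}
Since summability is checked on each cohomology module and restriction induces a spectral sequence converging to the cohomology of ${}_\cD i^*\cM$ whose $E_1$-terms are restrictions of cohomology modules of $\cM$, we may assume $\cM$ is a single holonomic module. By definition of summability in $D^b(\cDceL_{M\times S})$, there is an analytic stratification $\cS=\{S_j\}$ of $M$ such that, for each $j$ and each smoothing $p_{S_j}\colon S_j'\to\overline{S_j}\to M$, the localization $p_{S_j}^*\cM(S_j'\bs p_{S_j}^{-1}(S_j))$ is a summable flat $\hbar$-connection in the sense of Definition~\ref{def: WKB-regularD-mod}. Choose an analytic stratification $\cS'$ of $M$ refining $\cS$ and compatible with $Z$, so that $Z$ is a union of strata of $\cS'$ and each stratum $T$ of $\cS'$ is contained in some stratum $S_{j(T)}$ of $\cS$. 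For a stratum $T\subset Z$, pick a smoothing $p_T\colon T'\to\overline T\to M$ compatible with a smoothing $p_{S_{j(T)}}$ of $\overline{S_{j(T)}}$; by the base-change lemma for $\cD^\hbar$-modules and Lemma~\ref{Lem: MD}, the pulled-back localized module $p_T^*({}_\cD i^*\cM)(T'\bs p_T^{-1}(T))$ is identified with the restriction of the summable flat $\hbar$-connection $p_{S_{j(T)}}^*\cM(S_{j(T)}'\bs p_{S_{j(T)}}^{-1}(S_{j(T)}))$ to the corresponding subvariety. By the previous lemma, the restriction of a summable flat $\hbar$-connection is summable. Hence the stratification $\cS'|_Z$ of $Z$ witnesses the summability of ${}_\cD i^*\cM$, as required.
\end{proof}
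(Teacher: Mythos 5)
Your proposal is correct and follows essentially the same route as the paper: the paper's own proof is the one-line observation that "since $\cM$ is given as an extension of summable flat connections, the result follows from the preceding lemma," which is exactly your reduction via the stratification to the flat-connection case. You have simply made explicit the bookkeeping (stratification refinement, compatibility of smoothings, the spectral sequence reduction to cohomology modules) that the paper leaves to the reader.
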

\begin{proof}
Since $\cM$ is given as an extension of summable flat connections, the result follows from the preceding lemma.
\end{proof}

\begin{lemma}
Let $\cM$ be a summable flat connection. Then $\cM(*D)$ is also summable.
\end{lemma}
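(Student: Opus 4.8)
The plan is to reduce the claim about $\cM(*D)$ to the already-established stability of summability under restriction, exploiting the local structure of a summable flat connection. Recall that by definition a summable flat $\hbar$-connection $\cM$ is, away from its pole divisor, a flat $\hbar$-connection which is locally summable at every point; and by the local criterion (local semisimplicity $=$ local summability) it is locally, after $\otimes\cOseL_{M\times S}$, a direct sum $\bigoplus_i(\cE^{\alpha_i/\hbar})^{\oplus n_i}$ near points of $M\bs(\Turn(\cM)\cup D)$, with an analogous statement near $D$ after a local modification making $D$ normal crossing and passing to the real blow-up, using the $\mathrm{mod}\,D$-variant $\cO^{\mathrm{ce},\Lambda_0,\mathrm{mod}D}_{\widetilde U\times S}$.

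First I would observe that forming $\cM(*D)$ does not change $\cM$ over $M\bs D$: by construction $\cM(*D)=\cM\otimes\cO(*D)$ and $\cM(*D)|_{M\bs D}\cong\cM|_{M\bs D}$, so over $M\bs D$ there is nothing to prove — the required solutions $\psi_{ij}=e^{\alpha_i/\hbar}\widetilde\psi_{ij}$ with $\{\widetilde\psi_{ij}\}$ invertible over $\cOseL_{U\times S}$ are the same ones witnessing summability of $\cM$. The only new content is at points of $D$ (and more generally at the stratified singular locus). Near such a point $x\in D$, I would take a local modification $p\colon U'\to U$ making $p^{-1}(D)$ normal crossing, pass to the oriented real blow-up $\varpi\colon\widetilde U\to U'$, and note that the $\mathrm{mod}\,D$ structure sheaf already incorporates the poles along $D$: tensoring $\cM$ by $\cO(*D)$ and then by $\cO^{\mathrm{ce},\Lambda_0,\mathrm{mod}D}_{\widetilde U\times S}$ produces the same module as tensoring $\cM$ directly, because $\cA^{\mathrm{mod}D}_{\widetilde U}\otimes_{\pi^{-1}\cO_U}\pi^{-1}\cO_U(*D)\cong\cA^{\mathrm{mod}D}_{\widetilde U}$ (moderate-growth functions absorb poles along $D$). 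Hence the local decomposition $\bigoplus_i(\cE^{\alpha_i/\hbar})^{\oplus n_i}$ exhibiting local semisimplicity of $\cM$ at $x$ in the sense of the $D$-version definition is verbatim a local decomposition for $\cM(*D)$, so $\cM(*D)$ is locally semisimple, hence locally summable, at $x$.

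It then remains to check the two structural conditions in the definition of a summable flat connection for $\cM(*D)$: one may enlarge the divisor to $D':=D\cup\Pole(\cM)$ if necessary, but since $\cM$ is already a summable flat connection with divisor $D$, the covering $\Char(\cM(*D))\cap T^*(M\bs D)=\Char(\cM)\cap T^*(M\bs D)\to M$ is still unramified, and $\cM(*D)\cong(\cM(*D))(*D)$ is automatic because $\cO(*D)\otimes\cO(*D)\cong\cO(*D)$ after inverting $\hbar$ (exactly the localization point made in Section 10 — $\cOseL$ and $\cOdeL$ contain $\hbar^{-1}$, so the pathology of the two non-isomorphic localizations does not arise). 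I expect the genuinely delicate step to be the interaction of the $\gamma$-completion with the moderate-growth sheaf at the blow-up: one must confirm that $\cO^{\mathrm{ce},\Lambda_0,\mathrm{mod}D}$ really does absorb the divisor poles level by level in the filtration $F^{\le D}$, i.e. that the isomorphism $\varpi_{D*}\cA^{\mathrm{mod}D}_{\widetilde U}\cong\cO_U(*D)$ upgrades compatibly to the completed, Novikov-graded sheaves; granting that (which is part of the $\mathrm{mod}\,D$-setup imported from \cite{SabbahIntrotoStokes}), the rest is a direct unwinding of definitions and an appeal to the preceding restriction lemmas. Finally, for the derived statement one propagates this through the analytic stratification used to define summability for objects of $D^b(\cDceL_{M\times S})$, using the already-proved stability under restriction to strata.
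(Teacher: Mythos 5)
Your proof is correct, and since the paper's own proof is the single sentence ``This is also easy,'' you have in effect supplied the argument the author left implicit. The two load-bearing observations are exactly right: over $M\setminus D$ the localization does nothing, and near $D$ the moderate-growth sheaf already absorbs the poles, so $\varpi^{-1}\cO(*D)\otimes_{\varpi^{-1}\cO_U}\cA^{\mathrm{mod}D}_{\widetilde U}\cong\cA^{\mathrm{mod}D}_{\widetilde U}$ makes the defining local decomposition of $\cM$ into $\bigoplus_i(\cE^{\alpha_i/\hbar})^{\oplus n_i}$ literally coincide with that of $\cM(*D)$ after tensoring with $\cO^{\mathrm{ce},\Lambda_0,\mathrm{mod}D}_{\widetilde U\times S}$.

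Two small remarks. First, you are right to flag the compatibility of pole-absorption with the filtration/completion as the one non-formal step, but it is in fact automatic from the way $\cO^{\bullet,\mathrm{mod}D}$ is \emph{defined} in Section 4.8, namely as $\cA^{\mathrm{mod}D}_{\widetilde U}\otimes_{\pi^{-1}\cO_U}\varpi_D^{-1}\cO^\bullet_{U\times S}$: the $\cO_U(*D)$-factor is absorbed by the first tensor factor before the Novikov data enter, so there is no level-by-level check to perform. Second, in the paper's definition of a summable flat $\hbar$-connection the divisor is part of the data and one already requires $\cM\cong\cM(*D)$; with that reading the lemma is nearly tautological (either the new divisor coincides with the old one, or you enlarge the divisor as you suggest and only check the covering condition on the smaller complement). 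Your proof covers both readings, which is the safer thing to do given that the text does not pin down whether the $D$ in the lemma is the same as the one in the definition.
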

\begin{proof}
This is also easy.
\end{proof}

\subsection{Some properties}

\begin{proposition}[Pull-back]
Let $f\colon N\hookrightarrow M$ be a closed embedding of complex manifolds. 
For $\cM\in D^b_{\mathrm{sum}}(\cDseL_{M\times S})$, we have
\begin{equation}
    \Sol({}_\cD f^*\cM)\cong f^{-1}\Sol(\cM).
\end{equation}
\end{proposition}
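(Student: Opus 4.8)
The plan is to reduce the statement to the already-established compatibility between $\Sol^\hbar_S$ and the classical de Rham/solution machinery, together with base change for the transfer module $\cD^{\bullet}_{N\to M}$. First I would spell out what $\Sol^\hbar_S({}_\cD f^*\cM)$ unwinds to: by definition ${}_\cD f^*\cM = \cD^{\de,\Lambda_0}_{N\to M}\otimes_{f^{-1}\cD^{\de,\Lambda_0}_{M\times S}}f^{-1}\cM$, and
\begin{equation}
    \Sol^\hbar_S({}_\cD f^*\cM)=p_{S*}\cHom_{\cD^{\de,\Lambda_0}_{N\times S}}(\cD^{\de,\Lambda_0}_{N\to M}\otimes_{f^{-1}\cD^{\de,\Lambda_0}_{M\times S}}f^{-1}\cM,\ \cR_{N\times S\times \bC_t}).
\end{equation}
By the standard tensor--hom adjunction this is isomorphic to $p_{S*}f^{-1}\cHom_{\cD^{\de,\Lambda_0}_{M\times S}}(\cM,\ \cHom_{\cD^{\de,\Lambda_0}_{N\times S}}(\cD^{\de,\Lambda_0}_{N\to M},\cR_{N\times S\times\bC_t}))$ (using that $f$ is a closed embedding, so $f^{-1}$ is exact and commutes with the relevant $\cHom$'s), so everything comes down to identifying $\cHom_{\cD^{\de,\Lambda_0}_{N\times S}}(\cD^{\de,\Lambda_0}_{N\to M},\cR_{N\times S\times\bC_t})$ with $f^!\cR_{M\times S\times\bC_t}$ — i.e. the key input is that $\cR_{M\times S\times\bC_t}$ behaves like a ``solution complex'' object under pull-back along closed embeddings, exactly as $\cO_X$ (or $\bC_X$) does in the classical Riemann--Hilbert story.

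The key steps, in order, would be: (1) Record the adjunction isomorphism above, being careful that all tensor/hom operations are derived and that $f$ being a closed embedding lets us move $f^{-1}$ freely. (2) Prove the local comparison $\cHom_{\cD^{\de,\Lambda_0}_{N\times S}}(\cD^{\de,\Lambda_0}_{N\to M},\cR_{N\times S\times\bC_t})\cong \cR_{M\times S\times\bC_t}|_{N}[\,\mathrm{codim}\,]$ — more precisely identify the transfer-module pairing with the Koszul resolution of $\cO_N$ over $\cO_M$ restricted along $f$; since $\cR$ is (locally, on $M$ and forgetting the $S$-direction) a flat-$\hbar$-connection-type object, this is the same computation as for $\cO$, and the extra $e^{t/\hbar}$-twist and the $\bC_t$-direction are inert under the Koszul differential. (3) Feed this back through $p_{S*}$ and through the $\bC$-equivariant quotient $[-]$ to get the statement in $\Sh_S^\bC(N\times\bC_t)$, using that $p_{S*}$ commutes with $f^{-1}$ (the forgetting-$S$ projection and $f$ act on disjoint factors) and that the quotient functor is compatible with $f^{-1}$, which was already noted when the operations on $\Sh^\bC_S$ were set up in Section 8.

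The main obstacle I expect is step (2): the clean classical identity $f^{-1}R\cHom_{\cD_M}(\cD_{N\to M},\cO_M)\cong \cO_N$ relies on local freeness/good filtrations that are not fully available over the non-Noetherian ring $\cOdeL_S$, so I would instead work on the level of a local $\ce$-lattice (using that summable objects are locally of $\ce$, hence locally presented by honest flat $\hbar$-connections over $\cOceL$), where the Koszul computation is legitimate, and then base-change up along $(-)\otimes_{\cOceL_S}\cOdeL_S$. A secondary subtlety is that $\cR$ is a transcendental (asymptotic-analytic) object rather than $\cO$, so I should check that the Koszul/de Rham lemma used is the asymptotic version — but this is exactly the kind of statement proved for $1_S$ in Lemma~\ref{lem: expsolutions} (the higher-cohomology-vanishing argument via the holomorphic de Rham lemma with rapid-decay coefficients), so I would reuse that argument fiberwise along $N$. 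Once step (2) is in hand the rest is formal adjunction bookkeeping, so the proof should be short.
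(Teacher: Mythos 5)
Your route is genuinely different from the paper's, and it is considerably more roundabout than necessary. The paper's proof is essentially two lines: reduce, using the summability hypothesis and the structure theory from the preceding sections, to the case where $\cM$ is a summable flat $\hbar$-connection, hence locally a direct sum of exponential modules $\cE^{\alpha/\hbar}$; then observe ${}_\cD f^*\cE^{\alpha/\hbar}\cong\cE^{\alpha|_N/\hbar}$, which together with Lemma~\ref{lem: expsolutions} ($\Sol^\hbar_S(\cE^{\alpha/\hbar})\cong S_\alpha$) and the evident fact that $f^{-1}S_\alpha = S_{\alpha|_N}$ (since $S_\alpha$ is built from $\bigoplus_c\bC_{S^\vee-\alpha(x)+c}$, whose restriction to $N$ replaces $\alpha$ by $\alpha|_N$, and $\star$ only involves the $\bC_t$-factor) immediately gives the claim. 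The summability axiom is doing the heavy lifting: it is precisely what guarantees the local exponential decomposition, so once you restrict to it there is nothing left to compute.

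Your proposal instead tries to prove a Cauchy--Kowalevski--Kashiwara-type statement via a transfer-module/Koszul computation. A few concrete issues. First, the stated local comparison $\cHom_{\cD^{\de,\Lambda_0}_{N\times S}}(\cD^{\de,\Lambda_0}_{N\to M},\cR_{N\times S\times\bC_t})\cong\cR_{M\times S\times\bC_t}|_N[\,\mathrm{codim}\,]$ cannot be right as written: the proposition has no degree shift, and indeed the correct analogy is to the non-characteristic CKK theorem, which also produces no shift in the solution direction. Second, and more substantively, the Koszul identity you want is \emph{not} a formal manipulation even for classical $\cD$-modules: $R\cHom_{\cD_N}(\cD_{N\to M},\cO_N)$ is enormous, and what makes CKK work is the non-characteristic hypothesis, which is a genuine analytic input (existence/uniqueness for the Cauchy problem). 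In the present $\hbar$-asymptotic setting, the analogous analytic input (solving the Cauchy problem with $\cOaeL$/$\cOseL$-coefficients and extracting $\cR$ as a "flat" coefficient module) is not developed in the paper; the only solvability results available are Theorems~\ref{Theorem: Linear} and~\ref{Theorem: Non-linear} (ODE-type, not Cauchy-data PDE) and the de Rham computation inside Lemma~\ref{lem: expsolutions}. You acknowledge this by proposing to retreat to $\ce$-lattices and to "reuse the Lemma~\ref{lem: expsolutions} argument fiberwise," but at that point you have effectively rediscovered the paper's proof in disguise: once you are on $\cE^{\alpha/\hbar}$'s, both sides of the desired isomorphism are known explicitly and the adjunction scaffolding is dead weight. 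Finally, the preliminary adjunction step (commuting $f^{-1}$ with $\cHom$, passing to $\cHom_{f^{-1}\cD_M}(f^{-1}\cM,-)$ from $f^{-1}\cHom_{\cD_M}(\cM,-)$) is not automatic over the non-Noetherian base $\cOdeL_S$ and would itself need justification, which again you would supply by passing to local $\ce$-lattices — the same reduction the paper uses directly. In short: the approach can likely be pushed through, but only after building CKK-style machinery the paper deliberately avoids; the intended proof is the direct, one-line verification on exponential modules.
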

\begin{proof}
It is enough to show for the summable flat $\hbar$-connections. Since locally $\cM$ is isomorphic to a direct sum of $\cE^{f/\hbar}$ and ${}_\cD f^*\cE^{f/\hbar}\cong \cE^{f|_N/\hbar}$, we complete the proof by Lemma~\ref{lem: expsolutions}.
\end{proof}

\begin{proposition}[Exterior tensor product]
For $\cM, \cN\in D^b_{\mathrm{sum}}(\cDseL_{M\times S})$, we have $\cM\boxtimes \cN\in D^b_{\mathrm{sum}}(\cDseL_{M\times M})$.
\end{proposition}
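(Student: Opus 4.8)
The plan is to reduce the statement to the summable flat $\hbar$-connection case, exactly as in the preceding pull-back proposition, and then reduce the flat connection case to the local models $\cE^{\alpha/\hbar}$. Concretely, since summability of an object of $D^b_{\mathrm{sum}}(\cDseL_{M\times S})$ is defined stratum-by-stratum in terms of summable flat $\hbar$-connections on the strata (via local $\ce$-lattices), I would first observe that the exterior tensor product of two stratifications is again an analytic stratification of $M\times M$, and that restriction commutes with $\boxtimes$ in the obvious way. So it suffices to check that if $\cM$ and $\cN$ are summable flat $\hbar$-connections (locally of $\ce$-type, with the covering conditions on $\Char$), then $\cM\boxtimes\cN$ is a summable flat $\hbar$-connection on $M\times M$.

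Next I would localize further. By the very definition of summability (equivalently, by the Proposition identifying local semisimplicity with local summability), in a neighborhood of any point $\cM\otimes\cOseL$ is isomorphic to a finite direct sum $\bigoplus_i(\cE^{\alpha_i/\hbar})^{\oplus n_i}$ for primitives $\alpha_i$ of the sheets of $\Char(\cM)$, and similarly $\cN\otimes\cOseL\cong\bigoplus_j(\cE^{\beta_j/\hbar})^{\oplus m_j}$. Since $\boxtimes$ is exact and commutes with finite direct sums, $\cM\boxtimes\cN$ is locally isomorphic to a direct sum of modules of the form $\cE^{\alpha_i/\hbar}\boxtimes\cE^{\beta_j/\hbar}$. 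The key computation is then the identification
\begin{equation}
    \cE^{\alpha_i/\hbar}\boxtimes\cE^{\beta_j/\hbar}\cong\cE^{(\alpha_i\oplus\beta_j)/\hbar}
\end{equation}
on $M\times M$, where $\alpha_i\oplus\beta_j$ denotes the pull-back of $\alpha_i$ from the first factor plus the pull-back of $\beta_j$ from the second; this is immediate from the presentation of $\cE^{\alpha/\hbar}$ by the ideal $(\hbar\partial-d\alpha)$ and the fact that $\cD^\bullet_{M\times M\times S}$-module $\boxtimes$ of cyclic modules is cyclic with the sum of the ideals. Its characteristic variety is $\Graph(d\alpha_i)\times\Graph(d\beta_j)$ by the $\Char$ lemma for $\cE$-modules together with $\Char(\cM\boxtimes\cN)=\Char(\cM)\times\Char(\cN)$, which is again an algebraic Lagrangian with smooth (resp. ramified) covering behavior inherited from the factors.

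Finally I would check that the global bookkeeping goes through: the divisor for $\cM\boxtimes\cN$ is $D_{\cM}\times M\cup M\times D_{\cN}$, the covering $\Char(\cM\boxtimes\cN)\cap T^*((M\times M)\setminus D)\to M\times M$ is the product of the two coverings (hence unramified where both factors are), and $\cM\boxtimes\cN\cong(\cM\boxtimes\cN)(*D)$ follows from the corresponding isomorphisms for the factors and the compatibility of localization with $\boxtimes$. Assembling: on each product-stratum we have a local $\ce$-lattice (take $\boxtimes$ of the local $\ce$-lattices of $\cM$ and $\cN$) which is, after $\otimes\cOseL$, a direct sum of $\cE^{\gamma/\hbar}$'s, hence summable; so $\cM\boxtimes\cN\in D^b_{\mathrm{sum}}(\cDseL_{M\times M\times S})$. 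The only genuinely substantive point—and the one I would write out carefully—is the identification $\cE^{\alpha/\hbar}\boxtimes\cE^{\beta/\hbar}\cong\cE^{(\alpha\oplus\beta)/\hbar}$ together with keeping track of the regular factors $\cR_i$ appearing in the WKB-regular decomposition (they also behave well under $\boxtimes$, since $\Char(\cR_i\boxtimes\cR_j)$ stays in the zero section in the relevant directions and the nilpotence condition $I_{\Char}\cdot(\cR_i\boxtimes\cR_j)/\hbar=0$ is preserved); everything else is formal from exactness of $\boxtimes$ and the definitions.
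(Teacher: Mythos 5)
Your proof is correct and simply spells out what the paper declares ``obvious'': reduce via the stratification-wise definition of summability to flat $\hbar$-connections, then to the local models, where $\cE^{\alpha/\hbar}\boxtimes\cE^{\beta/\hbar}\cong\cE^{(\alpha\oplus\beta)/\hbar}$ does the job. This is the same approach the paper has in mind.
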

\begin{proof}
This is obvious.
\end{proof}

\begin{proposition}[Exterior tensor product]
For $\cM, \cN\in D^b_{\mathrm{sum}}(\cDseL_{M\times S})$, we have
\begin{equation}
    \Sol(\cM\boxtimes \cN)\cong \Sol(\cM)\overset{\star}{\boxtimes} \Sol(\cN).
\end{equation}
\end{proposition}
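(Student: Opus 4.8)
The plan is to run the same dévissage that underlies the pull-back proposition and Lemma~\ref{Lemma: HomSQ}. Both $\cM\mapsto\Sol(\cM\boxtimes\cN)$ and $\cM\mapsto\Sol(\cM)\overset{\star}{\boxtimes}\Sol(\cN)$ are triangulated functors of $\cM$ (the solution functor sends distinguished triangles to distinguished triangles, and $p_i^{-1}$ together with $\star_\cO$ are triangulated), and likewise in the variable $\cN$; both assignments are also local on $M$ and $N$. So the first step is to reduce, using the analytic stratifications built into membership in $D^b_{\mathrm{sum}}$ (Definition~\ref{def: WKB-regularD-mod}) and the compatibility of $\Sol$ with restriction to strata proved in the pull-back proposition, to the case where $\cM$ and $\cN$ are summable flat $\hbar$-connections and, by local semisimplicity, to the case $\cM\cong\bigoplus_i\cE^{\alpha_i/\hbar}$ and $\cN\cong\bigoplus_j\cE^{\beta_j/\hbar}$ over suitable charts, the $\alpha_i$ (resp.\ $\beta_j$) being primitives of the sheets of $\Char(\cM)$ (resp.\ $\Char(\cN)$). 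For this I use that $\cM\boxtimes\cN$ stays summable (the exterior-tensor proposition) and that over the product of charts the covering $\Char(\cM)\times\Char(\cN)\to M\times N$ has sheets whose primitives are $\alpha_i(x)+\beta_j(y)$.

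Second, I record the $\cD$-module identity $\cE^{\alpha/\hbar}\boxtimes\cE^{\beta/\hbar}\cong\cE^{(\alpha\boxplus\beta)/\hbar}$ on $M\times N$, where $(\alpha\boxplus\beta)(x,y):=\alpha(x)+\beta(y)$. This is immediate from the flat-$\hbar$-connection picture: the left-hand side is $\cOseL_{M\times N\times S}$ equipped with the connection $\hbar\partial_{x_i}+\partial_{x_i}\alpha$, $\hbar\partial_{y_j}+\partial_{y_j}\beta$, which is exactly $\nabla^{\alpha\boxplus\beta}$, so the proposition identifying $\nabla^{\gamma}$-modules with $\cE^{\gamma/\hbar}$ applies. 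Feeding this into Lemma~\ref{lem: expsolutions} on both sides, the whole statement collapses to the purely sheaf-theoretic identity $S_{\alpha\boxplus\beta}\cong S_\alpha\overset{\star}{\boxtimes}S_\beta$ on $M\times N\times\bC_t$.

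Third, I would prove this identity by unwinding $\cE\overset{\star}{\boxtimes}\cF=p_1^{-1}\cE\star_\cO p_2^{-1}\cF$ together with the explicit model $S_\gamma=\bigl(\bigoplus_c\bC_{S^\vee-\gamma+c}\bigr)\star 1_S$: pulling back along the two projections and convolving, I use associativity and commutativity of $\star_\cO$, the unit identity $\cE\star_\cO 1_S\cong\cE$ (hence $1_S\star_\cO 1_S\cong 1_S$), and the elementary computation $\bC_{S^\vee+a}\star_{\noneq}\bC_{S^\vee+b}\cong\bC_{S^\vee+a+b}$ (the cone $S^\vee$ being stable under addition), exactly as in the example of Section~6, to arrive at $\bigl(\bigoplus_c\bC_{S^\vee-(\alpha\boxplus\beta)+c}\bigr)\star 1_S=S_{\alpha\boxplus\beta}$. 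Finally I reassemble the local isomorphisms along the stratification, inductively in the style of Lemma~\ref{Lemma: HomSQ}.

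I expect the only genuinely delicate point to be the bookkeeping in the first step: verifying that the dévissage is compatible on the nose with $\boxtimes$ and with restriction to strata — concretely, that one can choose a single analytic stratification of $M\times N$ adapted to $\Char(\cM)\times\Char(\cN)$ and refining the products of the given stratifications of $M$ and $N$, and that $p_i^{-1}$ intertwines the local presentations of $\Sol(\cM)$ and $\Sol(\cN)$ on strata with the local presentation of $\Sol(\cM\boxtimes\cN)$. The $\cD$-module identity and the convolution computation are routine and require no new idea beyond what is already in Sections~6, 8 and 9.
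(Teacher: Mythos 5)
Your proposal is correct and follows essentially the same route as the paper's (very terse) proof: check the statement directly for summable flat $\hbar$-connections via the explicit model $S_\alpha$ and Lemma~\ref{lem: expsolutions}, and then handle the general case by dévissage through the stratification using the pull-back compatibility of $\Sol$. You have merely spelled out the details (the identity $\cE^{\alpha/\hbar}\boxtimes\cE^{\beta/\hbar}\cong\cE^{(\alpha\boxplus\beta)/\hbar}$ and the convolution computation $\bC_{S^\vee+a}\star\bC_{S^\vee+b}\cong\bC_{S^\vee+a+b}$) that the paper compresses into ``obvious for summable flat connections.''
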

\begin{proof}
This is obvious for summable flat connections. Using the pull-back formulas, we can inductively prove the general case.
\end{proof}

\begin{proposition}[Tensor product]
For $\cM, \cN\in D^b_{\mathrm{sum}}(\cDseL_{M\times S})$, we have
\begin{equation}
    \Sol(\cM\otimes \cN)\cong \Sol(\cM)\overset{\star}{\otimes} \Sol(\cN).
\end{equation}
\end{proposition}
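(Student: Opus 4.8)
The plan is to reduce the statement to the two exterior-tensor-product propositions (and the pull-back formula) already established, by using the very definition of the internal tensor product: $\cM\overset{\cO}{\otimes}\cN={}_\cD\delta^*(\cM\boxtimes\cN)$, where $\delta\colon M\hookrightarrow M\times M$ is the diagonal embedding. First I would note that $\cM\boxtimes\cN$ is summable (the first Exterior tensor product proposition); it is also holonomic, since its characteristic variety is the product $\Char(\cM)\times\Char(\cN)$, which is Lagrangian in $T^*(M\times M)$. Restricting along the closed subvariety $\delta$ and invoking the stability of summability under restriction to a subvariety, it follows that $\cM\overset{\cO}{\otimes}\cN$ is again summable, so all objects appearing in the claim lie in the relevant summable categories and in $\SQ_S$.

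Then I would string together the available isomorphisms:
\begin{equation}
\Sol\bigl(\cM\overset{\cO}{\otimes}\cN\bigr)=\Sol\bigl({}_\cD\delta^*(\cM\boxtimes\cN)\bigr)\cong \delta^{-1}\Sol(\cM\boxtimes\cN)\cong \delta^{-1}\bigl(\Sol(\cM)\overset{\star}{\boxtimes}\Sol(\cN)\bigr).
\end{equation}
Here the first isomorphism is the Pull-back proposition for the closed embedding $\delta$ (using that $\cM\boxtimes\cN$ is summable), and the second is the second Exterior tensor product proposition. Writing $\mathrm{pr}_1,\mathrm{pr}_2\colon M\times M\to M$ for the projections, the right-hand side is $\delta^{-1}\bigl(\mathrm{pr}_1^{-1}\Sol(\cM)\star_\cO\mathrm{pr}_2^{-1}\Sol(\cN)\bigr)$. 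Since $\delta$ is the identity on the $\bC_t$-factor and $\mathrm{pr}_i\circ\delta=\id_M$, and since $\star_\cO$ is built from $m_!^{\Delta_a}$ and $\otimes_{\cOdeL_S}$ of pullbacks along the $\bC_t$-projections — all of which commute with $\delta^{-1}$ via the appropriate Cartesian squares (proper base change along the $\bC_t$-directions, where $\delta$ restricts to an isomorphism, together with $\delta^{-1}\mathrm{pr}_i^{-1}=\id$) — this last term is canonically $\Sol(\cM)\star_\cO\Sol(\cN)$, i.e. $\Sol(\cM)\overset{\star}{\otimes}\Sol(\cN)$, which finishes the proof.

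The only point requiring genuine (though routine) verification is this last base-change identity: that $\delta^{-1}$ commutes with the equivariant convolution $\star_\cO$ on $M\times M$ and collapses it to $\star_\cO$ on $M$. If one prefers to avoid even this, or wants to be careful about the naturality of the composed isomorphisms, an alternative is a dévissage directly on the defining stratifications of $\cM$ and $\cN$: tensoring by a flat $\hbar$-connection is exact, so one reduces to the case where $\cM$ and $\cN$ are summable flat $\hbar$-connections, hence locally finite direct sums of modules $\cE^{\alpha_i/\hbar}$ and $\cE^{\beta_j/\hbar}$; one then uses the elementary isomorphisms $\cE^{\alpha/\hbar}\overset{\cO}{\otimes}\cE^{\beta/\hbar}\cong\cE^{(\alpha+\beta)/\hbar}$ and $S_\alpha\star_\cO S_\beta\cong S_{\alpha+\beta}$ together with Lemma~\ref{lem: expsolutions} ($\Sol(\cE^{\alpha/\hbar})\cong S_\alpha$), patching over the stratification as in the proof of Proposition~\ref{prop: stratification}. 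Either route makes the main obstacle a bookkeeping exercise rather than a conceptual one.
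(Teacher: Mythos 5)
Your primary argument is exactly the paper's: pull back the exterior tensor product formula along the diagonal $\delta\colon M\to M\times M$ using the Pull-back proposition. You spell out the final base-change identification $\delta^{-1}(\Sol(\cM)\overset{\star}{\boxtimes}\Sol(\cN))\cong\Sol(\cM)\star_\cO\Sol(\cN)$, which the paper leaves implicit, and you also offer a d\'evissage alternative, but the route is the same.
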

\begin{proof}
Consider the diagonal embedding $\Delta\colon M\rightarrow M\times M$ and apply the above pull-back formula to the previous lemma, we get the desired result. 
\end{proof}

For an object $\cM\in D^b(\cDseL_{M\times S})$, we set
\begin{equation}
    \cM(*D):=\cM\otimes \cO(*D).
\end{equation}
The following is a direct corollary.
\begin{corollary}\label{Lem: Tensor-restriction}
\begin{equation}
    \Sol(\cM(*D))\cong \Sol(\cM)\otimes \bC_{M\bs D}.
\end{equation}
\end{corollary}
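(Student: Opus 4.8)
The plan is to deduce the corollary from the Tensor product Proposition above, a single computation of the solution functor on $\cO(*D)$, and a formal convolution identity. First, $\cO(*D)=\cOseL_{M\times S}(*D)$ is a summable flat $\hbar$-connection (it equals $\cOseL_{M\times S}$ away from $D$, its characteristic variety is the zero section there, and $\cO(*D)\cong(\cO(*D))(*D)$), so it lies in $D^b_{\mathrm{sum}}(\cDseL_{M\times S})$ and, since $\cM(*D)=\cM\otimes\cO(*D)$, the Tensor product Proposition gives
\begin{equation}
    \Sol(\cM(*D))\cong\Sol(\cM)\star_\cO\Sol(\cO(*D)).
\end{equation}
Thus the statement reduces to identifying $\Sol(\cO(*D))$ and simplifying the right-hand side.

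Next I would prove $\Sol(\cO(*D))\cong 1_S\otimes\bC_{M\bs D}$, the extension by zero to $M\times\bC_t$ of the restriction of $\Sol(\cOseL_{M\times S})\cong 1_S$ to $(M\bs D)\times\bC_t$; this is the $\hbar$-incarnation of the classical identity $\mathrm{Sol}(\cO_X(*D))\cong\bC_{X\bs D}$. Over $M\bs D$ one has $\cO(*D)\cong\cOseL_{(M\bs D)\times S}$, whose solution sheaf is $1_S$ by the $0$-dimensional computation $\Sol(\cOseL_S)\cong 1_S$, so the content is the vanishing of $\Sol(\cO(*D))$ along $D\times\bC_t$. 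For this one uses that $\cOseL_{M\times S}$ contains $\hbar^{-1}$: near a point of $D$ with reduced local equation $f$, the module $\cO(*D)=\cO[f^{-1}]$ is generated over $\cDseL_{M\times S}$ by $f^{-1}$, and the relations $f\hbar\partial_i+\hbar(\partial_if)$ force a local $H^0$-solution $\psi$ to satisfy $\partial_i(f\psi)=0$ for all $i$, hence $f\psi$ is locally constant along $M$ and $\psi=c/f$ for some $c\in\cR$; since $\cR$ has no poles, $c=0$ near $D$, so $H^0\Sol(\cO(*D))$ vanishes along $D$, while the vanishing of the higher cohomology there follows from the holomorphic de Rham/primitive argument of the proof of Lemma~\ref{lem: expsolutions} (the primitive removes the apparent pole). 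Alternatively one may first reduce $D$ to a normal crossing divisor, write $\cO(*D)$ locally as an exterior product of the one-variable models $\cO_\bC(*0)\cong\cDseL_{\bC\times S}/\cDseL_{\bC\times S}(x\hbar\partial+\hbar)$ with trivial connections, and apply the Exterior tensor product Proposition together with $1_S\star_\cO 1_S\cong 1_S$.

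Finally, since $\bC_{M\bs D}$ is pulled back from $M$ along the projection $M\times\bC_t\to M$ while $\star_\cO$ convolves only the $\bC_t$-factor, the projection formula for the operations defining $\star_\cO$ lets $\bC_{M\bs D}$ factor out:
\begin{equation}
    \Sol(\cM)\star_\cO\bigl(1_S\otimes\bC_{M\bs D}\bigr)\cong\bigl(\Sol(\cM)\star_\cO 1_S\bigr)\otimes\bC_{M\bs D}\cong\Sol(\cM)\otimes\bC_{M\bs D},
\end{equation}
the last step being the unit property $\cE\star_\cO 1_S\cong\cE$. Combining the displayed isomorphisms with $\Sol(\cO(*D))\cong 1_S\otimes\bC_{M\bs D}$ proves the corollary. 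The step I expect to be the real obstacle is the identification of $\Sol(\cO(*D))$, specifically the vanishing of its higher cohomology along $D$ and at the singular locus of $D$: unlike the rest of the argument this is not formal and rests on the analytic comparison of the meromorphic $\hbar$-de Rham complex with the derived direct image from $M\bs D$ — precisely the place where the analytic (rather than formal) nature of the theory enters.
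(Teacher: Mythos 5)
Your proof is correct and takes the approach the paper implicitly intends: the corollary is stated without argument as a ``direct corollary'' of the Tensor-product Proposition, and your proof supplies precisely the details being suppressed --- applying that proposition with $\cN=\cO(*D)$, identifying $\Sol(\cO(*D))\cong 1_S\otimes\bC_{M\bs D}$ (including the non-formal vanishing along $D$, for which your $f\psi$-constancy argument for $H^0$ and the rapid-decay primitive argument of Lemma~\ref{lem: expsolutions} for the higher cohomology are exactly what is needed, using that $\hbar^{-1}\in\cOseL$ so that $f^{-1}$ generates $\cO(*D)$), and then pulling $\bC_{M\bs D}$ out of $\star_\cO$ via the projection formula in the $\bC_t$-direction together with the unit property $\cE\star_\cO 1_S\cong\cE$.
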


\subsection{Solution sheaf of the summable connections}
Let $\cM$ be a $S$-summable flat $\hbar$-connection defined on $S$.

\begin{proposition}
The image of $\cM$ under the solution functor is a sheaf quantization.
\end{proposition}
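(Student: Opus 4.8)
The object $\Sol^\hbar_S(\cM)$ lies in $\Sh^\bC_S(M\times\bC_t)$ by construction, so it suffices to check the two defining conditions of a sheaf quantization: that $\musupp(\Sol^\hbar_S(\cM))$ is an algebraic Lagrangian, and that the stalk $\Sol^\hbar_S(\cM)_x$ lies in $\SQ_S(T^*\{*\})$ for every $x\in M$ (compatibility is then automatic). By the definition of a summable flat $\hbar$-connection fix the divisor $D$ with $\cM\cong\cM(*D)$, with $\Char(\cM)\cap T^*(M\setminus D)\to M$ an unramified covering, and with $\cM|_{M\setminus D}$ summable. By the equivalence of local summability and local semisimplicity, every point of $M\setminus D$ has a neighbourhood $U$ over which $\Char(\cM)$ splits into sheets with primitives $\alpha_1,\dots,\alpha_n$ and multiplicities $n_i$ (with $\sum_i n_i=m=\operatorname{rk}\cM$) such that $\cM\otimes_{\cOceL_{M\times S}}\cOseL_{M\times S}\cong\bigoplus_i(\cE^{\alpha_i/\hbar})^{\oplus n_i}$ on $U$. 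Applying Lemma~\ref{lem: expsolutions} termwise, $\Sol^\hbar_S(\cM)|_U\cong\bigoplus_i S_{\alpha_i}^{\oplus n_i}$.

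For the stalk condition, first take $x\in M\setminus D$. From the local presentation, $\Sol^\hbar_S(\cM)_x\cong\bigoplus_i (S_{\alpha_i})_x^{\oplus n_i}$, and $(S_\alpha)_x=\bigl(\bigoplus_{c}\bC_{S^\vee-\alpha(x)+c}\bigr)\star 1_S$; reindexing the summands by $c\mapsto c-\alpha(x)$ identifies this with the unit $\bigl(\bigoplus_{c}\bC_{S^\vee+c}\bigr)\star 1_S\cong 1_S$. Hence $\Sol^\hbar_S(\cM)_x\cong 1_S^{\oplus m}\in\SQ_S(T^*\{*\})$. For $x\in D$, the isomorphism $\cM\cong\cM(*D)$ together with Corollary~\ref{Lem: Tensor-restriction} gives $\Sol^\hbar_S(\cM)\cong\Sol^\hbar_S(\cM)\otimes\bC_{M\setminus D}$, so $\Sol^\hbar_S(\cM)_x=0\in\SQ_S(T^*\{*\})$. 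This settles condition (2).

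For the microsupport, write $j\colon(M\setminus D)\times\bC_t\hookrightarrow M\times\bC_t$; the identity above reads $\Sol^\hbar_S(\cM)\cong j_!\bigl(\Sol^\hbar_S(\cM)|_{(M\setminus D)\times\bC_t}\bigr)$. Over $M\setminus D$, the computation $\musupp(S_\alpha)=\Graph(d\alpha)$ yields $\musupp(\Sol^\hbar_S(\cM))\cap T^*(M\setminus D)=\bigcup_i\Graph(d\alpha_i)=\Char(\cM)\cap T^*(M\setminus D)$, a smooth unramified covering of $M\setminus D$ which by hypothesis is (the restriction of) an algebraic Lagrangian. Over $D$ the microsupport is produced by the open extension $j_!$: the boundary analysis behind the computation of $\SS(S_\alpha)$ shows the new microsupport directions sit over $D$ with $T^*M$-component conormal to $D$ and $\bC_\tau$-component sweeping out $h(\Cone(S))$; since $\rho(x,\xi,t,\hbar)=(x,\xi/\hbar)$ and $\hbar$ ranges over the full open cone $\Int(h(\Cone(S)))$, their $\rho$-image over $D$ is exactly the union of conormals to the strata of $D$, which is precisely the part of $\Char(\cM)$ lying over $D$ for the localized module $\cM(*D)$. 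Thus $\musupp(\Sol^\hbar_S(\cM))=\Char(\cM)$, an algebraic Lagrangian, and condition (1) holds.

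The main obstacle is this last step: identifying $\SS$, hence $\musupp$, of the $j_!$-extension over the pole divisor $D$ and matching it with $\Char(\cM)$. Away from $D$ everything reduces to the termwise statement for $\cE^{\alpha/\hbar}$, but over $D$ one must run the boundary version of the $\SS(S_\alpha)$ computation; when $D$ is not smooth this is cleanest after a local modification bringing $D$ to normal crossing form and passing to the oriented real blow-up, where the ``locally semisimple at $x\in D$'' normal form with moderate-growth coefficients is available, and then pushing the local solution sheaves back down. The remaining points — that the termwise identifications glue over $U$, that $\sum_i n_i=m$, and that a finite direct sum of copies of $1_S$ and the zero object lie in the triangulated subcategory $\SQ_S(T^*\{*\})$ — are routine.
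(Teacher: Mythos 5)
Your proof and the paper's proof rest on the same core facts: after $\otimes\,\cOseL_{M\times S}$ the summable flat connection locally splits as $\bigoplus_i(\cE^{\alpha_i/\hbar})^{\oplus n_i}$, Lemma~\ref{lem: expsolutions} gives $\Sol^\hbar_S(\cE^{\alpha/\hbar})\cong S_\alpha$, and $\SQ_S(T^*M)$ is triangulated. The difference is explicitness. The paper's proof is two sentences — ``stable under cones, reduce to the locally summable case, apply Lemma~\ref{lem: expsolutions}'' — and leaves both the stalk check and the behaviour over the pole divisor $D$ implicit. You check the stalk condition in detail (your reindexing giving $(S_\alpha)_x\cong 1_S$ is correct, and $0\in\SQ_S(T^*\{*\})$ because it is a triangulated subcategory) and then tackle $\musupp$ over $D$ via the $j_!$-description coming from $\cM\cong\cM(*D)$. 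The gap you flag at the end — pinning down $\SS$ of the $j_!$-extension over $D$ and matching it with $\Char(\cM)$ — is real, and the paper's one-line proof does not address it either; your outline (pass to the oriented real blow-up along a normal crossing model of $D$, use the moderate-growth local normal form built into the definition of local semisimplicity at points of $D$) is exactly the right way to close it, and is the same machinery the paper deploys with $\cO^{\mathrm{de},\Lambda_0,\mathrm{mod}D}_{M\times S}$ in the fully-faithfulness argument. One simplification worth noting: condition~(1) of the definition of sheaf quantization only requires $\musupp$ to \emph{be} an algebraic Lagrangian, not to equal $\Char(\cM)$, so an inclusion $\musupp(\Sol^\hbar_S(\cM))\subset\Char(\cM)$ together with the observation that each irreducible component is then automatically Lagrangian would suffice and lets you avoid the exact identification attempted in your last paragraph.
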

\begin{proof}
Since sheaf quantization is stable under forming cones, it is enough to check for locally summable flat $\hbar$-connections. But, this is the case we have seen in Lemma~\ref{lem: expsolutions}.
\end{proof}

Although, we do not use the following theorem in this paper, it is useful to note that
\begin{conjecture}
For any $\cDceL$-module, we have
\begin{equation}
    \Char(\cM)=\mu supp(\Sol(\cM)).
\end{equation}
\end{conjecture}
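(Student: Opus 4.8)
The plan is to establish the two inclusions $\musupp(\Sol^\hbar_S(\cM))\subseteq\Char(\cM)$ and $\Char(\cM)\subseteq\musupp(\Sol^\hbar_S(\cM))$, in each case first reducing to the case of a summable flat $\hbar$-connection. Since $\Sol^\hbar_S$ is triangulated, since both $\Char(-)$ and $\musupp(\Sol^\hbar_S(-))$ are subadditive on distinguished triangles and computed from the cohomology objects, and since the constructions are local on $M$, the stratifications furnished by Definition~\ref{def: WKB-regularD-mod} on the $\cD$-module side and by Proposition~\ref{prop: stratification} on the sheaf side allow us by d\'evissage to replace $\cM$ by a summable flat $\hbar$-connection with $\cM\cong\cM(*D)$ whose characteristic variety is an unramified covering of $M\setminus D$ (so that, for the reduced object, the turning locus is absorbed into $D$). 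Using Corollary~\ref{Lem: MD} and Corollary~\ref{Lem: Tensor-restriction} together with the standard behaviour of $\Char$ under localization, we may then treat $\cM|_{M\setminus D}$ and the contribution over $D$ separately.

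On the Zariski-open set $M^\circ:=M\setminus D$, local summability (equivalently, by Proposition before, local semisimplicity) gives in a neighbourhood $U$ of any point an isomorphism $\cM\otimes_{\cOceL_{M\times S}}\cOseL_{M\times S}|_U\cong\bigoplus_i(\cE^{\alpha_i/\hbar})^{\oplus n_i}$. By Lemma~\ref{lem: expsolutions} this yields $\Sol^\hbar_S(\cM)|_U\cong\bigoplus_i S_{\alpha_i}^{\oplus n_i}$, while $\Char(\cE^{\alpha_i/\hbar})=\Graph(d\alpha_i)$ and, by the explicit microsupport computation for $S_{\alpha_i}$ recorded in the Example following the definition of sheaf quantization, also $\musupp(S_{\alpha_i})=\Graph(d\alpha_i)$. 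Hence $\Char(\cM)\cap T^*M^\circ=\bigcup_i\Graph(d\alpha_i)=\musupp(\Sol^\hbar_S(\cM))\cap T^*M^\circ$, and this equality is intrinsic, independent of the chosen local decomposition.

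It remains to propagate the equality across $D$. Both sides are algebraic Lagrangians: $\Char(\cM)$ because $\cM$ is holonomic and WKB-regular, and $\musupp(\Sol^\hbar_S(\cM))$ because $\Sol^\hbar_S(\cM)$ is a sheaf quantization. By the property of algebraic Lagrangians proved in the Appendix, each has only finitely many irreducible components; a component meeting $T^*M^\circ$ is the closure of its trace there, so such components agree on the two sides by the previous paragraph. The components lying entirely over $D$ arise, on the $\cD$-module side, from the identification $\cM\cong\cM(*D)$ (compare Corollary~\ref{Lem: MD}) and, on the sheaf side, from $\Sol^\hbar_S(\cM)\cong\Sol^\hbar_S(\cM)\otimes\bC_{M\setminus D}$ of Corollary~\ref{Lem: Tensor-restriction}; one must check that $\rho$ carries the conic microsupport picked up from $\bC_{M\setminus D}$ onto exactly the conormal directions to $D$ that the localization forces into $\Char(\cM(*D))$. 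Near the (now $D$-absorbed) turning points the corresponding verification is governed by the local product presentation $\cE|_U\cong(\bigoplus_c T_c\cE_0)\star\cE_1$ of Proposition~\ref{prop: stratification}, reducing it to a one-variable check.

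The main obstacle is precisely this last step: controlling $\musupp(\Sol^\hbar_S(\cM))$ along the pole/singular divisor and over the turning locus — exactly where exact WKB breaks down and where the statement is flagged as open. The conceptually clean route would be an $\hbar$-analogue of the Kashiwara--Schapira estimate, namely $\SS(\Sol^\hbar_S(\cM))$ contained in a set manufactured functorially from $\Char(\cM)$ and uniform near the bad locus, which would give ``$\subseteq$'', combined with conservativity of the equivalence $\Sol^\hbar_S$ of the main theorem, which would give ``$\supseteq$''. However, as noted in the text we do not dispose of a microlocalization of $\cDseL_{M\times S}$, and the base ring $\cOdeL_S$ is non-Noetherian; making such a microlocal estimate rigorous at this level of generality is the genuine difficulty, which is why the assertion is recorded here only as a conjecture.
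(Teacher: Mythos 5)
The statement you are asked to prove is labeled a \emph{conjecture} in the paper, and the paper offers no proof; the only accompanying remark is the one sentence ``It is easy to check this for summable flat $\hbar$-connections.'' So there is no proof to compare against, and your proposal should be read as a reconstruction of what the ``easy check'' consists of together with a diagnosis of the gap that makes the general statement conjectural. On both counts your analysis is sound and matches the paper's posture.

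Your treatment of the generic locus is exactly the ``easy check'': local summability yields a decomposition into $\cE^{\alpha_i/\hbar}$'s, $\Sol^\hbar_S$ sends each to $S_{\alpha_i}$ by Lemma~\ref{lem: expsolutions}, and the explicit microsupport computation for $S_\alpha$ in the example following the definition of sheaf quantization gives $\musupp(S_\alpha)=\Graph(d\alpha)=\Char(\cE^{\alpha/\hbar})$; intersecting with $T^*(M\setminus D)$ and taking closures then gives the equality of components meeting the generic locus. Your final paragraph correctly identifies the two missing ingredients for a genuine proof: a Kashiwara--Schapira--type microlocal estimate for $\Sol^\hbar_S$ near $D$ and the turning locus (unavailable because, as the paper itself remarks, $\cDseL$ has not been microlocalized and $\cOdeL_S$ is non-Noetherian), and conservativity from the main equivalence for the reverse inclusion (available only in the summable regime). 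Flagging this honestly, rather than hand-waving past it, is the right call.

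One gap you understate: the conjecture is phrased for \emph{any} $\cDceL$-module, whereas your d\'evissage appeals to the stratifications of Definition~\ref{def: WKB-regularD-mod} and Proposition~\ref{prop: stratification}, which are available only for summable (hence in particular holonomic, WKB-regular) modules. For a non-summable or non-holonomic $\cDceL$-module there is no reduction to flat $\hbar$-connections at all, $\Sol^\hbar_S(\cM)$ need not be a sheaf quantization, and $\musupp$ may fail to be Lagrangian; so even the starting move of your argument is unavailable at the stated level of generality. This is an additional reason the statement is recorded as a conjecture rather than a consequence of the main theorem.
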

It is easy to check this for summable flat $\hbar$-connections.

\section{The proof of the main theorem}
\subsection{The main theorem}
We would like to state our main theorem in this paper.

We have defined $D^b_{\mathrm{sum}}(\cDdeL_{M\times S})$ and $\Sol^\hbar_{S}$. We denote the induced functor by the same notation:
\begin{equation}
    \Sol^\hbar_{S}\colon D^b_{\mathrm{sum}}(\cDdeL_{M\times S})\rightarrow \SQ_S(T^*M).
\end{equation}

\begin{theorem}
The above functor $\Sol^\hbar_{S}$ induces an exact equivalence over $\cOdeL_S$.
\end{theorem}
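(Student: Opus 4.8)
The plan is to prove that $\Sol^\hbar_S$ is an equivalence by exhibiting a quasi-inverse built from the de Rham functor $\DR_S$ together with a duality, and to verify fully-faithfulness and essential surjectivity separately using the stratification results already established. Concretely, I would proceed in the following steps. First, reduce everything to the local model. By Proposition~\ref{prop: stratification} and Definition~\ref{def: WKB-regularD-mod}, both sides of the correspondence are generated, after choosing a common analytic stratification, by summable flat $\hbar$-connections which are locally of the form $\bigoplus_i \cE^{\alpha_i/\hbar}\otimes\cR_i$ on strata; and by Lemma~\ref{lem: expsolutions}, $\Sol^\hbar_S(\cE^{\alpha/\hbar})\cong S_\alpha$, which is the local building block of sheaf quantizations in the sense of the Example following the definition of $\SQ_S(T^*M)$. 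So the strategy is a dévissage along a stratification, with the $0$-dimensional case ($\SQ_S(T^*\{*\})\cong \mathrm{Perf}(\cOdeL_S)$, Corollary~\ref{cor: overapoint}, matched against rank-$1$ connections valued in $\cOseL_S$ as in the $0$-dimensional example of Section~9) as the base of the induction.

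\textbf{Fully faithfulness.} The core computation is that $\Sol^\hbar_S$ induces an isomorphism on $\Hom$-complexes. Using Corollary~\ref{cor: derhamduality} ($\bD\circ\DR_S\cong\Sol^\hbar_S$) and the duality interpretations on both sides (the criterion via $\cB_\Delta$ and $\cM\boxtimes\cM'$ on the $\cD^\hbar$-side, and its mirror via $\bC_\Delta$, $\omega_\Delta$, $\overset{\star_\cO}{\boxtimes}$ on the sheaf-quantization side), together with the tensor/pullback compatibilities of Section~13 ($\Sol(\cM\boxtimes\cN)\cong\Sol(\cM)\overset{\star}{\boxtimes}\Sol(\cN)$, $\Sol(\cM\otimes\cN)\cong\Sol(\cM)\overset{\star}{\otimes}\Sol(\cN)$, $\Sol({}_\cD f^*\cM)\cong f^{-1}\Sol(\cM)$), I would reduce $\Hom_{\cDdeL}(\cM,\cN)$ to the global sections of $\Sol^\hbar_S(\cHom_{\cD}(\cM,\cN))$-type object, i.e. reduce the $\Hom$ computation to computing $\Sol^\hbar_S$ of a summable flat connection and then taking hom out of $1_S$ via Lemma~\ref{lem: Lemma 7.11} and the $\cHom^{\star_\cO}(1_S,-)$ adjunction. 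On each stratum this becomes, by the local model, a direct sum of computations of $\Hom$ between $S_{\alpha_i}$'s, which by Example~\ref{ex: dualityexample} and Proposition~\ref{prop: endocalculation} is $\cOdeL_S$-linearly identified with $\Hom$ between the corresponding $\cE^{\alpha_i/\hbar}$'s. The gluing of these local isomorphisms into a global one is a Mayer--Vietoris/descent argument along the stratification, using that $\Sol^\hbar_S$ commutes with the relevant restriction and pushforward functors (the operations in Section~8 and Section~13).

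\textbf{Essential surjectivity and the main obstacle.} Given $\cE\in\SQ_S(T^*M)$, Proposition~\ref{prop: stratification} gives a stratification with $\cE|_U\cong(\bigoplus_c T_c\cE_0)\star\cE_1$, and $\musupp(\cE)$ is an algebraic Lagrangian; on the open stratum this is a ramified (after refinement, unramified) covering, on which $\cE$ is a direct sum of $S_{\alpha_i}\star\cE_1^{(i)}$'s, and the corresponding $\cD^\hbar$-module is $\bigoplus_i\cE^{\alpha_i/\hbar}\otimes(\text{lattice from }\cE_1^{(i)})$, which is summable by Corollary~\ref{cor: rank1summability} and WKB-regular by construction. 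The serious point — and I expect this to be \emph{the main obstacle} — is the inductive extension across strata: one must show that a sheaf quantization which on a Zariski-open is $\Sol^\hbar_S$ of a summable WKB-regular $\cDdeL$-module extends, together with that identification, over the lower-dimensional strata, i.e. that the gluing data (the ``Stokes data'' encoded in the equivariant sheaf near the non-conic part of $\musupp$) is exactly matched by gluing data for summable holonomic $\cD^\hbar$-modules. This is where WKB-regularity (Definition~\ref{def: WKB-regularD-mod}) is essential: it is precisely the condition making the geometric datum $\Char(\cM)$ consistent with the formal/Stokes datum, so that the local pieces $\cE^{\alpha_i/\hbar}\otimes\cR_i$ dictated by the Lagrangian glue. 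I would handle this by the D'Agnolo--Kashiwara-style argument: reconstruct $\cM$ as the (derived) solution of a Riemann--Hilbert-type problem, using that $\Sol^\hbar_S$ is already known fully faithful to transport the gluing isomorphisms, and invoking the localization results of Sections~10 and~13 ($\cM\cong\cM(*D)$, $\Sol(\cM(*D))\cong\Sol(\cM)\otimes\bC_{M\setminus D}$, Corollary~\ref{Lem: Tensor-restriction}) to control behavior along the pole divisor. Finally, the $\cOdeL_S$-linearity of the equivalence is built in: $\Sol^\hbar_S$ is $\cOdeL_S$-linear by construction (both $\cR_{M\times S\times\bC_t}$ carries the $\cOdeL_S$-action and the target is the $\cOdeL_S$-version of $\Sh^\bC_S$), and compatibility of the two $\Lambda_0$-actions (Definition of compatibility) is guaranteed since sheaf quantizations are compatible by condition~(2) of their definition.
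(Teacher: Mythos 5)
Your overall skeleton matches the paper's: reduce to summable flat $\hbar$-connections by a dévissage along strata and localization, prove fully faithfulness for those, and then argue essential surjectivity. You also correctly flag the $\cOdeL_S$-linearity as built into the definitions. But there are two places where the proposal either diverges substantially or has a genuine gap.

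\textbf{Fully faithfulness: different route.} You propose to reduce $\Hom_{\cDdeL}(\cM,\cN)$ via the duality interpretation ($\bD\circ\DR_S\cong\Sol^\hbar_S$, the $\cB_\Delta/\omega_\Delta$ criterion, and the $\Sol(\cM\boxtimes\cN)\cong\Sol\cM\overset{\star}{\boxtimes}\Sol\cN$ compatibilities), landing on $\cHom^{\star_\cO}(1_S,-)$. The paper instead proves fully faithfulness by a direct local computation: after a blow-up of the pole divisor $D$ to normal crossings and then the oriented real blow-up, summability lets one write $\cM,\cN$ locally as sums of $\cE^{\alpha/\hbar}$'s, and then both $\cHom_{\cD}(\cE^{\alpha/\hbar},\cE^{\beta/\hbar})$ and $\cHom^{\Lambda_0}(\cS_\beta,\cS_\alpha)$ are computed explicitly as the cone of $\hbar d-d(\beta-\alpha)$ acting on the moderate-growth sheaf $\cO^{\mathrm{de},\Lambda_0,\mathrm{mod}D}$, using the crucial observation that $S_\alpha|_V\cong\Cone(\cR^{\mathrm{mod}D}\xrightarrow{\hbar d-d\alpha}\cR^{\mathrm{mod}D})$. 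Your formal route is plausible but would still need a comparison of the $\Hom$ objects with moderate-growth conditions along $D$; the paper's explicit cone computation is where those conditions actually get matched, and you should not expect the abstract duality formalism alone to supply them.

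\textbf{Essential surjectivity: genuine gap.} You correctly identify that the hard part is extending the local identification on the open stratum across lower-dimensional strata/poles, but your proposed fix — ``reconstruct $\cM$ as the solution of a Riemann--Hilbert-type problem, using fully faithfulness to transport the gluing isomorphisms'' — is not an argument. Fully faithfulness transports \emph{morphisms} between objects already known to be in the essential image; it does not by itself manufacture a $\cD^\hbar$-module realizing a given sheaf quantization. The paper actually constructs a quasi-inverse: set $\Psi(\cE):=\cHom^{\Lambda_0}(\cE,\cR_{M\times S\times\bC_t})$, check $\cE\cong\Psi(\Sol^\hbar_S\cE)\otimes\cOdeL$ for flat connections, reduce to \emph{elementary} sheaf quantizations via the stratification proposition, and then — this is the key step you are missing — for an elementary $\cS$ supported on $N$, desingularize $\overline{N}$ with exceptional divisor $E$, observe that $\Psi(\cS)$ is free of rank $n$ over $\cOseL_{\widetilde{N}\bs E}$, and show that the submodule $\cOseL_{\widetilde{N}}(*E)^{\oplus n}$ is stable under $\cDseL_{\widetilde{N}\times S}$ because the connection matrices $\hbar\partial-\alpha_i$ have $\alpha_i$ meromorphic with poles along $E$. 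Without a construction of this kind, your essential surjectivity argument does not close. Similarly, in the reduction step, the paper's induction is not just ``restrict to strata'': it takes $\cC:=\Cone(\cN\to\cN(*D))$, resolves singularities of $D$, compares $\cC$ with $r_{D*}r_D^*\cC(*\widetilde D_1)$, and inducts on the dimension of $\supp\cC_i$ — a more delicate finite descent than a generic Mayer--Vietoris.

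In short: the skeleton is right and the $\cOdeL_S$-linearity remark is fine, but you need (a) a mechanism to match moderate-growth conditions in the $\Hom$ computation near $D$ (the paper does this by the explicit cone description of $S_\alpha$), and (b) an actual construction of a quasi-inverse on elementary sheaf quantizations, for which the meromorphic-lattice-inside-$\Psi(\cS)$ argument is essential and absent from your proposal.
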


\begin{remark}
Using the Novikov sheaf $\frakL_0$, one can state our theorem as an equivalence between $\frakL_0$-modules.
\end{remark}

\subsection{Reduction to flat connections}
We assume that the fully faithfulness is verified for flat connections with unramified characteristic varieties, which will be proved in the next section.

Let $\cM, \cN$ summable be $\cDdeL_{M\times S}$-modules. There exists a divisor $D$ such that $\cM(*D)$ and $\cN(*D)_{M\times S}$ are both summable flat connections.

Note that
\begin{equation}
    \Hom(\cM, \cN(*D))\cong \Hom(\cM, \cHom(\cO(*D),\cN(*D)))\cong \Hom(\cM(*D), \cN(*D)).
\end{equation}
by Lemma~\ref{Lem: MD}.
By the hypothesis, we have
\begin{equation}
    \Hom(\cM(*D), \cN(*D))\cong \Hom(\Sol^\hbar_{S}(\cN(*D)), \Sol^\hbar_{S}(\cM(*D))).
\end{equation}
By Lemma~\ref{Lem: Tensor-restriction},
\begin{equation}
\begin{split}
\Hom(\Sol^\hbar_{S}(\cN(*D)), \Sol^\hbar_{S}(\cM(*D)))&\cong \Hom(\Sol^\hbar_{S}(\cN)\otimes \bC_{M\bs D}, \Sol^\hbar_{S}(\cM)\otimes \bC_{M\bs D})\\
&\cong 
    \Hom(\Sol^\hbar_{S}(\cN(*D)), \Sol^\hbar_{S}(\cM))
\end{split}
\end{equation}
Hence we get
\begin{equation}
   \Hom(\cM, \cN(*D))\cong \Hom(\Sol^\hbar_{S}(\cN(*D)), \Sol^\hbar_{S}(\cM)).
\end{equation}

So it is enough to prove
\begin{equation}
    \Hom(\cM, \cC)\cong \Hom(\Sol^\hbar_{S}(\cC), \Sol^\hbar_{S}(\cM)).
\end{equation}
where $\cC:=\Cone(\cN\rightarrow \cN(*D))$ to see the desired isomorphism.

If $D$ is singular, we take a resolution $r_D\colon \widetilde{D}\rightarrow D$. We set $\widetilde{D}_1:=\mathrm{Pole}({}_\cD r_D^*\cC)$. We again have
\begin{equation}
     \Hom({}_\cD r_D^*\cM, {}_\cD r_D^*\cC(*\widetilde{D}_1))\cong  \Hom( \Sol^\hbar_{S}(r_D^*\cC(*\widetilde{D}_1)), \Sol^\hbar_{S}(r_D^*\cM))
\end{equation}
by the hypothesis. There exists a canonical morphism $\cC\rightarrow r_{D_*}r_D^*\cC(*\widetilde{D}_1)$ whose cone $\cC_1$ is supported on $r_D(\widetilde{D}_1)$. We have an exact triangle
\begin{equation}
    \Hom(\cM, \cC)\rightarrow  \Hom(r_D^*\cM, r_D^*\cC(*\widetilde{D}_1))\rightarrow \Hom(\cM, \cC_1)\xrightarrow{[1]}.
\end{equation}
Correspondingly,
\begin{equation}
    \Hom(\Sol^\hbar_{S}(\cC),\Sol^\hbar_{S}(\cM))\rightarrow \Hom(\Sol^\hbar_{S}({}_\cD r_D^*\cC(*\widetilde{D}_1)),\Sol^\hbar_{S}({}_\cD r_D^*\cM))\rightarrow \Hom(\Sol^\hbar_{S}(\cC_1), \Sol^\hbar_{S}(\cM))\xrightarrow{[1]}
\end{equation}
Hence it is enough to show
\begin{equation}
\Hom(\cM, \cC_1)\cong \Hom(\Sol^\hbar_{S}(\cC_1), \Sol^\hbar_{S}(\cM)).
\end{equation}
We can complete the proof by continuing the induction on the dimension of the support of $\cC_i$.

\subsection{Fully faithfulness}
Now we will prove the fully faithfulness.
\begin{proposition}
For summable meromorphic flat $\hbar$-connections $\cM, \cN$ (possibly with shifts) with poles in $D$, the morphism
\begin{equation}
        \Hom_{\cDseL_{M\times S}}(\cM, \cN)\rightarrow \Hom(\Sol^\hbar_{S}(\cN), \Sol^\hbar_{S}(\cM)).
\end{equation}
induced by $\Sol^\hbar_{S}$ is an isomorphism.
\end{proposition}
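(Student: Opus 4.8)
The plan is to follow the dévissage strategy of \cite[\S\S 9.4--9.5]{DK}, reducing the statement to a purely local computation with the building blocks $\cE^{\alpha/\hbar}$. First observe that both sides of the morphism in question are global sections over $M$ of complexes of sheaves on $M$: on the left the Novikov de Rham complex $\cHom_{\cDseL_{M\times S}}(\cM,\cN)$, on the right the complex built from $\Sol^\hbar_{S}(\cM),\Sol^\hbar_{S}(\cN)$ by the Novikov $\cHom$ and $p_{M*}$; and $\Sol^\hbar_{S}$ commutes with restriction to open subsets of $M$ (by the pull-back proposition) and with cones (as $\SQ_S(T^*M)$ is closed under cones). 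Hence the assertion is local on $M$, and we may stratify. On $M\bs D$ the hypothesis that $\cM,\cN$ are summable flat $\hbar$-connections forces $\Char(\cM)\to M$ and $\Char(\cN)\to M$ to be unramified coverings, so there are no turning points there and local semisimplicity gives, near each point and after base change to $\cOseL_{M\times S}$, decompositions into direct sums of modules $\cE^{\alpha_i/\hbar}$. Near a point of $D$ one takes a local modification making $D$ normal crossing, passes to the oriented real blow-up, works with the $\mathrm{mod}\,D$ structure sheaves of Section 4, and invokes the decomposition $\bigoplus_i\cE^{\alpha_i/\hbar}\otimes\cR_i$ (with $\alpha_i$ meromorphic and $\cR_i$ of trivial characteristic) supplied by WKB-regularity, which follows from summability. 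Using compatibility of $\Sol^\hbar_{S}$ with direct sums, with the pull-backs occurring in WKB-regularity (closed embeddings, modifications, ramified covers) and--where needed--with the matching descriptions $\cM\cong\cM(*D)$ and $\Sol^\hbar_{S}(\cM)\otimes\bC_{M\bs D}\cong\Sol^\hbar_{S}(\cM)$ along such modifications, a Mayer--Vietoris/\v{C}ech induction over the stratification reduces the statement to the building-block case.

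Thus it suffices to prove that for $\alpha,\beta$ (holomorphic, or meromorphic in the blow-up picture) the canonical morphism
\begin{equation}
    \Hom_{\cDseL_{M\times S}}(\cE^{\alpha/\hbar},\cE^{\beta/\hbar})\longrightarrow\Hom\bigl(\Sol^\hbar_{S}(\cE^{\beta/\hbar}),\Sol^\hbar_{S}(\cE^{\alpha/\hbar})\bigr)=\Hom(S_\beta,S_\alpha)
\end{equation}
is an isomorphism in all cohomological degrees, the last equality being Lemma~\ref{lem: expsolutions}. Applying internal-hom on the source ($\bD\cE^{\alpha/\hbar}\cong\cE^{-\alpha/\hbar}$ by Example~\ref{ex: dualityexample}) and the $\star_\cO$-invertibility of $S_\alpha$ on the target (so that $\cHom^{\star_\cO}(S_\alpha,\cG)\cong S_{-\alpha}\star_\cO\cG$), this reduces further to showing that
\begin{equation}
    \Hom_{\cDseL_{M\times S}}(\cO_{M\times S},\cE^{\gamma/\hbar})\longrightarrow\Hom(S_\gamma,1_S),\qquad\gamma:=\beta-\alpha,
\end{equation}
is an isomorphism.

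The left-hand side of the last morphism is the Novikov de Rham complex of $\cE^{\gamma/\hbar}$; by the rank-one solving Theorem~\ref{Theorem: Linear} (and its $\mathrm{mod}\,D$ variant near the pole divisor) the module of flat sections is free of rank one over $\cOdeL_S$, generated by the formal solution $e^{-\gamma/\hbar}$, and by the rapid-decay version of the holomorphic Poincar\'e lemma used in the proof of Lemma~\ref{lem: expsolutions} the complex is concentrated in degree zero. The right-hand side is computed from the presentation of $1_S$ in Lemma~\ref{lem: Lemma 7.11}, the identification $S_\gamma\cong(\bigoplus_c\bC_{S^\vee-\gamma+c})\star 1_S$, the convolution computation in the example of Section 6, and $1_S\star_\cO 1_S\cong 1_S$; it yields the same free rank-one $\cOdeL_S$-module, and unwinding the definitions shows that the map induced by $\Sol^\hbar_{S}$ is the identity. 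For $\gamma=0$ this is the $0$-dimensional computation $\Sol^\hbar_{S}(\cO_S)\cong 1_S$ with $\End(1_S)\cong\cOdeL_S$ of Corollary~\ref{cor: overapoint}. Finally one checks that the comparison is $\cOdeL_S$-linear, which follows from the compatibility of the $\cD$-module and equivariant module structures (Module structures I and II).

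The main obstacle is the behavior at the pole divisor $D$. Off $D$ the correspondence is governed by the clean dictionary $\cE^{\alpha/\hbar}\leftrightarrow S_\alpha$ of Lemma~\ref{lem: expsolutions}; near $D$, after blowing up, one must verify that $\Sol^\hbar_{S}$ applied to the local model $\cE^{\alpha/\hbar}\otimes\cR_i$ with $\alpha$ \emph{meromorphic} produces the expected Stokes-filtered sheaf quantization near the exceptional and real-blow-up boundary loci, and that the Stokes gluing data on the two sides match--that is, that the Sabbah--Mochizuki--Kedlaya formal structure axiomatized through WKB-regularity is carried by $\Sol^\hbar_{S}$ to the microlocal normal form of the solution sheaf quantization. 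Since $\cOseL_S$ and $\cOdeL_S$ are not Noetherian, coherence is unavailable, and these local normal forms--as well as the de Rham vanishings above--must be established directly from rapid-decay estimates; this is where the bulk of the technical work lies.
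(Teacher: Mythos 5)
Your overall devissage is the same as the paper's: blow up $D$ to normal crossing, pass to the oriented real blow-up, invoke summability/WKB-regularity to locally split $\cM,\cN$ into direct sums of exponential modules $\cE^{\alpha/\hbar}$, and reduce to the single case $\Hom(\cE^{\alpha/\hbar},\cE^{\beta/\hbar})\to\Hom(S_\beta,S_\alpha)$. But your final step has a genuine gap, and it is exactly the step the paper is careful to avoid.

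You claim that after reducing to $\Hom(\cO_{M\times S},\cE^{\gamma/\hbar})$, the Novikov de Rham complex of $\cE^{\gamma/\hbar}$ ``is free of rank one over $\cOdeL_S$, generated by $e^{-\gamma/\hbar}$,'' concentrated in degree zero, by Theorem~\ref{Theorem: Linear} and a rapid-decay Poincar\'e lemma. This fails near $D$. There $\gamma=\beta-\alpha$ is meromorphic with poles on $D$, and on the real blow-up the left side must be computed in the moderate growth sheaf $\cO^{\mathrm{de},\Lambda_0,\mathrm{mod}D}_{M\times S}$. Whether $e^{-\gamma/\hbar}$ has moderate growth on a given boundary sector depends on the sign of $\Re(\gamma/\hbar)$ near $D$; on some sectors it is not a section, and the rank of the moderate-growth flat sections genuinely varies. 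This is precisely the Stokes phenomenon: the moderate-growth de Rham complex of an exponential module along the real blow-up of the pole divisor is \emph{not} locally constant of rank one, and Theorem~\ref{Theorem: Linear} (which is formulated away from poles, with $A$ valued in $\cOceL$) does not give what you need there. The rapid-decay Poincar\'e argument in Lemma~\ref{lem: expsolutions} is also only carried out for globally holomorphic $\alpha$, not for the $\mathrm{mod}\,D$ situation.

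The paper's proof sidesteps this entirely: it never computes the cohomology of the de Rham complex. Instead it shows that \emph{both} sides are isomorphic to the \emph{same} two-term cone,
\begin{equation}
\Cone\bigl(\cO^{\mathrm{de},\Lambda_0,\mathrm{mod}D}_{M\times S}\xrightarrow{\hbar d-d(\beta-\alpha)}\Omega^1_M\otimes_{\cO_M}\cO^{\mathrm{de},\Lambda_0,\mathrm{mod}D}_{M\times S}\bigr),
\end{equation}
by identifying $\cHom_\cD$ on the left with this cone directly, and on the right by writing $S_\alpha|_V$ as a cone of $\cR^{\mathrm{mod}D}_{M\times\bC_t}$ (using the observation that solutions $e^{(t+f)/\hbar}g$ have moderate growth on the region where they are supported), then applying $(p_{M*}\cHom(1_S,-))^\bC$. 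Matching cones rather than cohomologies is what makes the argument robust under Stokes jumps. So: your reduction to exponential modules is correct and essentially the paper's, but your final local computation is wrong as stated, and the difficulty you flag at the end (behavior at $D$) is not merely ``technical work'' but requires replacing the rank-one claim by the cone-matching argument.
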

\begin{proof}
We will show locally. The strategy of the proof is close to the strategy in \cite{SabbahIntrotoStokes}.

We first blow-up the divisor $D$ locally to make it into a normal crossing divisor. Then we implement the oriented real blow-up $\varpi\colon \widetilde{U}\rightarrow U$. By the summability, in a small enough open subst of $\widetilde{U}$, $\cM$ and $\cN$ are isomorphic to direct sums of the modules of the form $\cE^{\alpha/\hbar}$ and $\cE^{\beta/\hbar}$, where $\alpha$ and $\beta$ are primitives of the characteristic varieties.

It is enough to show that 
\begin{equation}
    \cHom_\cD(\cE^{\alpha/\hbar}, \cE^{\beta/\hbar})\cong \cHom^{\Lambda_0}(\cS_\beta, \cS_\alpha).
\end{equation}
The left hand side is isomorphic to 
\begin{equation}
\begin{split}
    \cHom_\cD(\cE^{\alpha/\hbar}, \cE^{\beta/\hbar})&\cong\cHom_\cD(\cO^{\mathrm{de},\Lambda_0, \mathrm{mod} D}_{M\times S}, \cE^{(\beta-\alpha)/\hbar})\\
    &\cong \Cone(\cO^{\mathrm{de},\Lambda_0, \mathrm{mod} D}_{M\times S}\xrightarrow{\hbar d-d(\beta-\alpha)}\Omega^1_{M}\otimes_{\cO_M} \cO^{\mathrm{de},\Lambda_0, \mathrm{mod} D}_{M\times S}).
\end{split}
\end{equation}
We would like to compute the right hand side. We first note the following: Let $V$ be an open subset of $\widetilde{U}$. Then $S_\alpha|_V\cong \Cone(\cR^{\mathrm{mod}D}_{M\times \bC_t}\xrightarrow{\hbar d-d(\alpha)}\cR^{\mathrm{mod}D}_{M\times \bC_t})$ where $\cR^{\mathrm{mod}D}_{M\times \bC_t}:=\cR_{M\times \bC_t}\otimes_{\cOseL_{M\times S}} \cO^{\mathrm{se}, \Lambda_0, \mathrm{mod}D}_{M\times S}$. Actually, the solution is of the form $e^{(t+f)/\hbar}g(z,\hbar)$ where $g\in \cOseL_{M\times S}$ and the support is only where $t\in -S^\vee-f$. In this region, $e^{t+f/\hbar}g(z,\hbar)$ is of the moderate growth along $D$. Hence the right hand side is isomorphic to
\begin{equation}
\begin{split}
 \cHom^{\Lambda_0}(\cS_\beta, \cS_\alpha)&\cong (p_{M*} \cHom(1_S, \cS_{\alpha-\beta}))^\bC\\
&\cong \Cone((p_{M*}\cHom(1_S, \cR^{\mathrm{mod}D}_{M\times \bC_t}))^\bC\xrightarrow{\hbar d-d(\beta-\alpha)} (p_{M*}\cHom(1_S, \Omega^1_M\otimes_{\cO_M}\cR^{\mathrm{mod}D}_{M\times \bC_t}))^\bC)\\
 &\cong \Cone(\cO^{\mathrm{de},\Lambda_0, \mathrm{mod} D}_{M\times S}\xrightarrow{\hbar d-d(\beta-\alpha)}\Omega^1_{M}\otimes_{\cO_M} \cO^{\mathrm{de},\Lambda_0, \mathrm{mod} D}_{M\times S}).
\end{split}
\end{equation}
This completes the proof.
\end{proof}

\subsection{Essential surjectivity}
We consider the inverse functor as usual, but only for flat connections. We set
\begin{equation}
    \Psi(\cE):=\cHom^{\Lambda_0}(\cE, \cR_{M\times S\times \bC_t}).
\end{equation}
Note that this is a sheaf on $M$, but we consider it as a sheaf on $M\times S$ by tensoring $\cOdeL_{M\times S}$. We first check locally.

\begin{lemma}
For a flat $\hbar$-connection $\cE$, we have 
\begin{equation}
    \cE\cong     \Psi(\Sol^\hbar_{S}(\cE))\otimes_{\cOseL_{M\times S}}\cOdeL_{M\times S}.
\end{equation}
\end{lemma}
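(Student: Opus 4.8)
The statement is a local reconstruction lemma: starting from a (summable) flat $\hbar$-connection $\cE$, solving it gives a sheaf quantization $\Sol^\hbar_{S}(\cE)$, and applying the candidate inverse $\Psi$ recovers $\cE$ after base change to $\cOdeL_{M\times S}$. My plan is to reduce to the local model $\cE^{\alpha/\hbar}$ and then to a direct computation of $\Psi$ on its solution sheaf $S_\alpha$, mirroring the calculation already carried out in the fully faithfulness proposition above.

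First I would use the summability hypothesis together with the local semisimplicity $=$ local summability equivalence (the Proposition in Section~13) to pass to a small enough open set (after blowing up $D$ and taking the oriented real blow-up where poles are involved) on which $\cE\otimes\cOseL_{M\times S}$ decomposes as a finite direct sum $\bigoplus_i(\cE^{\alpha_i/\hbar})^{\oplus n_i}$, where $\alpha_i$ is a primitive of the relevant sheet of $\Char(\cE)$. Since both $\Sol^\hbar_{S}$ and $\Psi=\cHom^{\Lambda_0}(-,\cR_{M\times S\times\bC_t})$ are additive, it suffices to treat a single summand $\cE^{\alpha/\hbar}$. By Lemma~\ref{lem: expsolutions}, $\Sol^\hbar_{S}(\cE^{\alpha/\hbar})\cong S_\alpha$, so the task becomes showing $\Psi(S_\alpha)\otimes_{\cOseL_{M\times S}}\cOdeL_{M\times S}\cong\cE^{\alpha/\hbar}$.

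Next I would compute $\Psi(S_\alpha)=\cHom^{\Lambda_0}(S_\alpha,\cR_{M\times S\times\bC_t})$ directly, reusing the Cone presentation from the fully faithfulness proof. Namely $S_\alpha|_V\cong\Cone\bigl(\cR_{M\times\bC_t}\xrightarrow{\hbar d-d\alpha}\Omega^1_M\otimes\cR_{M\times\bC_t}\bigr)$ (on the blow-up, with moderate-growth versions), and $\cR_{M\times S\times\bC_t}$ plays the role of $1_S$ in the $M$-fibered direction, so applying $\cHom^{\Lambda_0}(-,\cR)$ turns the Cone for $S_\alpha$ into the de Rham complex of $\cE^{\alpha/\hbar}$ valued in $\cOdeL_{M\times S}$ — exactly as in the three-line display at the end of the fully faithfulness proof, but with $\beta=0$ and $\alpha$ replaced by $-\alpha$ (since $\Sol$ flips sign, cf. the $S_\alpha$ versus $S_{-\alpha}$ discrepancy in Lemma~\ref{lem: expsolutions}). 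The key input is that the compatibility/microsupport constraints force the internal $\Hom$ to land in the subexponential (resp. $\cOdeL$) sheaf of functions, i.e. the computation $\cO^{\mathrm{de},\Lambda_0}_S\cong\End(1_S\star\bC_{S^\vee})$ of Proposition~\ref{prop: endocalculation} and its $M$-family version. One then identifies the resulting two-term complex $\bigl[\cOdeL_{M\times S}\xrightarrow{\hbar d+d\alpha}\Omega^1_M\otimes\cOdeL_{M\times S}\bigr]$ with $\cE^{\alpha/\hbar}\otimes_{\cOseL}\cOdeL$ via the resolution of $\cE^{\alpha/\hbar}$ by its own de Rham/Spencer-type complex, using that $\hbar d+d\alpha$ is injective with the stated cokernel (the de Rham lemma for $\cO^{\Lambda}$ established inside Lemma~\ref{lem: expsolutions}).

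The main obstacle I anticipate is bookkeeping the base-ring bookkeeping: one must check that $\Psi(S_\alpha)$, a priori only a sheaf on $M$ regarded as a $\cOdeL_{M\times S}$-module by extension of scalars, genuinely carries the flat $\hbar$-connection structure matching $\cE^{\alpha/\hbar}$, and that the $\cOdeL$-linear structure coming from the equivariant/Novikov action is the same as the one coming from the $\cOseL$-module structure on $\cR$ (this is precisely the compatibility notion of Section~7, and where the ramification $\cOseL\rightsquigarrow\cOdeL$ is needed so that the computation of $\End(1_S\star\bC_{S^\vee})$ is available). A secondary subtlety is the moderate-growth modification along the normal crossing divisor $D$: one needs $\varpi_{D*}\cO^{\mathrm{de},\Lambda_0,\mathrm{mod}D}_{\widetilde U\times S}$ to behave like $\cO^{\mathrm{de},\Lambda_0}_{U\times S}(*D)$ so that the local computation glues, but this is parallel to the classical fact $\varpi_{D*}\cA^{\mathrm{mod}D}_{\widetilde U}\cong\cO_U(*D)$ recalled in Section~4 and should go through verbatim. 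Apart from these, the argument is a formal manipulation of adjunctions and Cone presentations already validated earlier in the paper.
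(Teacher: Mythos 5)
Your overall strategy matches the paper's: reduce (via summability) to the local model $\cE^{\alpha/\hbar}$, invoke Lemma~\ref{lem: expsolutions} to get $\Sol^\hbar_S(\cE^{\alpha/\hbar})\cong S_\alpha$, and then compute $\Psi(S_\alpha)$. Where you diverge is in the last step. The paper simply evaluates $\cHom^{\Lambda_0}\bigl((\bigoplus_{c}\bC_{S^\vee-\alpha+c})\star 1_S,\;\cR\bigr)\otimes_{\cOseL}\cOdeL$ directly as $e^{\alpha/\hbar}\cOdeL_{M\times S}$ --- a map $S_\alpha\to\cR$ is a section of $\cR$ supported where $t+\alpha\in S^\vee$, so it is forced to be $e^{\alpha/\hbar}$ times a subexponential function --- and then observes that renaming the generator $e^{\alpha/\hbar}\mapsto 1$ conjugates $\hbar d$ into $\hbar d+d\alpha$, which is precisely $\cE^{\alpha/\hbar}$. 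You instead want to rerun the Cone/de Rham presentation from the fully faithfulness proof. That can be made to work, but it is the longer road here: applying $\cHom^{\Lambda_0}(-,\cR)$ to the Cone presentation $S_\alpha\cong\Cone(\cR\xrightarrow{\hbar d-d\alpha}\Omega^1\otimes\cR)$ gives the \emph{Spencer-type} dual complex, not literally the de Rham complex of $\cE^{\alpha/\hbar}$, and one then needs an extra identification to land on $\cE^{\alpha/\hbar}$. The direct computation bypasses all of this.

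Two specific points to fix. First, your remark that ``$\Sol$ flips sign, cf. the $S_\alpha$ versus $S_{-\alpha}$ discrepancy'' is not correct: Lemma~\ref{lem: expsolutions} gives $\Sol^\hbar_S(\cE^{\alpha/\hbar})\cong S_\alpha$ with no sign flip; it is $\DR_S$ that produces $S_{-\alpha}$. Inserting a gratuitous $\alpha\mapsto-\alpha$ would land you on $\cE^{-\alpha/\hbar}$ instead of $\cE^{\alpha/\hbar}$. Second, you do not mention the canonical morphism $\cE\to\Psi(\Sol^\hbar_S(\cE))$. The paper opens the proof with this, and it is not decorative: the computation you and the paper both propose is local, and one needs a globally defined map to assert that the local identifications glue to a global isomorphism. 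Without exhibiting the canonical morphism first, the local isomorphisms are only defined up to automorphisms of $\cE^{\alpha/\hbar}$ on overlaps, and the conclusion ``$\cE\cong\Psi\Sol\cE$'' does not immediately follow.
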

\begin{proof}
Note that there exists a canonical morphism $\cE\rightarrow \Psi(\Sol^\hbar_{S}(\cE))$. 
The sheaf quantization $\Sol^\hbar_{S}(\cE)$ is locally a direct sum of sheaf quantizations of the form $S_\alpha=(\bigoplus_{c\in \bC}\bC_{t\in c-\alpha+S^\vee})\star 1_S$ for a holomorphic function $\alpha$. 

Then we have
\begin{equation}
\begin{split}
    \Psi(\cS_\alpha)\otimes_{\cOseL_{M\times S}}\cOdeL_{M\times S}&\cong \cHom^{\Lambda_0}((\bigoplus_{c\in \bC}\bC_{t\in c-\alpha+S^\vee})\star 1_S, \cR)\otimes_{\cOseL_{M\times S}}\cOdeL_{M\times S}\\
    &\cong e^{\alpha/\hbar}\cOdeL_{M\times S}\\
    &\cong \cOdeL_{M\times S}.
\end{split}    
\end{equation}
Under the last isomorphism, the differential $\hbar d$ becomes $\hbar d+d\alpha$. This gives the desired identification.
\end{proof}

\begin{definition}
We say an object $\cE$ of $\SQ_S(T^*M)$ is elementary if 
\begin{enumerate}
    \item $\cE$ is supported on a smooth locally closed subset $N$ of $M$,
    \item the microsupport is stable under the addition of $T^*_NM$, and 
    \item the quotient $\musupp(\cE)/T^*_NM$ is an unramified covering of $N$.
\end{enumerate}
\end{definition}

\begin{proposition}
Any sheaf quantization can be obtained as an iterated cone of elementary sheaf quantizations.
\end{proposition}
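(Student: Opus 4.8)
The plan is to use the stratification result (Proposition~\ref{prop: stratification}) as the backbone and then induct on the dimension of the support, reducing each stratum contribution to an elementary sheaf quantization. Given a sheaf quantization $\cE\in \SQ_S(T^*M)$, first apply Proposition~\ref{prop: stratification} to obtain an analytic stratification $\cS = \{S_i\}$ such that, locally on each stratum, $\cE$ has the form $(\bigoplus_c T_c\cE_0)\star \cE_1$ with $\cE_0$ constructible and $\cE_1\in \SQ_S(T^*\{*\})$. Let $U$ be an open stratum of maximal dimension (a Zariski-open subset of a top-dimensional component of $M$, after further refining so that $\musupp(\cE)$ restricted there is an unramified covering; this refinement uses the classification of algebraic Lagrangians in the Appendix exactly as in the proof of Proposition~\ref{prop: stratification}). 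Consider the open embedding $j\colon U\hookrightarrow M$ and the adjunction triangle
\begin{equation}
    j_!j^{-1}\cE \rightarrow \cE \rightarrow \cE'' \xrightarrow{[1]}
\end{equation}
where $\cE'' := \Cone(j_!j^{-1}\cE\rightarrow \cE)$ is supported on the closed complement $Z := M\setminus U$, which has strictly smaller dimension.

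The key points to check are then: (i) $j_!j^{-1}\cE$ is an iterated cone of elementary sheaf quantizations, and (ii) $\cE''$ is again a sheaf quantization supported on $Z$, so that the inductive hypothesis applies. For (ii), note that $\cE''$ is a cone of two objects of $\SQ_S(T^*M)$ (the category is closed under cones by the Lemma following the definition of $\SQ_S(T^*M)$, and $j_!j^{-1}\cE\in\SQ_S(T^*M)$ by the operation lemmas of Section~8 once one checks $j_!$ preserves the relevant conditions — this is where one must be slightly careful, since $j$ is an open, not proper, embedding; but the microsupport estimate $\musupp(j_!\cF)\subset \overline{\musupp(\cF)}$ together with the fact that $j^{-1}\cE$ is a sheaf quantization on $U$ whose $\musupp$ is a \emph{closed} algebraic Lagrangian in $T^*M$ — again by the Appendix — handles this). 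Then $\cE''$ restricted to $Z$ with its induced stratification is a sheaf quantization on $Z$ (pulled back along the inclusion via Proposition on pull-back in Section~8), and we conclude by induction on $\dim \supp$. For (i), on $U$ the covering $\musupp(j^{-1}\cE)/T^*_UM$ is unramified; using Proposition~\ref{prop: stratification} and a homotopy argument as in that proof, $j^{-1}\cE\cong (\bigoplus_i S_{\alpha_i})\star \cE_1$ locally, and choosing a filtration of $\cE_1\in\SQ_S(T^*\{*\})\cong \mathrm{Perf}(\cOdeL_S)$ (Corollary~\ref{cor: overapoint}) by shifts of the rank-one module, one writes $j^{-1}\cE$ as an iterated cone of objects each of which is supported on $U$, has microsupport stable under $T^*_UM$-translation, and has unramified covering microsupport — i.e. is elementary. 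Applying $j_!$ commutes with cones, so $j_!j^{-1}\cE$ is an iterated cone of objects $j_!(\text{elementary on }U)$; one should observe that these are themselves elementary in the sense of the definition (the definition only asks for support on a \emph{locally closed} $N$, so $N=U$ is allowed, and $j_!$ does not change the microsupport over $U$).

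The main obstacle I expect is the bookkeeping around $j_!$ for the non-proper open embedding: verifying that $j_!$ of a sheaf quantization on $U$ stays inside $\SQ_S(T^*M)$ (the operation lemmas in Section~8 are stated for $f^{-1},f^!$ and for proper $f_*$, not for $j_!$), and that its non-conic microsupport is still an \emph{algebraic} Lagrangian and not merely a closed Lagrangian. This is precisely the role of the algebraicity hypothesis and the Appendix result: an algebraic Lagrangian in $T^*U$ has a closure in $T^*M$ that is still algebraic Lagrangian, and the boundary pieces are accounted for by the lower-dimensional strata, which is consistent with $\cE''$ being supported on $Z$. Once this is set up cleanly, the induction closes immediately since over a $0$-dimensional stratum every sheaf quantization is, by Corollary~\ref{cor: overapoint}, an iterated cone of copies of $1_S$ (shifted and tensored), which are elementary with $N$ a point.
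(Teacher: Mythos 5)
Your proposal is correct and is essentially the same argument the paper has in mind; the paper's own ``proof'' is just the one-line remark that the statement ``is the content of the proof of Proposition~\ref{prop: stratification},'' and your write-up supplies the recollement-and-induction skeleton that sentence leaves implicit: stratify, split off the top-dimensional stratum via $j_!j^{-1}\cE\to\cE\to \cE''=i_*i^{-1}\cE$, identify the piece over the open stratum as elementary, and induct on the dimension of the support of $\cE''$. You also correctly isolate the genuine subtlety the paper glosses over, namely that $j_!$ is taken along a non-proper open embedding, so one must check that the closure of the non-conic microsupport (together with its boundary conormal pieces) is still an \emph{algebraic} Lagrangian; the Appendix's Lagrangian dichotomy theorem is exactly what makes this work.

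One small overcomplication: once you allow $N=U$ open in the definition of ``elementary'' (so $T^*_UM=0$ and condition~2 is vacuous), the object $j_!j^{-1}\cE$ is \emph{already} elementary as soon as $\musupp(\cE)$ restricted over $U$ is an unramified covering, which your stratification refinement arranges. The further step of filtering the stalk factor $\cE_1\in\mathrm{Perf}(\cOdeL_S)$ by shifts of the rank-one module is unnecessary for the devissage (and also slightly delicate as written, since the presentation $(\bigoplus_i S_{\alpha_i})\star\cE_1$ is only local on contractible opens of the stratum, so there is no global $\cE_1$ to filter). Dropping that step shortens the argument without affecting its correctness.
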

\begin{proof}
This is the content of the proof of Proposition~\ref{prop: stratification}. 
\end{proof}

Hence it is enough to see that the solution functor hit all the elementary sheaf quantizations. This can be done in a way similar to the classical case. 

Let $\cS$ be an elementary sheaf quantization. Let $N$ be the support of $\cS$ in $M$. Take a desingularization $\widetilde{N}$ of the closure of $N$. We write the exceptional divisor by $E$. Then $\cS$ gives a sheaf quantization on $\widetilde{N}$ supported on $\widetilde{N}\bs E$. Consider $\Psi_S(\cS)$, which is a $\cDseL_{\widetilde{N}\bs E\times S}$-module. As an $\cOseL_{\widetilde{N}\bs E\times S}$-module, it is isomorphic to $(\cOseL_{\widetilde{N}\bs E})^{\oplus n}$ for some $n$. Inside this module, we can take $\cOseL_{\widetilde{N}}(*E)^{\oplus n}$. 

\begin{lemma}
The submodule is invariant under the action of $\cDseL_{\widetilde{N}\times S}$.
\end{lemma}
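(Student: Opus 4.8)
The claim is local near a point $p\in E$; fix coordinates $(z_1,\dots,z_m)$ on a polydisc neighbourhood $U$ of $p$ in $\widetilde{N}$ with $E\cap U=\{z_1\cdots z_k=0\}$, and work on $U$. On $U\setminus E$ the sheaf quantization $\cS$ is elementary, so by the proof of Proposition~\ref{prop: stratification} it is, over each contractible open, a finite direct sum of standard pieces $S_{\alpha_i}$ with $\alpha_i$ holomorphic; hence $\Psi_S(\cS)|_{U\setminus E}$ is a flat $\hbar$-connection which over such opens is isomorphic to $\bigoplus_i\cE^{\alpha_i/\hbar}$. Recall from the lemma above that $\Psi_S(S_\alpha)\otimes_{\cOseL_{U\times S}}\cOdeL_{U\times S}\cong\cOdeL_{U\times S}$, and under this identification $\hbar d$ becomes $\hbar d+d\alpha$; so each local model carries connection one-form the diagonal matrix $\mathrm{diag}(d\alpha_1,\dots,d\alpha_n)$, and the transition data between two local models consists of permutations of the sheets, constant shifts of the $\alpha_i$, and the locally constant monodromy of the underlying local system.

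The heart of the argument is that the exponents extend meromorphically along $E$, i.e. $d\alpha_i\in\Omega^1_U(*E)$. Indeed, the $d\alpha_i$ are exactly the branches of the restriction to $\musupp(\cS)$ of the tautological one-form of $T^*\widetilde{N}$. Since $\musupp(\cS)$ is an algebraic Lagrangian and $\widetilde{N}$ was chosen as a resolution adapted to its closure --- allowing, as in the Appendix and in the WKB-regularity setup, a ramified base change so that the branches become single-valued --- the closure $\overline{L}\subset T^*\widetilde{N}$ is algebraic and finite over $\widetilde{N}$, étale away from $E$, so the defining monic polynomial $\prod_i(\xi-d\alpha_i)$ has coefficients meromorphic along $E$ and simple roots there; hence each $d\alpha_i$ is itself meromorphic along $E$. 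Consequently each local model $\bigoplus_i\cE^{\alpha_i/\hbar}$ extends to a meromorphic flat $\hbar$-connection over $U$, with connection one-form in $\Omega^1_U(*E)\otimes\cOseL_{U\times S}(*E)$; and since the transition data listed above is itself meromorphic (in fact locally constant) along $E$, these local extensions glue to a coherent meromorphic flat $\hbar$-connection $\mathcal{M}$ on $\widetilde{N}$ with $\mathcal{M}|_{\widetilde{N}\setminus E}\cong\Psi_S(\cS)$ and $\mathcal{M}\cong\mathcal{M}(*E)$, which in the coordinates above is $\cOseL_{\widetilde{N}}(*E)^{\oplus n}$ with connection one-form $\omega$ meromorphic along $E$. (Thus the $\cOseL$-frame in the statement is to be taken to be this one; the meromorphy of the $\alpha_i$ along $E$ is the only nontrivial point.)

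Given this, invariance is immediate. The ring $\cDseL_{\widetilde{N}\times S}$ is generated over $\cOseL_{\widetilde{N}\times S}$ by the operators $\hbar\partial_{z_j}$, and in the frame above a section $f\in\cOseL_{\widetilde{N}}(*E)^{\oplus n}$ is sent by $\hbar\partial_{z_j}$ to $\hbar\,\partial_{z_j}f+\omega_j f$, where $\omega_j$ is the $\partial_{z_j}$-component of $\omega$. The first term stays in $\cOseL_{\widetilde{N}}(*E)^{\oplus n}$ since differentiation in $z_j$ keeps us in $\cOseL_{\widetilde{N}}$ up to multiplying by $\hbar^{-1}\in\cOseL_{\widetilde{N}\times S}$ and raises the pole order along $E$ by at most one; the second term stays there because $\omega_j$ has entries in $\cOseL_{\widetilde{N}}(*E)$ and $\cOseL_{\widetilde{N}}(*E)$ is a ring. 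Hence $\cOseL_{\widetilde{N}}(*E)^{\oplus n}$ is stable under $\cDseL_{\widetilde{N}\times S}$, that is, it is the localization $\Psi_S(\cS)(*E)$, as claimed. The only real obstacle in all of this is the meromorphy of the exponents along $E$, which rests on the algebraicity of $\musupp(\cS)$ together with the choice of an adapted resolution $\widetilde{N}$.
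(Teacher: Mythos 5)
Your proof is correct and follows essentially the same route as the paper: the heart of both is the observation that the connection one-form in a solution frame is diagonal with entries $d\alpha_i$, and that these extend meromorphically along $E$ because $\musupp(\cS)$ is an algebraic Lagrangian, after which stability of $\cOseL_{\widetilde{N}}(*E)^{\oplus n}$ under $\hbar\partial_{z_j}$ is immediate. You usefully spell out the justification of the meromorphy claim (via the characteristic polynomial of $\overline{L}$ over $\widetilde{N}$ and a ramified base change), which the paper's one-line proof simply asserts as ``$\alpha_i$ is a meromorphic function with poles in $E$.''
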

\begin{proof}
On each sector, the differential acts as a matrix of  $\hbar\partial-\alpha_i$. Since $\alpha_i$ is a meromorphic function with poles in $E$. Hence it acts on $\cOseL_{\widetilde{N}}(*E)^{\oplus n}$. 
\end{proof}

By the construction, this is our desired connection.

\begin{remark}
There is a slight difference between the concept of the monodromy in our setting and usual setting. Consider the following equation:
\begin{equation}
    \lb \hbar\partial_x -\frac{a}{x}\rb\psi=0.
\end{equation}
A solution to this equation is $x^{a/\hbar}$. Around $x=0$, it has monodromy $e^{2\pi \sqrt{-1}a/\hbar}$. If one fix a $\hbar$ and consider it as a usual differential equation, one has to treat it as monodromy. In particular, we have to treat it as a part of data of the corresponding Stokes local system. However, in our setting, it is not something to be encoded in Stokes data. Rather, it is encoded in the "formal type" of the equation. In other words, we have nontrivial formal types for regular $\hbar$-differential equations.
\end{remark}

\section{A sufficient condition for the summability}
In this section, we will prove a sufficient condition for the summability.

\begin{theorem}
Let $\cM$ be a weakly semi-globally semisimple WKB-regular flat $\hbar$-connection $\cM$ valued in $\cOaeL_{M\times S}$ defined over $S$. Then $\cM$ is summable at any $\theta\in S\cap \varpi^{-1}(0)$. 
\end{theorem}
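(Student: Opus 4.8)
The statement is local on $M$ and local near a chosen $\theta\in S\cap\varpi^{-1}(0)$, so the plan is to reduce to a small sectoroid $S$ around $\theta$ and to a basic local model, then to solve the connection there by an asymptotic-analysis argument. By the structure theory of algebraic Lagrangians (Appendix) and the analytic stratification built in for WKB-regularity, I would first pass to the open stratum where $L=\Char(\cM)$ is a ramified covering of $M$, then to the locus $M\bs(\Turn(\cM)\cup D)$ where the covering is unramified and trivial, with sheets $L_i$ and primitives $\alpha_i$. On this locus, after the gauge $\widetilde\psi:=\prod e^{\alpha_i/\hbar}\psi$ (i.e. twisting by the $\cE^{\alpha_i/\hbar}$), the task reduces to showing that a flat $\hbar$-connection $\hbar d\widetilde\psi+A_0\widetilde\psi=0$ with $A_0|_{\hbar=0}$ \emph{block-diagonal and nondegenerate in the sense of weak semi-global semisimplicity} admits a fundamental solution matrix lying in $\cOseL_{M\times S}$ and invertible there. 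The point of the hypothesis ``valued in $\cOaeL$'' is precisely that we can run honest asymptotic analysis (Borel--Ritt, the integral-operator contraction of Theorem~\ref{Theorem: Non-linear} and Theorem~\ref{Theorem: Linear}) at the $\cOaeL$-level and then descend.

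The core analytic input is the Novikov-adapted version of Sibuya's block-diagonalization together with Wasow's contraction argument, which is already packaged in Theorem~\ref{Theorem: Linear} and Theorem~\ref{Theorem: Non-linear} of this paper. Concretely: weak semi-global semisimplicity gives, formally (i.e. after applying $[-]\colon \cOaeL\to\cOaeL/T^{>0}\cOaeL\cong\cO[[\hbar]]$), a decomposition of $[\cM]$ into blocks whose $\hbar=0$-leading terms have pairwise-disjoint spectra along each sheet of the covering; Sibuya's method then produces a formal gauge transformation block-diagonalizing the connection. Feeding this formal gauge into the Borel--Ritt lemma yields an analytic (i.e. $\cOaeL_{M\times S}$-valued) gauge transformation whose asymptotic expansion is the formal one, reducing us to solving each block; within a block, WKB-regularity forces the block to be of the form $\cE^{\alpha_i/\hbar}\otimes\cR_i$ with $\cR_i$ regular and $I_{\Char(\cR_i)}\cR_i/\hbar\cR_i=0$, so after a further twist by $\alpha_i$ we solve $\hbar d\widetilde\psi+A_0\widetilde\psi=0$ with $A_0|_{\hbar=0}=0$ exactly as in the proof of Theorem~\ref{Theorem: Linear} (writing $A_0=B_0+B_{>0}$ with $\exp(\int B_0/\hbar)\in\cOseL$ and $\exp(\int B_{>0}/\hbar)$ understood via its transseries/Taylor expansion). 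The resulting solution matrix is visibly invertible in $\cOseL_{M\times S}$ because it is the product of $\exp(\int\cdot/\hbar)$ factors, each a unit. One then handles the divisor $D$ as in the summability definition: locally modify to make $D$ normal crossing, take the oriented real blow-up $\varpi_D\colon\widetilde U\to U$, and repeat the argument with $\cO^{\mathrm{ce},\Lambda_0,\mathrm{mod}D}$ in place of $\cOceL$, using that the SMK-type decomposition is part of the WKB-regularity axiom. Finally, to globalize over the analytic stratification, one checks (as in Definition~\ref{def: WKB-regularD-mod}) that summability on each stratum, for appropriately chosen localizations, assembles to summability of the object; since the problem is being solved stratum-by-stratum and the definition of summability for complexes is itself stratified, no gluing of solutions across strata is required.

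The main obstacle is the Novikov-coefficient block-diagonalization step: classically Sibuya's and Wasow's arguments run over $\bC$ (or $\bC[[\hbar]]$) with standard Banach-space estimates, and here we must run them at the level of $\cOaeL$, i.e. simultaneously over the whole $\gamma$-complete family of exponential weights $e^{-c/\hbar}$, $c\in S^\vee$. The right way to organize this, which I would follow, is the graded/filtered argument already used in the proof of Theorem~\ref{Theorem: Non-linear}: it suffices to prove convergence in $\Gr^{\leq D}$ for every bounded neighborhood $D$ of $0$, where only finitely many exponential weights survive, reducing each step to a finite system of classical estimates (Wasow's Lemma~27.1 bound on the integral operator), and then take the inverse limit over $D\in\frakD$. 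One must check that Sibuya's off-diagonal-killing iteration and Wasow's contraction are both uniform enough that they pass to $\Gr^{\leq D}$ for all $D$ with compatible constants — this uniformity, rather than any single estimate, is the delicate point, and it is exactly what the ``$\gamma$-finite'' / compact-in-$\gamma$-topology bookkeeping of Section~3 is designed to furnish. A secondary technical nuisance is the higher-dimensional case, handled (as in the proof of Theorem~\ref{Theorem: Linear}) by an induction on $\dim M$ using integrability of the flat connection, restricting first to $z_1=0$, solving the lower-dimensional system, then solving the one-dimensional family, all within $\cOseL_{M\times S}$; and at turning points and over $D$ one must verify that the moderate-growth variant $\cO^{\bullet,\mathrm{mod}D}$ behaves well under these constructions, which follows from $\varpi_{D*}\cA^{\mathrm{mod}D}\cong\cO_U(*D)$ together with the WKB-regularity axiom.
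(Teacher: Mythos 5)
Your proposal has the right skeleton (reduce locally, Sibuya block-diagonalize, lift formal to analytic via Wasow's contraction, descend to $\cOseL$), but there are two substantive gaps and a misassignment of hypotheses that together miss what the paper's argument actually turns on.

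First, the roles of the hypotheses are swapped. In the paper's proof, Sibuya's block-diagonalization needs \emph{no} extra assumption beyond WKB-regularity: the blocks arise from the distinct sheets $L_i$ of $\Char(\cM)$ (the distinct eigenvalues of $C_0=\Omega^{(1)}|_{\hbar=0}$), and the formal block-diagonalizing gauge $Q$ is lifted to an analytic one via Theorem~\ref{Theorem: Non-linear} (not by Borel--Ritt alone — Borel--Ritt only furnishes an analytic function with the correct expansion; one must still solve the Riccati-type equation \eqref{eq: 14.16} for the off-block entries, and that contraction argument is Theorem~\ref{Theorem: Non-linear}). The \emph{weak semi-global semisimplicity} hypothesis enters only afterward: it is exactly the assumption that each resulting block of rank $n_i>1$ can be diagonalized up to $O(e^{-c_*/\hbar})$. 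The paper explicitly remarks (citing Wasow \S29) that this within-block diagonalization can fail in general, which is why it must be imposed as a property rather than deduced. You instead claim that WKB-regularity forces each block into $\cE^{\alpha_i/\hbar}\otimes\cR_i$ form \emph{analytically} and that this finishes the proof — but WKB-regularity only gives a formal decomposition at the exceptional divisor, and Theorem~\ref{Theorem: Linear} is stated for rank~$1$ connections only, so invoking it on a block of rank $n_i$ is not valid. In short, you are assuming what the weak semi-global semisimplicity hypothesis is there to provide.

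Second, you omit the weak-to-strong diagonalization step, which is the content of \S15.1. After Sibuya plus the semisimplicity hypothesis, one only obtains a connection matrix that is diagonal modulo $O(e^{-c_*/\hbar})$ — i.e.\ \emph{weakly} diagonalizable in the paper's sense. The paper's proposition that weakly diagonalizable implies diagonalizable (via the Neumann-type series $\sum_{n\geq0}(e^{-c/\hbar}\Omega_c)^n e^{-\alpha_i/\hbar}$, convergent because each term is $O(e^{-cn/\hbar})$) is a genuine and necessary step to pass from the output of Sibuya--Wasow to an actual fundamental solution matrix in $\cOseL_{M\times S}$, and hence to summability. Without it the argument stops short of the conclusion. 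Your $\Gr^{\leq D}$ bookkeeping remark is correct and is indeed how the Novikov-coefficient convergence is organized, but it addresses a different part of the argument (the lift of the formal gauge) and does not substitute for this final series step.
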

The weak semi-glabal semisimplicity will be explained later.
As a corollary, we obtain the following.
\begin{corollary}
For any $\theta$, there exists a non-full faithful embedding
\begin{equation}
    D^b_{ws}(\cDae_{M\times \theta})\rightarrow \SQ_\theta(T^*M).
\end{equation}
where $D^b_{ws}(\cDae_{M\times S})$ is the subcategory generated by weakly semi-globally semisimple WKB-regular modules.
\end{corollary}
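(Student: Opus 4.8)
The plan is to obtain the functor as extension of scalars followed by the main equivalence of this paper, so that, granting the summability Theorem above, the corollary is essentially formal. Since $\cOaeL_{M\times S}\subset\cOceL_{M\times S}\subset\cOseL_{M\times S}\subset\cOdeL_{M\times S}$, extension of scalars defines a triangulated functor $B\colon\cM\mapsto\cM\otimes_{\cDaeL_{M\times\theta}}\cDdeL_{M\times\theta}$ from $D^b(\cDaeL_{M\times\theta})$ to $D^b(\cDdeL_{M\times\theta})$ (germ categories as in Section~5). If $\cM$ is one of the generating objects of $D^b_{ws}(\cDae_{M\times\theta})$---a weakly semi-globally semisimple WKB-regular flat $\hbar$-connection valued in $\cOaeL_{M\times S}$ over some sectoroid $S\ni\theta$---then by the Theorem $\cM$ is summable at $\theta$, hence $B\cM\in D^b_{\mathrm{sum}}(\cDdeL_{M\times\theta})$. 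As the latter is a triangulated subcategory and $D^b_{ws}(\cDae_{M\times\theta})$ is generated by such $\cM$, the functor $B$ restricts to $B\colon D^b_{ws}(\cDae_{M\times\theta})\to D^b_{\mathrm{sum}}(\cDdeL_{M\times\theta})$. Passing to the filtered colimit over sectoroids $S\ni\theta$ in the main theorem---using that the $\Sol^\hbar_S$ commute with restriction of $S$---yields the germ-level equivalence $\Sol^\hbar_\theta\colon D^b_{\mathrm{sum}}(\cDdeL_{M\times\theta})\xrightarrow{\ \sim\ }\SQ_\theta(T^*M)$, and $\Phi:=\Sol^\hbar_\theta\circ B$ (post-composed with the duality $\bD$ if one prefers a covariant functor) is the asserted functor.

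For faithfulness it suffices, $\Sol^\hbar_\theta$ being an equivalence, that $B$ be faithful; since $B$ is triangulated this reduces to the injectivity of $\Hom_{\cDaeL_{M\times\theta}}(\cE,\cF)\to\Hom_{\cDdeL_{M\times\theta}}(B\cE,B\cF)$, and of its shifted analogues, for flat $\hbar$-connections $\cE,\cF$. For such connections these groups are the cohomology of the de Rham complex of the locally free $\cOaeL_{M\times\theta}$-module $\cHom_{\cOaeL}(\cE,\cF)$; applying $-\otimes_{\cOaeL}\cOdeL$ produces the de Rham complex computing the target, and since $\cOaeL_{M\times\theta}\hookrightarrow\cOdeL_{M\times\theta}$ and the terms are locally free of finite rank, the base-change map is termwise injective, hence injective on cohomology. (As usual the faithfulness is meant on this generated subcategory: $B$ is not faithful on all of $D^b(\cDaeL_{M\times\theta})$, since e.g. it annihilates $\Cone(\hbar\colon\cOaeL\to\cOaeL)$, $\hbar$ being a unit in $\cOdeL$.)

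Non-fullness is visible already on the trivial flat $\hbar$-connection $\cE^{0/\hbar}=\cOaeL_{M\times\theta}$, which is weakly semi-globally semisimple and WKB-regular. By (the germ version of) Lemma~\ref{lem: expsolutions} with $\alpha=0$ we have $\Phi(\cE^{0/\hbar})\cong\bC_M\boxtimes 1_\theta$, whose endomorphism ring in $\SQ_\theta(T^*M)$ contains $\End_{\SQ_\theta}(1_\theta)\cong\cOdeL_\theta$ by the very definition of $\cOdeL$; on the other hand the endomorphism ring of $\cE^{0/\hbar}$ in the source is the ring of $\hbar d$-flat global sections of $\cOaeL_{M\times\theta}$, which does not contain $\hbar^{-1}$. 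So $\Phi$ is faithful but not full, which is the content of the corollary.

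The genuinely hard step is not in this deduction---which is formal given the two theorems---but in the summability Theorem it invokes, whose proof is the Novikov-coefficient block-diagonalization of Sibuya--Wasow type carried out separately. Within the present argument the only delicate points are the colimit passage to the germ-level equivalence $\Sol^\hbar_\theta$ and the precise scope of ``faithful'' flagged above; both are routine.
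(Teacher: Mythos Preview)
The paper states this corollary without proof, as an immediate consequence of the summability theorem together with the main equivalence. Your construction of the functor---scalar extension followed by the germ-level $\Sol^\hbar_\theta$---is exactly the intended one, and your non-fullness example via $\End(\cE^{0/\hbar})$ is correct and explicit where the paper is silent. Two remarks:

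First, a minor point: the source category in the statement is built from $\cDae$ (uncompleted), not $\cDaeL$; your argument goes through with this change since $\cOae\hookrightarrow\cOdeL$ as well.

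Second, and more seriously, your faithfulness argument has a gap. You claim that because the comparison map of de Rham complexes is termwise injective, it is injective on cohomology. This inference is false in general: the inclusion $[0\to\bZ]\hookrightarrow[\bZ\xrightarrow{\id}\bZ]$ is termwise injective but kills $H^1$. Concretely, a closed $\omega\in\Omega^1\otimes\cHom_{\cOae}(\cE,\cF)$ could become exact only after passing to $\cOdeL$-coefficients (the primitive $\psi$ solving $\nabla\psi=\omega$ may genuinely require the larger ring), so injectivity on $\Ext^{>0}$ does not follow. Your argument is valid only for $\Hom^0$. Relatedly, even granting faithfulness on the generating flat connections, faithfulness of a triangulated functor does not propagate to the generated subcategory without further input.

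Given that the paper's phrasing ``non-full faithful embedding'' is informal and unproved there, it is possible the author intends only ``a functor which is not fully faithful''; your reading is stronger. If you want to keep the stronger claim, the faithfulness step needs a genuine argument---for instance, identifying the cokernel of $\cOae\hookrightarrow\cOdeL$ and showing it contributes no closed forms in the de Rham complex of $\cHom(\cE,\cF)$---rather than the termwise-injectivity shortcut.
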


We follow \cite{SabbahIntrotoStokes} which is based on Mozhizuki's argument~\cite{WildTwistor}. Throughout this section, we fix a sectoroid $S$ and $\theta\in \varpi^{-1}(0)\cap S$. Since the variable $\hbar$ values in the ray spanned by $\theta$, it is convenient to rotate $\bC_\hbar$ so that $\theta=0$ and $\hbar$ values in the positive real numbers. This is our setting in this section.

\subsection{Weak diagonalization}
As explained in Wasow~\cite[\S 29]{Wasow}, it seems that the summability does not hold in general. The main problem is the possibility of local diagonalization of connection matrices. Hence we require it as a property.

\begin{definition}
\begin{enumerate}
    \item Let $\Omega$ be a connection matrix valued in $\cOaeL_{M\times S}$. We say it is diagonalizable if there exists an invertible matrix $P$ and $c\in \bR_{>0}$ such that the transformed $\Omega$ is a diagonal matrix.
    \item Let $\Omega$ be a connection matrix valued in $\cOaeL_{M\times S}$. We say it is weakly diagonalizable if there locally exists an invertible matrix $P$ and $c\in \bR_{>0}$ such that the transformed $\Omega$ has a form $\Omega_0+e^{-c/\hbar}\Omega_c$ where $\Omega_0, \Omega_c$ valued $\cOaeL_{M\times S}$ and $\Omega_0$ is a diagonal matrix.
\end{enumerate}
\end{definition}

If a given connection is diagonalizable, it is apparently summable. In the following, we would like to seek a sufficient condition to be diagonalizable.

\begin{proposition}
Suppose that $\Omega$ is a weakly diagonalizable connection. Then $\Omega$ is a diagonalizable connection.
\end{proposition}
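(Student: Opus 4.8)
The plan is to run Sibuya's block-diagonalization (as in Wasow~\cite{Wasow}) adapted to the Novikov-completed coefficients, reducing a higher-dimensional base to the one-dimensional case by integrability exactly as in the proof of Theorem~\ref{Theorem: Linear}, and organizing the convergence of the iteration over the graded quotients $\Gr^{\leq D}\cOaeL_{M\times S}$ as in the proof of Theorem~\ref{Theorem: Non-linear}.

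First I would reduce to a local statement. By weak diagonalizability, after a local gauge transformation over $\cOaeL_{M\times S}$ the connection matrix has the form $\Omega_0 + e^{-c/\hbar}\Omega_c$ with $\Omega_0$ diagonal; absorbing the diagonal part of $e^{-c/\hbar}\Omega_c$ into $\Omega_0$ we may assume $\Omega = \Omega_0 + F$ with $\Omega_0 = \mathrm{diag}(\beta_1,\dots,\beta_n)$ and $F$ purely off-diagonal with all entries in $e^{-c/\hbar}\cdot\cOaeL_{M\times S}$. I then look for $P = I + N$ with $N$ off-diagonal and exponentially small so that $\Omega' := P^{-1}\Omega P + \hbar P^{-1}dP$ is diagonal. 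A direct computation shows that the diagonal part of the equation forces the diagonal correction to be $\Delta = (FN)_{\mathrm{diag}}$, automatically of order $e^{-2c/\hbar}$, while vanishing of the off-diagonal part becomes the fixed-point equation
\begin{equation}
    L(N) := \hbar\, dN + [\Omega_0, N] = -F + \mathcal{Q}(N), \qquad \mathcal{Q}(N) := N\Delta - FN,
\end{equation}
with $\mathcal{Q}(N)$ quadratic-or-higher in $(N,F)$, hence of exponential order at least $e^{-2c/\hbar}$ once $N$ is $O(e^{-c/\hbar})$.

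The heart of the argument is the inversion of $L$ on off-diagonal matrices, i.e.\ solving $\hbar\, dN_{ij} + (\beta_i-\beta_j)N_{ij} = G_{ij}$ for each $i\ne j$, keeping $N_{ij}$ in $\cOaeL_{M\times S}$ and exponentially small when $G_{ij}$ is. For $\dim M = 1$ this is a scalar $\hbar$-differential equation solved by variation of parameters along a Wasow-type integration path (as in Theorem~\ref{Theorem: Non-linear}), so the estimate on the kernel $\hbar^{-1}\exp((x-t)(\beta_i-\beta_j)/\hbar)$ keeps the solution in $\cOaeL_{M\times S}$ with the same exponential decay as $G_{ij}$ up to a controlled, $D$-independent loss. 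For $\dim M > 1$ one solves the system direction by direction and checks compatibility using the flatness of $\Omega$ (which yields $d\beta_i = -\sum_{k\ne i}F_{ik}\wedge F_{ki} = O(e^{-2c/\hbar})$ and the analogous off-diagonal identities), reducing to the one-dimensional case precisely as the higher-dimensional part of Theorem~\ref{Theorem: Linear} is reduced to its one-dimensional part. Granting this, the fixed-point equation is solved by the iteration $N^{(0)} = 0$, $L(N^{(k+1)}) = -F + \mathcal{Q}(N^{(k)})$; since $\mathcal{Q}$ gains a factor $e^{-c/\hbar}$ at each step, the increment $N^{(k+1)} - N^{(k)}$ has exponential order $e^{-(k+1)c/\hbar}$, and for every $D$ this vanishes in $\Gr^{\leq D}\cOaeL_{M\times S}$ once $k$ is large, so the series converges in each $\Gr^{\leq D}$ and, passing to $\lim_{D}$, produces $N\in\cOaeL_{M\times S}$. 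Then $P = I+N$ is invertible (its off-diagonal part is exponentially small) and conjugates $\Omega$ to the diagonal matrix $\Omega_0 + \Delta$.

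The step I expect to be the main obstacle is the inversion of $L$ with the correct estimates: producing solutions of $\hbar\, dN_{ij} + (\beta_i-\beta_j)N_{ij} = G_{ij}$ that genuinely live in $\cOaeL_{M\times S}$ (not merely formally, nor only at the level of $\cOseL_{M\times S}$) requires the Wasow-type bounds on the propagator along admissible paths, and in the several-variable case the compatibility across coordinate directions is exactly where flatness — and, at the global level, the weak semi-global semisimplicity hypothesis — is used; this is also the point where, without weak diagonalizability, the construction breaks down (cf.\ Wasow~\cite[\S 29]{Wasow}).
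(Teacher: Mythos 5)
Your proof is correct and shares its engine with the paper's: a Picard/Neumann-type iteration in which each step gains a factor $e^{-c/\hbar}$, so the partial sums stabilize in every $\Gr^{\leq D}\cOaeL_{M\times S}$ and converge in the Novikov completion. The organization differs. The paper constructs the fundamental solutions directly as the series $\sum_{n\geq 0}(e^{-c/\hbar}\Omega_c)^n e^{-\alpha_i/\hbar}$, asserting that the $n$-th term is $O(e^{-nc/\hbar})$; this shorthand suppresses the iterated integrals against the propagator of $\hbar d+\Omega_0$, and in particular does not dwell on the off-diagonal exponentials $e^{(\alpha_j-\alpha_i)/\hbar}$ that would spoil the estimate absent a Wasow-type choice of integration path. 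You instead seek the diagonalizing gauge $P=I+N$ and reduce to the fixed-point equation $L(N)=-F+\mathcal{Q}(N)$, making the inversion of $L$ --- i.e.\ solving $\hbar\,dN_{ij}+(\beta_i-\beta_j)N_{ij}=G_{ij}$ while preserving membership in $\cOaeL_{M\times S}$ and the exponential order --- the visible technical core, and handling $\dim M>1$ by the integrability reduction as in Theorem~\ref{Theorem: Linear}. This is a fuller account of the same iteration (the gauge is just $P=\Psi\cdot\mathrm{diag}(e^{\alpha_i/\hbar})$ where $\Psi$ is the fundamental solution matrix the paper builds), and it correctly flags the propagator estimate as the place where the argument really lives; what it buys you is transparency about exactly what the paper's terse formula is eliding, at the cost of more bookkeeping.
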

\begin{proof}
Suppose $\Omega$ is weakly diagonalizable. Then $\Omega$ is in a form that
\begin{equation}
    \Omega=\Omega_0+e^{-c/\hbar}\Omega_{1}
\end{equation}
where $\Omega_0$ is a diagonal matrix. Take a primitive of each diagonal entry and denote it by $\alpha_i$.

Take point $x\in M$ and we will argue locally around $x$. Then 
\begin{equation}
    (e^{-c/\hbar}\Omega_c)^n e^{-\alpha_i/\hbar}=O(e^{-cn/\hbar}).
\end{equation}
Hence $\sum_{n\geq 0} (e^{-c/\hbar}\Omega_c)^n e^{-\alpha_i/\hbar}$ is convergent. This forms a desired basis of the solutions.
\end{proof}
Hence it is enough to seek some sufficient conditions for the weak diagonalizability.

\subsection{Weak semisimplicity}
In the following two sections, we introduce Sibuya's block-diagonalizatoin. Upon this, we will introduce the weak local semisimiplicity.

Let us fix $\theta\in S$. We will argue the reduction of higher rank to lower rank case. The setting is as follows: Let $\cE$ be a locally free $\cOaeL_{M\times S}$-module of finite rank and $\nabla$ be an $\hbar$-connection on it. As before, we assume that the characteristic variety it an unramified covering of $M\bs \mathrm{Pole}(\cM)$ from the beginning.

We will work locally in the following sense: For a point $x\in M\bs \mathrm{Pole(\cM)}$, we just take an open neighborhood. For a point $x\in \mathrm{Pole(\cM)}$, we will treat later.
Since we are free from turning points and working locally, we have $m$ sheets trivial covering of the characteristic variety over us. Let $\alpha_1,..,\alpha_m$ be primitives of these sheets. 

Since we will work locally, we can fix an isomorphism $\cE\cong (\cOaeL_{M\times S})^{\oplus M}$ for some $M$. Then the connection can be written as
\begin{equation}
    \nabla=\hbar d+\Omega=\hbar d+\sum_{i=1}^n \Omega^{(i)}dz^i
\end{equation}
where $\Omega^{(i)}$ is an $M\times M$-matrix valued in $\cOaeL_{M\times S}$.

We would like to construct a basis change $P$ which makes $\nabla$ into a block-diagonal form. This is done for 1-dimensional case by Sibuya~\cite{Sibuyadiagonal}. We first consider a formal construction. 

We first would like to find $Q\in \GL_M(\cO_{M}[[\hbar]])$ such that 
\begin{equation}\label{eq: eq14.4}
    Q^{-1}\hbar d_1Q+Q^{-1}\widehat{\Omega}^{(1)} Q=B
\end{equation}
where $B$ is a block-diagonal matrix. Here $\widehat{\Omega}^{(1)}$ is the asymptotic expansion of $\Omega^{(1)}$.

We rewrite this in the form
\begin{equation}
    \hbar Q'=\widehat{\Omega}^{(1)} Q-QB.
\end{equation}
where $'$ means $d_1$. We set
\begin{equation}
\begin{split}
    \widehat{\Omega}^{(1)}&=C_0(z)+\sum_{i=1}^\infty C_i(z)\hbar^i=:C_0+\widehat{C},\\     Q&=Q_0+\sum_{i=1}^\infty Q_i(z)\hbar^i=:Q_0+\widehat{Q},\\
    B&=B_0+\sum_{i=1}^\infty B_i(z)\hbar^i=:B_0+\widehat{B},\\
\end{split}
\end{equation}
Then the equation (\ref{eq: eq14.4}) can be read as
\begin{equation}
\begin{split}
    C_0Q_0-Q_0B_0&=0  \\
    C_0Q_r-Q_rC_0&=\sum_{s=0}^{r-1} (Q_sB_{r-s}-C_{r-s}Q_s)+Q'_{r-1}
\end{split}
\end{equation}

The following holds by the definition of the characteristic variety:
\begin{lemma}
The set of eigenvalues of $C_0$ coincides with $\lc d_1\alpha_1(x),..., d_1\alpha_n(x)\rc$.
\end{lemma}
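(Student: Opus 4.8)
The claim to be proved is that the eigenvalues of $C_0 = \widehat{\Omega}^{(1)}|_{\hbar=0}$ are exactly $d_1\alpha_1(x),\dots,d_1\alpha_n(x)$, where the $\alpha_i$ are primitives of the sheets of $\Char(\cM)$ over a (turning-point-free, pole-free) neighborhood. The whole point is that passing to $\hbar=0$ in the connection matrix recovers the classical object, so the plan is to reduce everything to the defining computation of $\Char$. First I would recall the definition from Section 5: for a flat $\hbar$-connection $\cM$ with local frame $e_1,\dots,e_M$ and $\nabla e_i = \sum_j \omega_{ij} e_j$, the class $\cM^{cl} = \cM/\frakm\cM$ is an $\cO_{T^*M}$-module, and $\Char(\cM) = \supp(\cM^{cl})$ is cut out by the characteristic polynomial of $\{\omega_{ij}|_{\hbar=0}\}$ — more precisely, by the Lemma in the "flat connections" subsection, $\Char(\cM)$ is the zero locus of $\det(\xi\cdot\mathrm{Id} - \Omega|_{\hbar=0})$ where $\Omega = \sum_i \Omega^{(i)} dz^i$ and $\xi = \sum_i \xi_i dz^i$ is the tautological one-form.

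\textbf{Key steps.} (1) Specialize to the $z^1$-direction: the $\xi_1$-component of $\Char(\cM)$ is the zero locus of $\det(\xi_1 \cdot \mathrm{Id} - \Omega^{(1)}|_{\hbar=0}) = \det(\xi_1\cdot\mathrm{Id} - C_0)$, since $C_0 = \widehat{\Omega}^{(1)}|_{\hbar=0} = \Omega^{(1)}|_{\hbar=0}$ (the asymptotic expansion and the $\hbar=0$ restriction agree on the constant term). (2) Since we have placed ourselves over a neighborhood $U$ of $x$ free of turning points and poles, $\Char(\cM)\cap T^*U = \bigsqcup_{i=1}^n L_i$ is a disjoint union of smooth sheets, and over $x$ the fibre $\Char(\cM)\cap T^*_xU$ consists of the $n$ points whose $z^1$-coordinates (covector components) are $d_1\alpha_i(x) := \partial\alpha_i/\partial z^1(x)$, because $\alpha_i$ is a primitive of the Liouville form restricted to $L_i$, i.e. $d\alpha_i$ parametrizes $L_i$ as a graph. (3) Matching: the roots of $\det(\xi_1\cdot\mathrm{Id} - C_0(x))$ are precisely the $\xi_1$-values appearing in $\Char(\cM)\cap T^*_xU$, counted with their multiplicities $n_i$ (which sum to $M$, the rank); hence the eigenvalue set of $C_0(x)$ is $\{d_1\alpha_1(x),\dots,d_1\alpha_n(x)\}$ with multiplicities $n_i$. (4) Finally I would note that this is a pointwise statement in $x$ and holds identically on $U$ by the coherence/analyticity of all the data involved.

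\textbf{Main obstacle.} The only genuine subtlety is bookkeeping around multiplicities and the passage between "the set of eigenvalues" and "the characteristic polynomial with multiplicities": one must be careful that the multiplicity $n_i$ attached to the sheet $L_i$ in Section 13 (the sheet multiplicity, with $\sum n_i = m = M$) coincides with the algebraic multiplicity of $d_1\alpha_i(x)$ as a root of $\det(\xi_1\cdot\mathrm{Id}-C_0)$; this is forced by the definition of $\Char(\cM)$ as a scheme-theoretic support, but deserves a sentence. A secondary point is confirming that the local triviality of the covering (no turning points, no poles) guarantees the $\alpha_i$ are single-valued holomorphic on $U$, so that $d_1\alpha_i(x)$ is well-defined; this is exactly the hypothesis under which we are working in this subsection, so it costs nothing. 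Everything else is a direct unwinding of definitions, so I expect no real difficulty — the lemma is essentially a restatement, for the $z^1$-slice, of the already-established fact that $\Char(\cM)$ is the spectral variety of the connection matrix at $\hbar = 0$.
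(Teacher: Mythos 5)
Your proof is correct and matches the paper's (implicit) argument: the paper gives no proof at all, simply prefacing the lemma with ``The following holds by the definition of the characteristic variety,'' and your write-up is exactly the unwinding of that definition — identifying $C_0 = \Omega^{(1)}|_{\hbar=0}$ with the connection matrix in the $z^1$-direction at $\hbar=0$, noting that $\Char(\cM)$ is cut out by its characteristic polynomial (as established in the lemma in Section~5 on flat $\hbar$-connections), and observing that over a turning-point-free, pole-free neighborhood the spectral variety is the union of graphs $\xi = d\alpha_i$, so the $\xi_1$-values over $x$ are the $d_1\alpha_i(x)$.

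One small remark: the sign convention is worth a half-sentence of care. With $\nabla = \hbar d + \Omega$ and the module structure $\hbar\partial_{x_i}s := \nabla_{\partial_{x_i}}s$, the example $\cE^{\alpha/\hbar}$ (whose $\Char$ is $\mathrm{Graph}(d\alpha)$) has $\Omega^{(i)} = \partial_{x_i}\alpha$, so the eigenvalue of $C_0$ is indeed $+d_1\alpha$ and not $-d_1\alpha$; your conclusion is consistent with the paper's conventions, but since $\hbar\,d\psi + \Omega\psi = 0$ is the flatness equation, a reader might expect a minus sign, and flagging the convention would preempt confusion.
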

We consider the case when $x\in M\bs \mathrm{Pole}(\cM)$.
By appropriately choosing $z_1$-direction, we can
suppose $d_1\alpha_i(0)\neq d_1\alpha_j(0)$ for $d\alpha_i\neq d\alpha_j$.
By applying a transformation initially, we can assume that $C_0(0)$ is in the Jordan normal form. Then the following holds:
\begin{lemma}[{\cite[Theorem 25.1]{Wasow}}]
We can make $C_0$ into a block-diagonal form where each block is labeled by eigenvalues.
\end{lemma}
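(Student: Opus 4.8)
The statement is purely about the holomorphic matrix $C_0(z)$: it carries no $\hbar$-dependence, being the $\hbar=0$ coefficient of $\widehat{\Omega}^{(1)}$, so the plan is to import the classical block-diagonalization of holomorphic matrix functions with clustered spectrum, i.e.\ \cite[Theorem 25.1]{Wasow}, and check that its hypotheses hold in our situation.

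First I would record the spectral picture at the origin. Since $C_0(0)$ is in Jordan normal form, after permuting the frame we may assume its Jordan blocks are grouped by eigenvalue, so $C_0(0)=\mathrm{diag}(D_1,\dots,D_r)$ with $\mathrm{Spec}(D_k)=\{\lambda_k\}$, where $\lambda_1,\dots,\lambda_r$ are the pairwise distinct values among $d_1\alpha_1(0),\dots,d_1\alpha_n(0)$. By the preceding lemma the eigenvalues of $C_0(z)$ are exactly the values $d_1\alpha_i(z)$, which are holomorphic in $z$; hence by continuity there are a polydisk $\Delta\ni 0$ and pairwise disjoint small circles $\Gamma_1,\dots,\Gamma_r$, with $\lambda_k$ enclosed by $\Gamma_k$, such that for every $z\in\Delta$ the part of $\mathrm{Spec}(C_0(z))$ near $\lambda_k$ stays inside $\Gamma_k$ and the $\Gamma_k$ remain pairwise disjoint.

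Next I would produce the block-diagonalizing gauge. The cleanest realization uses the Riesz spectral projectors
\[
  P_k(z):=\frac{1}{2\pi\sqrt{-1}}\oint_{\Gamma_k}\bigl(\lambda\,\mathrm{Id}-C_0(z)\bigr)^{-1}\,d\lambda,
\]
which are holomorphic on $\Delta$, satisfy $\sum_k P_k=\mathrm{Id}$ and $P_kP_l=\delta_{kl}P_k$, commute with $C_0(z)$, and agree at $z=0$ with the projectors onto the $D_k$-blocks. They give a $C_0$-stable holomorphic splitting $\cO_\Delta^{\oplus M}=\bigoplus_k\mathrm{Im}\,P_k(z)$ into locally free subsheaves; after shrinking $\Delta$ each summand is free, and choosing holomorphic frames of the summands reducing at $z=0$ to the standard block frames assembles into $P\in\mathrm{GL}_M(\cO_\Delta)$ with $P(0)=\mathrm{Id}$ for which $P^{-1}C_0P$ is block-diagonal with the prescribed block sizes. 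Equivalently one can follow Wasow directly: seek $P=\mathrm{Id}+(\text{block-off-diagonal})$ and solve the coupled Sylvester equations $D_kP_{kl}-P_{kl}D_l=(\text{terms of lower order in the off-diagonal entries})$; the linearization at $z=0$ is $X\mapsto D_kX-XD_l$, invertible because $\mathrm{Spec}(D_k)\cap\mathrm{Spec}(D_l)=\varnothing$, so the holomorphic implicit function theorem (or a contraction-mapping argument) yields $P$ on a possibly smaller polydisk.

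I do not expect a genuine obstacle here; the only points needing care are that the spectral clusters of $C_0(z)$ stay pairwise disjoint on one common neighborhood of $0$ — guaranteed precisely by grouping the $d_1\alpha_i$ according to their values at $0$ and by their continuity, which is exactly why the choice of generic $z_1$-direction was made — and that the image sheaves of the $P_k$, a priori only locally free, become free after shrinking $\Delta$. Finally I would note that, $C_0$ being $\hbar$-independent, the same $P$ serves over $\cOaeL_{M\times S}$, and that the blocks are indeed indexed by the distinct values $\lambda_k$, i.e.\ ``labeled by eigenvalues''; the subsequent, genuinely $\hbar$-dependent block-diagonalization of the full $\widehat{\Omega}^{(1)}$ is then handled by the recursion for the $Q_r$ set up above.
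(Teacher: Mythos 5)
Your proof is correct, and it fills in something the paper does not actually prove: the lemma is stated with a bare citation to Wasow's Theorem~25.1 and no argument of its own. Your Riesz-projector construction is a clean, standard way to block-diagonalize a holomorphic matrix family whose spectrum separates into disjoint clusters on a neighborhood of $0$, and the hypotheses it requires are exactly the ones the paper has just arranged: the distinct values among $d_1\alpha_1(0),\dots,d_1\alpha_n(0)$ are pairwise distinct by the choice of $z_1$-direction, and $C_0(0)$ has been put in Jordan form so the clusters at $z=0$ are transparent. Wasow's own proof runs instead along the Sylvester-equation/contraction route you sketch as the second option, building $P=\mathrm{Id}+(\text{block-off-diagonal})$ by the implicit function theorem, with invertibility of $X\mapsto D_kX-XD_l$ for $k\neq l$ doing the work; the two approaches are equivalent here, the projector version being more geometric and making the holomorphic dependence on $z$ and the local freeness of the summands manifest, the Sylvester version staying closer to the recursive scheme the paper then runs for the $Q_r$. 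Your closing remark that $P$ is $\hbar$-independent, so it causes no trouble when lifted to $\cOaeL_{M\times S}$, is the one point worth making explicit beyond the pure linear-algebra statement, and you make it.
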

Applying this lemma initially, We set $B_0=C_0$ and $Q_0=\id$.

Now we have
\begin{equation}
    C_0Q_r-Q_rC_0=C_r+\sum_{s=1}^{r-1} (Q_sB_{r-s}-C_{r-s}Q_s)+Q'_{r-1}
\end{equation}
Because $C_0$ is already a block diagonal, we can solve this in the form that $Q$ is anti-diagonal and $B$ is block-diagonal, inductively by a usual method as in \cite{Sibuyadiagonal, Wasow}.

Now we obtain $Q$. We redefine $\widehat{\Omega}$ by 
\begin{equation}
-Q^{-1}\hbar dQ+Q^{-1}\widehat{\Omega}Q
\end{equation}
and again set $\widehat{\Omega}=\sum_{i=1}^n\widehat{\Omega}^{(i)}dz^i$. Then the integrability condition tells us that
\begin{equation}
    \hbar\partial_1\widehat{\Omega}^{(j)}_{kl}=\widehat{\Omega}^{(j)}_{kl}\widehat{\Omega}_{ll}^{(1)}-\widehat{\Omega}^{(1)}_{kk}\widehat{\Omega}^{(j)}_{kl}
\end{equation}
where $k\neq l$ are block labelings. 

\begin{lemma}
Each $\widehat{\Omega}^{(i)}$ is block diagonal i.e., $\widehat{\Omega}^{(j)}_{kl}=0$ for $k\neq l$.
\end{lemma}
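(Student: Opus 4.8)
The plan is to show $\widehat{\Omega}^{(j)}_{kl}=0$ for $k\neq l$ by exploiting the integrability relation
\begin{equation}
    \hbar\partial_1\widehat{\Omega}^{(j)}_{kl}=\widehat{\Omega}^{(j)}_{kl}\widehat{\Omega}_{ll}^{(1)}-\widehat{\Omega}^{(1)}_{kk}\widehat{\Omega}^{(j)}_{kl}
\end{equation}
together with the fact that $\widehat{\Omega}^{(1)}$ itself is already block-diagonal with blocks labelled by distinct eigenvalues of $C_0$. Since everything is a formal power series in $\hbar$ with coefficients holomorphic in $z$, I would argue order by order in $\hbar$. Write $\widehat{\Omega}^{(j)}_{kl}=\sum_{r\geq 0}\omega^{(j),r}_{kl}(z)\hbar^r$; the left side has lowest-order term $\hbar\cdot(\text{order }0)$, i.e. it starts in order $\geq 1$, whereas the right side in order $r$ reads $\sum_{s}(\omega^{(j),s}_{kl}\widehat{\Omega}^{(1),r-s}_{ll}-\widehat{\Omega}^{(1),r-s}_{kk}\omega^{(j),s}_{kl})$.

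First I would look at the order-zero part of the equation: the left side contributes nothing in order $0$, so
\begin{equation}
    \omega^{(j),0}_{kl}\,\widehat{\Omega}^{(1),0}_{ll}-\widehat{\Omega}^{(1),0}_{kk}\,\omega^{(j),0}_{kl}=0,
\end{equation}
but $\widehat{\Omega}^{(1),0}_{kk}$ and $\widehat{\Omega}^{(1),0}_{ll}$ are (after the block reduction of $C_0$ in Wasow's Theorem 25.1) scalar blocks with values $d_1\alpha_k(z)$ and $d_1\alpha_l(z)$ respectively — actually matrices whose spectra are disjoint for $k\neq l$ — so the Sylvester operator $X\mapsto \widehat{\Omega}^{(1),0}_{kk}X-X\widehat{\Omega}^{(1),0}_{ll}$ is invertible in a neighborhood of the base point where $d_1\alpha_k(0)\neq d_1\alpha_l(0)$. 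Hence $\omega^{(j),0}_{kl}=0$. Then I would induct on $r$: assuming $\omega^{(j),s}_{kl}=0$ for all $s<r$, the order-$r$ part of the equation becomes $\partial_1\omega^{(j),r-1}_{kl}$ on the left (which vanishes by the inductive hypothesis) equals the Sylvester operator applied to $\omega^{(j),r}_{kl}$ plus terms involving $\omega^{(j),s}_{kl}$ with $s<r$ (which vanish), so again invertibility of the Sylvester operator forces $\omega^{(j),r}_{kl}=0$. This completes the off-diagonal vanishing for $j\geq 2$, and for $j=1$ it is built in by construction of $Q$.

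The main obstacle I anticipate is handling the case $x\in\mathrm{Pole}(\cM)$ rather than the generic locus: there the eigenvalues $d_1\alpha_i$ are meromorphic rather than holomorphic, the separation condition $d_1\alpha_k(0)\neq d_1\alpha_l(0)$ must be arranged after the oriented real blow-up and a choice of good coordinate $z_1$ transverse to the divisor, and the Sylvester operator's invertibility must be controlled uniformly enough (in the sense of moderate growth along $D$) to keep the formal solution inside the relevant ring. I expect this to be dealt with exactly as in \cite{SabbahIntrotoStokes, WildTwistor}: after the modification the difference $d_1\alpha_k-d_1\alpha_l$ is invertible as a meromorphic function and the whole order-by-order argument goes through over $\cO_M(*D)[[\hbar]]$ (or the mod-$D$ version), so the only real work is bookkeeping of the pole orders, not a new idea. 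A secondary minor point is to note that the construction is genuinely local, so one only needs the statement in a small polydisc where the trivialization and the block splitting of $C_0$ are available, which the preceding setup already grants.
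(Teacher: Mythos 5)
Your proposal is correct and follows essentially the same route as the paper: expand $\widehat{\Omega}^{(j)}_{kl}$ as a power series in $\hbar$, use that the left side $\hbar\partial_1\widehat{\Omega}^{(j)}_{kl}$ starts at order $1$, and observe at each order that the only surviving term is the Sylvester operator built from $C_0$-blocks with disjoint spectra acting on the new coefficient, whose invertibility forces it to vanish. The paper's phrasing (``eigenvalues of $C_{0,kk}$ and $C_{0,ll}$ are different on a dense open subset'') and your phrasing (invertibility of the Sylvester operator near the base point where $d_1\alpha_k(0)\neq d_1\alpha_l(0)$) are the same observation, and your anticipated issue about $x\in\mathrm{Pole}(\cM)$ is a non-issue for this lemma since the surrounding discussion restricts to $x\in M\setminus\mathrm{Pole}(\cM)$, with the pole case treated separately afterward.
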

\begin{proof}
We write
\begin{equation}
    \widehat{\Omega}^{(j)}_{kl}=\sum_{i=0}^\infty C_{i,kl}^{(j)}(z)\hbar^i
\end{equation}
Let us first see the $0$-th term with respect to $\hbar$. We have
\begin{equation}
   0=C_{0,kl}^{(j)}C_{0, ll}-C_{0, kk}C^{(j)}_{0,kl}
\end{equation}

Since the eigenvalues of $C_{0, kk}$ and $C_{0, ll}$ are different on a dense open subset, $C^{(j)}_{0, kl}$ vanishes. We can now inductively see all the terms vanish. For example, the 1-st term is 
\begin{equation}
    \hbar\partial_1 C^{(j)}_{0, kl}=C_{1,kl}^{(j)}C_{0, ll}-C_{1, kk}C^{(j)}_{0,kl}+C_{0,kl}^{(j)}C_{1, ll}-C_{0, kka}C^{(j)}_{1,kl}.
\end{equation}
This equation is reduced to 
\begin{equation}
0=C_{1,kl}^{(j)}C_{0, ll}-C_{0, kk}C^{(j)}_{1,kl}.
\end{equation}
Again, by the same reasoning, we have $C^{(j)}_{1,kl}=0$.
\end{proof}

Hence we get a formal block diagonalization. Now $B$ is block diagonal. This constraint gives
\begin{equation}
\begin{split}
    \hbar\frac{d\widehat{Q}_{kl}}{dz_1}&=-C_{0klkk}\widehat{Q}_{kl}-\widehat{C}_{kl}-\sum \whC_{ks}\widehat{Q}_{sl}+\widehat{Q}_{kl}\wB_{ll}+\widehat{Q}_{kl}\whC_{0ll}+\widehat{Q}_{kl}\whC_{ll}\\
    \wB_{ll}&=\whC_{ll}+\sum \whC_{ls}\wQ_{sl}.
\end{split}
\end{equation}
where $\alpha,\beta$ are the block labelings. Combining these two equations, we get
\begin{equation}
    \hbar\frac{d\wQ_{kl}}{dz_1}=-C_{0kk}\wQ_{kl}-\whC_{kl}-\sum \whC_{ks}\wQ_{sl}+\wQ_{kl}(\whC_{ll}+\sum \whC_{ls}\wQ_{sl})+\wQ_{kl}C_{0ll}+\wQ_{kl}\whC_{ll}.
\end{equation}
Now we replace this with analytic coefficients:
\begin{equation}\label{eq: 14.16}
    \hbar\frac{dQ_{kl}}{dz_1}=-C_{0kk}Q_{kl}-C_{kl}-\sum C_{ks}Q_{sl}+Q_{kl}(C_{ll}+\sum C_{ls}Q_{sl})+Q_{kl}C_{0ll}+Q_{kl}C_{ll}
\end{equation}
where $C=\Omega^{(1)}-C_0$. By what we have seen, this admits a formal solution.

\begin{lemma}
The equation (\ref{eq: 14.16}) admits a nontrivial solution.
\end{lemma}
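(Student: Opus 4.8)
The plan is to prove the existence of a nontrivial solution of the nonlinear $\hbar$-differential equation~(\ref{eq: 14.16}) by reducing it to Theorem~\ref{Theorem: Non-linear}. The right hand side of~(\ref{eq: 14.16}) is a polynomial (in fact quadratic) expression in the unknown block $Q_{kl}$ with coefficients that lie in $\cOaeL_{M\times S}$: the matrices $C_{0kk}$, $C_{0ll}$ come from the leading term of the connection (hence are holomorphic in $z$ and $\hbar$-independent) and the $C_{ks}$, $C_{ls}$, $C_{kl}$, $C_{ll}$ are entries of $C=\Omega^{(1)}-C_0$, which are in $\cOaeL_{M\times S}$ with vanishing $0$-th order term in $\hbar$. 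So~(\ref{eq: 14.16}) has exactly the shape $\hbar\partial_{z_1}Q_{kl}=f(z_1, Q_{kl}, \hbar)$ with $f\in \cOaeL_{\bC\times \bC^N\times S}$ treated as a one-parameter ODE in $z_1$ (the remaining variables $z_2,\dots,z_n$ and the other blocks of $Q$ entering as parameters, which we handle exactly as in Subsection~\ref{subsection: linear} by integrating in the remaining directions afterwards and invoking the integrability condition; alternatively, one works in the $1$-dimensional reduction first, as Wasow and Sibuya do).

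First I would verify the nondegeneracy hypothesis of Theorem~\ref{Theorem: Non-linear}: the Jacobian of $[f]$ with respect to the $Q_{kl}$-variables at $\hbar\to 0$. Differentiating the right hand side of~(\ref{eq: 14.16}) in $Q_{kl}$ and then setting $\hbar=0$, the quadratic and the $\sum C_{ks}Q_{sl}$-type terms contribute nothing because every $C_{\bullet}$ (as opposed to $C_{0\bullet}$) vanishes at $\hbar=0$; what remains is the linear map $Q_{kl}\mapsto -C_{0kk}Q_{kl}+Q_{kl}C_{0ll}$. This is the Sylvester operator associated to the pair of blocks $(k,l)$, and it is invertible on the space of rectangular matrices precisely because, by the preceding Lemma (the one identifying the eigenvalues of $C_0$ with $\{d_1\alpha_i(x)\}$) together with the genericity choice $d_1\alpha_i(0)\neq d_1\alpha_j(0)$ for $d\alpha_i\neq d\alpha_j$, the blocks $C_{0kk}$ and $C_{0ll}$ have disjoint spectra on a dense open subset. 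Shrinking the neighborhood of $x$, this disjointness is uniform, so the Sylvester operator is invertible, i.e. the Jacobian of $[f]$ is nondegenerate at $\hbar\to 0$.

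Next I would produce the required formal solution: this is exactly the block-diagonalization computation carried out just above the Lemma. We have already solved the formal equations $C_0Q_r-Q_rC_0=C_r+\sum_{s=1}^{r-1}(Q_sB_{r-s}-C_{r-s}Q_s)+Q'_{r-1}$ inductively with $Q$ anti-diagonal and $B$ block-diagonal, so the formal power series $\widehat{Q}_{kl}=\sum_{i\geq 1}Q_i(z)\hbar^i$ is a formal solution of~(\ref{eq: 14.16}) in the sense of Theorem~\ref{Theorem: Non-linear} (after restricting to the $z_1$-line, the remaining variables being parameters). Then Theorem~\ref{Theorem: Non-linear} applies verbatim (with $\gamma$ the fixed ray of $S$, now the positive real axis by our normalization $\theta=0$) and produces an actual solution in $\cOseL$-coefficients — more precisely in $\cOaeL_{M\times S}$ near $\gamma$ by the construction in that theorem's proof — lifting the formal block off-diagonal part. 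This solution $Q_{kl}$ is nontrivial: its asymptotic expansion is the nonzero series $\widehat{Q}_{kl}$ (nonzero because $C$ is nonzero, otherwise there is nothing to diagonalize), so $Q_{kl}$ itself is nonzero.

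The main obstacle I expect is the bookkeeping at poles and the passage from the $1$-dimensional statement of Theorem~\ref{Theorem: Non-linear} to the full multidimensional, parameter-dependent situation: near a point of $\mathrm{Pole}(\cM)$ one must first pass to the normal-crossing model and the oriented real blow-up, replace $\cOaeL$ by $\cOaeL{}^{,\mathrm{mod}D}$, and check that all the coefficient estimates used in the proof of Theorem~\ref{Theorem: Non-linear} (in particular the uniform bound $|\int_{\Gamma(x)}\exp((x-t)J/\hbar)\chi(x)/\hbar|\leq K\max\|\chi\|$ borrowed from \cite[Lemma 27.1]{Wasow}) survive with the moderate-growth modification; and one must check that the integrability condition of the connection lets one assemble the $z_1$-solutions into a genuine multivariable gauge transformation, exactly as in the inductive argument of Subsection~\ref{subsection: linear}. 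These are the points where the classical Sibuya--Wasow arguments need the Novikov-theoretic adaptation, but the structure is the same: solve along one variable using the contraction-mapping scheme of Theorem~\ref{Theorem: Non-linear}, then propagate in the remaining directions by integrability.
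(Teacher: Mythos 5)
Your proposal is correct and takes essentially the same route as the paper: verify the Jacobian nondegeneracy of Theorem~\ref{Theorem: Non-linear} (which is precisely the invertibility of the Sylvester operator $Q_{kl}\mapsto -C_{0kk}Q_{kl}+Q_{kl}C_{0ll}$, guaranteed by the disjoint spectra of the blocks), observe that the preceding inductive computation supplies the formal solution, and invoke Theorem~\ref{Theorem: Non-linear}. Your closing worries about poles are outside the scope of this particular lemma — the paper invokes it only for $x\in M\bs \mathrm{Pole}(\cM)$ and treats the pole case separately afterward via the real blow-up — but they do not affect the validity of your argument for the stated case.
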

\begin{proof}
We first note that  we have see that (\ref{eq: 14.16}) admits a formal solution. The assumption in Theorem~\ref{Theorem: Non-linear} holds, since $k\neq l$ have different eigenvalues.
Hence the statement is a consequence of Theorem~\ref{Theorem: Non-linear}.
\end{proof}
Hence we can make $\Omega^{(1)}$ into a block-diagonal form in the case when $x\in M\bs \mathrm{Pole}(\cM)$.

For the case when $x\in \mathrm{Pole}(\cM)$, we consider a local modification into a normal crossing divisor and implement the oriented real blowup. Then take an open neighborhood in the blowed-up space. We fix a local coordinate $z_1,..., z_n$
in the neighborhood with $x=(0,0,\cdots, 0)$. We first make a modification so that the formal type becomes a good normal form. By the WKB-regularity, we have a formal decomposition $\bigoplus \cE^{\alpha_i/\hbar}$. Lifting it into analytic in the sector, we have an initial transform.  Now we can follow the first step of the proof of \cite[Proposition 12.8]{SabbahIntrotoStokes}. We can easily check that the obtained transformation is valued in $\cOseL_{M\times S}$ and make $\Omega^{(1)}$ into a block-diagonal form, also in this case. 

Now $\Omega$ is the block diagonal form where each block is labeled by a sheet of the characteristic variety.
We would like to see other components. By the integrability, we have
\begin{equation}
    \hbar\partial_1\Omega^{(j)}_{kl}=\Omega^{(j)}_{kl}\Omega_{ll}^{(1)}-\Omega^{(1)}_{kk}\Omega^{(j)}_{kl}.
\end{equation}
Hence $\Omega^{(j)}_{kl}$ must be of the form $e^{\alpha_k-\alpha_l+c_{kl}/\hbar}$ times an asymptotically expandable function for some $c_{kl}$. Since $\Omega^{(j)}_{kl}$ is asymptotically expandable and $\alpha_k\neq \alpha_l$, we have to have
\begin{equation}
   \Re( \alpha_k-\alpha_l+c_{kl}/\hbar)< 0.
\end{equation}
By taking an enoguh small compact neighborhood $U$, we can see that
\begin{equation}
    c_*:=\min\lc \Re(e^{-i\theta}(\alpha_k(x)-\alpha_l(x)-c_{kl}))\relmid k\neq k, x\in U\rc
\end{equation}
is a positive number. Hence we have a weak ``block-diagonalizability" i.e., $\Omega$ is a block-diagonal form up to $O(e^{-c_*/\hbar})$.
\begin{definition}
We say $\nabla$ is locally weakly semi-globally semisimple if there exists a further transformation that each block becomes diagonal up to $O(e^{-c_*/\hbar})$.
\end{definition}

For example, if the number of sheets of the characteristic variety coincides with the rank of the connection (i.e., each multiplicity is 1), then the connection is automatically weakly locally semisimple.

\section{Exact WKB analysis}

\subsection{Schr\"odinger case}
We briefly recall the summability result for second order $\hbar$-differential equations. For more complete explanations, please refer to \cite{Takei, IwakiNakanishi, kuwagaki2020sheaf}.

Let $C$ be a compact Riemann surface. Let $\cQ$ be a Schr\"odinger equation in the sense of \cite{IwakiNakanishi}. Let $D$ be the pole divisor of the equation. We fix a spin structure, then $\cQ$ defines a flat $\hbar$-connection. We assume the WKB-regularity.

Let $L$ be the characteristic variety, which is a Lagrangian. Let $x_0\in C$ be a turning point. An $\hbar$-Stokes curve emanating from $x_0$ is defined by
\begin{equation}
    \lc x\in C\relmid \Image\int_{x_0}^x\frac{\lambda|_L}{\hbar}=0\rc.
\end{equation}
Assuming the GMN condition~\cite{GMN, BridgelandSmith, IwakiNakanishi} and taking a generic $\hbar$, then the set of Stokes curves give a cell decomposition of $C$. We call each 2-dimensional cell a Stokes region. One of the fundamental theorem in exact WKB analysis is the following:
\begin{theorem}[Koike--Sch\"afke theorem]
Assume the above situation. For a small and generic $\hbar\in \bC^\times$, the exact WKB method works outside the Stokes curves.
\end{theorem}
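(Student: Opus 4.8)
The plan is to deduce the statement from the general summability criterion of Section~15, combined with the classical reduction of the Schr\"odinger problem to a scalar Riccati equation, which has exactly the shape handled by Theorem~\ref{Theorem: Non-linear}. First I would fix a single Stokes region $R$ (a $2$-cell of the decomposition of $C$ cut out by the $\hbar$-Stokes curves) and a small generic $\hbar$ obeying the GMN condition. By construction $R$ contains no pole of $\cQ$ and no turning point, so over $R$ the characteristic variety $L$ (the spectral curve $\xi^2=Q_0(x)$, with $Q_0:=Q|_{\hbar=0}$) restricts to an \emph{unramified} double cover of $R$; write $\pm\alpha$ for primitives of $\lambda|_L$ on the two sheets, so $d\alpha=\sqrt{Q_0}\,dx$. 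The defining condition of $R$ is precisely $\Im\big(\int \lambda|_L/\hbar\big)\neq 0$, i.e. the difference $2\,\Re(\alpha/\hbar)=\Re\big((\alpha-(-\alpha))/\hbar\big)$ has constant nonzero sign on $R$; this sign condition, together with $d\alpha\neq -d\alpha$, is exactly the input that makes the dominant/subdominant bookkeeping, hence the analytic lifting below, go through.

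Next I would establish summability on $R$ by two complementary routes that amount to the same computation. Structurally: the spin structure turns $\cQ$ into a rank $2$ flat $\hbar$-connection whose spectral curve has exactly two sheets over $R$, so the number of sheets equals the rank, and by the remark closing Section~15 the connection $\cQ|_R$ is automatically weakly (locally) semi-globally semisimple; since $\cQ$ is WKB-regular by hypothesis, the summability criterion of Section~15 applies and $\cQ|_R$, viewed as an $\cOaeL_{R\times S}$-valued connection, is summable at any small $\theta$ in the chosen ray. Concretely: the logarithmic-derivative substitution $\psi=\exp(\hbar^{-1}\!\int S)$ converts the equation into the Riccati equation $\hbar S'+S^2=Q$, and writing $S=\sqrt{Q_0}+\hbar T$ yields $\hbar T' = -2\sqrt{Q_0}\,T-(\sqrt{Q_0})'+Q_1-\hbar T^2+O(\hbar)$, a nonlinear $\hbar$-differential equation whose linearization in $T$ has leading coefficient $-2\sqrt{Q_0}\neq 0$ on $R$; a formal solution (the WKB series) exists, so Theorem~\ref{Theorem: Non-linear} produces an analytic lift. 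Repeating with $\sqrt{Q_0}\mapsto -\sqrt{Q_0}$ gives the second solution, and the Stokes-region sign condition guarantees that the two lifts remain a well-conditioned basis (the dominant solution does not absorb the subdominant one), so $\cQ|_R$ is summable and $\Sol^\hbar_S$ sends it locally to the pair $S_\alpha\oplus S_{-\alpha}$, with the $\cD^\hbar$-module identifications provided by Corollary~\ref{cor: rank1summability}. This is the precise meaning of ``the exact WKB method works outside the Stokes curves.''

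The main obstacle I expect is \emph{uniformity up to the boundary} $\partial R$, which consists of Stokes curves: there the sign of $\Re(\alpha/\hbar)$ degenerates, so the hypotheses of Theorem~\ref{Theorem: Non-linear} (distinct eigenvalues with controlled real parts, integration contours $\Gamma(x)$ staying in the region of exponential decay, the uniform bound imported from \cite[Lemma~27.1]{Wasow}) hold uniformly only on compact subsets of the open region $R$; one therefore proves summability on an exhaustion of $R$ and patches over the overlaps. A second, more bookkeeping point is that our notion of summability is formulated over $\cOseL$/$\cOaeL$, which is a priori weaker than genuine Borel summability: to recover the classical Koike--Sch\"afke statement one additionally records that the lift furnished by Theorem~\ref{Theorem: Non-linear} is asymptotically expandable with the WKB series as its expansion, and then invokes a Watson--Nevanlinna-type argument on the sector of opening $>\pi$ made available by the genericity of $\hbar$. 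Finally, genericity of $\hbar$ is what ensures the Stokes graph is an honest cell decomposition (no saddle connections), so that ``outside the Stokes curves'' is a finite union of such regions $R$; the saddle-trajectory locus in $\hbar$ is excluded exactly as in \cite{GMN, IwakiNakanishi}.
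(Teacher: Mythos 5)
The paper does not prove this theorem at all: it is cited as a known classical result, with the line immediately following it, ``For the proof, one can refer to \cite{Takei}.'' The Koike--Sch\"afke theorem is an external input to Section~16, used to compare the sheaf-quantization construction of \cite{kuwagaki2020sheaf} with the functor $\Sol^\hbar_S$ of this paper. The classical proofs (Koike--Sch\"afke, Takei, Nikolaev) proceed by Borel--Laplace resummation: one shows that the Borel transform of the WKB series is endlessly continuable with controlled singularities and then that the Laplace integral converges in a sector of opening strictly greater than $\pi$, which is the content of ``Borel summability.'' Your proposal is therefore not a reconstruction of the paper's argument but an attempt to \emph{rederive} the imported theorem from the paper's own Section~15 machinery.

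As such a rederivation, the proposal has a real gap that you partly flag but do not close. Section~15, Theorem~\ref{Theorem: Non-linear}, and Corollary~\ref{cor: rank1summability} establish \emph{summability in the paper's axiomatic sense}, i.e.\ the existence of a solution basis $e^{\alpha_i/\hbar}\widetilde\psi_i$ with $\widetilde\psi_i\in\cOseL$ obtained by Wasow/Sibuya-type analytic lifting. This is strictly weaker than the Borel summability of the WKB series asserted by the Koike--Sch\"afke theorem: the analytic lift furnished by Theorem~\ref{Theorem: Non-linear} is produced by a fixed-point iteration with the correct asymptotic expansion, but there is no guarantee that it is the Borel sum of that expansion, nor that the expansion is Borel summable at all. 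The Watson--Nevanlinna step you invoke to bridge this gap requires a uniform Gevrey-$1$ error estimate on a sector of opening $>\pi$; Theorem~\ref{Theorem: Non-linear} is proved on an arbitrary sectoroid $S$ and the bound imported from \cite[Lemma 27.1]{Wasow} degrades near the boundary rays of $S$, so you do not automatically get the required estimate on a wide enough sector, and the genericity of $\hbar$ does not by itself supply it. Since the Koike--Sch\"afke theorem is genuinely a harder analytic statement that the paper deliberately treats as a black box, the honest conclusion is that the Section~15 machinery recovers the Schr\"odinger-case summability in the weaker Novikov sense (which is what Section~16 actually uses), but does not, without substantial additional work on the Borel side, reprove the classical theorem as stated.

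Two smaller points. First, ``the exact WKB method works outside the Stokes curves'' also packages the identification of the Borel sums with the normalized WKB solutions (Voros symbols) on each Stokes region, which your route does not address and which is needed for the Voros connection formulas used later in Section~16. Second, your reduction to the Riccati equation and appeal to the remark at the end of Section~15 (multiplicity~one $\Rightarrow$ weak local semisimplicity) is a correct and clean way to see that the Section~15 hypotheses hold for a WKB-regular Schr\"odinger operator on a Stokes region, so the part of your argument establishing Novikov-summability on each $R$ is sound; it is only the upgrade to the classical Borel-summability statement that is not supplied by the paper's tools.
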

For the proof, one can refer to \cite{Takei}. If one further assumes the simpleness of the turning points, the transformation occurring on the solution basis can be explicitly determined by Voros formula~\cite{Voros, KawaiTakei}.

Based on these results, in the previous paper~\cite{kuwagaki2020sheaf}, we construct a sheaf quantization from a Schr\"odinger equation as an $\bR$-equivariant sheaf. We explain a slightly modified construction which produces a $\bC$-equivariant sheaf. For the terminologies, we refer to loc. cit. 

Let us take a sectoroid $S$ such that the Koike--Sch\"afke theorem holds. We also fix $\hbar\in S$.

For each Stokes region $R$, take a base point $x_0\in R$. Over each Stokes region, the spectral curve is a trivial cover $L|_R=L_1\sqcup L_2$. We take a local sheaf quantization as
\begin{equation}
    \bigoplus_{c\in \bC}\bigoplus_{i=1,2}\bC_{\lc (x, t)\in C\times \bC_t\relmid t\in- \int_{x_0}^x  \frac{\lambda|_{L_i}}{\hbar}+c+S^\vee\rc} \star 1_S.
\end{equation}
This has canonically an $\bC$-equivariant structure.

We will glue up these local sheaf quantizations to produce a genuine sheaf quantization. In \cite{kuwagaki2020sheaf}, the gluing morphisms consist of the Voros gluings and the base point changes.

For the Voros gluing, we can implement exactly as in \cite{kuwagaki2020sheaf}. For the base point change, we will define it in a slightly different way. Let $x_1$ be another base point. Then the solutions are connected by the multiplication of 
\begin{equation}
    \exp\lb \int_{x_0}^{x_1} \frac{\lambda|_{L_i}}{\hbar}\rb\exp\lb\sum_{i\geq 0} S_i \rb.
\end{equation}
where the right part is an asymptotically expandable function.
We consider the associated identification of sheaves by taking the direct sum of 
\begin{equation}
\begin{split}
   \bC_{\lc (x, t)\in C\times \bC_t\relmid t\in- \int_{x_0}^x  \frac{\lambda|_{L_i}}{\hbar}+c+S^\vee\rc} \xrightarrow{\cdot \exp\lb\sum_{i\geq 0} S_i \rb} &\bC_{\lc (x, t)\in C\times \bC_t\relmid t\in- \int_{x_0}^x  \frac{\lambda|_{L_i}}{\hbar}+c+S^\vee\rc}\\
    &= \bC_{\lc (x, t)\in C\times \bC_t\relmid t\in- \int_{x_1}^x  \frac{\lambda|_{L_i}}{\hbar}+c'+S^\vee\rc}.
\end{split}
\end{equation}
where $c'=-\int_{x_0}^{x_1} \frac{\lambda|_{L_i}}{\hbar}+c$. We can also fulfill the turning points as in \cite{kuwagaki2020sheaf}. We would like to denote the resulting sheaf quantization by $S_\cQ$, which is defined on $C\bs D$. On the other hand, we can get a sheaf quantization by applying $\Sol^\hbar_{S}$ to $\cQ$ on $C\bs D$. Since both $S_{\cQ}$ and $\Sol^\hbar_{S}(\cQ)$ see the transformation between solutions of $\cQ$, we obtain the following:
\begin{proposition}
$S_{\cQ}\cong \Sol^\hbar_{S}(\cQ)$.
\end{proposition}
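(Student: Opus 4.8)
The plan is to compare the two sheaf quantizations locally over each Stokes region and then check that the gluing data agree, so that the local isomorphisms patch to a global one. Both $S_\cQ$ and $\Sol^\hbar_S(\cQ)$ are objects of $\SQ_S(T^*(C\bs D))$ with $\musupp = L$, and over a Stokes region $R$ the spectral curve splits as $L|_R = L_1 \sqcup L_2$. By Proposition~\ref{prop: stratification} (and the $1$-dimensional version of Lemma~\ref{lem: expsolutions}), the restriction $\Sol^\hbar_S(\cQ)|_R$ is a direct sum of the elementary pieces $S_{\alpha_i}$, where $\alpha_i = \int_{x_0}^x \lambda|_{L_i}$ is a primitive of the Liouville form restricted to the $i$-th sheet; this is precisely the local model
\begin{equation}
    \bigoplus_{c\in \bC}\bigoplus_{i=1,2}\bC_{\lc (x, t)\relmid t\in- \int_{x_0}^x  \frac{\lambda|_{L_i}}{\hbar}+c+S^\vee\rc} \star 1_S
\end{equation}
used to define $S_\cQ|_R$. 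So first I would record a canonical local isomorphism $S_\cQ|_R \cong \Sol^\hbar_S(\cQ)|_R$ for each Stokes region $R$, normalized by the chosen base point $x_0 \in R$.

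Next I would check that the two sets of gluing morphisms coincide. There are two kinds of transition data in the construction of $S_\cQ$: the Voros gluing across a Stokes curve, and the base point change within a cell or between overlapping cells. The key point is that $\Sol^\hbar_S(\cQ)$ is built from the sheaf of actual (sub-exponential) solutions of $\cQ$, so its transition maps over the overlaps are exactly the linear transformations relating bases of solutions. The base point change contributes the multiplication by $\exp(\int_{x_0}^{x_1}\lambda|_{L_i}/\hbar)\exp(\sum_{i\ge0}S_i)$, which is by construction the identification used in $S_\cQ$; here I would invoke the Koike--Sch\"afke theorem (valid on $S$ by hypothesis) to guarantee that the WKB solutions $e^{\alpha_i/\hbar}\exp(\sum S_i)$ are genuine sub-exponential solutions, hence elements of $\cOseL$, so that these morphisms are well-defined morphisms in $\Sh_S^\bC$. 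For the Voros gluing, I would similarly note that the connection formula across a Stokes curve (Voros' formula for simple turning points, or more generally the Koike--Sch\"afke connection matrix) describes exactly how the local solution bases of $\cQ$ transform, which is what $\Sol^\hbar_S(\cQ)$ encodes. Thus the cocycle defining $S_\cQ$ equals the one defining $\Sol^\hbar_S(\cQ)$, and the local isomorphisms assemble into a global isomorphism on $C\bs D$.

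Finally I would address the turning points: both sheaf quantizations are defined on $C\bs D$, but one must check the isomorphism is compatible with the filling-in procedure over turning points as performed in \cite{kuwagaki2020sheaf}. Since $\Sol^\hbar_S(\cQ)$ automatically exists over turning points (solutions of a holonomic $\cD^\hbar$-module exist everywhere) and its restriction to a punctured neighborhood of a turning point matches the local model glued in \cite{kuwagaki2020sheaf}, the isomorphism extends by the uniqueness part of sheaf quantization over such a point.

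The main obstacle I expect is the second step: matching the explicit transition data. The construction of $S_\cQ$ in \cite{kuwagaki2020sheaf} uses specific combinatorial gluing morphisms (Voros jumps, base point changes) written in coordinates, while $\Sol^\hbar_S(\cQ)$ is defined functorially; identifying the two requires carefully tracking the normalizations of solution bases and the orientation/sign conventions for the lifts $\widehat{L}_i$ (the $B$-field shift discussed in the remark after the local model), and invoking the exact WKB summability results (Koike--Sch\"afke, Voros' formula) to guarantee that the formal WKB series genuinely lift to sub-exponential solutions so that all the transition maps land in $\cOseL_S$ rather than merely in a formal completion. Once the dictionary between the two conventions is fixed, the identification of cocycles is a direct but somewhat lengthy bookkeeping that I would only sketch.
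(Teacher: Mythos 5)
Your proposal is correct and expands, in more detail, exactly the one-line justification the paper gives: both $S_\cQ$ and $\Sol^\hbar_S(\cQ)$ are locally $\bigoplus_i S_{\alpha_i}$ over each Stokes region and both are glued by the same transition matrices between solution bases (Voros connection formulas and base-point changes), so the cocycles agree. Your flagged obstacle about matching normalizations and the B-field convention is a fair bookkeeping concern, but it is the same point the paper acknowledges in the remark immediately following, so the approach and the content coincide.
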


\begin{remark}
When we define the microlocalization in \cite{kuwagaki2020sheaf}, we include B-field twist in the definition. This is because we did not include the information about $\exp\lb \Im \int \frac{\lambda|_{L_i}}{\hbar}\rb$ in the construction of the sheaf quantizations. In the present construction, we have already included the information. So we do not need the twist. Instead, we slightly change the gluing construction of the local system on $L$ where we took only the information coming from $ \exp\lb\Re \int \frac{\lambda|_{L_i}}{\hbar}\rb$. By using the complexified Novikov field, it is obvious that we can also include the information coming from $\exp\lb\Im  \int \frac{\lambda|_{L_i}}{\hbar}\rb$. 
\end{remark}

\begin{remark}
Iwaki--Nakanishi~\cite{IwakiNakanishi} studied cluster transformations arising in the theory of exact WKB analysis. In \cite{kuwagaki2020sheaf}, it is interpreted as a different microlocalizations.

Here if one takes a sectoroid $S$ to be one contains $\hbar_1$ and $\hbar_2$, then we can take different microlocalizations: one is along the dual of $\hbar_1$ and one is along the dual of $\hbar_2$. These different microlocalizations are related by a cluster transformation if there exists a wall between $\hbar_1$ and $\hbar_2$.
\end{remark}

\subsection{Higher order case: Spectral scattering construction}
Let us sketch a higher order case. The details will be explained in elsewhere~\cite{KuwNov}. We will keep the same situation as in the last section, but we will allow higher order differential equations.

Let $\cQ$ be an $\hbar$-differential operator. By fixing a spin structure, we lift it to an $\hbar$-connection. We assume the WKB-regularity. In higher order case, the Stokes geometry is more complicated and the summability result is not available. Some discussions toward the summability can be found in \cite{Virtual}. We briefly discuss an ideal story (i.e., folklore expectations).

Let $L$ be the characteristic variety of $\cQ$. Again, we have the set of turning points. For each point $x\in C$ outside the turning points, we can construct a formal WKB solution. We apply the Borel transformation to this formal solutions. The first conjecture is the following:
\begin{conjecture}
The Borel transformation is endlessly continuable. 
\end{conjecture}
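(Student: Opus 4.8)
The plan is to follow the resurgence-theoretic strategy that is by now classical in the Schr\"odinger case and to push it to operators of arbitrary order, the genuinely new difficulty being the control of the Borel plane near higher-order turning points and along bifurcating Stokes geometry.

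First I would fix a point $x$ outside $\Turn(\cQ)$ and write the formal WKB solution of $\cQ$ attached to a sheet $L_i$ of $L$ as $\exp\!\big(\int \lambda|_{L_i}/\hbar\big)\cdot\sum_{j\ge 0}S_j(x)\,\hbar^{j}$. The first step is to establish Gevrey-$1$ bounds $|S_j|\le AB^{j}j!$ on every simply connected compact $K\subset C\setminus\Turn(\cQ)$. These should follow from the triangular recursion that defines the $S_j$ — a system governed by the characteristic polynomial of the symbol of $\cQ$, whose discriminant is invertible off the turning locus — by the standard majorant / Cauchy-estimate argument. Hence the formal Borel transform in $\hbar$ of the subdominant factor converges on a disc about the origin of the Borel plane $\bC_\xi$, giving a holomorphic germ $\widehat{\psi}_i$.

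Second I would pin down the candidate singular locus of $\widehat{\psi}_i$: it should be the discrete, $x$-dependent set of values of the relative actions $\int_{x_0}^{x}\big(\lambda|_{L_i}-\lambda|_{L_k}\big)$ together with the contributions generated at the turning points. The structural input is that $\widehat{\psi}_i$ satisfies a convolution equation, obtained by applying the formal Borel transform to the Riccati-type equation for the logarithmic derivative of the WKB solution (or directly to $\cQ$): under this transform pointwise product becomes convolution, $\hbar\partial_x$ becomes $\partial_x$, and division by $\hbar$ becomes $\int_0$. One then invokes the stability of the space of endlessly continuable functions under exactly these operations (\'Ecalle's theorem, in the Candelpergher--Nosmas--Pham / Delabaere formulation) and runs a Picard iteration in the Fr\'echet space of endlessly continuable germs; the fixed point is endlessly continuable with singular support in the predicted locus, and gluing the resulting analytic continuations across Stokes regions (as in the Schr\"odinger construction recalled above) upgrades this to a statement valid on all of $C\setminus\Turn(\cQ)$.

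The hard part will be the turning points. Near a simple turning point the local model is Airy and one has an explicit Borel resummation; when three or more sheets of $L$ collide, or the ramification is of higher order, no closed model is available and one must build a local normal form for $\cQ$ and prove endless continuability for it by hand — this is precisely where the argument ceases to be routine, and is why the statement is only conjectural. A secondary obstruction, already visible in the fourth-order case, is that Stokes curves bifurcate, producing new Stokes curves / virtual turning points \cite{Virtual}; one must check that the extra Borel-plane singularities they create are again only relative-action values and do not accumulate on paths of bounded length. I expect steps one and two (Gevrey bounds, convolution equation, \'Ecalle stability) to be essentially mechanical, and the turning-point analysis to carry the weight of the proof; a natural intermediate target is the case in which $L\to C$ has only simple ramification, where Airy-type models suffice and the stability theorem can be applied globally.
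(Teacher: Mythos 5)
The statement you were asked to address is labeled a \emph{conjecture} in the paper, and the paper offers no proof of it; it is stated precisely because endless continuability of the Borel transform of higher-order WKB solutions is open. Your submission is not a proof either, and to your credit you say so: the turning-point analysis and the bifurcating/virtual Stokes geometry are explicitly flagged in your last paragraph as the place where the argument ``ceases to be routine.'' So what you have written is a plausible research programme, not a proof, and that is consistent with the paper's own assessment.

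A few comments on the programme itself, since it is close in spirit to the ``ideal story'' the paper sketches around this conjecture. Steps one and two are less mechanical than you suggest. Gevrey-$1$ bounds on the $S_j$ off the turning locus are indeed standard, but passing from ``Borel transform holomorphic near $0$'' to ``endlessly continuable'' via a convolution equation and \'Ecalle-type stability is delicate: the class of endlessly continuable germs is stable under convolution only with control on how singularities proliferate, and the real issue is to show that the singular support does not accumulate along paths of bounded length. This is exactly the Gromov-compactness condition the paper isolates (finiteness of the Stokes/BPS spectrum below any cutoff), and it is not a ``secondary obstruction'' but the heart of the matter even away from turning points: iterated convolution produces singularities at sums of relative actions, and without a finiteness input these can be dense. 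You should make the dependence on that finiteness hypothesis explicit rather than treating it as an afterthought. Finally, even in the simple-ramification case you propose as an intermediate target, the Airy local model controls the Borel plane only locally near the turning point; matching the local resummation to the global convolution fixed point across a full Stokes region is itself a nontrivial gluing problem, and no reference currently does this in the generality of an arbitrary-order operator on a Riemann surface.

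In short: your outline is a reasonable and fairly standard resurgence strategy, it correctly identifies the higher-order turning points and Stokes bifurcation as the genuine difficulty, but it does not constitute a proof, and the paper never claims one.
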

Roughly speaking, the endless continuability says that the function admits analytic continuation endlessly with certain poles. For a part of the precise definition, we refer to \cite{KamimotoSauzin}, while the original reference is Ecalle~\cite{Ecalle}. 

We further assume that 
\begin{conjecture}
The Borel transformation is Laplace transformable along any path going to $\infty$ avoiding poles.
\end{conjecture}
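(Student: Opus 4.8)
The plan is to derive the Borel-transformed equation attached to $\cQ$ and to run a majorant/fixed-point argument adapted to the non-compact geometry of the Borel plane, propagating local estimates along the chosen path of integration.

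First I would write $\cQ=\sum_{k}a_k(x)(\hbar\partial_x)^k$, fix a simple root $\xi=p(x)$ of the principal symbol (a sheet of the characteristic variety $L$) and the associated formal WKB solution $\psi(x,\hbar)=\exp\!\left(\tfrac1\hbar\int^x p\,dx\right)\sum_{n\geq 0}\psi_n(x)\hbar^n$. Conjugating $\cQ$ by the exponential prefactor produces a differential operator whose symbol vanishes on constants (because $p$ is a simple root), and the formal Borel transform in $\hbar$ of the amplitude series turns the equation for $\widehat\psi(y,z)$, where $y=x-x_0$, into a linear convolution equation
\begin{equation}
  \partial_y\widehat\psi(y,z)=\int_0^z K(y,z-w)\,\widehat\psi(y,w)\,dw+R(y,z),
\end{equation}
with kernel $K$ and inhomogeneity $R$ built from the coefficients $a_k$ and the differences $p_i-p_j$ of the sheets of $L$. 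A standard Picard iteration with a geometric majorant then gives holomorphy of $\widehat\psi$ near $z=0$ together with a bound $|\widehat\psi(y,z)|\leq Ce^{A|z|}$ on a full neighborhood of the origin.

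Next I would propagate this estimate along a path $\gamma$ in the Borel plane avoiding the singular set. Granting the endless-continuability conjecture, the only obstructions are the discrete ``action'' singularities lying over $z\in\{\int_{x_0}^{x}(p_i-p_j)\,dx\}$, and away from them $\widehat\psi$ continues holomorphically. Restricting the convolution equation to a thin tube around $\gamma$ and iterating the integral operator along $\gamma$ yields a series whose $n$-th term is bounded by $(A\,\mathrm{length}(\gamma))^n/n!$ times the local bound, hence $|\widehat\psi|\leq Ce^{A|z|}$ on the whole tube; combined with integrability at $z=0$ this makes the Laplace integral $\int_\gamma e^{-z/\hbar}\widehat\psi(y,z)\,dz$ convergent for $\hbar$ in the half-plane associated to the asymptotic direction of $\gamma$, which is the assertion.

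The hard part is the uniform, direction-independent growth control, which is exactly the role played by the Koike--Sch\"afke theorem in the second-order case treated in \cite{Takei}. For higher order two new difficulties appear: the kernel mixes several sheets $p_i-p_j$ at once, so the iterated convolutions branch into trees and one must show that summing over all branchings still leaves factorial decay of the tail; and the Borel-plane singular locus is enlarged by images of new (virtual) turning points (cf.\ \cite{Virtual}), so that ``avoiding poles'' has to be read relative to this larger set and one must verify that $\gamma$ can be deformed to stay in the domain of holomorphy while keeping a fixed direction at infinity. Making the tube estimates uniform as $\gamma$ approaches this enlarged locus, and handling the confluence of singularities over turning points, is where the genuine analysis is required, and a higher-order analogue of Koike--Sch\"afke is not presently available.
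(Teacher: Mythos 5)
This statement is labeled a \emph{conjecture} in the paper precisely because no proof is given: Section~16 presents it, together with the preceding endless-continuability conjecture, as part of an ``ideal story (i.e., folklore expectations)'' for the higher-order case, and the paper immediately moves on to describe what one \emph{would} do if these conjectures held. So there is no proof in the paper against which to compare your argument.

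Your proposal is a sensible sketch of the standard Borel-plane strategy (conjugate by the exponential prefactor, pass to the convolution equation, Picard iteration with exponential majorants, propagate along a tube around $\gamma$), and to your credit you end by identifying exactly the obstruction that is the reason this remains open: the branching of iterated convolutions over several sheets $p_i-p_j$, the enlargement of the singular locus by virtual turning points, and the absence of a higher-order Koike--Sch\"afke theorem giving uniform, direction-independent growth bounds. But that admission means what you have written is an outline of a research program, not a proof. In particular, the step where you claim $|\widehat\psi|\leq Ce^{A|z|}$ on the whole tube via a $(A\,\mathrm{length}(\gamma))^n/n!$ bound is precisely where the argument would need new input --- as the path $\gamma$ lengthens and approaches the (conjecturally endlessly continuable but a priori accumulating) singular set, keeping the constants $A$, $C$ uniform is not guaranteed by the local estimate near $z=0$, and this is the content of Koike--Sch\"afke in rank two. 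The paper's decision to leave this as a conjecture, and to instead axiomatize the needed consequence via the notion of summability, is consistent with the gap you name.
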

It is moreover expected that the Borel transformation is Ecalle--Laplace transformable for any reasonable averaging. See the discussion in \cite{EcalleM}.

To predict a distribution of the poles in the first sheet of the Riemann surface of the Borel transformation, we introduce the language of Stokes geometry.
\begin{definition}
Fix $\theta\in \bR/\bZ$. 
\begin{enumerate}
    \item A $\theta$-preStokes curve is a curve on $C$ on which $\int \frac{\lambda}{e^{i\theta}}\in \bR$ is satisfied.
    \item A 0-th Stokes curve is a $\theta$-preStokes curve emanating from a turning point.
    \item An $i$-th Stokes curve is a $\theta$-preStokes curve emanting from an intersection point of $j$-th and $j'$-th Stokes curves with $j, j'<i$.
\end{enumerate}
\end{definition}
As it is explained in \cite{kuwagaki2020sheaf}, each Stokes curve carries a weight (a.k.a. BPS mass). It is expected that we can construct a set $\frakS$ of Stokes curves such that each Stokes curve in it carrying transformations of the solution basis like
\begin{equation}
    \psi_i\rightsquigarrow \psi_i+\sum a_{ij}e^{-c/\hbar}\psi_j
\end{equation}
where $c$ is the weight. For a given point $x\in C$, the weights of the Stokes curves in $\frakS$ passing through $x$ is expected to be the set of poles of the Borel transformations. Also, $\frakS$ is expected to be finite at any cutoff at $c\in \bR_{>0}$. This property is called the Gromov compactness and studied in \cite{kuwagaki2020sheaf}.

If one can complete this story, we can construct a sheaf quantization from $\frakS$ as we have done in \cite[\S 12]{kuwagaki2020sheaf}. Again, since the resulting sheaf quantization sees the transformation matrices between the solution bases, it should coincide with the one obtained by $\Sol^\hbar_{S}$.

\begin{remark}
Assuming the conjectural full results of exact WKB analysis, one can partially refine our functor, for example, about coefficients. This will be discussed in elsewhere (e.g. \cite{KuwNov}).
\end{remark}

\section{Relation to the holonomic Riemann--Hilbert correspondence}
We would like to compare our sheaf quantization with D'Agnolo--Kashiwara's Riemann--Hilbert correspondence.

\subsection{Enhanced ind-sheaves and holonomic Riemann--Hilbert correspondence}
We would like to recall the theory of enhanced ind-sheaves briefly.

The topological side is the notion of enhanced ind-sheaves. We consider $\overline{\bR}:=[0,1]$ as a two point compactification of $\bR$ via $\bR\cong (0,1)\hookrightarrow [0,1]=\overline{\bR}$. We set 
\begin{equation}
D^b(\mathrm{I}\bC_{M\times(\overline{\bR},\bR)}):=D^b(\mathrm{I}\bC_{M\times\overline{\bR}})/D^b(\mathrm{I}\bC_{M\times{\overline{\bR}\bs \bR}})
\end{equation}
where $D^b(\mathrm{I}\bC_M)$ is the bounded derived category of ind-sheaves over $M$. 
We set $\bC_{t\lesseqgtr 0}:=\bC_{\lc (x,t)\in M\times \overline{\bR}\relmid t\in \bR, t\lesseqgtr 0 \rc}$. We denote the convolution product along $\bR_t$ by $\potimes$. We set 
\begin{equation}
\mathrm{IC}_{t^*=0}:=\lc K\relmid K\potimes \bC_{\leq 0}\simeq 0, K\potimes \bC_{\geq 0}\simeq 0\rc.
\end{equation}
The category of enhanced ind-sheaves over $M$ is defined by 
\begin{equation}
E^b({\mathrm{I}\bC_M}):=D^b(\mathrm{I}\bC_{M\times (\overline{\bR}, \bR)})/\mathrm{IC}_{t^*=0}.
\end{equation}
We set $\bC^E_M:=\underset{a \rightarrow\infty}{\forlim}\bC_{t\geq a}$ as an object of $E^b({\mathrm{I}\bC_M})$. As usual, $\forlim$ means Ind-colimit. An object of $E^b({\mathrm{I}\bC_M})$ is said to be $\bR$-constructible if there locally exists an $\bR$-constructible sheaf $\cE$ on $M\times \bR_t$ and the object is locally isomorphic to $\cE\potimes \bC^E_M$. 
\begin{theorem}[\cite{DK}]
There exists a contravariant fully faithful functor $\mathrm{Sol}^E\colon D^b_{hol}(\cD_M)\hookrightarrow E^b(\mathrm{I}\bC_M)$ where $D^b_{hol}(\cD_X)$ is the derived category of holonomic $\cD_M$-modules.
\end{theorem}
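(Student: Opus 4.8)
The plan is to follow the strategy of D'Agnolo--Kashiwara: build $\mathrm{Sol}^E$ from an enhanced version of the sheaf of tempered holomorphic functions, prove its compatibility with the six operations, and then reduce full faithfulness to an explicit computation on elementary meromorphic connections by means of the Mochizuki--Kedlaya--Sabbah structure theorem for holonomic modules. Concretely, one starts from the ind-sheaf $\cO^t_M$ of tempered holomorphic functions, together with its de~Rham partner $\Omega^t_M$, and assembles the enhanced object $\cO^E_M \in E^b(\mathrm{I}\bC_M)$ by incorporating the extra variable $t$ so that $e^{t}$-type twists are built into the coefficients; one then sets $\mathrm{Sol}^E(\cM) := \Hom_{\cD_M}(\cM, \cO^E_M)$ (all functors derived, as in Section~5). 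The first checkpoint is that $\mathrm{Sol}^E$ lands in the subcategory of $\bR$-constructible enhanced ind-sheaves. For holonomic $\cM$ this is proved by d\'evissage: holonomicity and $\bR$-constructibility are stable under the projective modifications, ramified coverings and stratifications produced by the structure theorem, and on each elementary stratum one invokes the tempered de~Rham comparison.

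The core is full faithfulness, i.e.\ $\Hom_{E^b(\mathrm{I}\bC_M)}(\mathrm{Sol}^E\cN, \mathrm{Sol}^E\cM) \xrightarrow{\sim} \Hom_{\cD_M}(\cN, \cM)$. I would first establish that $\mathrm{Sol}^E$ intertwines proper direct image, non-characteristic inverse image, external and internal tensor product, and duality with their enhanced counterparts. With these compatibilities available, the formal ``$\epsilon/\eta$'' calculus through the diagonal kernel $\cB_\Delta$ (the same bookkeeping recalled in the duality-interpretation paragraph of Section~5, and mirrored for sheaf quantizations in Section~8) reduces the claim to a reconstruction isomorphism recovering $\cM$ from $\mathrm{Sol}^E\cM$ (the enhanced analogue of the statement that a regular holonomic module is recovered from its solution complex), which is local on $M$ and compatible with the d\'evissage. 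The structure theorem then leaves two model cases: (i) regular holonomic modules, where $\mathrm{Sol}^E(\cM) \simeq \bC^E_M \potimes \mathrm{Sol}(\cM)$ and one quotes the classical Kashiwara--Mebkhout correspondence; and (ii) the rank-one exponential model $\cE^{\varphi} = (\cO_M(*D),\, d + d\varphi)$ on a germ of curve with $\varphi$ meromorphic, for which $\mathrm{Sol}^E(\cE^\varphi)$ is computed by hand to be a shift of $\bC^E_{\{t + \Re\varphi \geq 0\}}$ and the $\Hom$ on both sides is evaluated explicitly. Reassembling these local identifications along the stratification and the operations yields the theorem.

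The main obstacle is the compatibility of $\mathrm{Sol}^E$ with proper direct image, together with the finiteness of the output. One must show that tempered holomorphic cohomology commutes with proper pushforward in the enhanced framework --- the analytically delicate input, which rests on precise estimates of moderate growth near the exceptional divisors of the resolutions used in the d\'evissage --- and simultaneously that $\bR$-constructibility survives these operations; the parallel non-characteristic base-change statement is of comparable difficulty. Once the operational compatibilities are in place, the reduction to curves and the explicit rank-one computation are essentially formal, and the regular case is imported verbatim from Kashiwara--Mebkhout.
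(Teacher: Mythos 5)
The paper does not prove this statement: it is recalled verbatim as D'Agnolo--Kashiwara's theorem, cited to \cite{DK}, and no argument is supplied --- it serves only as a reference point for the comparison functor $E(\cE,\cC)$ built in Section~17. So there is no ``paper's own proof'' to compare against.

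That said, your sketch is a reasonably faithful summary of the strategy of the original D'Agnolo--Kashiwara paper: build $\cO^E_M$ out of tempered holomorphic functions with the auxiliary variable $t$, define $\Sol^E = R\cIhom_{\cD}(\,\cdot\,,\cO^E_M)$, prove $\bR$-constructibility and compatibility with the six operations, use the $\epsilon/\eta$ duality calculus through $\cB_\Delta$ to reduce full faithfulness to a reconstruction statement, and then apply Mochizuki--Kedlaya--Sabbah d\'evissage to land on the rank-one exponential model and the regular (Kashiwara--Mebkhout) case. Two small caveats. First, the logical order in \cite{DK} is slightly different from how you phrase it: the reconstruction morphism $\cM \to R\cHom^E(\Sol^E\cM,\cO^E_M)$ is the central object, and full faithfulness is deduced from it being an isomorphism together with the operational compatibilities; the $\epsilon/\eta$ machinery serves to propagate duality rather than to directly ``reduce'' full faithfulness. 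Second, be careful with the claim that $\bR$-constructibility is stable under the modifications appearing in the structure theorem --- this is true but is itself one of the technical cores of \cite{DK} (their Section 6), not a formal consequence of d\'evissage, and it relies on the moderate-growth estimates you mention only in passing. Neither point is a gap in the sense of invalidating the sketch, but both deserve more than a clause if you were actually writing out the proof.
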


\subsection{Convergent lattice and associated enhanced ind-sheaf}

Fix a sectoroid $S$ and $\hbar\in S$.
Let $\cE$ be an object of $\SQ_S(T^*M)$. Suppose that there exists a divisor $D\subset M$ such that $\musupp(\cE)$ is on $M\bs D$ and the projection $\musupp(\cE)\rightarrow M\bs D$ is a ramified covering. For a point $x\in D$, take a local modification around $x$ which makes $D$ into a normal crossing divisor. Furthermore, take an oriented real blow up of the normal crossing divisor. We fix such a modification $\widetilde{M_x}$ for each point of $x\in D$.

\begin{definition}
In the above setting, a convergent lattice is the following data:
\begin{enumerate}
\item a collection $\lc U_i\rc$ of open coverings of $M\bs D$ and $\widetilde{M_x}$
    \item $\cE|_{U_i}\cong (\bigoplus_{c\in \bC}T_c\cE_i)\star 1_S$ where $\cE_i$ is a constructible sheaf in $\Sh_S(M\times \bC_t)$
\end{enumerate}
satisfying that
\begin{enumerate}
    \item $\cE_i\cong T_{c_{ij}}\cE_j$ on $U_i\cap U_j$ for some $c_{ij}\in \bC$,
    \item the gluing morphism $g_{ij}\colon \cE|_{U_i\cap U_j}\xrightarrow{\cong} \cE|_{U_i\cap U_j}$ is convergent in the following sense: The morphism space $\Hom(\cE|_{U_i\cap U_j}, \cE|_{U_i\cap U_j})$ is identified with ${\prod}^{S^\vee}_{c\in \bC}\Hom(\cE_i, T_c\cE_i)\otimes_{\Lambda_0^S}\otimes \cOseL_S$. There exists $D\in  \frakD$ such that the subspace of $\Hom(\cE|_{U_i\cap U_j}, \cE|_{U_i\cap U_j})$ defined as the image of the canonical map
    \begin{equation}
        \bigoplus_{c\in D}\Hom(\cE_i, T_c\cE_i)\otimes_{\bC}\cOse_S\rightarrow {\prod}^{S^\vee}_{c\in \bC}\Hom(\cE_i, T_c\cE_i)\otimes_{\Lambda_0^S}\otimes \cOseL_S
    \end{equation}
    contains $g_{ij}$.
\end{enumerate}
\end{definition}

We now associate an enhanced ind-sheaf to a convergent lattice. Let $\cC$ be a convergent lattice of $\cE$. Let $\{U_i\}$ be the defining open covering of the convergent structure. 

Let $\bR\cdot \hbar$ be the real line in $\bC_\hbar$ spanned by the fixed $\hbar\in S$. We set $\cE^\hbar:=\cE|_{\bR\cdot \hbar}$.

On each $U_i$, we have
\begin{equation}
    \cE^\hbar|_{U_i}\cong \bigoplus_c T_c\cE^\hbar_{i}\star 1_S.
\end{equation}
Specializing at $\hbar\in S$, on each overlap $U_i\cap U_j$, we get a morphism
\begin{equation}
   g_{ij}(\hbar)\colon \cE^\hbar_{i}|_{U_i\cap U_j}\rightarrow \bigoplus_{c\in D}T_c\cE^\hbar_{j}|_{U_i\cap U_j}.
\end{equation}

We denote each projection by
\begin{equation}
    g_{ij}(\hbar)_c\colon \cE^\hbar_{i}|_{U_i\cap U_j}\rightarrow T_c\cE^\hbar_{j}|_{U_i\cap U_j}.
\end{equation}
This further induces a morphism
\begin{equation}
     g_{ij}(\hbar)_c\colon \cE^\hbar_{i}\potimes \bC^E_M|_{U_i\cap U_j}\rightarrow  \cE^\hbar_{j}\potimes \bC^E_M|_{U_i\cap U_j}.
\end{equation}
We finally get a morphism
\begin{equation}
    \sum_{c\in D}e^{-c/\hbar} g_{ij}(\hbar)_c\colon \cE^\hbar_{i}\potimes \bC^E_M|_{U_i\cap U_j}\rightarrow  \cE^\hbar_{j}\potimes \bC^E_M|_{U_i\cap U_j}.
\end{equation}
Since the original morphisms satisfy the cocycle condition, these morphisms also satisfy the same cocycle condition. Hence this gives an enhanced ind-sheaf on the blowed-up space. By the push-forward along the projection, we get an enhanced ind-sheaf on $M$ supported on $M\bs D$. We denote the resulting enhanced ind-sheaf by $E(\cE, \cC)$.

\subsection{From sheaf quantization to enhanced ind-sheaf}
We now would like to explain the relation between D'Agnolo--Kashiwara's functor and our sheaf quantization.

Let $\cM$ be a flat connection valued in $\cOc_{M\times S}$. We slightly modify the definition of the summability in this situation.
\begin{definition}
For a point $x\in M\bs (\mathrm{Turn}(\cM)\cup D)$, we say $\cM$ is locally semisimple at $x$ if there exists a decomposition of $\cM\otimes_{\cOc_{M\times S}}\otimes \cOse_{M\times S}$ defined in a neighborhood $U$ of $x$ such that 
\begin{equation}
    \cM\otimes_{\cOc_{M\times S}}\otimes \cOse_{M\times S}|_U\cong \bigoplus_{i=1}^n(\cE^{\alpha_i/\hbar})^{\oplus n_i}.
\end{equation}
We can define the semi-global version and call it the summability.
\end{definition}
Let $\cM$ be a summable flat connection valued in $\cOc_{M\times S}$.
We can construct enhanced ind-sheaves in two ways. 

The first way is as follows: We set
\begin{equation}
   \cM_\hbar:= \cM\otimes_{\cOc_{M\times S}} \cO_M
\end{equation}
where $\cOc_{M\times S}\rightarrow \cO_M$ is given by the evaluation at $\hbar\in S$. Then the object canonically has a structure of $\cD_M$-module. Then we get an enhanced ind-sheaf $\Sol^E(\cM_\hbar)$.

The second way uses a convergent structure. By the summability assumption, we have a local isomorphism $ \cM\otimes_{\cOc_{M\times S}}\otimes \cOse_{M\times S}|_U\cong \bigoplus_{i=1}^n(\cE^{\alpha_i/\hbar})^{\oplus n_i}$. In particular, we get a basis of solutions of the form $e^{\alpha_i/\hbar}\psi_i$ where $\psi_i\in \cOse_{M\times S}$. Accordingly, we have a trivialization 
\begin{equation}
    \Sol^\hbar_{S}(\cM)|_U\cong \bigoplus_{i}\bigoplus_{c\in \bC}\bC_{S^\vee-\alpha_i+c}\star 1_S.
\end{equation}
Since the gluing morphisms come from  transformation matrices of the actual analytic solutions, this gives a convergent structure. We denote the convergent structure $C_\cM$. 
\begin{theorem}
There exists an isomorphism
\begin{equation}
    \Sol^E(\cM_\hbar)\cong E(\Sol^\hbar_{S}(\cM), C_\cM)
\end{equation}
\end{theorem}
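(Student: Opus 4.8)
The plan is to verify the isomorphism $\Sol^E(\cM_\hbar)\cong E(\Sol^\hbar_S(\cM),C_\cM)$ locally and then glue. Both sides are enhanced ind-sheaves on $M$ (supported on $M\bs D$ away from singular divisors, after passing to a suitable modification), so by the cocycle-datum description of $E(-,-)$ it suffices to compare them on each chart $U_i$ of the covering defining the convergent structure $C_\cM$, and then compare the gluing morphisms on overlaps $U_i\cap U_j$. First I would reduce, using the summability assumption and the local decomposition $\cM\otimes_{\cOc}\cOse|_U\cong\bigoplus_i(\cE^{\alpha_i/\hbar})^{\oplus n_i}$, to the rank-one building block $\cE^{\alpha/\hbar}$: both functors are compatible with finite direct sums, so it is enough to treat a single exponential module. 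For this block, $(\cE^{\alpha/\hbar})_\hbar$ is the classical flat connection $d-d\alpha/\hbar$ (with $\hbar\in S$ fixed), whose enhanced solution complex is, by D'Agnolo--Kashiwara, $\bC_{\{t+\Re(\alpha/\hbar)\geq 0\}}\potimes\bC^E_M$ — that is, the ``exponential'' enhanced ind-sheaf $\bE^{\,\Re(\alpha/\hbar)}$. On the other side, by Lemma~\ref{lem: expsolutions} we have $\Sol^\hbar_S(\cE^{\alpha/\hbar})\cong S_\alpha=(\bigoplus_{c\in\bC}\bC_{S^\vee-\alpha(x)+c})\star 1_S$, and the construction of $E(-,-)$ sends this, after restricting to $\bR\cdot\hbar$ and specializing, to exactly the same $\bE^{\,\Re(\alpha/\hbar)}$ (here one uses that $t\in S^\vee-\alpha$ for $\hbar\in S$ is the half-line condition $\Re((t+\alpha)/\hbar)\geq 0$, as noted in the Example following the definition of sheaf quantization). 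So the two charts-level objects agree canonically.

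Next I would match the transition morphisms. On an overlap, a morphism between the blocks for $\cM_\hbar$ is computed by $\cHom_\cD(\cE^{\alpha/\hbar},\cE^{\beta/\hbar})$ specialized at $\hbar$, i.e. it is the gauge transformation recorded by the actual analytic fundamental solutions of $\cM$; concretely it is a matrix with entries of the form $e^{(\beta-\alpha)/\hbar}\cdot(\text{asymptotically expandable function})$, or more precisely built from the $\gamma$-finite expansions $\sum_c e^{-c/\hbar}(\cdots)$ that live in $\cOse_{M\times S}$. On the sheaf-quantization side, the convergent structure $C_\cM$ was defined precisely so that the gluing morphism $g_{ij}$ is the image of $\bigoplus_{c\in D}\Hom(\cE_i,T_c\cE_i)\otimes\cOse_S$, and the associated enhanced-ind-sheaf morphism is $\sum_{c\in D}e^{-c/\hbar}g_{ij}(\hbar)_c$ — the same Borel-summed exponential series. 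Thus under the chart-level identification $\bE^{\,\Re(\alpha_i/\hbar)}\cong\bE^{\,\Re(\alpha_i/\hbar)}$, the two sets of transition morphisms coincide, and since both satisfy the cocycle condition, they glue to an isomorphism $\Sol^E(\cM_\hbar)\cong E(\Sol^\hbar_S(\cM),C_\cM)$ of enhanced ind-sheaves. Finally, I would check compatibility with the pushforward from the real-blown-up modification $\widetilde{M_x}$ down to $M$ near $D$: this is formal once one knows that $\cM_\hbar$ has moderate growth along $D$ (which follows because the $\alpha_i$ are meromorphic with poles in $D$ and the $\psi_i\in\cOse$), matching the $\mathrm{mod}\,D$ structure sheaves used implicitly in $E(-,-)$.

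The main obstacle I expect is the bookkeeping at the divisor $D$ and at ramification/turning points, i.e. making precise that the asymptotically-expandable gauge transformations used to define $C_\cM$ are genuinely the same data as the morphisms computed on the $\cD$-module side after specializing $\hbar\in S$, and that both produce enhanced ind-sheaves of moderate growth along $D$ compatibly with the real blow-up. Away from $D$ and turning points everything is literally the rank-one computation above; the subtlety is that near $D$ one must pass to a normal-crossing modification and its oriented real blow-up, use $\cO^{\mathrm{ce},\Lambda_0,\mathrm{mod}D}$ and $\cO^{\mathrm{se},\Lambda_0,\mathrm{mod}D}$ as in the fully-faithfulness proof, and verify that the evaluation map $\cOc_{M\times S}\to\cO_M$ at $\hbar$ intertwines the ``keep the extra direction'' convergent-lattice picture with D'Agnolo--Kashiwara's $\forlim_{a\to\infty}$ (the point being that specializing $\hbar$ collapses the $\Lambda_0^S$-enrichment to an honest exponential-type enhanced ind-sheaf). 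Once that dictionary is set up — and the relevant pieces are already present in Lemma~\ref{lem: expsolutions}, the fully-faithfulness proof, and the construction of $E(\cE,\cC)$ — the theorem follows by assembling the local data.
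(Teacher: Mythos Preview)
Your proposal is correct and follows essentially the same approach as the paper. The paper's own proof is very terse---it simply observes that away from $D$ both constructions record the transition matrices of the analytic solution basis (hence the local-system data agree), and that on $D$ both record the Stokes structure with the same formal type (by WKB-regularity) and the same gluing (again from transition matrices). Your explicit reduction to the rank-one blocks $\cE^{\alpha/\hbar}$, identification of both sides with the exponential enhanced ind-sheaf, and matching of the transition morphisms is a detailed unpacking of exactly this argument.
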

\begin{proof}
Away from the divisor $D$, the data survive in enhanced ind-sheaves are the same as that of local systems on $M\bs D$. In the both constructions, the data come from the transition matrices of solutions, hence they are the same. On the divisor $D$, the data survive in the enhanced ind-sheaf is a Stokes structure. The formal type of the both constructions are the same, which is a direct consequence of the WKB-regularity. Again, the gluing come from the transition matrices of solutions, hence they are the same. This completes the proof.
\end{proof}

\begin{remark}[General $\cDae_{M\times S}$-module]
It is natural to ask a generalization of the above result to general summable $\cDae_{M\times S}$-module. 
However, it seems that the definition of convergent lattice does not work well for general summable $\cDae_{M\times S}$-module. A possible way to remedy this is as follows: First, decompose sheaf quantization to elementary sheaf quantizations. Then we can define the convergent lattice for each elementary sheaf quantization. Then we can associate an enhanced ind-sheaf for each. The extension map between elementary sheaf quantizations induce an extension map between enhanced ind-sheaf. The resulting object can be identified with the image under $\Sol^E$.
\end{remark}

\section{Appendix}
\subsection{Lagrangian dichotomy}
Let $M$ be a complex manifold. We equip the standard holomorphic symplectic structure $\omega$ with the holomorphic cotangent bundle $T^*M$. We denote the projection $T^*M\rightarrow M$ by $p_{T^*M}$.

\begin{definition}
Let $L$ be a (possibly singular) holomorphic Lagrangian submanifold of $T^*M$. We say $L$ is algebraic if the local defining ideal is generated by functions in $\cO_M[\xi_1,...,\xi_n]$ where each $\xi_i$ is the cotangent coordinate.
\end{definition}

\begin{lemma}
For a holomorphic $\cD^{\hbar, \al}_M$-module, its characteristic variety $\mathrm{Char}(M)$ is always an algebraic Lagrangian.
\end{lemma}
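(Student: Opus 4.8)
The statement to prove is: for a holonomic $\cD^{\hbar,\al}_M$-module $\cM$, the characteristic variety $\Char(\cM)\subset T^*M$ is an algebraic Lagrangian in the sense of the definition just given, i.e.\ its defining ideal is generated locally by elements of $\cO_M[\xi_1,\dots,\xi_n]$. The key point is that, by construction, $\cD^{\hbar,\al}_M$ is the subring of $\cD_{M\times\bC_\hbar/\bC_\hbar}$ generated by $\cO_M$, $\hbar$, and $\hbar\partial$ \emph{polynomially} in $\hbar$, and by the Lemma in \S5 we have $\cD^{\hbar,\al}_M/\hbar\cD^{\hbar,\al}_M\cong \cO_M[\xi_1,\dots,\xi_n]$ (the structure sheaf of $T^*M$ with the fiber direction regarded as \emph{algebraic}). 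So the classical object $\cM^{cl}=\cM/\hbar\cM$ is a module over the \emph{polynomial} ring $\cO_M[\xi_1,\dots,\xi_n]$, not merely over $\cO_{T^*M}$, and $\Char(\cM)=\supp(\cM^{cl})$.

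\textbf{Steps.} First I would reduce to the local/affine picture: choose a coordinate chart $U\subset M$ with coordinates $x_1,\dots,x_n$ so that $\cD^{\hbar,\al}_M|_U$ is the Rees-type algebra $\cO_U\langle \hbar\partial_{x_1},\dots,\hbar\partial_{x_n}\rangle[\hbar]$ with the obvious commutation relations, and observe this is a Noetherian filtered ring whose associated graded (for the order filtration, or equivalently after setting $\hbar=0$) is $\cO_U[\xi_1,\dots,\xi_n]$. Second, pick a coherent presentation of $\cM$ near a point; by coherence we may choose a finite presentation $(\cD^{\hbar,\al})^p\to(\cD^{\hbar,\al})^q\to\cM\to 0$, so $\cM^{cl}$ has a finite presentation over $\cO_U[\xi_1,\dots,\xi_n]$ obtained by reducing the presentation matrix mod $\hbar$; the entries of that matrix, being symbols of $\hbar$-differential operators polynomial in $\hbar$, lie in $\cO_U[\xi_1,\dots,\xi_n]$. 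Hence the annihilator ideal $\cI:=\mathrm{Ann}_{\cO_U[\xi]}(\cM^{cl})$ is an ideal of the polynomial ring $\cO_U[\xi_1,\dots,\xi_n]$, and $\Char(\cM)\cap T^*U=V(\cI)$ is cut out by polynomial equations in the $\xi_i$ — this is exactly the algebraicity condition. Third, the Lagrangian property: since $\cM$ is holonomic by hypothesis, $\Char(\cM)$ is already assumed Lagrangian in the definition of holonomicity in \S5 (``holonomic'' is defined there as coherent with Lagrangian characteristic variety), so nothing more is needed on that side; alternatively, one invokes the involutivity theorem (Gabber / Sato--Kawai--Kashiwara) for the filtered ring $\cD^{\hbar,\al}$ to see $\Char(\cM)$ is coisotropic, and dimension $\le n$ from holonomicity forces Lagrangian. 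Finally, I would check the gluing: the ideals $\cI$ on overlapping charts agree on intersections because they are all equal to the intrinsically-defined support ideal of the coherent sheaf $\cM^{cl}$ over the sheaf of rings $\cO_M[\xi_1,\dots,\xi_n]=\cD^{\hbar,\al}_M/\hbar\cD^{\hbar,\al}_M$, so the local polynomial generators patch to the claimed global structure.

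\textbf{Main obstacle.} The only genuinely delicate point is making precise that the classical reduction $\cD^{\hbar,\al}_M/\hbar\cD^{\hbar,\al}_M$ really is the \emph{polynomial} algebra $\cO_M[\xi_1,\dots,\xi_n]$ rather than some completion or the analytic ring $\cO_{T^*M}$ — i.e.\ that the ``$\mathrm{al}$'' superscript (polynomial $\hbar$-dependence) is exactly what guarantees polynomial $\xi$-dependence of symbols. This is essentially the content of the Lemma in \S5 together with the definition of $\cD^{\hbar,\al}_M$, so the argument is really just unwinding those definitions carefully; once that identification is in hand, everything else is the standard coherent-sheaf-support bookkeeping. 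A secondary subtlety is that $\cO_M$ itself is only analytic, so $\Char(\cM)$ is algebraic in the \emph{fiber} variables but not globally affine-algebraic; this matches the definition in the appendix precisely (generators in $\cO_M[\xi_1,\dots,\xi_n]$, with $\cO_M$ the holomorphic structure sheaf), so no contradiction arises, but the write-up should state it that way to avoid confusion.
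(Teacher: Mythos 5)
Your proposal is correct and is essentially a careful unwinding of what the paper considers evident: the paper's own proof is just ``This is clear from the definition,'' resting on the fact that $\cD^{\hbar,\al}_M/\hbar\cD^{\hbar,\al}_M\cong\cO_M[\xi_1,\dots,\xi_n]$ so the support of $\cM^{cl}$ is cut out by fiberwise-polynomial equations, with the Lagrangian property built into the definition of holonomicity. Your more explicit write-up (finite presentation mod $\hbar$, gluing of local annihilator ideals, and the remark distinguishing fiber-algebraicity from global algebraicity) is a faithful elaboration of the same argument, not a different route.
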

\begin{proof}
This is clear from the definition.
\end{proof}

For a moment, we consider $T^*M$ as a holomorphic vector bundle over $M$. Let $\cO$ be the trivial bundle. We set $\overline{T^*M}:=\mathrm{tot}(\bP(T^*M\oplus \cO))$, which is the total space of projective compactification of $T^*M$. Then $T^*M$ can be canonically embedded into $\overline{T^*M}$. For a holomorphic Lagrangian $L$ in $T^*M$, we denote the closure of $L$ in $\overline{T^*M}$ by $\overline{L}$.
\begin{lemma}
If $L$ is algebraic, the closure $\overline{L}$ is a holomorphic subvariety of $\overline{T^*M}$.
\end{lemma}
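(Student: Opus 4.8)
The plan is to reduce the statement to a purely local question and then use the algebraicity of the defining ideal in the fiber variables together with the standard description of the projective compactification $\overline{T^*M}=\mathrm{tot}(\bP(T^*M\oplus\cO))$ in homogeneous fiber coordinates. First I would work locally on $M$, choosing a coordinate chart with coordinates $x_1,\dots,x_n$ and dual fiber coordinates $\xi_1,\dots,\xi_n$ on $T^*M$, so that $\overline{T^*M}$ over this chart is $U\times\bP^n$ with homogeneous coordinates $[\xi_0:\xi_1:\dots:\xi_n]$ on the $\bP^n$-factor, the open set $\{\xi_0\neq 0\}$ being $T^*M$ (with $\xi_i$ identified with $\xi_i/\xi_0$). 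Since $L$ is algebraic, by Definition its ideal in $\cO_U[\xi_1,\dots,\xi_n]$ is generated by finitely many polynomials $f_1,\dots,f_k$ in the $\xi_i$ with holomorphic coefficients in $x$. Homogenizing each $f_j$ with respect to $\xi_0$ — i.e. replacing $\xi_i$ by $\xi_i/\xi_0$ and clearing denominators by multiplying by $\xi_0^{\deg_\xi f_j}$ — produces homogeneous polynomials $\widetilde f_j(x,\xi_0,\dots,\xi_n)$, homogeneous in $(\xi_0,\dots,\xi_n)$, with holomorphic coefficients in $x$. These define a closed analytic subset $Z$ of $U\times\bP^n$ whose intersection with $\{\xi_0\neq 0\}$ is exactly $L\cap (U\times\bP^n)$.

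The key step is then to check that $\overline L$, the closure of $L$ inside $\overline{T^*M}$, coincides with this $Z$ (at least after possibly removing extra components of $Z$ supported on the divisor at infinity $\{\xi_0=0\}$). One inclusion is immediate: $Z$ is closed and contains $L$, hence contains $\overline L$. For the reverse inclusion the point is that $L$, being a Lagrangian of dimension $n$, is dense in each of its local analytic branches, and over a Zariski-dense open subset of $M$ (the complement of the discriminant of the finite/ramified covering structure one gets from the algebraic Lagrangian dichotomy already used in the body of the paper) the polynomials $f_j$ cut out $L$ as a reduced complete intersection of the expected dimension, so the homogenization does not create spurious components through the generic fiber. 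Concretely, I would argue that $\overline L$ is a closed analytic subvariety because it is the closure of a locally closed analytic set that is contained in the compact set $U\times\bP^n$ over any relatively compact $U$; closure of an analytic set in an ambient complex manifold is analytic precisely when the analytic set is contained in the ambient space and its closure has finite volume / bounded in the fiber directions — here that boundedness is automatic since $\bP^n$ is compact, and Remmert–Stein (or the Bishop extension theorem) guarantees that the closure of the $n$-dimensional analytic subvariety $L$ across the divisor $\{\xi_0=0\}$ is again analytic, provided the divisor has dimension $< n$ at every point where we add points, which holds since the divisor is $(n-1)$-dimensional in each $\bP^n$ fiber direction but globally of dimension $n-1+\dim(\text{base slice})$; one must check the codimension bookkeeping carefully.

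So the main obstacle is exactly this last codimension/Remmert–Stein verification: one needs that the ``boundary'' $\overline L\setminus L$ sits inside a closed analytic subset of $\overline{T^*M}$ of dimension strictly less than $\dim \overline L=n=\dim M$, so that the Remmert–Stein extension theorem applies and forces $\overline L$ to be analytic rather than merely closed. I expect this to follow from the algebraicity hypothesis: because the defining equations are polynomial in the fibers, the intersection $\overline L\cap\{\xi_0=0\}$ is described by the top-degree homogeneous parts of the $f_j$, which generically cut out something of dimension $\leq n-1$ (it is the ``Lagrangian at infinity'', a conic Lagrangian-type object in the contact boundary), and a dimension count using the fact that $L\to M$ is generically finite off a divisor $D\subset M$ shows the fiber of $\overline L\setminus L$ over any point has dimension at most $n-1-\dim D'$ for the relevant stratum. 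Once this is in hand, invoking Remmert–Stein locally and patching over the coordinate charts (the construction is manifestly independent of the chart since the homogenization is intrinsic to the line-bundle structure of $\bP(T^*M\oplus\cO)$) gives that $\overline L$ is a holomorphic subvariety of $\overline{T^*M}$, completing the proof.
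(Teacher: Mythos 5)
Your homogenization step is exactly the right idea, and it is in fact essentially the whole content of the proof (the paper itself dismisses the lemma as ``clear from the definition'' for precisely this reason: polynomiality in the fiber coordinates $\xi_i$ lets you homogenize and get a closed analytic $Z\subset\overline{T^*M}$ with $Z\cap\{\xi_0\neq0\}=L$). But the way you try to finish is a genuine gap.

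The Remmert--Stein route cannot work here, and your own uneasiness about the ``codimension bookkeeping'' is well founded. Remmert--Stein requires the exceptional set $A$ across which you extend to satisfy $\dim A<\dim V$. Here $V=L$ has dimension $n=\dim M$, while the divisor at infinity $\{\xi_0=0\}\subset\overline{T^*M}$ is fibered over $M$ with fiber $\bP^{n-1}$, so it has dimension $2n-1$, which is $\geq n$ as soon as $n\geq1$. The inequality goes the wrong way, so the theorem simply does not apply, and the statement you call the ``main obstacle'' is not an obstacle you can clear this way. You also do not need the discriminant/complete-intersection discussion at all; nothing about reducedness or expected dimension is required.

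The correct way to conclude from the homogenization is shorter. You already have $\overline{L}\subseteq Z$ with $Z$ analytic and $Z\cap\{\xi_0\neq0\}=L$. Now decompose $Z$ locally into its (finitely many) irreducible components $Z_i$. Any $Z_i$ not contained in $\{\xi_0=0\}$ meets $\{\xi_0\neq0\}$ in a dense open subset of $Z_i$, and that subset lies in $L$; hence $Z_i\subseteq\overline{L}$. Conversely $\overline{L}$ is the union of the closures of $Z_i\cap\{\xi_0\neq0\}$, each of which is either $Z_i$ or empty. So $\overline{L}$ is exactly the union of the irreducible components of $Z$ not contained in the divisor at infinity, which is a locally finite union of analytic sets and therefore a closed analytic subvariety. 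No extension theorem is needed, and there is no dimension count to worry about.
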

\begin{proof}
This is also clear from the definition.
\end{proof}

Here is the main technical theorem in this section.
\begin{theorem}
Let $L$ be an irreducible algebraic Lagrangian subvariety in $T^*M$. Then $L$ satisfies the following: 
\begin{enumerate}
    \item For any point $x$ of $p_{T^*M}(L)$, each connected component of $\pi^{-1}(x)\cap L\subset T^*M|_x$ is stable under the addition of $T^*_xN$.
    \item The quotient $L/T^*_NM\rightarrow p_{T^*M}(L)$ is a ramified covering.
\end{enumerate}
\end{theorem}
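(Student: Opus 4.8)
The plan is to reduce the statement to a purely algebro-geometric fact about the projective compactification $\overline{T^*M}$ introduced just before the theorem, and then to extract the two assertions from the behaviour of $\overline{L}$ along the divisor at infinity. First I would observe that, since $L$ is algebraic, $\overline{L}\subset \overline{T^*M}=\mathrm{tot}(\bP(T^*M\oplus\cO))$ is a closed holomorphic (in fact locally algebraic) subvariety, and the divisor at infinity $H_\infty:=\overline{T^*M}\setminus T^*M$ is canonically $\bP(T^*M)$, the projectivized cotangent bundle. The key geometric input is that $L$ being Lagrangian for the canonical symplectic form forces strong constraints on $\overline{L}\cap H_\infty$: working fibrewise over a point $x\in M$, $\overline{L}|_x$ is an algebraic subvariety of $\bP(T^*_xM\oplus\bC)\cong\bP^n$, and the condition that $L$ is conic-at-infinity-compatible means that the part of $\overline{L}$ at infinity is a union of projective linear subspaces (the directions in which $L|_x$ escapes to infinity). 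I would make this precise by taking, over a fixed $x$, the affine cone perspective: a connected component $Z$ of $L|_x\subset T^*_xM$ has a well-defined ``recession cone'' $C(Z):=\{v\in T^*_xM \mid \overline{Z}\cap H_\infty \text{ contains the direction }[v]\}$, and I claim $Z+C(Z)=Z$ and that $C(Z)$ is a linear subspace.

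The second step is to show that $C(Z)$, the linear subspace just described, is exactly $T^*_xN$ for $N=p_{T^*M}(L)$ near $x$ (more precisely, the conormal direction of the stratum through $x$). Here the Lagrangian hypothesis is essential: restricting the tautological $1$-form to $L$ and using that $d$ of it vanishes, one gets that the fibre directions along which $L$ is translation-invariant are precisely the conormal directions to the image $p_{T^*M}(L)$; this is the classical fact that an irreducible Lagrangian is, over the smooth locus of its image, an affine subbundle of the conormal bundle $T^*_NM$. For the algebraic case one argues generically first — over a Zariski-dense open $U\subset p_{T^*M}(L)$ the map $L|_U\to U$ has all fibre-components affine-linear of the same dimension, translates of $T^*_UM$ — and then uses the algebraicity of $\overline{L}$ together with properness of $\overline{L}\to M$ to propagate the statement to all of $p_{T^*M}(L)$ by a limiting argument: a fibre-component over a boundary point is a flat limit of translates of $T^*_xN$, hence again a union of translates of the appropriate conormal space (the recession cone can only jump up, but Lagrangian dimension count pins it down). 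This gives assertion (1).

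For assertion (2), once each connected component of $\pi^{-1}(x)\cap L$ is a coset of $T^*_xN$, the quotient $L/T^*_NM$ is well-defined as a complex space over $p_{T^*M}(L)$, and its fibre over $x$ is the finite set of these cosets. Finiteness of the number of components over each $x$ is where the algebraicity is used once more: for algebraic $L$ the intersection $L\cap T^*_xM$ has only finitely many connected components (this is the property the introduction attributes to algebraic Lagrangians, cf. the Remark on algebraic Lagrangians), so $L/T^*_NM\to p_{T^*M}(L)$ is quasi-finite; combined with properness of $\overline{L}\to M$ restricted appropriately, it is finite, and a finite surjective map of irreducible varieties of the same dimension is a ramified covering. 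I would also note that $p_{T^*M}(L)$ is itself irreducible (image of irreducible $L$) so that ``ramified covering'' makes sense on its smooth locus, the ramification locus being the image of $\mathrm{Ram}(L)$.

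The main obstacle I anticipate is the propagation step in the second paragraph: showing that the fibre-wise linear-translate structure, which is elementary generically, survives over the boundary divisor $D\subset p_{T^*M}(L)$ without the recession cones jumping in dimension. The Lagrangian condition controls dimension ($\dim C(Z)+\dim p_{T^*M}(L)$ near $x$ must equal $n=\dim M$), and semicontinuity forces the recession cone to be at least $T^*_xN$; the work is in ruling out that it is strictly larger, which I would do by a normalization/resolution argument applied to $\overline{L}$ (resolve $\overline{L}$, track the divisor at infinity, and use that a Lagrangian cannot contain a fibre direction transverse to its image without violating the isotropy of the canonical form). This is essentially Mochizuki/Kedlaya-style bookkeeping on the structure of the boundary of an algebraic Lagrangian, and is the technical heart of the appendix.
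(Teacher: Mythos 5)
Your proposal is a genuinely different route from the paper's proof, and it has a gap precisely at the step you yourself flag as the ``main obstacle.'' Let me compare the two.

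The paper's argument is much more economical than yours. It does use the projective compactification $\overline{T^*M}$, but only once and only to conclude that $\overline{L}\rightarrow M$ is proper and algebraic, hence that the image $p_{T^*M}(L)$ is an irreducible subvariety of $M$. After that there is no analysis of $\overline{L}\cap H_\infty$, no recession cones, and no resolution of singularities. Instead, on the open locus where $B=p_{T^*M}(L)^{\mathrm{sm}}$ and $L$ are both smooth, the paper writes the symplectic form in coordinates adapted to $B\subset M$ as $\sum_{i=1}^m dx_i\wedge d\xi_i$, and the Lagrangian condition directly forces $(x_1,\dots,x_m,\xi_{m+1},\dots,\xi_n)$ to be coordinates on $L$ with $\xi_1,\dots,\xi_m$ constant along fibres; in other words the fibre component is an affine translate of $T^*_xB$. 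The extension to singular points of $L$ over smooth $B$, and to points over the singular locus of $B$, is handled by a soft density-plus-closedness argument: $L$ is irreducible, so smooth points of $L$ over smooth $B$ are dense in $L$, and the condition ``$y+\xi\in L$ for $\xi$ in the (limiting) conormal direction'' is closed. That completely replaces your entire second and fourth paragraphs. For assertion (2), once the fibres are affine translates of a rank-$(n-m)$ conormal, the quotient is $m$-dimensional and surjects onto the $m$-dimensional image, giving the ramified covering directly; the paper does not need to invoke finiteness of components or a quasi-finite-plus-proper argument.

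The gap in your version is the claim that the recession cone ``can only jump up, but Lagrangian dimension count pins it down.'' The Lagrangian condition gives you an equation on the smooth locus of $L$, not directly a constraint on the special fibre of a flat limit, and the dimension count you invoke does not rule out a fibre component of excess dimension at a boundary point without further work (this is exactly why you reach for a resolution of $\overline{L}$). You are right that it is the technical heart of \emph{your} approach, but it is avoidable: replace it with the observation that for any fixed $\xi$ lifted to a local holomorphic section of the conormal bundle over $B$, the locus $\lc y\in L\relmid y+\xi(p_{T^*M}(y))\in L\rc$ is closed in $L$ and contains the dense smooth locus, hence is all of $L$. That single line is what the paper does. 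Your framework via $H_\infty=\bP(T^*M)$ and recession cones is defensible and would likely work if completed, but it imports machinery (flat limits, normalization, Mochizuki--Kedlaya-style boundary analysis) that the statement does not require. Finally, a small slip: you write that the fibre components over a generic $U$ are ``translates of $T^*_UM$''; you mean translates of the conormal $T^*_UN\subset T^*_UM$, as you say correctly elsewhere.
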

\begin{proof}
Since $L$ is algebraic, by the above lemma, $\overline{L}$ is algebraic in the fiber direction. Hence the restriction of the projection $\overline{T^*M}\rightarrow M$ to $\overline{L}$ is proper and algebraic. Then the projection image of $\overline{L}$ is an irreducible subvariety of $M$. We denote the smooth part of the projection image $p_{T^*M}({L})$ by $B$. 

Take a point $B$. We can take a local coordinate $x_1,..., x_n$ of the base space $M$ around $x$ such that $B$ is locally defined by $x_{m+1}=x_{m+2}=\cdots =x_n=0$. 

Take a smooth point $y\in L$ with $x=p_{T^*M}(y)$. Take a neighborhood $V$ of $x$ and take a connected component $U$ of $p_{T^*M}^{-1}(V)$, which is a neighborhood of $x$.

Consider the zero set $Z_i$ of $dx_i$ on $U$, which is a closed analytic subset of $U$. We set $U':=U\bs\lb \bigcup_i Z_i\rb$. 

Then the restriction of the symplectic form to $U'$ is
\begin{equation}
    dx_1\wedge d\xi_1+\cdots dx_m\wedge d\xi_m|_L=0.
\end{equation}
Since $dx_i$'s are independent, if one of $d\xi_i$ for $i=1,...,m$ is independent of $dx_i$'s, the above does not vanish. This implies that $dx_1,..., dx_m, d\xi_{m+1},..., d\xi_n$ are independent on $L$. In other words, $x_1,...,x_m, \xi_{m+1},..., \xi_n$ is a coordinate of $U'$. We also note that this implies that $\xi_1,..., \xi_m$ are fixed for a fixed $(x_1,..., x_m)$. A point in $U'$ with a fixed $x_1,..., x_m$ is in a plane defined by $(x_1,..., x_m, 0,,...,0, \xi_1,...., \xi_m)$. The holomorphy and the closedness of $L$ implies the first part of the theorem for smooth points.

For a singular point $y\in L$, there exists a sequence of smooth points $\{y_i\}_{i\in \bN}$ in $L$ which converges to $y$. Then the translation $\{y_i+\xi\}$ by $\xi \in T^*_NM$ is again in $L$ which converges to $y+\xi$. Since $L$ is closed $y+\xi\in L$.

Suppose $y$ is in the fiber of a singular point of $B$. By the irreducibility, there exists a sequence of points of the fibers in $B$ converging to $y$. Hence we can prove the assertion for $y$ similarly.

Since the action of $T^*_NM$ on $L$ is free, we obtain an $m$-dimensional variety as a quotient. Since the dimension of the projection image $p_{T^*M}(L)$ is the same as $L/T^*_NM$, we obtain a ramified covering map.
\end{proof}

Let $L$ be an algebraic Lagrangian. We first decompose $L$ into irreducible Lagrangians. We set $B_i:=p_{T^*M}(L_i)$ where $L_i$ is each irreducible component. 

\begin{definition}
Let $\cS$ be a subanalytic stratification $\cS$ of $M$. We say $\cS$ is adapted to $L$ if the following holds: For any $B_i$, there exists a subset $\cS_i$ of $\cS$ such that $\bigcup_{S\in \cS_i}S=B_i$.
\end{definition}

Let $L$ be an algebraic Lagrangian. Let $\cS$ be an adapted stratification. For each $S\in \cS$, we have a symplectic reduction $T^*M\rightarrow T^*S$. We set
\begin{equation}
    \widetilde{L_S}:=L_S+T^*_SM.
\end{equation}
By the above lemma, we have
\begin{lemma}
The reduction of $L$ into $T^*S$ is again Lagrangian for any $S$ and given by $\widetilde{L_S}/T^*_SM$.
\end{lemma}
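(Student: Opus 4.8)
Fix a stratum $S\in\cS$ and write $\pi_S\colon T^*M|_S\to T^*S$ for the canonical map sending a covector at $x\in S$ to its restriction along $T_xS$. The submanifold $T^*M|_S=p_{T^*M}^{-1}(S)$ is coisotropic in $T^*M$, its characteristic foliation has as leaves exactly the cosets of the conormal bundle $T^*_SM$ (these are the fibres of $\pi_S$), and the tautological forms match, $\iota^*\omega_{T^*M}=\pi_S^*\omega_{T^*S}$ for $\iota\colon T^*M|_S\hookrightarrow T^*M$; thus $\pi_S$ is the symplectic reduction. By definition the reduction of $L$ is $\pi_S(L\cap T^*M|_S)$, and since the fibres of $\pi_S$ are $T^*_SM$-cosets we have $\pi_S(L\cap T^*M|_S)=\pi_S\bigl((L\cap T^*M|_S)+T^*_SM\bigr)=\pi_S(\widetilde{L_S})$; because $\widetilde{L_S}$ is $T^*_SM$-saturated, $\pi_S$ exhibits it as a $T^*_SM$-torsor over its image, so the reduction equals $\widetilde{L_S}/T^*_SM$ once we know $\widetilde{L_S}$ is a genuine subvariety, which is the only non-formal point.

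\textbf{Reducing to the dichotomy theorem.} To establish this, and the Lagrangian property, I would argue component by component. Decompose $L=\bigcup_jL_j$ into irreducible algebraic Lagrangians and set $B_j:=p_{T^*M}(L_j)$. Since $\cS$ is adapted to $L$, every stratum $S$ meeting $L_j$ lies in $B_j$; replacing $\cS$ by the refinement used in the proof of Proposition~\ref{prop: stratification} we may moreover assume $S$ lies in the smooth locus of each relevant $B_j$. By part~(1) of the dichotomy theorem, for $x\in S$ every connected component of $L_j\cap p_{T^*M}^{-1}(x)$ is a coset of the conormal space $T^*_xB_j$, and since $S\subseteq B_j$ we have $T^*_{B_j}M\subseteq T^*_SM$; hence adding $T^*_SM$ collapses each such coset onto a single coset of $T^*_SM$. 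Therefore $\widetilde{(L_j)_S}:=(L_j\cap p_{T^*M}^{-1}(S))+T^*_SM$ is a subvariety of $T^*M|_S$ which is a union of cosets of $T^*_SM$ over a covering of $S$; it is $T^*_SM$-saturated, and $\widetilde{(L_j)_S}/T^*_SM\to S$ is a (ramified) covering. Taking unions over $j$ shows $\widetilde{L_S}=\bigcup_j\widetilde{(L_j)_S}$ is a subvariety on which the quotient by $T^*_SM$ makes sense; combined with the first paragraph this identifies the reduction of $L$ with $\widetilde{L_S}/T^*_SM$.

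\textbf{Dimension and isotropy.} It remains to check this is Lagrangian. Each $\widetilde{(L_j)_S}/T^*_SM$ is a covering of $S$, hence of dimension $\dim S=\tfrac12\dim_{\bC}T^*S$. It is isotropic: from $\iota^*\omega_{T^*M}=\pi_S^*\omega_{T^*S}$ and the isotropy of $L_j$ one gets that $\pi_S^*\omega_{T^*S}$ vanishes on $L_j\cap T^*M|_S$, and since $\pi_S$ is a submersion onto $\widetilde{(L_j)_S}/T^*_SM$, $\omega_{T^*S}$ vanishes there. An isotropic subvariety of half the dimension of $T^*S$ is Lagrangian, so each $\widetilde{(L_j)_S}/T^*_SM$, and therefore the finite union $\widetilde{L_S}/T^*_SM$, is a (possibly singular, reducible) Lagrangian subvariety of $T^*S$, which is exactly the assertion.

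\textbf{Main obstacle.} The only genuinely non-formal ingredient is that $\widetilde{L_S}$ is a well-behaved variety: over lower-dimensional strata $L\cap T^*M|_S$ can be quite singular, and it is precisely part~(1) of the dichotomy theorem — stability of the fibrewise connected components under conormal translation — that forces the saturation $\widetilde{L_S}$ to be an affine-subbundle-type object and its quotient by $T^*_SM$ to be a covering of $S$, delivering smoothness and the correct dimension at once. The one computation that needs a little care is matching the ambient conormal $T^*_SM$ against the smaller conormal $T^*_{B_j}M$ produced by the theorem, i.e.\ checking that enlarging the conormal from $B_j$ to $S$ neither raises nor lowers the dimension of the reduced locus.
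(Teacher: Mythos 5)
Your proof is correct and fills in the details of the paper's essentially one-line argument, which merely asserts that the lemma follows ``by the above lemma'' (i.e.\ the dichotomy theorem) without further justification. Your derivation --- identifying $T^*M|_S$ as coisotropic with characteristic leaves the $T^*_SM$-cosets so that $\pi_S$ is the reduction, invoking part~(1) of the dichotomy theorem to make $\widetilde{L_S}$ a $T^*_SM$-saturated subvariety whose quotient is a covering of $S$, and concluding Lagrangianness from the dimension count together with isotropy via $\iota^*\omega_{T^*M}=\pi_S^*\omega_{T^*S}$ --- is exactly the standard elaboration the paper leaves implicit.
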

Note that each $\widetilde{L_S}$ has again a property that the projection $L_S/T^*_SM\rightarrow S$ is a ramified covering. 
\footnotesize
\bibliographystyle{alpha}
\bibliography{fwkb.bib}

\noindent
Department of Mathematics, Graduate School of Science, Osaka University, Toyonaka, Osaka, 560-0043, tatsuki.kuwagaki.a.gmail.com
\end{document}